\let\d\partial
    \pgfmathsetlength{\pgf@xb}{\pgfkeysvalueof{/pgf/outer xsep}}%
    \pgfmathsetlength{\pgf@yb}{\pgfkeysvalueof{/pgf/outer ysep}}%
\colorlet{symbols}{black}      
\colorlet{testcolor}{green!60!black}
\def\symbol#1{\textcolor{symbols}{#1}}
\def\Wick#1{\mathopen{{:}}#1\mathclose{{:}}}
\def\1{\mathbf{\symbol{1}}}
\tikzstyle{tinydots}=[dash pattern=on \pgflinewidth off 2*\pgflinewidth]
\def\drawx{\draw[-,solid] (-3pt,-3pt) -- (3pt,3pt);\draw[-,solid] (-3pt,3pt) -- (3pt,-3pt);}
\tikzset{
	n/.style={circle,fill=black!15,inner sep=0pt, minimum size=0.7mm},
	l/.style={inner sep=2pt,label=center:{...}},
	r/.style={circle,fill=red,inner sep=0pt, minimum size=1mm},
	simple/.style={circle,inner sep=0pt, minimum size=0.5mm},
	d/.style={circle,fill=black,inner sep=0pt, minimum size=0.7mm},
	c/.style={circle,draw=black,fill=white,inner sep=0pt, minimum size=1.2mm},
	root/.style={circle,fill=testcolor,inner sep=0pt, minimum size=2mm},
	dot/.style={circle,fill=black,inner sep=0pt, minimum size=1mm},
	var/.style={circle,fill=black!10,draw=black,inner sep=0pt, minimum size=2mm},
	dotred/.style={circle,fill=black!50,inner sep=0pt, minimum size=2mm},
	generic/.style={semithick,shorten >=1pt,shorten <=1pt},
	dist/.style={ultra thick,draw=testcolor,shorten >=1pt,shorten <=1pt},
	testfcn/.style={ultra thick,testcolor,shorten >=1pt,shorten <=1pt,<-},
	testfcnx/.style={ultra thick,testcolor,shorten >=1pt,shorten <=1pt,<-,
		postaction={decorate,decoration={markings,mark=at position 0.6 with {\drawx}}}},
	kprime/.style={semithick,shorten >=1pt,shorten <=1pt,densely dashed,->},
	kprimex/.style={semithick,shorten >=1pt,shorten <=1pt,densely dashed,->,
		postaction={decorate,decoration={markings,mark=at position 0.4 with {\drawx}}}},
	kernel/.style={semithick,shorten >=1pt,shorten <=1pt,->},
	multx/.style={shorten >=1pt,shorten <=1pt,
		postaction={decorate,decoration={markings,mark=at position 0.5 with {\drawx}}}},
	kernelx/.style={semithick,shorten >=1pt,shorten <=1pt,->,
		postaction={decorate,decoration={markings,mark=at position 0.4 with {\drawx}}}},
	kernel1/.style={->,semithick,shorten >=1pt,shorten <=1pt,postaction={decorate,decoration={markings,mark=at position 0.45 with {\draw[-] (0,-0.1) -- (0,0.1);}}}},
	kernel2/.style={->,semithick,shorten >=1pt,shorten <=1pt,postaction={decorate,decoration={markings,mark=at position 0.45 with {\draw[-] (0.05,-0.1) -- (0.05,0.1);\draw[-] (-0.05,-0.1) -- (-0.05,0.1);}}}},
	kernelBig/.style={semithick,shorten >=1pt,shorten <=1pt,decorate, decoration={zigzag,amplitude=1.5pt,segment length = 3pt,pre length=2pt,post length=2pt}},
	rho/.style={dotted,semithick,shorten >=1pt,shorten <=1pt},
	renorm/.style={shape=circle,fill=white,inner sep=1pt},
	labl/.style={shape=rectangle,fill=white,inner sep=1pt},
cumu2n/.style={inner sep=3pt},
cumu2/.style={draw=red!50,fill=red!20},
cumu3/.style={regular polygon, regular polygon sides=3,draw=red!50,rounded corners=3pt,fill=red!20,minimum size=5mm},
cumu4/.style={regular polygon, regular polygon sides=4,draw=red!50,rounded corners=3pt,fill=red!20,minimum size=7mm},
cumu5/.style={regular polygon, regular polygon sides=5,draw=red!50,rounded corners=3pt,fill=red!20,minimum size=5mm},
	xi/.style={circle,fill=symbols!10,draw=symbols,inner sep=0pt,minimum size=1.2mm},
	xix/.style={crosscircle,fill=symbols!10,draw=symbols,inner sep=0pt,minimum size=1.2mm},
	xib/.style={circle,fill=symbols!10,draw=symbols,inner sep=0pt,minimum size=1.6mm},
	xibx/.style={crosscircle,fill=symbols!10,draw=symbols,inner sep=0pt,minimum size=1.6mm},
	not/.style={circle,fill=symbols,draw=symbols,inner sep=0pt,minimum size=0.5mm},
	>=stealth,
	}
\def\DeclareSymbol#1#2#3{\expandafter\gdef\csname MH@symb@#1\endcsname{\tikzsetnextfilename{symbol#1}\tikz[baseline=#2,scale=0.15,draw=symbols]{#3}}\expandafter\gdef\csname MH@symb@#1s\endcsname{\scalebox{0.7}{\tikzsetnextfilename{symbol#1s}\tikz[baseline=#2,scale=0.15,draw=symbols]{#3}}}}
\def\<#1>{\csname MH@symb@#1\endcsname}
\newtheorem{assumption}[lemma]{Assumption}
\newcommand{\Mod}[1]{\ (\text{mod}\ #1)}
\let\I\CI
\def\s{\mathfrak{s}}
\def\c{\mathfrak{c}}
\newcommand{\V}  {\overline{V}}
\def\K{\mathfrak{K}}
\def\Ren{\mathscr{R}}
\def\${|\!|\!|}
\def\Z{\mathbb{Z}}
\def\T{\mathbb{T}}
\def\R{\mathbb{R}}
\def\N{\mathbb{N}}
\def\cA{\mathcal{A}}
\def\cC{\mathcal{C}}
\def\cD{\mathcal{D}}
\def\CG{\mathcal{G}}
\def\cH{\mathcal{H}}
\def\cK{\mathcal{K}}
\def\cT{\mathcal{T}}
\def\cI{\mathcal{I}}
\def\cU{\mathcal{U}}
\def\cP{\mathcal{P}}
\def\cS{\mathcal{S}}
\def\cF{\mathcal{F}}
\def\fx{\mathbf{x}}
\def\fy{\mathbf{y}}
\def\fz{\mathbf{z}}
\def\cN{\mathcal{N}}
\def\CCE{\mathbb{E}}
\def\CCV{\mathbb{V}}
\def\CCG{\mathbb{G}}
\def\fn{\mathbf{n}}
\def\fa{\mathbf{a}}
\def\fb{\mathbf{b}}
\def\fu{\mathbf{u}}
\def\emptyset{{\centernot\ocircle}}
\def\fT{\mathbf{T}}
\def\fs{\mathbf{s}}
\def\ft{\mathbf{t}}
\def\Cumus{\mathfrak{C}}
\def\Tree{\mathfrak{T}}
\def\Trees{\mathfrak{F}}
\def\pP{\mathfrak{P}}
\def\powerset{\mathscr{P}}
\def\tT{\mathtt{T}}
\def\Part{P}
\def\mod{\mathrm{mod}}
\def\gain{\mathrm{gain}}
\def\I{\mathrm{I}}
\def\II{\mathrm{I\kern-0.1emI}}
\def\III{\mathrm{I\kern-0.1emI\kern-0.1emI}}
\def\IV{\mathrm{I\kern-0.1emV}}
\def\V{\mathrm{V}}
\def\VI{\mathrm{V\kern-0.1emI}}
\def\f#1#2{\frac{#1}{#2}}
\def\St{\mathbb{S}}
\def\Vec{\mathop{\mathrm{Vec}}}
\definecolor{Red}{rgb}{1,0,0}
\definecolor{Blue}{rgb}{0,0,1}
\definecolor{Olive}{rgb}{0.41,0.55,0.13}
\definecolor{Yarok}{rgb}{0,0.5,0}
\definecolor{Green}{rgb}{0,1,0}
\definecolor{MGreen}{rgb}{0,0.8,0}
\definecolor{DGreen}{rgb}{0,0.65,0}
\definecolor{Yellow}{rgb}{1,1,0}
\definecolor{Cyan}{rgb}{0,1,1}
\definecolor{Magenta}{rgb}{1,0,1}
\definecolor{Orange}{rgb}{1,.5,0}
\definecolor{Violet}{rgb}{.5,0,.5}
\definecolor{Purple}{rgb}{.75,0,.25}
\definecolor{Brown}{rgb}{.75,.5,.25}
\definecolor{Grey}{rgb}{.7,.7,.7}
\definecolor{Black}{rgb}{0,0,0}
\definecolor{dr}{rgb}{0.8,0,0}
\definecolor{db}{rgb}{0,0,0.8}
\newcommand{\row}[3]{
\foreach[count=\n] \i in {#1}
    \node at (\n,#2) [\i] (#3\n) {};
}
\newcounter{list_size}
\newcommand{\connect}[4][dr]{
\edef\myleftx{10000}
\edef\myrightx{-10000}
\edef\mycentery{0}
\setcounter{list_size}{0}
\foreach \j in {#2}{\stepcounter{list_size}};
\edef\n{\arabic{list_size}}
\ifthenelse{\n=1}
{
	\draw (#3#2) -- (#4#2);
}
{
\foreach \j in {#2} {
	\path (#3\j); \pgfgetlastxy{\XCoord}{\YCoord};
    \pgfmathsetmacro{\lx}{min(\myleftx,\XCoord)};
    \pgfmathsetmacro{\rx}{max(\myrightx,\XCoord)};
    \pgfmathsetmacro{\cy}{\mycentery+\YCoord/(2*\n)};
	\path (#4\j); \pgfgetlastxy{\XCoord}{\YCoord};
    \pgfmathsetmacro{\cy}{\cy+\YCoord/(2*\n)};
    \global\let\myleftx=\lx
    \global\let\myrightx=\rx
    \global\let\mycentery=\cy
	}
\foreach \j in {#2} {
	\draw (#3\j) -- (#4\j);
}
\draw[thick,#1] (\myleftx pt,\mycentery pt) -- (\myrightx pt,\mycentery pt);
\foreach \j in {#2} {
	\path (#3\j); \pgfgetlastxy{\XCoord}{\YCoord};
    \node at (\XCoord,\mycentery pt) [simple,fill=#1] {};
}
}
}
\begin{document} 
 
\title{A scaling limit of the parabolic Anderson model with exclusion interaction}
\author{Dirk Erhard$^1$ and Martin Hairer$^2$}
\institute{Universidade Federal da Bahia, Brazil, \email{erharddirk@gmail.com} 
\and Imperial College London, UK, \email{m.hairer@imperial.ac.uk}}

\maketitle

\begin{abstract}
We consider the (discrete) parabolic Anderson model $\partial u(t,x)/\partial t=\Delta u(t,x) +\xi_t(x) u(t,x)$, $t\geq 0$, $x\in \Z^d$, where the $\xi$-field is $\R$-valued and plays the role of a dynamic random environment, and $\Delta$ is the discrete Laplacian. We focus on the case in which $\xi$ is given by a properly rescaled  symmetric simple exclusion process under which it converges to an Ornstein--Uhlenbeck process. 
Scaling the Laplacian diffusively and restricting ourselves to a torus, we show that in dimension $d=3$ upon considering a suitably renormalised version of the above equation, the sequence of solutions converges in law. 

As a by-product of our main result we obtain precise asymptotics for the survival probability of a simple random walk that is killed at a scale dependent rate when meeting an exclusion particle.
Our proof relies on the discrete theory of regularity structures of~\cite{ErhardHairerRegularity} and on novel  sharp estimates of joint cumulants of arbitrary 
large order for the exclusion process.
We think that the latter is of independent interest and may find applications elsewhere.\\[.4em]
\noindent {\scriptsize \textit{Keywords:} Simple exclusion process, parabolic Anderson, stochastic PDE}\\[-.4em]
\noindent {\scriptsize\textit{MSC2020 classification:} 60K35, 60L30, 60L90, 60H15} 
\end{abstract}

\setcounter{tocdepth}{2}
\tableofcontents

\section{Introduction}
\label{S1}
The (discrete) parabolic Anderson model is the partial differential equation
\begin{equ} 
\label{eq:PAM}
\partial_t u (x,t)=(\Delta u)(x,t) - \xi_t(x)u(x,t) \;.
\end{equ}
Here $x\in \Z^d$, $t\geq 0$, and $\Delta$ is the discrete Laplacian acting on $u$ as
\begin{equ}[e:delta]
\Delta u(x,t)= \sum_{y:\, x\sim y}[u(y,t)-u(x,t)],
\end{equ}
where $x\sim y$ means that $x$ and $y$ are nearest neighbours with respect to the Euclidean norm on $\Z^d$, while $\xi$ is an $\R$-valued random field that plays the role of a dynamic random environment. In this article we focus on the case in which $\xi$ is given by the symmetric simple exclusion process, i.e., $\xi=(\xi_t)_{t\geq 0}$ is the Markov process taking values in $\cH=\{0,1\}^{\Z^d}$ with generator $L$ acting on local functions 
$f\in\R^{\cH}$ via
\begin{equation}
\label{eq:genex}
(Lf)(\eta) = \sum_{x\sim y}
[f(\eta^{x,y})-f(\eta)].
\end{equation}
Here,
\begin{equation}
\label{eq:etaswapped}
\eta^{x,y}(z) = 
\left\{\begin{array}{ll}
\eta(z), &z\neq x,y\\
\eta(y), &z=x\\
\eta(x), &z=y.
\end{array}\right.
\end{equation}
Informally, \eqref{eq:genex} says that neighbouring states are swapped independently at 
rate~$1$. An equivalent way to construct the symmetric simple exclusion process is via its \emph{graphical representation}. Here, space is drawn sideways, time is
drawn upwards, and to each edge $e = \{v,w\}$ connecting two neighbouring sites $v$ and $w$ in $\Z^d$ one attaches a Poisson process $P(e)$ with intensity~$1$. Each event of 
such a Poisson process is drawn as a link above $e$. The configuration at 
time $t$ is then obtained
from the one at time~$0$ by transporting the local states along paths that move
upwards with time and sidewards along the links (see Figure~\ref{fig1}).


\begin{figure}
\setlength{\unitlength}{0.27cm}
\begin{center}
\tikzsetnextfilename{graphrep}
\begin{tikzpicture}[scale=0.7,baseline=0.85cm]
\draw[->] (0,0) -- (8,0);
\draw[black!25] (0,4) -- (8,4);
\foreach \x in {1,...,7}
{
	\draw (\x,0) -- (\x,4.5);
}
\draw[very thick,dr!80] (4,0) -- (4,1) -- (5,1) -- (5,2.5) -- (4,2.5) -- (4,3.2) -- (3,3.2) -- (3,4);
\foreach \x/\y in {1/2,1/3,2/1.1,3/2,3/3.2,4/2.5,4/1,5/0.5,6/1.8,6/2.2,6/3.7}
{
	\draw[tinydots] (\x,\y) -- (\x+1,\y);
}

\node[fill=white,below] at (4,0) {$x$};
\node[fill=white,above] at (3,4) {$y$};
\node[left] at (0,0) {$0$};
\node[left] at (0,4) {$t$};
\node[right] at (8,0) {$\Z^d$};
\node[inner sep=0pt,minimum size=1mm] at (4,0) [circle,fill=dr!80] {};
\node[inner sep=0pt,minimum size=1mm] at (3,4) [circle,fill=dr!80] {};
\end{tikzpicture}
\end{center}
\caption{Graphical representation. The dashed lines are links
and the thick line shows a path from $(x,0)$ to $(y,t)$ following links.}
               \label{fig1}
\end{figure}
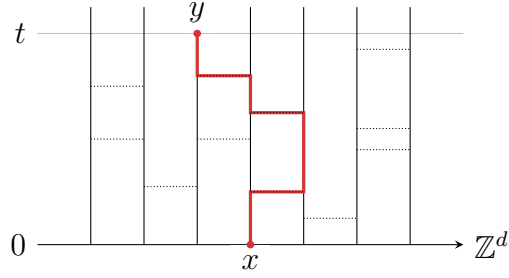

For $\rho\in (0,1)$, denote by $\nu_{\rho}$ the Bernoulli product measure on $\Z^d$ with parameter $\rho$. It is well known that $(\nu_{\rho})_{\rho \in [0,1]}$ forms a family of invariant measures for $\xi$ under which the dynamics is reversible. We assume throughout this article that $\xi_0\sim\nu_\rho$ for some fixed value of $\rho$.
Denote by $\cS(\R^d)$ the space of Schwartz functions and for $N\in\N$ define the fluctuation field $Y^N$ of $\xi$ via
\begin{equation}
Y_t^N(f)= 2^{-Nd}\sum_{x\in\Z^d}f(x/2^N)2^{Nd/2}[\xi_{t2^{2N}}(x)-\rho].
\end{equation}
It was shown in \cite{Ravi92} that for all $T>0$ the process $Y^N$ converges in law with respect to the Skorokhod topology in the space $\cD([0,T], \cS'(\R^d))$ to the
stationary generalised Ornstein--Uhlenbeck process $Y$ solving
\begin{equ}[eq:Y]
\d_t Y = \Delta Y + \sqrt{2\rho(1-\rho)} \,\div \hat \xi\;,
\end{equ}
where $\hat \xi$ denotes a vector-valued space-time white noise.
It is well known that when $d \ge 2$, $Y$ does not make sense as a random space-time function, 
but as a random function in time taking values 
in the Besov--H\"{o}lder space $\cC^{-d/2-}(\R^{d+1};\R)$.
This suggests that looking at the equation
\begin{equation}\label{eq:disunrenPAM}
\partial_t\hat u^N(t,x)=(\Delta_N \hat u^N)(t,x)-2^{Nd/2}\bar\xi^N_{t}(x)\hat u^N(t,x),
\end{equation}
where $\Delta_N =2^{2N}\Delta$ is the rescaled discrete Laplacian acting on functions defined on $2^{-N}\Z^d$, $x\in2^{-N}\Z^d$, $t\geq 0$, $\xi^N_t(x)=\xi_{2^{2N}t}(2^Nx)$, and $\bar \xi^N_t(x)=\xi_{2^{2N}t}(2^Nx)-\rho$ is its centred version, its solutions $\hat u^N$ should converge to the solution $\hat u$ of
\begin{equation}\label{eq:unrenPAM}
\partial_t \hat u(t,x)=(\Delta \hat u)(t,x)-Y_t(x) \hat u(t,x),\quad x\in\R^d,t\geq 0.
\end{equation}
Here, $\Delta$ denotes the usual continuous Laplacian. The problem with this equation is 
that as soon as $d\geq 2$ there is no canonical interpretation for the product between 
$Y$ and $\hat u$. Indeed, since $Y$ has regularity $-d/2^-$ (and no better) one would 
expect $\hat u$ to 
have regularity no better than $2-d/2^-$. The product then becomes ill-defined 
when $2-d/2^- -d/2^-\leq 0$, i.e.\ as
soon as $d\geq 2$. However, the theory of regularity structures developed in~\cite{Regularity} shows that in dimensions $d\in\{2,3\}$, it is possible to renormalise~\eqref{eq:unrenPAM} 
so that it becomes well-defined. This amounts to subtracting an infinite counterterm and 
the more honest way of writing~\eqref{eq:unrenPAM} is in fact
\begin{equation}\label{eq:renPAM}
\partial_t u = \Delta  u - (Y - \infty) u\;.
\end{equation}
Since $Y$ is Gaussian it follows indeed from the arguments in~\cite{WongZakai} and~\cite{Ajay} that there is a rigorous way of making sense of~\eqref{eq:renPAM} by replacing $Y$ first by a smooth approximation $Y_\eps$  and $\infty$ by an explicit constant $C_\eps$ that tends to infinity as $\eps$ tends to zero. This suggests that instead of looking at~\eqref{eq:disunrenPAM} we really should look at 
\begin{equ} 
\label{eq:PAMN}
\partial_t u^{N}(t,x)=(2^{2N}\Delta u^{N})(t,x) - [2^{Nd/2}\bar\xi^N_{t}(x)-C_N]u^{N}(t,x),
\end{equ}
for some choice of constants $C_N$ that tend to infinity. To show the convergence of $u^N$ one faces two problems:
\begin{itemize}
	\item[(1)] The original theory of regularity structures does not immediately apply to show convergence of discrete systems.
	\item[(2)] Unlike $Y$, the exclusion process is of course not a Gaussian process. In particular estimates on second moments do not immediately yield estimates on higher moments
	and higher order cumulants play a role in estimates.
\end{itemize}
The first problem can be circumvented by using a discrete theory of regularity 
structures~\cite{ErhardHairerRegularity}. The second problem however is more difficult to 
solve and requires much better control on the symmetric simple exclusion process than what
is currently known. For example, it seems unfortunately not be possible to take advantage of the 
fact that $\xi$ satisfies a log Sobolev inequality, since the log Sobolev constant is too 
large. Instead, we use a mixture of martingale techniques and combinatorial arguments to obtain estimates on joint cumulants of arbitrary high order of $\xi$ that are sufficient for our purposes. In particular, one of our main results can be formulated as follows.

\begin{theorem}\label{thm:cumu}
Let $\xi$ denote the stationary symmetric simple
exclusion process on $(t,x) \in \R\times \Z^d$ with $d \ge 3$ and $\E\xi(t,x) = \rho \in (0,1)$.
Then, for every $k \ge 2$ there exists a constant $C$ such that, for 
any collection $(t_i, x_i)_{i \in [k]}$ of $k$ space-time points, 
the joint cumulant satisfies
\begin{equ}\label{eq:cumu}
\E_c \{\xi(t_i,x_i)\}_{i \le k} \le C\sum_{\sigma}\prod_{i \in [k]} \bigl(1 + |t_{\sigma_{i+1}} - t_{\sigma_i}|
+ |x_{\sigma_{i+1}} - x_{\sigma_i}|^2\bigr)^{-d/4}\;,
\end{equ}
where the sum runs over all bijections $\sigma \colon [k] \to [k]$
and we use the convention $\sigma_{k+1}=\sigma_1$.
\end{theorem}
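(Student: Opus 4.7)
The plan is to combine the graphical (stirring) representation of the symmetric simple exclusion process with Brillinger's law of total cumulance, and then close the bound by applying heat-kernel estimates for the simple random walk on $\Z^d$ together with Cauchy--Schwarz. The exponent $-d/4$ in the statement is the square-root of the standard heat-kernel decay $-d/2$, and arises when one passes from a spanning-tree bound on the cumulant to the symmetric cyclic form of the statement.

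Working with the centred field $\bar\xi = \xi - \rho$, the graphical representation identifies $\bar\xi(t,x) = \bar\xi_0(Y(t,x))$, where $Y(t,x) = \phi_{0,t}^{-1}(x)$ is the random backward trajectory of $x$ under the stirring and $\bar\xi_0$ is a centred Bernoulli product field independent of the graphical Poisson processes. Conditional on the stirring, the variables $\{\bar\xi_0(Y(t_i,x_i))\}_{i\in[k]}$ are functions of an i.i.d.\ family, so any conditional joint cumulant vanishes unless all its arguments land on a common site. Brillinger's law of total cumulance then gives
\begin{equation*}
\E_c\{\bar\xi(t_i,x_i)\}_{i\in[k]} = \sum_{\pi \in \cP_{\ge 2}([k])} \Bigl(\prod_{B \in \pi} \kappa_{|B|}(\bar\xi_0)\Bigr)\, \E_c\{\mathbf{1}_{A_B}\}_{B \in \pi}\;,
\end{equation*}
where $\cP_{\ge 2}([k])$ is the set of partitions of $[k]$ whose blocks all have size at least $2$ (the restriction arising from $\kappa_1(\bar\xi_0)=0$), $A_B$ is the event that $Y(t_i,x_i) = Y(t_j,x_j)$ for all $i,j \in B$, and the outer cumulant is taken under the law of the graphical Poisson processes.

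For a single block $B$, the backward flows from $(t_i,x_i)_{i\in B}$ evolve (in reversed time) as exclusion particles entering the system at times $t_i$; the Markov property combined with the standard heat-kernel bound $p_t(x) \lesssim (1+t+|x|^2)^{-d/2}$ gives $\Pr[A_B] \lesssim \prod p_{\Delta t_i}(\Delta x_i)$ along a spanning path of $B$ in time order. Joint cumulants across several blocks measure the dependence of the coincidence events through the shared stirring; these I would bound by inserting intermediate meeting points of representatives of different blocks, iterating the two-point estimate along spanning trees of the associated \emph{connection graph}, and then applying Cauchy--Schwarz in the intermediate variables to convert each pairing into a product of square-roots of heat kernels. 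Each such factor scales as $(1+\Delta t+\Delta x^2)^{-d/4}$, and regrouping the resulting bounds by the cyclic ordering of $[k]$ that they naturally induce yields the sum over bijections $\sigma$ in the statement.

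The main obstacle is the exclusion interaction itself, which correlates the backward flows and prevents a direct independence-based decomposition. I would circumvent this by working in the orthogonal-polynomial basis $\{p_A(\eta) = \prod_{x\in A}(\eta(x)-\rho)\}_{A\subset\Z^d}$ of $L^2(\nu_\rho)$, under which the SSEP generator acts as an exclusion dynamics on the index set $A$; multi-time joint correlations then reduce to transition kernels of systems of exclusion particles, which can be compared to those of independent simple random walks via a Harris-type coupling. The resulting decoupling error is strictly lower-order and does not affect the $-d/4$ exponent, but controlling the factorially many terms in the sum over partitions and spanning trees uniformly in $k$, without losing this exponent, is the key technical step of the argument.
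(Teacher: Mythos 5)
Your opening reduction is sound and is essentially the paper's Proposition~\ref{prop:nthmoment} and Theorem~\ref{thm:cumulantofexclusion} reached by a shorter route: conditioning on the stirring and applying the law of total cumulance gives the exact identity
\begin{equ}
\E_c \{\xi(t_i,x_i)\}_{i \le k} \;=\; \sum_{\pi}\Bigl(\prod_{B\in\pi}\kappa_{|B|}(B_\rho)\Bigr)\,\E_c\{\one_{A_B}\}_{B\in\pi}\;,
\end{equ}
where $A_B$ is the event that the backward stirring trajectories of the points in $B$ coincide and the sum runs over partitions all of whose blocks have size at least two; the paper obtains the equivalent statement through the $\star$-product multiplicativity argument of Section~\ref{S4.3}. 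Your closing step (a tree bound with exponent $-d/2$ per edge converted into the cyclic $-d/4$ bound by a depth-first traversal of the tree) is also exactly the proof of Theorem~\ref{thm:cycle}.

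The gap is everything in between: you supply no mechanism that actually bounds the outer cumulant $\E_c\{\one_{A_B}\}_{B\in\pi}$ when $|\pi|\ge 2$, and the tools you name cannot do it. Take $k=4$, $z_1=(R^2,0)$, $z_2=(0,0)$, $z_3=(R^2,Re_1)$, $z_4=(0,Re_1)$ and $\pi=\{\{1,2\},\{3,4\}\}$: all pairwise parabolic distances are of order $R$, so the theorem requires this term to be $O(R^{-2d})$. Cauchy--Schwarz gives $|\mathrm{Cov}(\one_{A_{12}},\one_{A_{34}})|\le\sqrt{\P[A_{12}]\P[A_{34}]}\asymp R^{-d}$, off by a factor $R^{d}$, while bounding the covariance by the probability that a trajectory of the first block ever interacts with one of the second gives only $O(R^{2-d})$, off by $R^{d+2}$. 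What is really needed is that $\P[A_{12}\cap A_{34}]$ factorises up to an additive error of the \emph{same order as the product} $\P[A_{12}]\P[A_{34}]$, and this cancellation is invisible at the level of heat kernels along spanning trees. In the paper it is produced by the martingale decomposition of Propositions~\ref{prop:decompositionofproba} and~\ref{prop:productofmartingales}, which represents each connected correlation of the labelled exclusion process as a sum over \emph{connected} binary coalescence trees whose internal nodes carry discrete \emph{gradients} $\nabla_{i,j}P$ of transition kernels (arising from the optional covariations of the compensated stirring Poisson processes); each gradient supplies one extra unit of decay, and the weight-redistribution argument of Lemma~\ref{lem:upperoncumu} and Theorem~\ref{thm:cumuboundedbyc} converts these gains into the weight $d/2$ per tree node. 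Your alternative suggestion---duality plus a Harris coupling with a ``strictly lower-order decoupling error''---begs the same question, since quantifying that error \emph{is} the theorem. (A minor point: uniformity in $k$ is not an issue, as $C$ may depend on $k$; the difficulty sits where indicated above, not in the combinatorics.)
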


\begin{remark}
We expect this bound to be optimal whenever the points obey a near parabolic scaling
in the sense that  
$|t_{i} - t_{j}| \simeq |x_{i} - x_{j}|^2$, which
is the regime that interests us. If $|t_{i} - t_{j}| \ll  |x_{i} - x_{j}|^2$, one
should be able to obtain a better bound by exploiting the superexponential 
decay of the discrete heat kernel.
\end{remark}

Regarding convergence of the $u_N$'s as in \eqref{eq:PAMN}, we impose additional
restrictions: we replace $\Z^d$ by a large torus $\Z_N^d=(\Z/2^{N}\Z)^d\eqdef \St$ and
we only consider initial conditions for~\eqref{eq:PAMN} that are smooth at macroscopic scale:
see Section~\ref{sec:funcSpaces} below for the definitions of the function spaces used to formulate our
result.

\begin{theorem}
\label{thm}
Let $d=3$ and let $(u_{0}^N)_{N\in\N}$ be a sequence of initial conditions such that
there is $\eta\in (0,1)$ and $u_0\in\cC^\eta$ with
\begin{equation}
\lim_{n\to\infty}\|u_0;u_0^N\|_\eta=0\;.
\end{equation} 
Write $u^N$ for the solution to~\eqref{eq:PAMN}  defined on the rescaled torus $\T_N^d=2^{-N}\Z_N^d$ 
and $u$ for the solution to~\eqref{eq:renPAM}.
Then, there is a diverging sequence of constants $C_N$ such that the sequence  $u^N$  converges
in law to $u$ in $\cC_N^{\bar\eta,T}$ for all $\bar\eta \in (0,\frac12\wedge\eta)$. 
\end{theorem}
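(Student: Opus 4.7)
The plan is to recast the discrete equation~\eqref{eq:PAMN} as a fixed-point problem in a suitable space $\cD_N^{\gamma,\eta}$ of discrete modelled distributions over the regularity structure associated to the PAM, and then to transfer convergence of the underlying random models to convergence of their solutions. For $d=3$ the relevant subspace is generated by polynomials together with a finite list of trees built from copies of the noise symbol $\Xi$ and a small number of integration edges encoding heat-kernel convolutions, with the non-linearity $\xi u$ lifted to the abstract equation $U = \cP(\Xi\, U) + G u_0^N$. The discrete theory of~\cite{ErhardHairerRegularity} provides a short-time solution map on $\cD_N^{\gamma,\eta}$ that depends locally Lipschitz-continuously on the pair (initial condition, admissible model). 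The proof therefore reduces to three blocks: convergence of the initial data (which is part of the assumption), convergence of a suitably renormalised discrete model $\hat Z^N = (\hat\Pi^N,\hat\Gamma^N)$ built from $\bar\xi^N$ to the continuous BPHZ model $\hat Z$ driven by the Ornstein--Uhlenbeck field $Y$, and extension of the resulting local-in-time convergence to the interval $[0,T]$.

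The core analytic work is the convergence $\hat Z^N \to \hat Z$. I would enumerate the finite list of negative-homogeneity trees produced by the PAM rule in $d=3$, identify for each tree the unique Wick-type subtraction that makes its expectation vanish in the limit, and accumulate the resulting counterterms into the constants $C_N$. For each such tree $\tau$ I would then establish Kolmogorov-type moment bounds of the form $\E\bigl|(\hat\Pi^N_z \tau)(\phi_z^\lambda)\bigr|^p \lesssim \lambda^{p|\tau|_\s + p\kappa}$ together with corresponding bounds on the discrete/continuous difference $\hat\Pi^N_z\tau - \hat\Pi_z\tau$. This is where Theorem~\ref{thm:cumu} is indispensable: because $\bar\xi^N$ is non-Gaussian, the $p$-th moment of a multilinear expression in $\bar\xi^N$ expands via the moment-cumulant formula into a sum over set-partitions of the noise insertions into cumulant clusters, and the cyclic-product bound~\eqref{eq:cumu} provides on each block exactly the same parabolic integrability that Gaussian Isserlis/Wick contractions would give. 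Substituting into the standard BPHZ graph sums then reproduces the continuous sub-critical scaling tree by tree.

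Once $\hat Z^N \to \hat Z$ holds in probability in the model topology, convergence in law of $u^N$ to $u$ in $\cC_N^{\bar\eta,T}$ for every $\bar\eta < \tfrac12 \wedge \eta$ follows by combining the local Lipschitz continuity of the abstract solution map on (initial condition, model), the compatibility of the discrete reconstruction operator with the continuum reconstruction, and a standard iteration argument to cover the full time interval. The linearity of the equation, together with the fact that Theorem~\ref{thm:cumu} delivers moments of all orders of $\hat Z^N$, gives the tightness needed to globalise in time.

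\textbf{Main obstacle.} The genuine difficulty is entirely stochastic. One must both (a) extract the correct $C_N$ at each divergent tree so that the renormalised discrete kernel remains sub-critical in the parabolic scaling, and (b) control the non-Gaussian expansion of high moments. Step (b) is where the cumulant estimate of Theorem~\ref{thm:cumu} does the heavy lifting: the cyclic-product structure of the bound is precisely what is needed so that, after summing over all set-partitions arising from the moment-cumulant expansion, the resulting graph integrals converge at the Gaussian rate. A secondary technical layer is the bookkeeping of the discretisation error between $\hat\Pi^N\tau$ and $\hat\Pi\tau$, i.e.\ transferring the continuous BPHZ argument to the discrete exclusion noise on $2^{-N}\Z^d$; for this one combines Theorem~\ref{thm:cumu} with the convergence of the fluctuation field $Y^N \to Y$ established in~\cite{Ravi92}.
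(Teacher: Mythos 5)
Your overall architecture — lift \eqref{eq:PAMN} to the abstract fixed point problem \eqref{eq:fixedpoint} in $\CD_N^{\gamma,\eta}$, renormalise the canonical model by the group element $M_N = c_N\<Xi2>+c_N^{(1)}\<Xi3>+c_N^{(2)}\<Xi4>$, prove Kolmogorov-type moment bounds on the renormalised model tree by tree via the moment--cumulant expansion and the cyclic cumulant bound, and then invoke continuity of the solution map — is exactly the paper's strategy, and your identification of Theorem~\ref{thm:cumu} as the replacement for Gaussian Wick contractions is on target.

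There is, however, a genuine gap in your identification-of-the-limit step. You assert that ``$\hat Z^N \to \hat Z$ holds in probability in the model topology'' and propose moment bounds on the difference $\hat\Pi^N_z\tau - \hat\Pi_z\tau$. Neither object is well defined as stated: the exclusion process $\xi^N$ and the Ornstein--Uhlenbeck field $Y$ live on different probability spaces, the only available link being convergence \emph{in law} of the fluctuation field from~\cite{Ravi92}. One cannot subtract the two models or speak of convergence in probability without first constructing a coupling, and the cumulant bounds alone do not produce one. The paper resolves this with a two-parameter mollification argument (Section~\ref{S:Identification}): it introduces mollified models $(\hat\Pi^{\delta,N},\hat\Gamma^{\delta,N})$ built from $\xi^{\delta,N}=\rho^{\delta,N}\star_N\xi^N$ and proves, in Proposition~\ref{prop:momentbounds}, moment bounds on $\hat\Pi^N_z\tau-\hat\Pi^{\delta,N}_z\tau$ of order $\delta^{p\theta}\lambda^{p(|\tau|+\kappa-\theta)}$, uniformly in $N$ — this is the correct discrete/discrete difference to control, not a discrete/continuous one. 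The mollified noise $\xi^{\delta,N}$ is then shown to be tight in $C([0,T]\times\T^3)$ (again via Theorem~\ref{thm:cumu} and Kolmogorov's criterion), so it converges in law to $Y^\delta$, and the Skorokhod representation theorem upgrades this to convergence in probability on a common space at the mollified level; the continuous theory handles $u^\delta\to u$, and the three comparisons $u^N\leftrightarrow u^{\delta,N}\leftrightarrow u^\delta\leftrightarrow u$ are combined by sending first $N\to\infty$ and then $\delta\to 0$. Your final sentence gestures at the right ingredients, but without the mollification and the Skorokhod coupling the central claim of your third paragraph cannot be executed as written.
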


\begin{remark}
We expect Theorem~~\ref{thm} to be true also in dimension $2$. From a purely analytical point of view the $2$-dimensional case is easier than the $3$-dimensional case. However, it turns out that the stochastic estimates become harder in dimension $2$. If we would have Theorem~\ref{thm:cumu} for $d=2$ (which we strongly expect to be true), then the $2$-dimensional case would follow from a by now standard adaptation of our arguments. The only gap to be filled to establish~\eqref{eq:cumu} is to derive a gaussian upper bound on the discrete partial derivatives of the transition probability of a field of $n$ labelled exclusion particles. We refer to Remark~\ref{rem:2d} for a more precise formulation. 
\end{remark}
\begin{remark} It turns out that the constant $C_N$ actually can be written as a sum of three constants, namely it takes the form $C_N= c_N + c_N^{(1)} +c_N^{(2)}$. These constants will be introduced in Section~\ref{S3.3}.
In Proposition~\ref{prop:renconstant} we will show that $C_N\approx \alpha_N2^N+\beta_N2^{N/2} + \gamma_NN$ for some bounded sequences $\{\alpha_N\}_{N\in\N},\{\beta_N\}_{N\in\N},$ and $\{\gamma_N\}_{N\in\N}$.  
	\end{remark}
	
\emph{If it were the case that Theorem~\ref{thm} holds on the whole space $\Z^d$} then, translated in terms of the discrete parabolic Anderson model, our result could be 
restated as follows. Fix a realisation of the symmetric simple exclusion process $\eta$ 
with density $\rho$ on $\Z^d$ and let $X_t^\eps$ be the position of a simple random walker
on $\Z^d$ that is killed at rate $\eps \eta_t(X_t^\eps)$ but is otherwise independent of $\eta$.
Then, for $d =3$, in order to see non-trivial behaviour for $X_t^\eps$ when 
$\eps \ll 1$, one should
look at time scales of order $t = \tau \eps^{-2\beta}$ and space scales of order $X = x\eps^{-\beta}$
with $\beta = {2\over 4-d}=2$.
The probability that $X$ survives up to time $\tau \eps^{-2\beta}$ is then of order 
$S_\tau^{\eps}$ with 
\begin{equ}
	S_\tau^{\eps} = \eps^{\alpha_3 \tau} \exp(-\rho \eps^{-3} \tau + c\eps^{-2} \tau + \bar c\eps^{-1} \tau )\;.
\end{equ}
for some constants $c, \bar c, \alpha_3 \in \R$.
Furthermore, writing $P_t^\eta(x)$ for the probability that, given a realisation of the environment $\eta$,
the process $X$ survives up to time $t$ and can be found at location $x$, \emph{and provided that we had Theorem~\ref{thm} for the choice $u_{0_N}=\delta_0$, the Dirac delta at zero,} one has
\begin{equ}\label{eq:transitionsurvival}
\lim_{\eps \to 0} {P_{\tau \eps^{-2\beta}}^\eta(x\eps^{-\beta}) \over S_{\tau}^{\eps}}
= u(x,\tau)\;,
\end{equ}
where $u$ solves \eqref{eq:renPAM}.
Here, the convergence takes place in law,
jointly and uniformly for all values of $(\tau,x)$ belonging to any fixed compact region of space-time 
included in $\{(x,\tau)\,:\,\tau > 0\}$.

\begin{remark}
As already remarked above, we believe that one could adapt the techniques from~\cite{Cyril} 
to lift the two restrictions just mentioned above. 
\end{remark}

\begin{remark}
Using the same techniques, one easily shows that if one chooses
$\beta < 2/(4-d)$, then the result analogous to \eqref{eq:transitionsurvival} simply
yields a solution to the (deterministic) heat equation in the limit.
\end{remark}

\begin{remark}
As is often the case, while the constants $c$ and $\bar c$ are model-dependent 
(they will change for example if we change the
moves for the underlying random walk, the details of the symmetric simple exclusion process,
the shape of our grid, etc),
the exponent $\alpha_3$ is expected to only depend on the dimension. 
\end{remark}
\begin{remark}
The interpretation of the solution to~\eqref{eq:renPAM} as the survival probability of a simple random walk in a field of traps goes back at least to the celebrated works \cite{DV75,DV79} by Donsker and Varadhan. In these works, the authors considered the case in which the traps are given by a Poisson point field of balls and by an i.i.d.\ field of Bernoulli variables respectively, which are such that the Wiener sausage and random walk respectively are killed instantaneously upon stepping on a trap. They obtained asymptotics for the survival probability which were later refined in a number of works, see for example~\cite{Bolthausen94,Povel99,Sznitman91}. Whereas in the case of static traps we have by now a very good understanding of the behaviour of the underlying random walk, much less is known in the dynamic case. The works that are in this regard probably closest to ours are the works of Drewitz, G\"{a}rtner, Ramirez and Sun~\cite{DrewitzGartnerRamirezSun12}, Athreya, Drewitz and Sun~\cite{AthreyaDrewitzSun17}, and of Shen, Song, Sun and Xu~\cite{Shen20}. In the two former works the survival probability and the path behaviour of a random walk in a field of independent random walks starting from a Poisson field is studied. The main difference to our work is however, that the study of the path behaviour was restricted to the case in which there was an additional annealing with respect to the field of traps whereas we quench over the environment. Moreover, in contrast to~\cite{AthreyaDrewitzSun17} we look at a weak killing regime. In~\cite{Shen20} however the work~\cite{AthreyaDrewitzSun17} was extended to the quenched regime in one spatial dimension.
\end{remark}

\subsection{Notations}
Throughout this work $\s=(\s_0,\s_1,\ldots, \s_d)\in \N_{\geq 1}^{d+1}$ denotes a scaling of $\R^{d+1}$ and we 
associate to it the ``norm'' on $\R^{d+1}$ given by
\begin{equation}
\|z\|_\s :=\sup_{i\in\{0,\ldots, d\}}|z_i|^{1/\s_i}\;.
\end{equation}  
We define $|\s|=\sum_{i=0}^{d}|\s_i|$. Unless stated otherwise, $\s$ denotes the parabolic scaling in $\R^4$, i.e., $\s=(2,1,1,1)$. Given $\delta>0$ and $\varphi:\R^{d+1}\to\R$ we set $\CS_\s^{\delta}(z_0,\ldots, z_d)= (\delta^{-\s_0}z_0,\ldots, \delta^{-\s_d}z_d)$ and $(\CS_{\s,z}^{\delta}\varphi)(y)=\delta^{-|\s|}\varphi(\CS_\s^{\delta}(y-z))$. When the scaling is clear from the context we also use the notation $\varphi_z^{\delta}$. Moreover, $\|\cdot\|$ denotes the Euclidean norm. 

$P^N(e)$ denotes the Poisson process with intensity $2^{2N}$ attached to 
the edge $e$ in the graphical construction of the exclusion process. $\xi$ is the exclusion process and $\bar\xi$ is its centred version.
Given $\|e\|=1$, we denote by $\nabla_N^{e}$ the discrete derivative in direction $e$ that is defined via $\nabla_N^{e}p(x,w) = 2^{N}[p(x,w+e)-p(x,w)]$.
$\Delta_N$ denotes the usual discrete Laplacian sped up by a factor $2^{2N}$.
$\{\cF_r\}_{r\geq 0}$ denotes the natural filtration induced by the exclusion process.
The notation $\Z_N^d$ denotes the $d$-dimensional discrete torus of side length $2^N$, and will sometimes 
be denoted by $\St$ (standing for ``state space'') and $\T_N^d=2^{-N}\Z_N^d$ is its rescaled version.
The notation $\K$ is reserved to denote some compact subset of $\R^4$, the notation $\K_N$ will be used to denote a compact
subset of $\R^4$ with diameter bounded by $2^{-N+1}$. Moreover, $\bar{\K}$ denotes the 1-fattening of $\K$. 

\subsection{Discrete function spaces}\label{sec:funcSpaces}

For $\eta\in (0,1)$, we define discrete H\"older spaces $\cC_N^\eta(\T_N^d,\R)$ as the space of all elements $f\in\R^{\T_N^d}$, with norm 
\begin{equation}
\|f\|_{\cC_N^\eta}\overset{\text{def}}{=} \sup_{x\in\T_N^d}|f(x)| + \sup_{x\neq y\in\T_N^d}
\frac{|f(x)-u(y)|}{|x-y|^\eta}.
\end{equation}
Let $\eta\in (0,1)$. To compare an element $f\in\cC^\eta(\T^d,\R)$ in the usual H\"older space with an element $f^N\in \cC_N^\eta(\T_N^d,\R)$ we introduce the distance
\begin{equation}\label{eq:discreteHolder}
\begin{aligned}
\|f;f^N\|_{\eta}\overset{\text{def}}{=}
&\sup_{x\in\T_N^d}|f(x)-f^N(x)| + \sup_{x\neq y\in\T_N^d}
\frac{|(f(x)-f(y))-(f^N(x)-f^N(y))|}{|x-y|^\eta}\\
 &+\sup_{\substack{x,y\in\R^d:\, |x-y|<2^{-N}}}
\frac{|f(x)-f(y)|}{|x-y|^\eta}.
\end{aligned}
\end{equation}
The reason for this choice is that denoting by $\tilde f^N$ the function that is extended to all of $\T^d$ by linear interpolation of $f^N$, then at scales of order $\eps=2^{-N}$ or larger it may be possible to approximate $f$ by $\tilde f^N$, but this approximation may break down at smaller scales.
To compare functions $f\in\cC_\s^\eta ([0,T]\times \T^d,\R)$ and $f^N\colon [0,T]\times \T_N^d \to \R$, we define a ``distance'' by
\begin{equs}
\|f;f^N\|_{\cC_N^{\eta,T}}&\eqdef \sup_{(t,x)\in [0,T]\times \T_N^d}|f(t,x)-f^N(t,x)|+ \sup_{\substack{(t,x), (s,y)\in [0,T]\times\T_N^d\\ \|(t,x)-(s,y)\|_\s < 2^{-N}}}\frac{|f(t,x)-f(s,y)|}{\|(t,x)-(s,y)\|_\s^\eta}\\
& + \sup_{\substack{(t,x), (s,y)\in [0,T]\times\T_N^d\\ \|(t,x)-(s,y)\|_\s \geq 2^{-N}}} \frac{|(f(t,x)-f(s,y))-(f^N(t,x)-f^N(s,y))|}{\|(t,x)-(s,y)\|_\s^\eta}.
\end{equs}

\subsection*{Acknowledgements}

{\small
MH gratefully acknowledges support from the Royal Society through a research professorship. DE gratefully acknowledges financial support 
from the National Council for Scientific and Technological Development - CNPq via a 
Universal grant 409259/2018-7, and a Bolsa de Produtividade 303520/2019-1.
}

\section{Regularity structures}
\label{S2}
The proof of Theorem~\ref{thm} relies on the theory of discrete regularity structures 
introduced in \cite{ErhardHairerRegularity}, which builds up on~\cite{Regularity}. We refer to this source for a discussion of the theory that goes beyond the description we are giving here.
Recall that a regularity structure consists of a graded vector space $\mathcal{T}= \bigoplus_{\alpha\in A}\mathcal{T}_{\alpha}$, where the index set $A\subseteq \R$ is locally finite and bounded from below, as well as a group $\mathcal{G}$ consisting of
continuous block upper unitriangular linear transformations on $\mathcal{T}$.
We say that elements $\tau\in \mathcal{T}_{\alpha}$ are homogeneous of degree $\alpha$
and we write $\deg\tau = \alpha$. 

In \cite[Sec.~8]{Regularity} and \cite[Sec.~5]{BHZalg} a general procedure is explained to construct a regularity structure to describe a class of subcritical SPDEs. 
In our setting, we start with the regularity structure generated by the polynomials
on $\R^{d+1}$ with parabolic scaling and an additional symbol $\Xi$ with
$\deg\Xi= -\f32-\kappa$ (for some sufficiently small $\kappa > 0$) 
that will be used to represent a suitably 
rescaled copy of the environment, i.e.\ a symmetric simple exclusion process. 
We add to this an integration operator $\cI$ of degree $2$, i.e.\ whenever a symbol
$\tau$ belongs to our regularity structure, we also include a symbol $\cI(\tau)$
with $\deg \cI(\tau)=2+\deg\tau$.

We then define two collections $\cU$ and $\cF$ of formal expressions as the smallest
collections such that 
$X^k\in\cU$ for every multi-index $k$, $\cU \subset \cF$, and furthermore
\begin{equation}
\label{eq:U}
\tau\in\cU\;\Longrightarrow\; \tau\Xi\in\cF\;,\qquad
\tau\in\cF\;\Longrightarrow\; \cI(\tau)\in\cU\;.
\end{equation} 
We then set $\cT = \Vec \cF$, endowed with the grading given by the aforementioned notion of degree.
It follows from \cite[Lem.~8.10]{Regularity} that, provided that $\kappa< \f12$, $\cF$ contains only finitely many elements of degree less than $\alpha$ for any $\alpha\in\R$, so that 
each $\cT_{\alpha}$ is finite-dimensional. We also impose that $\CI(X^k) = 0$ for every $k \in \N^{d+1}$.

In particular, using the same graphical notation as in \cite{WongZakai},
the list of all symbols of negative degree is given by
$\{\<Xi>, \<Xi2>, \<Xi3>, \<XiX>, \<Xi2X>, \<Xi4>\}$,
with $\deg \<Xi> = -\f32-\kappa$, $\deg \<Xi2> = -1-2\kappa$, etc.
The last ingredient of the regularity structure, namely the structure group $\mathcal{G}$, can also be described 
explicitly, but since its precise description does not matter for the purpose of this article we refrain from giving it here. We refer the interested reader to~\cite[Sec.~8]{Regularity} for the general construction of $\mathcal{G}$,
 as well as to~\cite[Sec.~3]{WongZakai} where a similar case is studied. 
 
Given a regularity structure, a crucial concept is that of a model for it, and it is 
there where the discrete theory developed in \cite{ErhardHairerRegularity} comes in handy.
We set 
$\CX_N= \CD(\R,\R^{\T_N^3})$ where, in general, $\CD(\R,E)$ denotes the space of all c\`{a}dl\`{a}g functions from $\R$ 
to $E$.
We define an inclusion map $\iota_N:\CX_N\to \CS'(\R^{1+3})$ via
\begin{equation}
\label{eq:iota}
(\iota_N f)(\varphi)= 2^{-3N}\sum_{x\in\T_N^3}\int f(t,x)\varphi(t, x)\, dt.
\end{equation}
We further endow $\CX_N$ with a family of seminorms $\|\cdot\|_{\alpha;\K_N;z;N}$. Here, the parameter $\alpha$ ranges over $\R$, $\K_N$ ranges over all compact subsets in $\R^4$ with diameter bounded by $2^{-N+1}$, and $z=(t,x)\in\R\times \T_N^3$ is such that $z\in\K_N$. We further fix an integer $r>|{\min A}|$ and $\lambda\in (0,2^{-N}]$ and we denote by $\Phi_{N,z}^{\lambda}$ the set of all functions $\phi:\R\to\R$ with $\|\phi\|_{\CC^r}\leq 1$ and support contained in the unit ball so that moreover $\supp \CS_{2,t}^{\lambda}\phi\subset\{s\in\R:\, (s,x)\in\K_N\}$. We then define
\begin{equation}
\label{eq:seminorm}
\|f\|_{\alpha;\K_N;z;N}= \sup_{\lambda\in (0,2^{-N}]}\sup_{\phi\in \Phi_{N,z}^{\lambda}}
\lambda^{-\alpha}\Big|\int f(s,x)(\CS_{2,t}^{\lambda}\phi)(s)\, ds\Big|.
\end{equation}
We denote by $\|\cdot\|_\ell$ the norm of the $\ell$-th component in $\CT$ and we use $\$\cdot\$_{\ell;\K;N}$ to denote a seminorm on the space of functions $f:\R^4\to \CT_{<\ell}$. The expression $\$ f\$_{\ell;\K;N}$ for such a function $f$ is only allowed
to depend on the values of $f$ in a neighbourhood of size proportional to $2^{-N}$ around $\K$. 
Further below we give the precise expression of the norm we are actually interested in.
A discrete model then consists of a collection of maps $z\mapsto \Pi_z^N\in \CL(\CT, \CX_N)$ and $\Gamma^N:\R^{4}\times\R^{4}\to \CG$ such that
\begin{claim}
\item $\Gamma^N_{zz}= \mathrm{id}$, the identity operator, and $\Gamma^N_{xy}\Gamma^N_{yz}=\Gamma^N_{xz}$ for $x,y,z\in\R^4$;
\item One has $\Pi_z^N= \Pi_y^N\Gamma^N_{yz}$ for all $y,z\in\R^4$.
\end{claim}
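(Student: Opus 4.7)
The plan is to construct $\Pi_z^N$ and $\Gamma^N$ inductively over the set $\cF$ and then verify the two identities by induction over the generating operations of $\cF$, namely $\tau\mapsto \tau\Xi$ (for $\tau\in\cU$) and $\tau\mapsto \cI(\tau)$ (for $\tau\in\cF$). First I would specify $\Pi_z^N$ on the generators: set $(\Pi_z^N X^k)(\bar z)=(\bar z-z)^k$ and $(\Pi_z^N\Xi)(\bar z)=2^{Nd/2}\bar\xi^N_{\bar t}(\bar x)$, extend to products $\tau\Xi$ via $\Pi_z^N(\tau\Xi)(\bar z)=(\Pi_z^N\tau)(\bar z)\cdot (\Pi_z^N\Xi)(\bar z)$ possibly minus a renormalisation constant inherited from the BPHZ-type prescription that lies in the polynomial sector, and define $\Pi_z^N\cI(\tau)$ as the convolution of $\iota_N(\Pi_z^N\tau)$ with a discrete heat kernel $K^N$, minus its Taylor polynomial at $z$ truncated at order $\deg\cI(\tau)$. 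The structure group maps $\Gamma^N_{yz}\in\CG$ are then defined as the elements of $\CG$ associated to the characters computing Taylor coefficients of $K^N\ast\iota_N(\Pi_z^N\tau)$ around $y$, exactly as in the construction of~\cite[Sec.~8]{Regularity} adapted to the discrete setting of~\cite{ErhardHairerRegularity}.

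The identity $\Gamma^N_{zz}=\mathrm{id}$ is immediate: when $y=z$ the characters computing Taylor coefficients at $z$ centred at $z$ vanish on all non-trivial factors of the positive Hopf algebra on which $\CG$ is built, so $\Gamma^N_{zz}$ acts as the identity on every basis vector of $\cT$. The cocycle identity $\Gamma^N_{xy}\Gamma^N_{yz}=\Gamma^N_{xz}$ is proved by induction over the generators. On the polynomial sector it reduces to the multivariable binomial identity
\begin{equation*}
(\bar z-x)^k=\sum_{\ell\le k}\binom{k}{\ell}(y-x)^{k-\ell}(\bar z-y)^\ell,
\end{equation*}
and on the remaining sectors it follows from the coassociativity of the underlying Hopf-algebraic coproduct, combined with the fact that this coproduct is designed precisely so as to encode compositions of Taylor recenterings applied to the factors $\cI(\cdot)$.

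The compatibility identity $\Pi_z^N=\Pi_y^N\Gamma^N_{yz}$ is then verified by the same structural induction. For $X^k$ it is a direct application of the binomial identity above, and for $\Xi$ it holds trivially since the right-hand side equals $\Pi_y^N\Xi$, which is independent of the base point. For products $\tau\Xi$ it follows immediately from the inductive hypothesis on $\tau$ together with the multiplicative extension $\Gamma^N_{yz}(\tau\Xi)=(\Gamma^N_{yz}\tau)\Xi$. The only nontrivial step is $\cI(\tau)$: here one has to verify that subtracting the Taylor polynomial of $K^N\ast\iota_N(\Pi_z^N\tau)$ at $z$ differs from subtracting the Taylor polynomial of $K^N\ast\iota_N(\Pi_y^N\tau)$ at $y$ by precisely the polynomial whose coefficients are encoded by $\Gamma^N_{yz}$. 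This is essentially the defining property of the characters entering the construction of $\CG$, combined with the inductive hypothesis applied to $\Pi_z^N\tau=\Pi_y^N\Gamma^N_{yz}\tau$ under the convolution with $K^N$.

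The main technical obstacle is that in the discrete setting $\Pi_z^N\tau$ lives on the space-time lattice $\R\times\T_N^3$ and its analytic meaning is controlled through the family of seminorms $\|\cdot\|_{\alpha;\K_N;z;N}$ from~\eqref{eq:seminorm} restricted to scales $\lambda\in(0,2^{-N}]$. Consequently the Taylor subtractions defining $\Pi_z^N\cI(\tau)$ must be performed using the \emph{discrete} heat kernel $K^N$ after a scale decomposition $K^N=\sum_{n\le N}K_n^N$ that terminates at the lattice scale, and the algebraic identities above need only hold in a sense consistent with this truncation. The verification that the truncated Taylor recentering is still compatible with the continuum Hopf-algebraic definition of $\CG$, up to terms that vanish on basis vectors of degree below the relevant threshold, is the place where the discrete theory of~\cite{ErhardHairerRegularity} genuinely enters, and where the otherwise purely algebraic identities become technically delicate to establish.
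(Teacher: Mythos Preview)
The statement you are attempting to prove is not a theorem at all: it is part of the \emph{definition} of a discrete model. In the paper, the sentence immediately preceding this list reads ``A discrete model then consists of a collection of maps $z\mapsto \Pi_z^N$ and $\Gamma^N$ such that \ldots'', and the two bullet points are the axioms being imposed, followed by the analytic bounds \eqref{eq:Pi}--\eqref{eq:Gamma}. There is nothing to prove here; the paper does not provide a proof because none is required.

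What you have actually written is a sketch of the verification that the \emph{canonical} construction described a few paragraphs later (via $\Pi_z^N\Xi=\bar\xi^N$, the recursive relations \eqref{eq:I} and \eqref{eq:product}, and the Hopf-algebraic definition of $\Gamma^N$) does satisfy these axioms. That is a legitimate and nontrivial task, but the paper handles it by a single citation: ``The following construction gives rise to an admissible model, see~\cite{Regularity}.'' Your inductive argument over the generating operations of $\cF$ is essentially the standard proof from \cite[Sec.~8]{Regularity}, and the observation that the discrete truncation of the kernel decomposition at scale $2^{-N}$ requires care is correct and is indeed the content of the adaptation carried out in~\cite{ErhardHairerRegularity}. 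So your mathematics is sound, but it answers a different question from the one posed: you have verified that a particular object is a model, whereas the statement merely records what ``being a model'' means.
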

Furthermore, for any compact set $\K\subset\R^d$ and every $\tau\in \CT_{\ell}$ one imposes
the analytical estimates
\begin{equ}
\label{eq:Pi}
\bigl|\bigl(\iota_N\Pi^{N}_z \tau\bigr)(\CS_{\s,z}^{\lambda}\phi)\bigr| \lesssim \|\tau\|_\ell\lambda^{\ell}\;,\quad 
\|\Pi^{N}_z \tau\|_{\ell;\K_N,z;N} \lesssim \|\tau\|_\ell\;,
\end{equ} and
\begin{equation}
\label{eq:Gamma}
\|\Gamma^N_{zz'}\tau\|_m\lesssim \|\tau\|_\ell \|z-z'\|_\s^{\ell-m}\;,\quad
\$ z\mapsto \Gamma^{N}_{zz'}\tau\$_{\ell;\K; N}\lesssim \|\tau\|_\ell.
\end{equation}
For some fixed $\gamma > 0$, these bounds are assumed to be uniform over $\lambda \in (2^{-N},1]$, all $\phi\in\CC^r$ with $\CC^r$-norm bounded by one and support contained in the unit ball, all $\ell\in A$ such that $\ell <\gamma$, and $m<\ell$, all $\tau\in \CT_{\ell}$, and locally uniform over $z,z'\in\K$ such that $\|z-z'\|_\s\in (2^{-N},1]$ and over all compact subsets $\K_N$ of diameter bounded by $2^{-N+1}$.

In what follows we denote by $\|\Pi^N\|_{\gamma;\K}^{(N)}$ the smallest proportionality constant in~\eqref{eq:Pi}
and \eqref{eq:Gamma}.
There is a specific class of models that are of importance, the class of admissible models. To describe these we first note that as a consequence of~\cite[Sec.~5]{MatetskiDiscrete} and~\cite[Sec.~A.1]{KPZJeremy} it is possible to decompose the properly rescaled Green's function $2^{3N}G^N$ of $\partial_t-\Delta_N$ as
\begin{equation}
2^{3N}G^N= K^N+R^N\;,
\end{equation}
where the function $K^N$ can be decomposed as $K^N= \sum_{n=1}^{N}K_n$,
where each $K_n$ has a support contained in the set $\{z\in\R^4: \|z\|_\s \lesssim 2^{-n}\}$,
and annihilates polynomials of scaled degree less than or equal to two. Moreover, for each $n\in\{1,\ldots, N-1\}$,
and each multi-index $|k|_\s\leq 2$,
\begin{equation}
\label{eq:Knest}
|D^kK_n(z)|\lesssim 2^{n(3+|k|_\s)}
\end{equation}
uniformly in all parameters.
If $n=N$ in Equation~\eqref{eq:Knest}, then the same estimate holds for $k=0$. The meaning in~\eqref{eq:Knest} should be understood in the following way: each $K_n$ can be extended to a function living on $\R^4$ and its derivatives up to second order are bounded.
Furthermore, $R^N$ is compactly supported, is anticipative, i.e., $R^N(\cdot,t)=0$ for $t<0$, and $\|R^N\|_{\CC^2}$ is bounded uniformly in $N$. The latter property has to be understood in a similar way as above.
Let $\zeta\in\R$ and $n\leq N$, we define
\begin{equation}
(T_{n,\zeta+2}^N F)(z)= \sum_{|k|_\s<\zeta+2}\frac{X^k}{k!}\CQ_k((T_{n,\zeta+\beta}^{N} F)(z)),
\end{equation}
where for $z=(t,x)\in \R^4$, $n\in\{1,\ldots, N\}$, and $|k|_\s <\zeta+2$,
\begin{equation}
\CQ_k((T_{n,\zeta+2}^{N} F)(z))= 
\begin{cases}
2^{-3N}\sum_{y\in\T_N^3}\int D_1^k K_n( z,(s,y))F(y,s)\, ds, &\text{if }n<N,\\
\delta_{k=0} 2^{-3N}\sum_{y\in\T_N^3}\int K_N( z,(s,y))F(y,s)\,ds, &\text{otherwise},
\end{cases}
\end{equation}
for all $F\in\CX_N$ for which the above expression makes sense. 
Here, the $2$ in the indices above is only there to make the notation consistent with~\cite[Section 4]{ErhardHairerRegularity}.
\begin{remark}
We also use the abbreviations
\begin{equation}
T_{\zeta+2}^N F= \sum_{n=1}^{N}T_{n,\zeta+2}^N F,\quad\mbox{and}\quad 
\CQ_k(T_{\zeta+2}^N F)(\cdot))=\sum_{n=1}^{N}\CQ_k(T_{n,\zeta+2}^N F)(\cdot)).
\end{equation}
\end{remark}
One may now show that Assumptions 4.3 and 4.4 in~\cite[Sec.~4]{ErhardHairerRegularity} are satisfied for $\sigma=2$. Indeed, Assumption 4.3 follows from Taylor's formula in a way similar to the arguments used in~\cite[Sec.~5]{Regularity}. Assumption 4.4 is a consequence of the fact that the $K_n$ annihilate polynomials of degree two, and of our definition of the $\CQ_k$'s. 
Given $K^N$ as above, the set of admissible models $\mathscr{M}$ then consists of all models such that, for every multi-index $k$,
\begin{equation}
\label{eq:Xk}
(\Pi_{(t,x)}^N X^k)(s,y)= ((s,y)-(t,x))^k
\end{equation}
and such that
\begin{equation}
\label{eq:I}
\begin{aligned}
(\Pi_{(t,x)}^N \CI\tau)(s,y)= &2^{-3N}\sum_{\bar x\in\T_N^3}\int K^N((s,y),(\bar s,\bar x))(\Pi_{(t,x)}^{N}\tau)(\bar s,\bar x)\, d\bar s\\
&-\sum_{|k|_\s<|\tau|+2} \frac{((s,y)-(t,x))^k}{k!}Q_k\big((T_{|\tau|+2}^{N}\Pi_{(t,x)}^N \tau )(t,x)\big).
\end{aligned}
\end{equation}
The following construction gives rise to an admissible model, see~\cite{Regularity}. First define for $(t,x)\in\R^4$ and $(s,y)\in\T_N^3\times\R$,
\begin{equation}
(\Pi_{(t,x)}^N\Xi)(s,y)=\bar\xi_s^N(y), 
\end{equation}
and let $\Pi_{(t,x)}^NX^k$ be as in~\eqref{eq:Xk}, and extend this definition using recursively~\eqref{eq:I} and the relation
\begin{equation}
\label{eq:product}
\Pi_{(t,x)}^N\tau\bar{\tau}= \Pi_{(t,x)}^N\tau\,\Pi_{(t,x)}^N\bar{\tau}.
\end{equation}
Finally, the construction of $\Gamma^N$ in the continuous case is outlined in~\cite[Sec.~8]{Regularity}. Since we do not make use of its precise form in this article we do not comment on how to adapt it to the current setting. The model constructed in this way is also called the canonical model. It was then shown in~\cite{ErhardHairerRegularity} how to associate an~\emph{abstract fixed point problem} to~\eqref{eq:PAM}.
This fixed point problem is posed in a certain space $\CD_N^{\gamma,\eta}$ of $\CT_{<\gamma}$-valued 
functions (here, $\CT_{<\gamma}$ denotes the space $\bigoplus_{\alpha <\gamma} \CT_{\alpha}$), which can 
be thought of as a H\"{o}lder space of order $\gamma$ that allows for a blow-up of order $\eta$ near $t=0$. 
More precisely, we define the ``$t=0$''-hyperplane via
\begin{equation}
\label{eq:P}
P=\{(t,x)\in\R^4:\, t=0\}.
\end{equation}
We further let 
$\|z\|_{P}= 1\wedge \inf_{y \in P} \|z-y\|_\s$ and $\|y,z\|_{P}= \|y\|_{P}\wedge \|z\|_{P}$,
as well as
\begin{equation}
\label{eq:kp}
\K_P=\{(y,z)\in (\K\setminus P)^2:\, y\neq z,\, \|y-z\|_\s\leq \|y,z\|_{P}\}.
\end{equation}
Consider now for $\gamma>0$ a function $f:\R^4\setminus P\to\CT_{<\gamma}$. We define for any compact set $\K\subset\R^4$,
\begin{equation}
\label{eq:smallscale}
\$ f\$_{\gamma,\eta;\K;N}=\sup_{\substack{z\in\K\setminus P\cap\R\times \T_N^3\\ \|z\|_P < 2^{-N}}}\sup_{\beta <\gamma}\frac{\|f(z)\|_\beta}{\|z\|_{P}^{(\eta-\beta)\wedge 0}}
+ \sup_{\substack{y,z\in\K_P\cap\R\times \T_N^3,\\ \|y-z\|_\s<2^{-N}}}
\sup_{\beta <\gamma}\frac{\|f(z)-\Gamma^{N}_{zy} f(y)\|_\beta}{\|y-z\|_\s^{\gamma-\beta}\|y,z\|_{P}^{\eta-\gamma}}.
\end{equation}
We then have the following definition.
\begin{definition}
\label{def:weightedDgamma}
Fix a regularity structure $\CT$ and a discrete model $(\Pi^{N},\Gamma^{N})$ and let $\eta\in\R$. We define the space 
$\CD^{\gamma,\eta}_N$ as the space of all functions $f:\R^d\setminus P\to\CT_{<\gamma}$ such that $\$ f\$_{\gamma,\eta;\K}^{N}<\infty$, where $\$ f\$_{\gamma,\eta;\K}^{N}$ is given via 
\begin{equation}
\label{eq:weightedDgamma}
\sup_{\substack{z\in\K\setminus P\\ \|z\|_P\geq 2^{-N}}}\sup_{\beta <\gamma}\frac{\| f(z)\|_{\beta}}{\| z\|_{P}^{(\eta-\beta)\wedge 0}} + \sup_{\substack{(y,z)\in\K_P,\\ 2^{-N}\leq \|y-z\|_\s\leq 1}}\sup_{\beta <\gamma}
\frac{\|f(z)-\Gamma^{N}_{zy}f(y)\|_{\beta}}{\|y-z\|_\s^{\gamma-\beta}\|y,z\|_{P}^{\eta-\gamma}}
+\$f\$_{\gamma,\eta;\K;N}.
\end{equation}
\end{definition}
In our particular case, we formulate the abstract fixed point problem in $\CD_N^{\gamma,\eta}$ for $\gamma>3/2+\kappa$ and $\eta\in[0,1)$.
However, before we are able to formulate the fixed point problem, we introduce the \emph{reconstruction operator}, a key concept in the theory of regularity structures. The reconstruction operator $\CR^N:\CD_N^{\gamma,\eta}\to\CX_N$ is simply defined by
\begin{equation}
\label{eq:rec}
(\CR^N f)(z)= (\Pi_z^Nf(z))(z),\quad z=(t,x)\in\R\times\T_N^3\;.
\end{equation}
Let $\alpha=-3/2-\kappa$, which coincides with the minimal degree of our regularity structure. We claim that one has the bounds:
\begin{equs}
\|\CR^{N} f - \Pi^{N}_z f(z)\|_{\gamma;\K_N;z;N}&\lesssim
\|\Pi^N\|_{\gamma;\bar\K_N}^{(N)} \$ f\$_{\gamma,\eta;\K_N}\;,\\
\|\CR^N f\|_{\alpha;\K_N;z;N}&\lesssim \$ f\$_{\gamma,\eta;\K_N;N}\;,
\end{equs}
locally uniformly in the parameters above.
In the first bound we further assume that $z\in\K_N$, and that $\K_N$ has a distance of order one from the hyperplane $P$ (this is just a reformulation of Assumption 3.1 in~\cite{ErhardHairerRegularity}.
The validity of these bounds is a consequence of the reconstruction theorem in~\cite{Regularity}. To see this, denote by $x$ the spatial coordinate of $z$ above, which we assume to be fixed for the moment. Then, only imposing the second inequality in~\eqref{eq:Pi} we are in the framework of~\cite{Regularity} (in this case time is the only coordinate and there is no ``space''), so that for each $x$ there is a reconstruction operator $\CR_x^N$ satisfying these bounds. Moreover, it was shown in~\cite{Regularity} and~\cite[Rem.~13.29]{FrizHairer} that~\eqref{eq:rec} holds at each point $z=(t,x)$ such that $(\Pi_{z_0}^N \tau)(\cdot)$ is continuous at $z$ for all $\tau\in\CT$. (This property is independent of the fixed base point $z_0$.) In our particular case, each finite time interval only contains a finite number of discontinuities, so that we may impose~\eqref{eq:rec} at every location without violating the two bounds. The local uniformity is a consequence of the fact that in~\eqref{eq:Pi} we imposed this as well.

It was then shown in~\cite[Sec.~3]{ErhardHairerRegularity}, that these local estimates carry over to global estimates of the same type.
The fixed point problem now takes the form
\begin{equation}
\label{eq:fixedpoint}
U^N= \CG^N\one_{t>0}U^N\Xi + G^Nu_0^N.
\end{equation}
Here, $\CG^N$ represents the convolution operator with the discrete heat kernel in the space $\CD_N^{\gamma,\eta}$ and it satisfies the relation
\begin{equation}
\CR^N\CG^N=G^N.
\end{equation}
Indeed, this relation is a consequence of~\cite[Thm~4.9 and Rem.~4.14]{ErhardHairerRegularity}.
The term $G^N u_0$ is the spatial convolution of $G^N$ with $u_0^N$ and is imbedded into $\CD_N^{\gamma,\eta}$ via
\begin{equation}
(G^Nu_0^N)(t,x)= \sum_{|k|_\s<\gamma}\frac{X^k}{k!}\CQ_k((G^N u_0^N)(t,x),
\end{equation}
where $\CQ_k((G^N u_0^N)(t,x))$ is given by
\begin{equation}
\begin{cases}
2^{-3N}\sum_{n<N}\sum_{y\in\T_N^3}\int D_1^k[ K_n+ R^N]((t,x),(s,y))\, u_0^N(y)\, ds, &\mbox{if $|k|_\s >0$},\\
2^{-3N}\sum_{y\in\T_N^3}\int [K^N+ R^N]((t,x),(s,y))\, u_0^N(y)\, ds, &\mbox{otherwise.}
\end{cases}
\end{equation}
Fix $\gamma\in(3/2+\kappa,2)$ and $\eta\in(0,1)$. In the same way as in~\cite[Thm~3.9]{WongZakai} one can show that for every admissible model and for every initial condition $u_0^N\in\CC_N^{\eta}$ the fixed point problem~\eqref{eq:fixedpoint} has a unique solution $U^N\in \CD_N^{\gamma,\eta}([0,T],\T_N^3)$, and this solution depends continuously on the underlying model. Moreover, if the model is canonical, then $\CR^N U^N$ coincides with the solution to~\eqref{eq:PAM}.
Thus, to show convergence of the sequence of equations under consideration, it would be enough to prove convergence of the sequence of canonical models. Unfortunately, this is not the way to go, since this sequence does not converge. To go around it, we will consider continuous transformations $\hat{M}^N$ on the space of admissible models $\mathscr{M}$ such that the sequence of \emph{renormalised models}
$(\hat{\Pi}^N,\hat{\Gamma}^N)= \hat{M}^N(\Pi^N,\Gamma^N)$ do converge.
The construction of the underlying \emph{renormalisation group} is mostly algebraic and has been worked out in~\cite[Sec.~8]{Regularity} for special cases and in~\cite{BHZalg} in the general case. Since it is not of 
great relevance for the analysis performed in this article we only provide a glimpse of its construction 
in the next section.
All constructions outlined above can also be done in the continuous setting and the same kind of results are true, see~\cite{Regularity}.
Hence, once we obtain the convergence of the renormalised models, the function $u=\CR U$
with $\CR$ being the reconstruction map associated with the limiting model is the
limiting solution stated in Theorem~\ref{thm}. The H\"{o}lder regularity of the solution $u$
is given by the minimum of the regularity of the initial condition $u_0$ and the degree of
$\<IXi>$ (the element of lowest degree describing $U$ that differs from the Taylor monomials),
which is $\f12-\kappa$.

\subsection{Renormalisation}
\label{S3}

We now give an explicit description of the renormalisation maps $\hat{M}_N$ alluded to in the previous section.
These maps are parametrised by linear maps $M_N:\CT\to\CT$ belonging to the \emph{renormalisation group} 
$\mathfrak{R}$, 
which was introduced in~\cite[Sec.~8.3]{Regularity}. A large subgroup $\CG_- \subset \mathfrak{R}$ 
is described in \cite[Sec.~6.4]{BHZalg} (see in particular Theorem~6.29 there), parametrised by
$\Vec \cF_-$, where $\cF_- \subset \cF$ denotes the symbols of strictly negative degree. 
We will restrict ourselves to the three-dimensional subgroup of $\CG_-$ isomorphic to
$\Vec \{\<Xi2>, \<Xi3>, \<Xi4>\}$ endowed with addition as its group operation.

Given a generic element $M = c\<Xi2>+ c^{(1)}\<Xi3> +c^{(2)} \<Xi4>$, its action $M\tau$ on 
an element $\tau \in \cF$ is obtained by iterating over all ways of contracting 
(possibly multiple) occurrences of 
$\<Xi2>$, $\<Xi3>$ and $\<Xi4>$ as subtrees of $\tau$ and replacing them by the corresponding 
constant (with a minus sign by convention). For example, we have
\begin{equ}
M \<Xi4> = \<Xi4> - c \bigl(\<IXi2> + \<Xi22>\bigr) - c^{(1)} \<IXi> - c^{(2)} \1\;.
\end{equ}
(The reason why we don't have an additional term $-c^{(1)}\<XiI>$ for example
is that this vanishes since $\CI(\1) = 0$.)

This group acts on the space of admissible models in a natural way, see \cite[Thm~6.29]{BHZalg}:
given a canonical model $(\Pi^N,\Gamma^N)$ and
\begin{equ}
M_N = c_N\<Xi2>+ c_N^{(1)}\<Xi3> +c_N^{(2)} \<Xi4> \in \CG_-\;,
\end{equ}
we write $(\hat{\Pi}^N,\hat{\Gamma}^N)= M_N(\Pi^N,\Gamma^N)$ 
for the unique admissible model such that, for $\tau \in \cF_-$, one has
\begin{equs}
\hat{\Pi}_z^N \tau &=
\Pi_z^N M_N\tau\;,\qquad \tau\neq \<Xi4>,\\
\hat{\Pi}_z^N \<Xi4> &= \Pi_z^N M_N\<Xi4>  + c_N\sum_{|k|_\s=2}\frac{(\cdot -z)^k}{k!}f_z^N(\CJ_k(\<IXi>))\,\Pi_z^N\<Xi>\;,
\end{equs}
where
\begin{equation}
f_z^N(\CJ_k(\<IXi>))= - \CQ_k(T_{|\<IXi>|+2}^N\Pi_z^N\<IXi>)(z)\;.
\end{equation}
From now on we solely work with the subgroup of $\mathfrak{R}$ just described.

One can then show in very much the same way as in~\cite[Sec.~4.4]{WongZakai}, that if $\hat{\CR}^N$ is the reconstruction operator associated to $(\hat{\Pi}^N,\hat{\Gamma}^N)$ and $U^N$ solves the fixed point problem~\eqref{eq:fixedpoint}, then $u^N= \hat{\CR}^N U^N$ solves the renormalised equation
\begin{equation}
\partial_t u^N(t,x)= \Delta_N u^N(t,x) -(2^{3N/2}\bar\xi_{2^{2N}t}(2^Nx)- c_N - c_N^{(1)}-c_N^{(2)})u^N(t,x).
\end{equation}
In particular, the renormalisation constant $C_N$ in \eqref{eq:PAMN} should be chosen as the sum of the 
three constants defining the element $M_N \in \CG_-$ guaranteeing that the renormalised model converges.

\subsection{Joint cumulants and Wick products}
\label{S3.1}

In this section we fix an index set $\cA$ and a collection of random variables $\{X_a\}_{a\in \cA}$. For $B\subseteq \cA$ we write 
\begin{equation}
\label{eq:X_B}
X_B=\{X_a:\, a\in B\}\quad\mbox{and}\quad X^B=\prod_{a\in B} X_a.
\end{equation}
Furthermore, we write $\powerset(B)$ for the powerset of $B$ and $\cP(B)\subset \powerset(\powerset(B))$ 
for the set of all partitions of $B$, i.e., $\pi\in\cP(B)$ if and only if $\pi$ is a collection 
of disjoint non-empty subsets of $B$ whose union equals $B$.
(Note that by convention $\cP(\emptyset) = \{\emptyset\}$.)
\begin{definition}
\label{def:cum}
Fix a finite subset $B\subseteq \cA$. We define the cumulant $\E_c(X_B)$ inductively over $|B|$ by 
$\E_c(X_B) = \E(X_a)$ if $B=\{a\}$ and
\begin{equation}
\label{eq:cum}
\begin{aligned}
&\E(X^{B}) = \sum_{\pi\in\cP(B)}\prod_{\bar{B}\in\pi}\E_c(X_{\bar{B}}),\quad \mbox{if } |B|\geq 2.
\end{aligned}
\end{equation}
\end{definition}
\begin{remark}
\label{rem:inversioncumulants}
Inverting the above formula we obtain \cite[Prop.~3.2.1]{PeccatiTaqqu}
\begin{equation}
\label{eq:inversioncumulants}
\E_c(X_B) = \sum_{\pi\in\cP(B)}(|\pi|-1)!(-1)^{|\pi|-1}\prod_{\bar{B}\in\pi}\E(X^{\bar{B}})\;.
\end{equation}
\end{remark}
With Definition~\ref{def:cum} at hand we may now introduce the concept of Wick products.
\begin{definition}
\label{def:Wick}
Let $A\subseteq\cA$. The Wick product $\Wick{X_A}$ is defined via $\Wick{X_{\emptyset}}=1$ and then recursively extended postulating the relation
\begin{equation}
\label{eq:Wick}
X^{A}= \sum_{B\subseteq A}\Wick{X_B}\sum_{\pi\in\cP(A\setminus B)}\prod_{\bar{B}\in \pi}
\E_c(X_{\bar{B}}).
\end{equation}
\end{definition}
\begin{remark}
\label{rem:Wick}
We note the following three facts that follow immediately from Definition~\ref{def:Wick}. First of all, as soon as $A\neq\emptyset$ we have that $\E(\Wick{X_A}) = 0$. Moreover, if we take the expectation on both sides of~\eqref{eq:Wick}, then all terms with $B\neq \emptyset$ vanish and we obtain the identity~\eqref{eq:cum}. Finally, we could have also written  $X^A$ above as $\sum_{B\subseteq A}\Wick{X_B}\E(X^{A\setminus B})$, but~\eqref{eq:Wick} is what will conceptually be used throughout this article. In Lemma~\ref{lem:diagram} below we show an extension of this formula which is called ``diagram formula'' in the literature. A proof can be found in~\cite{Surgailis}.
\end{remark}
Before we state the next lemma we need to introduce a special class of partitions.
\begin{definition}
\label{def:partition}
Let $M$ and $P$ be two sets and fix a subset $D\subseteq M\times P$. We say that $\pi\in\cP_M(D)$ if $\pi\in \cP(D)$ and for every $B\in\pi$, there exist $(i,k), (i',k') \in B$ such that $k\neq k'$. 
\end{definition}
In plain words, $\pi\in\cP_M(D)$ if it is a partition of $D$ and for each $B\in\pi$, the elements in $B$ can not have all the same "$P$-index". Note that in particular one must have $|B|\geq 2$.
One also has $\cP_M(D) = \emptyset$ if $|P| = 1$ but $\cP_M(D) = \{\emptyset\}$ if $D = \emptyset$.
\begin{lemma}
\label{lem:diagram}
Fix $m,p\in\N$. Define $M=\{i:\, 1\leq i\leq m\}$ and $P=\{k:\, 1\leq k\leq p\}$. Let $\{X_{(i,k)}\}_{i\in M, k\in P}$ be a collection of random variables all having bounded moments of all orders. One has the ``diagram formula"
\begin{equation}
\label{eq:diagram}
\E\bigg(\prod_{k=1}^{p}\Wick{\prod_{i=1}^{m}X_{(i,k)}}\bigg)
= \sum_{\pi\in\cP_M(M\times P)}\prod_{B\in\pi}\E_c(X_{B}).
\end{equation}

\end{lemma}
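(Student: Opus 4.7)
The plan is to prove by induction on $\sum_{k=1}^p |A_k|$ the strengthened statement that
\begin{equation*}
\E\Big(\prod_{k=1}^p \Wick{X_{B_k}}\Big) = \sum_{\tau \in \cP_M(B_1 \cup \cdots \cup B_p)} \prod_{B \in \tau} \E_c(X_B)
\end{equation*}
for every tuple $(B_1,\ldots,B_p)$ with $B_k \subseteq A_k$. If every $B_k$ is empty both sides equal $1$ by the convention $\cP(\emptyset)=\{\emptyset\}$. If exactly one $B_k$ is non-empty the left-hand side vanishes by Remark~\ref{rem:Wick}, and the right-hand side vanishes too: all elements of the set being partitioned carry the same $P$-index, so no block can satisfy the condition defining $\cP_M$. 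This handles the base cases.

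For the inductive step I would expand each factor using the second form of the Wick identity noted in Remark~\ref{rem:Wick}, namely $X^{A_k} = \sum_{B_k \subseteq A_k} \Wick{X_{B_k}}\, \E(X^{A_k\setminus B_k})$. Taking expectations of the product over $k$ and isolating the summand with $B_k = A_k$ for every $k$ (for which $\E(X^\emptyset)=1$) yields the recursion
\begin{equation*}
\E\Big(\prod_k \Wick{X_{A_k}}\Big) = \E(X^{M\times P}) - \sum_{(B_k)\neq (A_k)} \E\Big(\prod_k \Wick{X_{B_k}}\Big) \prod_k \E(X^{A_k \setminus B_k}),
\end{equation*}
where every summand on the right involves strictly smaller $\sum_k |B_k|$, so that the inductive hypothesis applies to each of them.

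The combinatorial core is to reorganise the expansion $\E(X^{M\times P}) = \sum_{\pi \in \cP(M\times P)} \prod_{B \in \pi} \E_c(X_B)$ according to which blocks of $\pi$ are contained in a single factor. For each $\pi$, let $D_k \subseteq A_k$ be the union of those blocks of $\pi$ lying entirely in $A_k$, and let $\tau$ collect the remaining blocks; by construction $\tau$ is exactly an element of $\cP_M((M\times P)\setminus (D_1\cup\cdots\cup D_p))$. Grouping the sum by the tuple $(D_k)$, using~\eqref{eq:cum} to identify $\sum_{\pi_k \in \cP(D_k)} \prod_{\bar B \in \pi_k} \E_c(X_{\bar B}) = \E(X^{D_k})$, and setting $B_k = A_k \setminus D_k$, one obtains
\begin{equation*}
\E(X^{M\times P}) = \sum_{B_k \subseteq A_k}\Big(\prod_k \E(X^{A_k\setminus B_k})\Big)\sum_{\tau \in \cP_M(B_1\cup\cdots\cup B_p)} \prod_{B\in\tau} \E_c(X_B).
\end{equation*}
Substituting the inductive hypothesis into the recursion cancels every contribution with $(B_k)\neq(A_k)$, leaving only the $(B_k)=(A_k)$ term, which is exactly the claimed formula. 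The real obstacle is purely bookkeeping: one must check that the decomposition $\pi \leftrightarrow ((D_k)_k,\tau)$ is a bijection and that the defining condition of $\cP_M$ matches precisely the requirement that no remaining block is contained in a single $A_k \setminus D_k$; once this is verified, all algebraic manipulations reduce to rearrangement of finite sums.
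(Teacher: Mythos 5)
The paper does not actually prove this lemma: immediately before the statement it says ``A proof can be found in~\cite{Surgailis}'' and then moves on, so there is no in-paper argument to compare yours against. Your self-contained induction is correct. The base cases are handled exactly as you say (a single nonempty $B_k$ forces every block of a candidate partition to have constant $P$-index, so $\cP_M$ is empty), the recursion obtained from $X^{A_k}=\sum_{B_k\subseteq A_k}\Wick{X_{B_k}}\E(X^{A_k\setminus B_k})$ is legitimate, and the ``bookkeeping'' step you flag does go through: since $A\cap(M\times\{k\})=A_k$, a block of $\pi\in\cP(A)$ fails to be contained in a single $A_k$ precisely when it contains two elements with distinct second coordinates, so the map $\pi\mapsto((D_k,\pi_k)_k,\tau)$ with $\pi_k\in\cP(D_k)$ and $\tau\in\cP_M(\bigcup_k(A_k\setminus D_k))$ is a bijection, and resumming the $\pi_k$ via \eqref{eq:cum} gives exactly the factors $\E(X^{A_k\setminus B_k})$ needed for the cancellation. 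Two cosmetic points: the induction variable is really $\sum_k|B_k|$ (the quantity $\sum_k|A_k|$ named in your first sentence is fixed once the ambient index set is fixed), and in the displayed recursion $\E(X^{M\times P})$ should read $\E(X^{\bigcup_k A_k})$ so that the step applies to a general tuple and not only the full slices. Neither affects the validity of the argument.
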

\subsection{Values of the constants}
\label{S3.3}
In this section we provide exact formulas for the renormalisation constants $c_N, c_N^{(1)}$ and $c_N^{(2)}$, and we determine their asymptotics. The final renormalisation constant $C_N$ is then simply given by $C_N=c_N + c_N^{(1)}+c_N^{(2)}$.
Using the notation $\xi^N(z)=\xi_t^N(x)$ for a time-space point $z=(t,x)\in\R_+\times\T_N^3$, where $\xi^N_t(x)= \xi_{2^{2N}t}(2^Nx)$, we have the formulas
\begin{equation}
\begin{aligned}\label{eq:renconst1}
c_N&= \int K^N(-z)\,\E_c(\xi^N(z),\xi^N(0))\, dz,\\
c_N^{(1)}&= \int K^N(-z_1)K^N(z_1-z_2)\, \E_c(\xi^N(z_1),\xi^N(z_2),\xi^N(0))\, dz,
\end{aligned}
\end{equation}
and $c_N^{(2)}= c_N^{(2,1)}+c_N^{(2,2)}+c_N^{(2,3)}$, where
\begin{equs}
c_N^{(2,1)}&= \int K^N(-z_1)K^N(z_1-z_2)K^N(z_2-z_3)\, \E_c(\xi^N(z_0),\ldots,\xi^N(z_3)),\\
c_N^{(2,2)}&=\int K^N(-z_1) \E_c(\xi^N(0),\xi^N(z_2))K^N(z_1-z_2)\E_c(\xi^N(z_1),\xi^N(z_3))K^N(z_2-z_3),\\
c_N^{(2,3)}&=\int K^N(-z_1) \Ren Q^N(z_1-z_2)K^N(z_2-z_3)\E_c(\xi^N(0),\xi^N(z_3)),\label{eq:c1c22c23}
\end{equs}
where in the first line $z_0=0$, and $\Ren Q^N$ denotes the renormalised kernel defined by $\Ren Q^N(z)= K^N(z) \E_c(\xi^N(0),\xi^N(z))- c_N 2^{3N}\delta_0(z)$. The integration above is implicitly assumed to be over all time-space points $z_i$ that appear, and each integral over space is a Riemann sum
$2^{-3N}\sum_{x\in \T_N^3}(\ldots)$.
Given two sequences $(a_N)_{N\in\N},$ and $(b_N)_{N\in\N}$ we use the notation $a_N\approx b_N$ to denote that they differ by at most a constant that is bounded in $N$.
The main result of this section then reads as follows.
\begin{proposition}
\label{prop:renconstant}
There exist constants $\alpha_N, \beta_N$ and $\gamma_N$ that are bounded in $N$ such that
\begin{equation}
c_N= \alpha_N 2^N,\quad c_N^{(1)}=\beta_N 2^{N/2},\quad \mbox{and}\quad c_N^{(2)}=\gamma_N N\;.
\end{equation}	
Moreover, $c_N^{(2,1)}$ and $c_N^{(2,3)}$ are themselves bounded in $N$.
\end{proposition}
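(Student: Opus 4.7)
The strategy is to combine the dyadic decomposition $K^N=\sum_{n=1}^N K_n$ from~\eqref{eq:Knest} with the cumulant bounds of Theorem~\ref{thm:cumu}. Each $K_n$ is supported in a parabolic ball of radius $2^{-n}$, satisfies $\|K_n\|_\infty\lesssim 2^{3n}$, and has support of parabolic volume $\lesssim 2^{-5n}$, while the cumulant bound applied with rescaled arguments $(2^{2N}t,2^N x)$ provides decay factors that sharpen with the separation of the points on the lattice scale.

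For $c_N$ I would use the $k=2$ instance of~\eqref{eq:cumu}, which for $k=2$ produces two identical factors in the product over bijections and thus effectively yields a pair cumulant bound of order $(1+2^{2N}\|z\|_\s^2)^{-3/2}$. A scale-by-scale computation, combining $\|K_n\|_\infty$ with the integrated cumulant mass on the support of $K_n$, shows that the $n$-th piece contributes at order $2^n$, summing to $|c_N|\lesssim 2^N$. The analogous computation for $c_N^{(1)}$ proceeds with the $k=3$ case, decomposing both copies of $K^N$ into scales $(n,m)$ and tracking the three edge factors along each triangle bijection; the bookkeeping gives $|c_N^{(1)}|\lesssim 2^{N/2}$.

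For $c_N^{(2)}$ the three subterms require separate treatment. For $c_N^{(2,1)}$, the $k=4$ case of~\eqref{eq:cumu} combined with three independent scale decompositions of the nested kernels produces scale sums that converge uniformly in $N$, because the $4$-cycle structure in each bijection gives enough decay at each vertex. For $c_N^{(2,3)}$, the renormalised kernel $\Ren Q^N=K^N\E_c(\xi^N(0),\cdot)-c_N\,2^{3N}\delta_0$ is constructed precisely to kill the leading $O(2^N)$ piece of $K^N\E_c$; after dyadically decomposing the two flanking copies of $K^N$ and localising the cancellation to each dyadic regime, the contribution is $O(1)$. The remaining piece $c_N^{(2,2)}$, a ``sunset'' diagram consisting of a chain of three kernels linked by two pair cumulants, has marginal scaling: substituting $w_i=2^N z_i$ one recognises a convolution integral that diverges logarithmically in the UV cutoff, producing a single factor of $N$, and hence $|c_N^{(2,2)}|\lesssim N$ and $|c_N^{(2)}|\lesssim N$.

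The main obstacle will be the $c_N^{(2,3)}$ bound, since the cancellation encoded in the Dirac subtraction is an integrated statement that must be transferred to each dyadic annulus before the surrounding scale sums can be controlled. The required localisation is in the spirit of the positive-renormalisation arguments of~\cite{Regularity, BHZalg}, but is complicated by the discreteness of the lattice and the non-Gaussian character of $\xi^N$, which prevents the use of Wick-type identities and forces us to rely directly on Theorem~\ref{thm:cumu}.
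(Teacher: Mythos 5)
Your route is genuinely different from the paper's: the paper does not invoke the cumulant \emph{upper bounds} of Theorem~\ref{thm:cumu} at all in this computation, but rather the \emph{exact} cumulant formulas coming from Section~\ref{S4.3} (the pair cumulant of $\xi^N$ is exactly $2^{3N}p_{t2^{2N}}(2^Nx)$, the third cumulant is an explicit product of two heat kernels, and the fourth cumulant is the explicit sum $\I+\II+\III$ of \eqref{eq:4thcumu} involving connected transition probabilities with discrete gradients), combined with the sharp random-walk estimates of Lemma~\ref{lem:SRWest}. Your dyadic power counting based on Theorem~\ref{thm:cumu} does correctly reproduce $c_N=O(2^N)$, $c_N^{(1)}=O(2^{N/2})$ and $c_N^{(2,2)}=O(N)$ (for the latter two the cycle bound has total weight $10.5$ against parabolic volume $10$, resp.\ $15$ against $15$), and for $c_N^{(2,2)}$ and $c_N^{(2,3)}$ only pair cumulants enter, so the bound is essentially sharp there.

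The genuine gap is $c_N^{(2,1)}$. With the $k=4$ cycle bound, the rescaled fourth cumulant carries total weight $4\cdot\tfrac32=6$ distributed over a $4$-cycle, and the three kernels $K^N$ contribute weight $9$, for a total of $15$ against the integration volume $|\s|\cdot 3=15$: the overall power counting is \emph{exactly marginal}, so summing the dyadic scales $\lambda\in[2^{-N},1]$ on which all four points are mutually at distance $\lambda$ produces a factor $N$, not a constant. No choice of cycle $\sigma$ improves this, since every bijection distributes the same total weight. Hence your argument can only deliver $c_N^{(2,1)}=O(N)$ — enough for $c_N^{(2)}=\gamma_N N$, but not for the ``moreover'' clause asserting that $c_N^{(2,1)}$ is bounded. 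To get $O(1)$ one must use the finer structure of the fourth cumulant: e.g.\ the term $\I$ in \eqref{eq:4thcumu} is $2^{-3N}$ times a product of three kernels of order $3$ along a chain (total weight $18$ with prefactor $2^{-3N}$, giving $2^{-3N}\cdot 2^{3N}=O(1)$ after integration), and the terms $\II,\III$ gain through the weight-$4$ gradient factors $\nabla p$ in the connected transition probabilities. This is precisely the extra input the paper extracts from Theorem~\ref{thm:cumulantofexclusion} and Proposition~\ref{prop:productofmartingales}, and it is not recoverable from the statement of Theorem~\ref{thm:cumu} alone.
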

The proof of Proposition~\ref{prop:renconstant} is provided in Appendix~\ref{C}.

\section{Proof of Theorem~\ref{thm:cumu}}
\label{S4}
The goal of this section is to derive estimates on the joint cumulants $\E_c(\{\xi_{t_a}^N(x_a)\,:\, a\in \cA\})$ for any finite collection of space-time points $(t_a,x_a)_{a\in \cA}$.
In order to achieve this goal we proceed in three steps.
In Section~\ref{S4.1} we will show that the joint transition probability of $n$ labelled exclusion particles admits
a Gaussian bound. In Section~\ref{S4.2} we use martingale techniques to get a decomposition of these transition probabilities. Finally, in Section~\ref{S4.3} we use the results from Sections~\ref{S4.1}--\ref{S4.2} to obtain bounds that turn out to be sufficient for our purposes. 
Recall that we also use the notation $\St$ in place of $\Z_N^d$.
Many of the arguments developed in this section actually turn out to not depend on the dimension. Therefore, until Section~\ref{sec:coalescence} we will make no restrictions on $d$.

\subsection{A priori bounds for the labelled exclusion process}
\label{S4.1}
Let $\cA$ be a finite index set and consider particles labelled by elements of $\cA$ that start from $|\cA|$ 
distinct sites and that evolve through stirring, i.e., according to the graphical representation 
defined in Section~\ref{S1}.
The state space of this Markov process is $\St^{\cA}$, defined by
\begin{equ}
\St^{\cA}= \{\fx = (\fx_a)_{a\in\cA}:\,\fx_a\in\St\text{ for all }a\in\cA,\, \fx_a\neq \fx_b\, \text{ for all } a\neq b\}
\end{equ}
and its generator is given by
\begin{equation}
\label{eq:labelledexclusion}
(L_\cA f)(\fx) = \sum_{\{x,y\}}[f(\sigma^{x,y}\fx)-f(\fx)]\;,
\end{equation}
where the sum runs over unoriented bonds $\{x,y\}$ between any pair of neighbouring sites $x,y\in\St$.
Here, for $\fx\in\St^\cA$ the configuration $\sigma^{x,y}\fx$ is given by
\begin{equation}
\label{eq:sigmaswapped}
(\sigma^{x,y}\fx)_a = 
\left\{\begin{array}{ll}
y, &\mbox{if }\fx_a= x,\\
x, &\mbox{if }\fx_a=y,\\
\fx_a, &\mbox{otherwise}.
\end{array}\right.
\end{equation}

\begin{remark}\label{rem:couplingX}
We denote the labelled exclusion process by $X^{\fx}$ and the exclusion particle started at position $x\in\St$ by $X^x$. Whenever we consider different instances of the process $X^{\fx}$ for possibly different starting points $\fx$, they are coupled through the same 
instance of the stirring process defined at the beginning of Section~\ref{S1}.
In this sense, the graphical construction defines not just a Markov process on $\St^\cA$, but a stochastic flow / random dynamical system.
\end{remark}

Let $\fx, \fy\in\St^\cA$ and denote by $p_t^{\fx}(\fy)$ the probability that the labelled exclusion process starting at $\fx$ at time $0$ is at $\fy$ at time $t$.
We further let $\Phi$ be the Legendre transform of $u\mapsto u^2\cosh u$, i.e., 
$\Phi(u) = \sup_{w\in\R}\{uw-w^2\cosh w\}$ and we note that $\Phi$ asymptotically behaves like $u^2$ for $u$ small and $u\log u$ for $u$ large.
We denote by $\|x-y\|_{\St}=\min_{z\in 2^N\Z^d}\|x-y-z\|_1$ the $\ell^1$-distance on the torus. 
\begin{lemma}
\label{lem:transitionprob}
Fix $\fx, \fy \in\St^\cA$ for some index set $\cA$. There are finite positive constants $C_1$ and $C_2$ 
depending on $d$ and $|\cA|$ such that
\begin{equation}
\label{eq:transitionprob}
p_t^{\fx}(\fy) \leq C_1\prod_{i \in \cA}\bar{p}_t^{x_i}(y_i),
\end{equation}
where for all $t>C_1$,
\begin{equation}
\label{eq:transitionproblarget}
\bar{p}_t^{x_i}(y_i) = 
\frac{1}{(1+t)^{d/2}}\sum_{z_i\in 2^N\Z^d}\exp\Big\{-\frac{C_2t}{2|\cA|(\log t)^2}\Phi\Big(\frac{\|x_i-y_i-z_i\|\log t}{C_2^2t}\Big)\Big\},
\end{equation}
whereas for $t\leq C_1$,
\begin{equation}
\label{eq:transitionprobsmallt}
\bar{p}_t^{x_i}(y_i)=
\sum_{z_i\in 2^N\Z^d}e^{-\|x_i-y_i-z_i\|_1}\;.
\end{equation}
\end{lemma}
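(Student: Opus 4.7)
I would split the argument into a small-time and a large-time regime. For $t\le C_1$, the graphical construction gives a direct combinatorial bound: the displacement of each particle is dominated by the number of Poisson marks on any path from $x_i$ to $X_t^{x_i}$, which is Poisson of mean of order $t$. Hence $\mathbb{P}(\|X_t^{x_i}-x_i\|_{\St}=k)\lesssim (ct)^k/k!\lesssim e^{-k}$ uniformly in $t\le C_1$. Summing over paths and over the $|\cA|$ particles (interactions at this time scale cost only a multiplicative constant depending on $|\cA|$, since the chance of multiple swaps involving the same particle is small) yields \eqref{eq:transitionprobsmallt}.

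For the large-time regime I would first establish the single-particle estimate. Viewed on its own, each labelled particle is a simple random walk on $\St$ at rate $2d$, so the standard Bennett--Chernoff argument applied to the martingale $\exp(\theta\cdot(X_t^x-x))$ and optimisation of $\theta\in\R^d$ (restricted to $|\theta|\lesssim 1/\log t$) produces
$p_t(x,y)\lesssim t^{-d/2}\sum_{z\in 2^N\Z^d}\exp\bigl(-c\,t\,\Phi(\|x-y-z\|/(Ct))\bigr)$,
with $\Phi$ precisely the Legendre transform in the statement, since the one-step log-MGF of a symmetric simple random walk is $\cosh\theta-1$. The sum over $z$ accounts for torus lifts. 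This already matches the form of \eqref{eq:transitionproblarget} in the case $|\cA|=1$.

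For the joint bound I would run the analogous computation simultaneously: consider $M_t=\exp(\sum_{i\in\cA}\theta_i\cdot X_t^{x_i})$ and evaluate $L_\cA M_t/M_t$ on a configuration $\fx$. Each unoriented bond contributes, according to its occupancy, either single-particle terms $e^{\pm\theta_i\cdot e}-1$ or, when both endpoints are occupied by distinct particles $i,j$, the swap term $e^{(\theta_i-\theta_j)\cdot e}-1$. The key algebraic observation is that
$(e^{(\theta_i-\theta_j)\cdot e}-1) - (e^{\theta_i\cdot e}-1) - (e^{-\theta_j\cdot e}-1) = (e^{\theta_i\cdot e}-1)(e^{-\theta_j\cdot e}-1)$,
so the joint generator equals the generator of $|\cA|$ independent random walks plus a ``correction'' running over adjacent pairs that factorises cleanly. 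The first piece contributes $\sum_i c|\theta_i|^2\cosh|\theta_i|$ after linear cancellation over opposite edge orientations; the correction, by $|e^u-1|\le |u|\cosh u$ and since the number of adjacent distinct pairs is $O(|\cA|)$ at any instant, is bounded by $C|\cA|\,\theta_{\max}^2\cosh(2\theta_{\max})$ with $\theta_{\max}=\max_i|\theta_i|$. Applying Gr\"onwall and Markov's inequality
$p_t^{\fx}(\fy)\le\exp(-\textstyle\sum_i\theta_i\cdot(y_i-x_i))\,\mathbb{E}_\fx M_t$
and optimising each $\theta_i$ subject to $\theta_{\max}\le c_0/\log t$ gives \eqref{eq:transitionproblarget}.

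\textbf{Main obstacle.} The non-routine step is controlling the interaction term $|\cA|\,\theta_{\max}^2\cosh(2\theta_{\max})$ in the joint martingale computation, since it couples all $\theta_i$ together through $\theta_{\max}$ and prevents a clean factorisation. The constraint $\theta_{\max}=O(1/\log t)$ is precisely what subordinates this interaction contribution to the single-particle one, and it is responsible both for the $\log t$ inside the argument of $\Phi$ and for the factor $1/(|\cA|(\log t)^2)$ appearing in front of $\Phi$ in \eqref{eq:transitionproblarget}. Verifying that this trade-off genuinely recovers the stated bound, rather than producing additional polynomial losses in $|\cA|$, and handling the torus periodicity uniformly in all these estimates, is the technical heart of the lemma.
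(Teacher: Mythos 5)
Your small-time argument is essentially the paper's: the paper also bounds $p_t^{\fx}(\fy)$ for $t\le C_1$ by a Chernoff bound on the total number of Poisson events, noting that at least $\tfrac12\sum_i\|x_i-y_i\|_{\St}$ clock rings are needed (the factor $\tfrac12$ because two particles may share a swap event), which is a detail you gloss over but which only affects constants. For the large-time regime, however, you take a genuinely different route, and it has a gap. The paper does not attempt a direct multi-particle martingale computation: it cites Landim's Gaussian upper bound for the $n$-particle exclusion semigroup on $\Z^d$, which delivers the joint estimate with the prefactor $(1+t)^{-nd/2}$ \emph{and} the exponential factor $\exp\{-\tfrac{C_2t}{2(\log t)^2}\Phi(\|\fx-\fy\|\log t/(C_2^2t))\}$ in terms of the aggregate displacement $\|\fx-\fy\|=(\sum_i\|x_i-y_i\|^2)^{1/2}$; the factorisation into a product over particles is then obtained purely from monotonicity/superadditivity of $\Phi$ (at the cost of the $|\cA|^{-1}$ in the exponent), and the torus lifts are handled by summing $p^{\fx}_{t,\Z^d}$ over translates.

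The gap in your approach is the on-diagonal prefactor. An exponential Chebyshev bound of the form $p_t^{\fx}(\fy)\le e^{-\sum_i\theta_i\cdot(y_i-x_i)}\,\E_{\fx}M_t$ degenerates to the trivial bound $1$ when $\fy=\fx$ (optimal $\theta=0$), so it can never produce the factor $(1+t)^{-d/2}$ per particle in \eqref{eq:transitionproblarget}. That factor is not cosmetic: it is exactly what makes $\bar p^{N}_{\cdot,\zeta}$ a kernel of order $\zeta$ in Lemma~\ref{lem:scaling} and what yields the summability bounds \eqref{eq:upperboundsum}; without it the subsequent convolution estimates collapse. To recover it you would need an ultracontractivity/Nash-type on-diagonal estimate $\sup_{\fz}p_t^{\fz}(\fy)\lesssim t^{-nd/2}$ for the $n$-particle exclusion semigroup (and then the standard splitting $t=t/2+t/2$ together with reversibility to combine it with your off-diagonal Chernoff bound). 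That on-diagonal estimate is the genuinely hard input here — it is precisely what Landim's theorem supplies and what your martingale computation does not. Your generator identity $(e^{a-b}-1)-(e^{a}-1)-(e^{-b}-1)=(e^{a}-1)(e^{-b}-1)$ and the control of the interaction term via $\theta_{\max}\lesssim 1/\log t$ are fine as far as they go, and would give a correct off-diagonal exponential bound with $|\cA|$-dependent constants, but as written the plan does not prove \eqref{eq:transitionproblarget}.
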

\begin{remark}
Typically the number of particles that are in the exclusion process is proportional to $N$, so that one might get worried that the above estimates are not sharp enough for our purposes. However, in order to prove Theorem~\ref{thm} we only need to estimate cumulants of a fixed (but possibly large) order $n$, which turn out to depend only on exclusion processes that start with $n$ particles. 
\end{remark}

\begin{remark}
Actually the bounds \eqref{eq:transitionproblarget} and \eqref{eq:transitionprobsmallt} also hold
with the sum replaced by its largest term. 
\end{remark}
\begin{proof}
Fix $\fx,\fy\in\St^\cA$ and denote the transition probabilities for the exclusion process indexed by $\cA$ 
by $p_{t,\Z^d}^{\fx}(\fy)$. It follows from Landim~\cite[Thm~3.1]{Landim} that there are constants $C_1, C_2$ as in the formulation of the statement such that for all $t>C_1$,
\begin{equation}
\label{eq:landim}
p_{t,\Z^d}^{\fx}(\fy)\leq \frac{C_1}{(1+t)^{nd/2}}
\exp\Big\{-\frac{C_2 t}{2(\log t)^2}\Phi\Big(\frac{\|\fx-\fy\|\log t}{C_2^2 t}\Big)\Big\},
\end{equation}
where $\|\fx-\fy\|^2= \sum_{i\in \cA}\|x_i-y_i\|^2$. Consequently, $\|\fx-\fy\|\leq \sum_{i\in\cA} \|x_i-y_i\|$, and the monotonicity of $\Phi$ shows that
\begin{equation}
\Phi\Big(\frac{\|\fx-\fy\|\log t}{C_2^2 t}\Big)\geq \frac1n\sum_{i\in\cA}\Phi\Big(\frac{\|x_i-y_i\|\log t}{C_2^2 t}\Big).
\end{equation}
This property together with the representation
\begin{equation}
\label{eq:torusvsZd}
p_{t}^{\fx}(\fy) = \sum_{\bar{\fy}\equiv \fy \Mod{2^{N}}}p_{t,\Z^d}^{\fx}(\fy)
\end{equation}
yields~\eqref{eq:transitionproblarget}.

Let now, $t\leq C_1$ and assume that $n$ labelled exclusion particles are initially located at $\fx$. 
Note that the least number of jumps that need to occur for a single particle starting in $x_i$ and ending up at $y_i$ is $\|x_i-y_i\|_\St$. However, the total number of Poisson clocks that need to ring is not the sum over all the $\|x_i-y_i\|_\St$ since two particles may use the same Poisson clock to swap places. (But no
more than two since our state space forces particles to be located at distinct sites.)
Therefore, we see that the total number of such Poissonian events is at least $\frac12\sum_{i\in\cA}\|x_i-y_i\|_\St$.
To get a more precise handle on the probability of this event we consider the following alternative construction of the labelled exclusion process.
We denote the field of labelled exclusion particles by $(X^{x_a})_{a\in \cA}$. Each particle $X^{x_i}$ carries $2d$ exponential clocks all  having rate one. We describe these exponential clocks by independent Poisson processes $P_{i, j}$, with $i \in \cA$ and $j=1,\ldots, 2d$. We also write $e_j$ for the $j$-th unit vector if $1\leq j\leq d$ and $e_j= -e_{2d+1-j}$ if $d+1\leq j\leq 2d$. If the process $P_{i,j}$ jumps at time $s$, then
we set $X_s^{x_i}= X_{s^{-}}^{x_i}+e_j$ unless the site $X_{s^{-}}^{x_i}+e_j$ is already occupied by some other 
particle (say particle $\ell$) in which case the two particles $i$ and $\ell$ either swap positions or stay in place with 
respective probabilities ${1\over 2}$.

Setting $P_i = \sum_j P_{i,j}$, it then follows from Chebychev's inequality and the independence of the
$P_{i,j}$ that  
\begin{equation}
p_{t}^{\fx}(\fy) \le \P\Big(\sum_{i\in\cA}P_i(t)\geq \frac12\sum_{i\in\cA}\|x_i-y_i\|_\St\Big)
\leq \E\big(e^{N_1(t)}\big)^n\exp\Big\{-\frac12\sum_{i\in\cA}\|x_i-y_i\|_\St\Big\}\;,
\end{equation}
which yields the desired bound \eqref{eq:transitionprobsmallt}.
\end{proof}
The next lemma gives bounds on scaled versions of the kernels appearing in \eqref{eq:transitionproblarget}--\eqref{eq:transitionprobsmallt},
namely we set  $\bar{p}_t^{N,x}(y)=\bar{p}_{2^{2N}t}^{2^Nx}(2^Ny)$ for  $x,y\in\T_N^d$.
In the lemma that follows it is implicitely assumed that $x$ and $y$ are such that $\|x-y\|_1=\|x-y\|_{\St}$. We moreover use $\int_{\T_N^d} (\ldots)\,dx$ as a shorthand for
$2^{-dN}\sum_{x\in \T_N^d}(\ldots)$.
\begin{lemma}
\label{lem:scaling}
Let $t\in[0,1]$, $x,y\in\T_N^d$ and $\zeta\in[0,d]$. Define a scaled version of $\bar p^N$ via
\begin{equation}
\label{eq:scaling}
\bar{p}_{t,\zeta}^{N,x}(y)= 2^{\zeta N}\bar{p}_{t}^{N,x}(y).
\end{equation}
Then the quantity
\begin{equation}
\label{eq:orderpbar}
(\sqrt{t}+\|x-y\| + 2^{-N})^{\zeta}\bar{p}_{t,\zeta}^{N,x}(y)
\end{equation}
is bounded uniformly over $N \ge 0$, $t \in [0,1]$ and $x,y \in \T_N^d$.
Moreover, the bounds
\begin{equation}
\label{eq:upperboundsum}
\int_{\T_N^d}\bar{p}_{t,\zeta}^{N,x}(y)\,dx\lesssim 2^{(\zeta -d)N}\quad\mbox{and}\quad
\int_{\T_N^d}\bar{p}_{t,\zeta}^{N,x}(y)\,dy\lesssim 2^{(\zeta-d)N}.
\end{equation}
hold uniformly over $N \ge 0$, $t \in [0,1]$ and  $x,y \in \T_N^d$.
\end{lemma}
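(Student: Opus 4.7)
The plan is to substitute the bounds from Lemma~\ref{lem:transitionprob} into the definition of $\bar p_t^{N,x}(y)$, split according to whether the physical time $2^{2N}t$ exceeds the threshold $C_1$, and verify the two claims by direct estimation in each regime.

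In the large-time regime $t \ge C_1 2^{-2N}$, I would rescale~\eqref{eq:transitionproblarget} and keep only the lattice representative realising the torus distance, giving
\begin{equ}
\bar p_t^{N,x}(y)\;\lesssim\;2^{-Nd}\,t^{-d/2}\,\exp\!\bigl(-c\,\|x-y\|^2/t\bigr)\;,
\end{equ}
where the Gaussian exponent is extracted from the quadratic asymptotics $\Phi(u)\sim u^2$ near the origin; in the complementary range where the $\Phi$-argument is large, the asymptotic $\Phi(u)\sim u\log u$ produces decay that is stronger than Gaussian in the relevant parameter range, so the same bound persists. Inserting into~\eqref{eq:orderpbar} and setting $v = \|x-y\|/\sqrt t$, the expression in question is dominated by
\begin{equ}
2^{(\zeta-d)N}\,t^{(\zeta-d)/2}\,(1+v)^\zeta\,e^{-cv^2}\;,
\end{equ}
which is uniformly bounded since $\zeta\le d$ together with $2^{2N}t\ge C_1$ force the first two factors to be $\lesssim 1$, while $\sup_{v\ge 0}(1+v)^\zeta e^{-cv^2}$ is finite.

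In the small-time regime $t \le C_1 2^{-2N}$, formula~\eqref{eq:transitionprobsmallt} gives a bound independent of $t$: keeping only the representative in $2^N\Z^d$ closest to $2^N(x-y)$ yields $\bar p_t^{N,x}(y) \lesssim e^{-2^N\|x-y\|}$. Since $\sqrt t \lesssim 2^{-N}$ here, the prefactor satisfies $(\sqrt t + \|x-y\| + 2^{-N})^\zeta \lesssim (2^{-N} + \|x-y\|)^\zeta$, and substituting $w = 2^N\|x-y\|$ reduces the product to $(1+w)^\zeta e^{-w}$, which is uniformly bounded in $w\ge 0$. This combined with the large-time case proves~\eqref{eq:orderpbar}. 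For~\eqref{eq:upperboundsum}, I would use that the sum over $x \in \T_N^d$ unfolds the torus sum over $z\in 2^N\Z^d$ appearing in~\eqref{eq:transitionproblarget}--\eqref{eq:transitionprobsmallt}, reducing $\sum_{x\in\T_N^d}\bar p_t^{N,x}(y)$ to a single sum over $\Z^d$ of the underlying Gaussian (respectively exponential) expression, which is $O(1)$ by a standard heat-kernel summation (respectively by a product of one-dimensional geometric series). Converting to the Riemann-sum convention then gives $\int_{\T_N^d}\bar p_t^{N,x}(y)\,dx \lesssim 2^{-Nd}$, and multiplying by $2^{\zeta N}$ yields the claim; the bound with integration in $y$ follows identically.

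The main obstacle I anticipate is the bookkeeping of the $\log$ factors in the rescaled Landim bound: one must confirm that, throughout the relevant range $\|x-y\| \le 1$ dictated by the torus, the $\Phi$-argument either falls in its quadratic regime (where the Gaussian tail used above is immediate) or, in the complementary range, yields super-Gaussian decay large enough to absorb the polynomial prefactor $(\sqrt t + \|x-y\| + 2^{-N})^\zeta$ uniformly in $N$.
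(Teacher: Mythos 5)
Your overall architecture (split at physical time $C_1$, rescale Landim's bound, reduce the final claim to boundedness of an explicit function of one variable, unfold the torus sum for the summability bound) matches the paper's, and the small-time regime and the treatment of \eqref{eq:upperboundsum} are fine. However, the step you yourself flag as the anticipated obstacle is a genuine gap, and it fails in the direction opposite to what you assert. For large $u$ one has $\Phi(u)\sim u\log u$, which grows \emph{more slowly} than $u^2$; consequently, in the regime where the $\Phi$-argument $u=\|x-y\|\log t/(C_2^2t)$ is large (i.e.\ $\|x-y\|\gtrsim t/\log t$), the exponent in \eqref{eq:transitionproblarget} is only of order $\|x-y\|/\log t$, which is \emph{smaller} than $\|x-y\|^2/t$ precisely on that range. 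So the decay there is strictly weaker than Gaussian, the bound
\begin{equ}
\bar p_t^{N,x}(y)\lesssim 2^{-Nd}t^{-d/2}\exp\bigl(-c\|x-y\|^2/t\bigr)
\end{equ}
does \emph{not} persist, and the verification you defer cannot be carried out as stated.

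The gap is repairable, and the repair is what the paper does: in the regime $1\le t/\log t\le \|x-y\|$ one only uses $\Phi(u)\gtrsim u$, obtaining
$|\bar p_t^x(y)|\lesssim (1+\sqrt t)^{-d}\exp(-c\|x-y\|/\log t)$, and then observes that this is
$\lesssim (1+\sqrt t+\|x-y\|)^{-d}\,\|x-y\|^d\exp(-\|x-y\|^{1/2})$, which suffices because $u\mapsto u^d e^{-\sqrt u}$ is bounded; the sub-Gaussian tail is still summable in $x$ uniformly in $t$ and $N$, so \eqref{eq:upperboundsum} survives as well. In other words, the polynomial bound of order $d$ does not require a Gaussian tail, only the much weaker stretched-exponential control available from $\Phi(u)\gtrsim u$. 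You should replace your claim of a global Gaussian bound by this two-regime argument; as written, the large-deviation case of your proof asserts a false inequality.
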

\begin{proof}
Combining the bound
$2^{(\zeta -d)N}(\sqrt{t}+\|x-y\|+2^{-N})^{-d}\leq (\sqrt{t}+\|x-y\|+2^{-N})^{-\zeta}$ with the way $\bar{p}_{\cdot,\zeta}^{N}$ is scaled, we see that it suffices to prove the statements for $\zeta=d$. 
The first bound is equivalent to the bound
\begin{equ}[e:wanted]
|\bar{p}_{t}^{x}(y)| \lesssim (\sqrt{t}+\|x-y\| + 1)^{-d}\;.
\end{equ}
It is obvious for $t\leq C_1$, so we consider the case $t>C_1$. 
For $\|x-y\| \le t/\log t$ (say), $\bar{p}_{t}^{x}(y)$ is bounded by the standard heat kernel, since $\Phi(u)\sim u^2$ for small $u$, for which
\eqref{e:wanted} holds, so it remains to consider the case $1 \le t/\log t \le \|x-y\|$.
In this regime the bound \eqref{eq:transitionproblarget} holds and the argument of $\Phi$ is greater than some
constant, so we can make use of the bound $\Phi(u) \gtrsim u$, yielding the existence of $c>0$ such that
\begin{equs}
|\bar{p}_{t}^{x}(y)|
&\lesssim (1+ \sqrt t)^{-d} \exp(-c\|x-y\|/\log t) \label{e:firstBoundpt}\\
&\lesssim (1+ \sqrt t + \|x-y\|)^{-d}  \|x-y\|^d \exp(-\|x-y\|^{1/2})\;.
\end{equs}
The required bound follows since $u \mapsto u^d e^{-\sqrt u}$ is bounded.

We now turn to the proof of~\eqref{eq:upperboundsum}.
For $t \le C_12^{-2N}$ it trivially follows from \eqref{eq:transitionprobsmallt}.
For $t \ge C_12^{-2N}$, it suffices to note that both the standard heat kernel and
the right hand side of \eqref{e:firstBoundpt} are summable in $x$, uniformly over $t$ and 
the underlying grid size. 
\end{proof}

Lemma~\ref{lem:scaling} motivates the following definition.
\begin{definition}\label{def:orderofkernel}
Fix $\zeta\in\R$. A kernel $p$ defined on $\R\times\T_N^d$  is said to be of order $\zeta$ if
\begin{equation}
\label{eq:orderofkernel}
\sup_{(t,x)\in\R\times \T_N^d}
(\sqrt{|t|}+\|x\|+2^{-N})^{\zeta}p_{t}(x) < \infty\;.
\end{equation}
If $p$ is instead defined on $\R^n\times \T_N^d$, then it is said to be of order $\zeta$ if
\begin{equ}
\sup_{(t,x)\in\R^n\times \T_N^d}
\Big(\Big(\sum_{i=1}^{n}|t_i|\Big)^{\tfrac12}+\|x\|+2^{-N}\Big)^{\zeta}p_{t}(x) < \infty\;.	
	\end{equ} 
If in one of the two cases above the kernel $p$ is defined on a subset of $\R\times\T_N^d$ or $\R^n\times\T_N^d$ respectively, then it is of order $\zeta$ if the supremum in the two equations above is taken over that subset.
\end{definition}
\begin{remark}
\label{rem:heatkernel}
Denote by $p^N$ the transition kernel of simple random walk on $\T_N^d$ jumping at rate $2d\times 2^{2N}$. It was shown in~\cite{MatetskiDiscrete} that $p_{\cdot,d}^{N}= 2^{dN}p^{N}$ is of order $d$. As in the proof of Lemma~\ref{lem:scaling} one may deduce from that, that for all $\zeta\in[0,d]$ the kernel $p_{\cdot,\zeta}^{N}=2^{\zeta N}p^N$ is of order $\zeta$. The bounds in~\eqref{eq:upperboundsum} are a consequence of the fact that $p^N$ is a transition probability.
With a slight abuse of notation we also denote by $p^N$ the transition probability for the labelled exclusion process jumping at rate $2d\times 2^{2N}$. This will not cause any ambiguity since the law of the (labelled) exclusion process is equal to that of a simple random walk when there is only one particle in the system.
\end{remark}
\begin{lemma}
\label{lem:convolutioninspace}
Let $p_1$, $p_2$ be two kernels of orders $\zeta_1$, $\zeta_2$ respectively
 and satisfying the bounds~\eqref{eq:upperboundsum}. Define for $x,z\in\T_N^d$ and $t = (t_1,t_2) \in \R_+^2$ the kernel
\begin{equation}
\label{eq:nfold}
p_{t}^{\otimes 2}(z)
= \int_{\T_N^d} p_{1,t_i}(x-y)p_{2,t_2}(y-z)\,dy\;,
\end{equation}
Then, provided that $\zeta \eqdef \zeta_1 + \zeta_2 - d>0$ the kernel $p^{\otimes 2}$ is of order $\zeta$ and, for all $t \in \R_+^2$ and $x,z\in\T_N^d$,
\begin{equation}
\label{eq:nfoldsummability}
\int_{\T_N^d}p_{t}^{\otimes 2}(x)\,dx \lesssim 2^{(\zeta - d)N}\;,
\end{equation}
with proportionality constants independent of $t$ and $N$.
\end{lemma}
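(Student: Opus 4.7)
The proof decomposes into verifying the integrability bound \eqref{eq:nfoldsummability} on the one hand and the pointwise estimate characterising a kernel of order $\zeta$ on the other. The first is immediate from Fubini together with the hypothesis that both $p_i$ satisfy \eqref{eq:upperboundsum}:
\begin{equation*}
\int_{\T_N^d} p_t^{\otimes 2}(w)\, dw \;=\; \Big(\int_{\T_N^d} p_{1,t_1}(u)\, du\Big)\Big(\int_{\T_N^d} p_{2,t_2}(v)\, dv\Big) \;\lesssim\; 2^{(\zeta_1 + \zeta_2 - 2d)N} \;=\; 2^{(\zeta - d)N}\;.
\end{equation*}

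For the pointwise estimate, write $w = x-z$, $s = \sqrt{t_1 + t_2}$, and $\lambda = s + \|w\| + 2^{-N}$; the goal is $p_t^{\otimes 2}(w) \lesssim \lambda^{-\zeta}$, uniformly in $t$, $N$ and $w$. The plan is to split into three cases according to which of the three terms dominates $\lambda$. If $\|w\| \geq \lambda/3$, split the $y$-integration at $\|x-y\| = \|w\|/2$: on $\{\|x-y\| \geq \|w\|/2\}$ one has $\sqrt{t_1}+\|x-y\|+2^{-N}\gtrsim\lambda$, hence $p_{1,t_1}(x-y) \lesssim \lambda^{-\zeta_1}$, and integrating $p_{2,t_2}$ via \eqref{eq:upperboundsum} yields a factor $2^{(\zeta_2 - d)N}$; on the complementary region one has $\|y-z\| \geq \|w\|/2 \gtrsim \lambda$ so the roles of the two kernels swap. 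If instead $s \geq \lambda/3$, then (without loss of generality) $\sqrt{t_1} \gtrsim \lambda$ gives $p_{1,t_1}(\cdot) \lesssim \lambda^{-\zeta_1}$ uniformly, and one simply integrates $p_{2,t_2}$. Finally, when $2^{-N}\geq\lambda/3$, bound $p_{2,t_2}$ pointwise by $2^{\zeta_2 N}$ and integrate $p_{1,t_1}$, yielding $p_t^{\otimes 2}(w) \lesssim 2^{(\zeta_1 + \zeta_2 - d)N} = 2^{\zeta N} \asymp \lambda^{-\zeta}$.

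In each case the residual factor $2^{(\zeta_i - d)N}$ produced by the integrated kernel needs to be converted into $\lambda^{d-\zeta_i}$. Since $\lambda \geq 2^{-N}$ and, in the relevant setting of Lemma~\ref{lem:scaling} and Remark~\ref{rem:heatkernel}, $\zeta_i \in [0,d]$, this is immediate: $2^{(\zeta_i - d)N} \leq \lambda^{d - \zeta_i}$. Combining with the pointwise $\lambda^{-\zeta_j}$ bound gives $\lambda^{-(\zeta_j + \zeta_i - d)} = \lambda^{-\zeta}$, as required. The only real subtlety in writing the proof is precisely this last comparison: were $\zeta_i > d$, the inequality would reverse and one would instead need a sharper uniform-in-$N$ integrability bound on $p_i$. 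In the regime of interest, however, $\zeta_1,\zeta_2\in[0,d]$ and the argument closes directly, with the assumption $\zeta = \zeta_1+\zeta_2-d > 0$ being exactly what ensures that $\lambda^{-\zeta}$ is the correct (non-trivial, non-improving) decay rate of the convolution.
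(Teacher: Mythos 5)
Your proof is correct and follows essentially the same route as the paper's: in each regime one kernel is bounded pointwise by $\lambda^{-\zeta_i}$ while the other is integrated via \eqref{eq:upperboundsum}, and the conversion $2^{(\zeta_j-d)N}\le\lambda^{d-\zeta_j}$ closes the argument (the paper, like you, implicitly uses $\zeta_i\in[0,d]$, which holds in all applications). The only difference is organisational: you case-split according to which term of $\lambda=\sqrt{t_1+t_2}+\|x-z\|+2^{-N}$ dominates, whereas the paper first splits the $y$-integral by whether $\|x-y\|$ or $\|y-z\|$ is larger and then subdivides by time versus space.
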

\begin{proof}
We note that~\eqref{eq:nfoldsummability} is a consequence of~\eqref{eq:upperboundsum}.
To establish that $p^{\otimes 2}$ is of order $\zeta$, take $t_1,t_2\geq 0$ and $x,z\in\T_N^d$ such that 
$\|(t_1+t_2,x-z)\|\geq 2^{-N}$ and write the right-hand side of~\eqref{eq:nfold} as $\I+\II$, where,
\begin{equation}
\label{eq:IandII}
\begin{aligned}
\I&= 2^{-dN}\sum_{\substack{y\in\T_N^d:\\ \|x-y\|> \|y-z\|}}p_{1,t_1}(x-y)p_{2,t_2}(y-z)\;,\\
\II&= 2^{-dN}\sum_{\substack{y\in\T_N^d:\\ \|x-y\|\leq  \|y-z\|}}p_{1,t_1}(x-y)p_{2,t_2}(y-z)\;.
\end{aligned}
\end{equation}
To estimate $\I$ and $\II$ we may assume that $t_1>t_2$, so that $\sqrt{t_1+t_2}\leq \sqrt{2}\sqrt{t_1}$.
Note that if $\|x-y\|>\|y-z\|$, then $\|x-z\|\leq 2\|x-y\|$.
Since, $p_1$ is of order $\zeta_1$, we may conclude that under the above assumption on $y$, the chain of inequalities $p_{1,t_1}(x-y)\lesssim (\sqrt{t_1}+ \|x-y\|)^{-\zeta_1} \lesssim (\sqrt{t_1+t_2}+\|x-z\|)^{-\zeta_1}$ holds. Consequently,
\begin{equation}
\label{eq:estI}
\I\lesssim (\sqrt{t_1+t_2}+\|x-z\|+2^{-N})^{-\zeta_1}2^{-dN}\sum_{\substack{y\in\T_N^d:\\
\|x-y\|>\|y-z\|}}p_{2,t_2}(y-z)\;,
\end{equation}
and the bound on $\I$ follows from an application of~\eqref{eq:upperboundsum} to the rightmost term in~\eqref{eq:estI}. To bound $\II$, we distinguish between two cases.
\begin{enumerate}
\item $\sqrt{t_1+t_2}\geq \|x-z\|$. In this case we have
$p_{1,t_1}(x-y)\lesssim t_1^{-\zeta_1/2}\lesssim (\sqrt{t_1+t_2}+ \|x-z\|)^{-\zeta_1}$ and we may proceed as for $\I$.
\item $\sqrt{t_1+t_2} < \|x-z\|$. Since,
$\|x-z\|\leq 2\|y-z\|$ we may estimate in this case
$p_{2,t_2}(y-z)\lesssim (\sqrt{t_1+t_2}+ \|x-z\| )^{-\zeta_2}$. Thus, with~\eqref{eq:upperboundsum} at hand we may conclude the required bound for $\II$.
\end{enumerate}
The estimates in the case $\|(t_1+t_2,x-z)\|_{\mathfrak{s}}\leq 2^{-N}$ follow
from the boundedness of $p_{i}$ by $2^{\zeta_iN}$ and~\eqref{eq:upperboundsum}, which concludes the proof.
\end{proof}
We have the following immediate corollary of Lemma~\ref{lem:convolutioninspace}.
\begin{corollary}
\label{cor:nfold} Fix $n\in\N$, $\zeta_1,\ldots, \zeta_n\in[0,d]$ and for each $i\in\{1,\ldots, n\}$ let $p_i$ be either given by $p_{\cdot,\zeta_i}^N$, as in Remark~\ref{rem:heatkernel}, or by $\bar{p}_{\cdot,\zeta_i}^N$. Then, $p^{\otimes n}$ defined from one of these two kernels as in Lemma~\ref{lem:convolutioninspace} is of order $(\sum_{i=1}^{n} \zeta_i) -d(n-1)$.
\end{corollary}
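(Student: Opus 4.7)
My plan is a straightforward induction on $n$, using Lemma~\ref{lem:convolutioninspace} at each step. The base case $n=1$ is immediate from Lemma~\ref{lem:scaling} and Remark~\ref{rem:heatkernel}, which give both the order $\zeta_1$ and the summability bound~\eqref{eq:upperboundsum} for the two families of kernels under consideration.

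For the inductive step, I would regard $p^{\otimes n}$ as the $\T_N^d$-convolution of $p^{\otimes(n-1)}$ (collecting the first $n-1$ factors) with $p_n$,
\begin{equ}
p^{\otimes n}_{(t_1,\ldots,t_n)}(x,z) = \int_{\T_N^d} p^{\otimes(n-1)}_{(t_1,\ldots,t_{n-1})}(x,y)\,p_{n,t_n}(y-z)\,dy\;,
\end{equ}
where $p^{\otimes(n-1)}$ carries the aggregated time variable $(t_1+\cdots+t_{n-1})^{1/2}$ in the sense of the multi-time version of Definition~\ref{def:orderofkernel}. By the inductive hypothesis, $p^{\otimes(n-1)}$ has order $\alpha \eqdef \sum_{i=1}^{n-1}\zeta_i - d(n-2)$ and satisfies the analogue of~\eqref{eq:nfoldsummability}. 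Together with the order $\zeta_n$ and summability of $p_n$, Lemma~\ref{lem:convolutioninspace} then yields that $p^{\otimes n}$ has order $\alpha + \zeta_n - d = \sum_{i=1}^n\zeta_i - d(n-1)$ along with the required summability, completing the induction.

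The main technical point I expect to have to address is that Lemma~\ref{lem:convolutioninspace} is formally stated under the positivity hypothesis $\zeta_1+\zeta_2 - d > 0$, which can fail in the induction when the running order $\alpha$ becomes non-positive (for instance if the $\zeta_i$ are small and $n$ is large). I would handle this by splitting into cases. When $\alpha \in [0,d]$, revisiting the proof of Lemma~\ref{lem:convolutioninspace} shows that the positivity of $\zeta_1+\zeta_2-d$ is never actually invoked in establishing the order bound: the crucial inequality $(2^{-N})^{d-\zeta_i} \le (\sqrt{t_1+t_2}+\|x-z\|+2^{-N})^{d-\zeta_i}$ that absorbs the summability factor into the spatial weight only requires $\zeta_i \le d$. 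When $\alpha < 0$, the target order $\alpha + \zeta_n - d$ is also negative, and the trivial pointwise estimate
\begin{equ}
p^{\otimes n}_{(t)}(x,z) \le \bigl(\sup_y p_{n,t_n}(y-z)\bigr) \int p^{\otimes(n-1)}_{(t_1,\ldots,t_{n-1})}(x,y)\,dy \lesssim 2^{(\alpha+\zeta_n-d)N}
\end{equ}
already suffices, since a uniform bound by $2^{\beta N}$ with $\beta \le 0$ automatically implies order $\beta$ (the function $(\sqrt{T}+\|x-z\|+2^{-N})^{-\beta}$ attains its minimum $2^{\beta N}$ at the origin and grows outward). The corresponding summability of $p^{\otimes n}$ follows by Fubini from the inductive summability, giving $\int p^{\otimes n}(x,z)\,dx \lesssim 2^{(\zeta_n - d)N}\cdot 2^{(\alpha - d)N} = 2^{(\alpha+\zeta_n-2d)N}$, which is exactly the bound required to propagate the induction.
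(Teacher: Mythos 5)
Your proof is correct and follows the route the paper intends: the paper states this as an ``immediate corollary'' of Lemma~\ref{lem:convolutioninspace} with no written proof, and the intended argument is exactly your iteration of that lemma, convolving one factor at a time and propagating both the order and the summability bound~\eqref{eq:nfoldsummability}. Your additional care with the hypothesis $\zeta_1+\zeta_2-d>0$ is warranted and correctly handled: the corollary's claimed order $\sum_i\zeta_i-d(n-1)$ can indeed be negative, your observation that the proof of Lemma~\ref{lem:convolutioninspace} only uses $\zeta_i\in[0,d]$ (so the absorption $2^{(\zeta_i-d)N}\le(\sqrt{t}+\|x-z\|+2^{-N})^{d-\zeta_i}$ goes through) covers the case $\alpha\in[0,d]$, and the $\sup\times L^1$ bound together with the remark that a uniform bound $2^{\beta N}$ with $\beta\le 0$ implies order $\beta$ correctly disposes of the case $\alpha<0$.
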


\subsection{Decomposition of transition probabilities for the labelled exclusion process}
\label{S4.2}
In Section~\ref{S4.1} we derived estimates on the transition probabilities for the labelled exclusion process. In this section we use these estimates to show that even more holds, namely we show that for any $\fx,\fy\in\St^\cA$ one has an identity of the form ``$p_t^{\fx}(\fy) = \prod_{i \in \cA}p_t(x_i,y_i)$ + expectation of sum of product of martingales'', 
where $p_t(x,y)$ denotes the discrete heat kernel, see Proposition~\ref{prop:decompositionofproba} below.
To specify these products, we start by collecting and proving some facts about martingales.

\subsubsection{Facts about martingales}

We first recall that a semimartingale is a process $X$ of the form $X=X_0+M+A$, where $X_0$ is finite-valued and $\mathcal{F}_0$-measurable (for some filtration $\cF_t$, $t\geq 0$), $M$ is a local martingale and $A$ is of finite variation. The local martingale part may be further decomposed. Indeed, by Jacod and Shiryaev \cite[Theorem 1.4.18]{JacodShiryaev}, any local martingale $M$ admits a unique decomposition $M=M_0+M^c+M^d$, where $M^c$ is a continuous local martingale. 
\begin{definition}
\label{def:opvar}
Let $A$ be a finite index set and consider c\`adl\`ag semimartingales $X^i, i\in A$. The optional variation is the limit in probability of
\begin{equation}
\label{eq:opvar}
[X_A]_t \stackrel{\text{def}}{=} \lim_{\mathrm{max}(t_{i+1}-t_i)\downarrow 0}
\sum_{i} \prod_{j\in A}(X^j_{t_{i+1}}-X^{j}_{t_{i}}),
\end{equation}
where $0= t_0, t_1,\ldots$ is a partition of $[0,t]$. If $A=\emptyset$, then we define
$[X_A]=0.$ 
\end{definition}
For any c\`adl\`ag process $X$, we write $\Delta_r X_r = X_r-X_{r^{-}}$ for the size of the jump of $X$ at time $r$.
The following result taken from \cite[Prop.~1]{Mykland} clarifies the structure of the optional variation.

\begin{proposition}
\label{prop:repopvar}
Let $A$ be a finite index set. The optional variations are well defined for any collection of c\`adl\`ag semimartingales
$X^i, i\in A$, in the sense that the limit in \eqref{eq:opvar} is independent  of the sequence of partitions and has a c\`adl\`ag modification. The optional variations are semimartingales and they possess the representation,
\begin{equation}
\label{eq:repopvar}
[X_A]_t= 
\left\{
\begin{array}{cl}
X^{a}_{t}- X^{a}_{0},&\quad \text{if $A=\{a\}$,}\\
\langle X^{a,c},X^{b,c}\rangle_{t} + \sum_{r\leq t}\Delta_r X^{a}_{r}\Delta_r X^{b}_{r},&\quad \text{if $A=\{a,b\}$,}\\
\sum_{r\leq t} \prod_{i\in A}\Delta_r X^{i}_{r},&\quad \text{otherwise.}
\end{array}\right.
\end{equation}
Here, $X^{a_1,c}$ and $X^{a_2,c}$ denote the continuous martingale part of $X^{a_1}$ and $X^{a_2}$ respectively
and $\langle\cdot,\cdot\rangle$ denotes the predictable quadratic variation.\qed
\end{proposition}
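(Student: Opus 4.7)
I would handle the three cases in order of increasing difficulty. The case $|A|=1$ is immediate by telescoping the partition sum. For $|A|=2$, I would invoke the classical quadratic covariation identity: after writing $X^a=X^a_0+M^a+V^a$ with $V^a$ of finite variation and noting that mixed sums of the form $\sum_k (V^a_{t_{k+1}}-V^a_{t_k})(X^b_{t_{k+1}}-X^b_{t_k})$ are dominated by the total variation of $V^a$ times the oscillation of $X^b$ on the largest partition interval (hence vanish in the limit), one reduces to the martingale case, where the standard Itô--Meyer identity gives $\langle M^{a,c},M^{b,c}\rangle_t+\sum_{r\le t}\Delta_r M^a_r \Delta_r M^b_r$.

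For the main case $|A|\ge 3$, the strategy is to decompose each semimartingale as $X^i=X^i_0+Y^i+J^i$, where $Y^i$ is a continuous semimartingale (sum of continuous local martingale and continuous finite-variation drift) and $J^i_t=\sum_{s\le t}\Delta_s X^i_s$ collects the pure-jump part. Expanding
\[
\sum_k \prod_{i\in A}\bigl(X^i_{t_{k+1}}-X^i_{t_k}\bigr)
\]
by multilinearity produces $2^{|A|}$ families of terms according to which factors come from $Y^i$ and which from $J^i$; the claim is that only the all-$J$ term survives and yields precisely $\sum_{r\le t}\prod_i\Delta_r X^i_r$.

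The all-continuous term is bounded, for $|A|=3$, by H\"older's inequality as
\[
\sum_k\prod_{i=1}^3|Y^i_{t_{k+1}}-Y^i_{t_k}|\le \max_k|Y^1_{t_{k+1}}-Y^1_{t_k}|\cdot \prod_{i=2}^3\Bigl(\sum_k|Y^i_{t_{k+1}}-Y^i_{t_k}|^2\Bigr)^{1/2},
\]
which tends to $0$ in probability since the first factor vanishes by uniform continuity on compacts while the second and third stay bounded by $[Y^2]_t^{1/2}[Y^3]_t^{1/2}$; an analogous chain handles larger $|A|$. The mixed terms are where the real work lies: my plan is to fix $\e>0$, split each $J^i$ into ``big'' jumps $J^{i,\e}$ (of size exceeding $\e$, locally finitely many) and ``small'' jumps $J^{i,\e,-}$, and argue separately. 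Intervals meeting a big jump are eventually unique for each jump as the mesh shrinks, and on them any accompanying continuous factor becomes arbitrarily small by continuity of $Y^j$; the small-jump factors, in turn, satisfy a continuous-semimartingale-type bound through $\sum_s(\Delta_s X^i_s)^2<\infty$ combined with Cauchy--Schwarz, so their contribution vanishes as mesh $\downarrow 0$ and then $\e\downarrow 0$. For the all-$J$ term the same big/small truncation together with dominated convergence over the finite set of big-jump times yields the claimed representation.

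\textbf{Main obstacle.} The substantive difficulty is the control of the mixed ``continuous times jump'' terms for $|A|\ge 3$: individual jump increments are $O(1)$, so one cannot merely estimate by the mesh size. The correct handle is the cleanly separated truncation $J^i=J^{i,\e}+J^{i,\e,-}$ above, exploiting the local finiteness of big jumps and the $\ell^2$-summability of small jumps; once this is set up, the independence of the limit on the chosen partition sequence and the existence of a c\`adl\`ag modification follow from the fact that the surviving limit $\sum_{r\le t}\prod_i\Delta_r X^i_r$ is manifestly well-defined and c\`adl\`ag, and is itself a pure-jump semimartingale.
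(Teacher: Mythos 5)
The paper gives no proof of this proposition at all: it is imported verbatim from Mykland's Proposition~1 and closed with a \qed, so there is nothing internal to compare your argument against. Judged on its own merits, your overall strategy for $|A|\ge 3$ (separate finitely many large jumps, control the rest via $\sum_{s\le t}(\Delta_s X^i_s)^2<\infty$ and Cauchy--Schwarz, and let a vanishing sup-factor kill every term containing at least one non-jump increment) is the right one. But two steps fail as written. The more serious one is the decomposition $X^i=X^i_0+Y^i+J^i$ with $Y^i$ continuous and $J^i_t=\sum_{s\le t}\Delta_s X^i_s$: for a general c\`adl\`ag semimartingale the jumps are only square-summable, not absolutely summable, so the series defining $J^i$ --- and likewise your small-jump part $J^{i,\e,-}$ --- may diverge; a purely discontinuous local martingale is not the sum of its jumps. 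The repair is standard but changes the bookkeeping: subtract only the finitely many jumps of size exceeding $\e$ and treat the remainder $X^{i,\e}$ not as a continuous process but as a semimartingale whose jumps are bounded by $\e$. Then the all-remainder term is controlled by $\max_k\prod_{i\ge 3}\bigl|X^{i,\e}_{t_{k+1}}-X^{i,\e}_{t_k}\bigr|$ times $\prod_{i=1}^{2}\bigl(\sum_k(X^{i,\e}_{t_{k+1}}-X^{i,\e}_{t_k})^2\bigr)^{1/2}$, where the first factor stays of order $\e^{|A|-2}$ (it does \emph{not} tend to $0$), and one concludes by letting $\e\downarrow 0$ at the end.

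The second gap is in the case $|A|=2$: the claim that $\sum_k(V^a_{t_{k+1}}-V^a_{t_k})(X^b_{t_{k+1}}-X^b_{t_k})$ vanishes because it is dominated by the total variation of $V^a$ times the largest oscillation of $X^b$ is false when $X^b$ jumps, since that oscillation does not tend to $0$ with the mesh (take $V^a=X^b=\one_{[1,\infty)}$: the bound equals $1$ for every partition and the sum converges to $1$, not $0$). These cross terms in fact converge to $\sum_{s\le t}\Delta_s V^a_s\,\Delta_s X^b_s$, which is exactly why the jump sum in \eqref{eq:repopvar} involves $\Delta_r X^a_r\Delta_r X^b_r$ rather than the jumps of the martingale parts alone; your reduction to the martingale case would therefore produce the wrong formula. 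Both points are classical and repairable, but as proposed the proof has genuine gaps.
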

It is well known that the quadratic variation of two semimartingales $X^1, X^2$ is also characterised via
\begin{equation}
\label{eq:quadvar}
 X^{1}_tX^{2}_t  = X^{1}_0X^{2}_0 +\int_{0}^{t}X^{1}_{r^-}\, dX^{2}_r
+\int_{0}^{t}X^{2}_{r^-}\, dX^{1}_r + [X^1,X^2]_t.
\end{equation}
This identity can be generalised to the case of arbitrary finite products.
Recall the notation $X^{A}$ from~\eqref{eq:X_B}.
\begin{lemma}
\label{lem:genprod}
Let $A$ be a finite index set and let $X^{i},\, i\in A$, be c\`adl\`ag semimartingales.
Then,
\begin{equation}
\label{eq:genprod}
\begin{aligned}
X^A_t = X^A_0 + \sum_{B \subsetneq A} \int_0^t X^B_{r^-} d[X_{A\setminus B}]_r\;,
\end{aligned}
\end{equation}
where $B=\emptyset$ is allowed in the above sum.
\end{lemma}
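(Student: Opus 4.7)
I would prove this by induction on $|A|$. The base case $|A|=1$ is immediate since the only $B\subsetneq\{a\}$ is $B=\emptyset$, and Proposition~\ref{prop:repopvar} gives $[X_{\{a\}}]_t=X^a_t-X^a_0$, so $X^a_t-X^a_0=\int_0^t 1\,d[X_{\{a\}}]_r$ holds trivially. The base case $|A|=2$ is the classical integration-by-parts identity \eqref{eq:quadvar}, since the three terms $B=\emptyset, \{a\}, \{b\}$ correspond precisely to $[X^a,X^b]_t$, $\int X^a_{r^-}dX^b_r$, and $\int X^b_{r^-}dX^a_r$.

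For the inductive step, pick any $a\in A$, set $A'=A\setminus\{a\}$, and apply the $|A|=2$ formula to the pair $(X^{A'},X^a)$ to get
\begin{equ}
X^A_t = X^{A'}_0 X^a_0 + \int_0^t X^{A'}_{r^-}\,dX^a_r + \int_0^t X^a_{r^-}\,dX^{A'}_r + [X^{A'},X^a]_t.
\end{equ}
The first integral is exactly the term $B=A'$ in the right-hand side of \eqref{eq:genprod} since $dX^a=d[X_{\{a\}}]$. For the second integral I would use the inductive hypothesis to substitute $dX^{A'}_r=\sum_{B'\subsetneq A'}X^{B'}_{r^-}d[X_{A'\setminus B'}]_r$, producing, after the reindexing $B=B'\cup\{a\}$, precisely the sum over $B\subsetneq A$ with $a\in B$.

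It then remains to identify $[X^{A'},X^a]_t$ with the sum over $B\subsetneq A'$ (for which $A\setminus B\supseteq\{a\}$ has cardinality at least $2$). Using bilinearity of the optional variation and the rule $[\int H\,dX,Y]_t=\int H\,d[X,Y]_r$, together with the inductive expansion of $X^{A'}$, I obtain
\begin{equ}
[X^{A'},X^a]_t = \sum_{B'\subsetneq A'}\int_0^t X^{B'}_{r^-}\,d\bigl[[X_{A'\setminus B'}],X^a\bigr]_r,
\end{equ}
so what is needed is the auxiliary identity $[[X_D],X^a]_t=[X_{D\cup\{a\}}]_t$ for every nonempty $D$ with $a\notin D$. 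I would verify this by a short case analysis based on the explicit representation in Proposition~\ref{prop:repopvar}: for $|D|=1$ it is trivial; for $|D|=2$, say $D=\{b,c\}$, the continuous finite-variation part $\langle X^{b,c},X^{c,c}\rangle$ of $[X_D]$ contributes nothing to $[\cdot,X^a]$, leaving $\sum_r\Delta X^a_r\Delta X^b_r\Delta X^c_r=[X_{\{a,b,c\}}]_t$; and for $|D|\ge 3$, $[X_D]$ is a pure-jump process with jump $\prod_{i\in D}\Delta X^i_r$ at time $r$, so $[[X_D],X^a]_t=\sum_r\Delta X^a_r\prod_{i\in D}\Delta X^i_r$, which coincides with $[X_{D\cup\{a\}}]_t$.

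The main obstacle is this auxiliary identity, specifically the $|D|=2$ case, because one has to verify that the continuous bracket $\langle X^{b,c},X^{c,c}\rangle$ truly drops out when coupled against $X^a$; this is where one uses that any continuous finite-variation semimartingale has vanishing optional covariation with every other semimartingale. Once this sublemma is in hand, assembling the four contributions from the $|A|=2$ product rule gives exactly the decomposition \eqref{eq:genprod} indexed by $\{B\subsetneq A\}=\{B\ni a,\,B\neq A\}\sqcup\{B=A'\}\sqcup\{B\subsetneq A'\}$, closing the induction.
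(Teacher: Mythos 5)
Your proof is correct and follows essentially the same route as the paper: induction on $|A|$, the two-variable integration-by-parts formula \eqref{eq:quadvar} applied to $X^{A'}$ and $X^a$, expansion of $dX^{A'}$ via the inductive hypothesis, and the identity $\bigl[\int_0^\cdot H_{r^-}\,d[X_D]_r,X^a\bigr]_t=\int_0^t H_{r^-}\,d[X_{D\cup\{a\}}]_r$ to absorb the bracket term. The only cosmetic difference is that you verify this last identity by a direct case analysis on $|D|$ using the representation in Proposition~\ref{prop:repopvar}, whereas the paper obtains it by citing \cite[Eq.~4.54 and Thm~4.47]{JacodShiryaev}; both are fine.
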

\begin{proof}
We write $A=\{1,\ldots,n\}$ and we proceed by induction over $n$. The case $n=1$ is obvious and the case $n=2$ is precisely \eqref{eq:quadvar}. Assume now that we have shown \eqref{eq:genprod} for some $n\geq 2$ and
write $Z^n_t = \prod_{i=1}^{n}X_t^{i}$.
Since c\`adl\`ag semimartingales are stable under products, $Z^n$ is again a c\`adl\`ag semimartingale
and we may apply \eqref{eq:quadvar} to obtain 
\begin{equation}
\label{eq:induction}
Z^{n+1}_t = Z^n_t X_t^{n+1}
=Z^{n+1}_0+ \int_0^{t}Z^n_{r^-}\, dX_r^{n+1} + \int_{0}^{t}X_{r^{-}}^{n+1}\,
dZ^n_r + [Z^n,X^{n+1}]_t.
\end{equation}
The induction hypothesis then yields
\begin{equ}
\int_{0}^{t}X_{r^{-}}^{n+1}\,
dZ^n_r = \sum_{B \subsetneq A} \int_{0}^{t}X_{r^{-}}^{n+1}\,X^B_{r^-}\,
d[X_{A\setminus B}]_r\;,
\end{equ}
as well as
\begin{equ}{}
[Z^n,X^{n+1}]_t = \sum_{B \subsetneq A} \Bigl[\int_0^\cdot X^B_{r^-} d[X_{A\setminus B}]_r,X^{n+1}\Bigr]_t\;,
\end{equ}
where we used the fact that the optional covariation of a constant process with any other semimartingale
always vanishes.
Combining this with the fact that for any collection of semimartingales $Z$ and $\{X^a\}_a$
and any finite index set $A$, one has
\begin{equ}
\Bigl[\int_0^\cdot Z_{r^-} d[X_{A}]_r,X^{a}\Bigr]_t =
\int_0^t Z_{r^-} d[X_{A\cup\{a\}}]_r\;,
\end{equ}
(combine for example \cite[Eq.~4.54 and Thm~4.47]{JacodShiryaev})
concludes the proof.
\end{proof}

\subsubsection{Application to the labelled exclusion process}
\label{S:binarytrees}

Let $\cA$ be a finite index set, fix $\fx,\fy,\fz\in\St^\cA$, define $\one_{\fy}(\cdot)= \one\{\fy=\cdot\}$ and 
consider a labelled exclusion process $X^\fx$ started at $\fx$.
We denote the particle started at $\fx_i$ by $X^{x_i}$. Since the generator of this process is given by $L_\cA$,
there is a martingale $M^{\fx}(\fy)$ such that
\begin{equation}
\label{eq:MP}
\one_\fy(X_t^{\fx})
= \one_\fy(X_0^{\fx})
+\int_0^t (L_\cA \one_{\fy})(X_s^\fx)\, ds + M_t^{\fx}(\fy)\;.
\end{equation}
It follows from Duhamel's formula, combined with the fact that $\one_\fy(\fz)$
is of the form $f(\fy-\fz)$ and the identity 
$\big(L_\cA f(\fy - \cdot)\big)(\fz) = \bigl(L_\cA f\bigr)(\fy - \fz)$, that 
\begin{equation}
\label{eq:Duhamel}
\one_\fy(X_t^{\fx})
=p_t^{\fx}(\fy) + \int_0^{t} \sum_{\fz\in\St^\cA} p_{t-s}^{\fz}(\fy)\, dM_s^{\fx}(\fz)\;.
\end{equation}
For $t>0$ we define martingales $M_{t,\cdot}^{\fx,\fy}$ by
\begin{equation}
\label{eq:Mfx}
M_{t,r}^{\fx,\fy} = \int_0^r \sum_{\fz\in\St^\cA} p_{t-s}^{\fz}(\fy)\, dM_s^{\fx}(\fz)\;.
\end{equation}
Since for all $r\geq t$, one has the identity $M_{t,r}^{\fx,\fy}= M_{t,t}^{\fx,\fy}$ (we comply with the 
usual convention that $p_t = 0$ for $t \le 0$), we call $t$ the terminal time of 
$M_{t,\cdot}^{\fx,\fy}$.

\begin{proposition}
	\label{prop:decompositionofproba}
	Fix $\fx,\fy\in\St^\cA$. Then,
	\begin{equation}
	\label{eq:decompositionofproba}
	p_t^{\fx}(\fy)=
	\sum_{J\subseteq\cA}\Big(\prod_{j\in J}p_{t}^{x_j}(y_j)\Bigr)
	\E\Big[\prod_{i\notin J} M_{t,t}^{x_i,y_i}\Big],
	\end{equation}
	where $J=\emptyset$ is allowed in the sum.
\end{proposition}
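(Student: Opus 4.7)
The plan is as follows. Since $X_t^\fx = (X_t^{x_i})_{i \in \cA}$ lives coordinatewise in $\St^\cA$, one has the pointwise product identity $\one_\fy(X_t^\fx) = \prod_{i \in \cA}\one_{y_i}(X_t^{x_i})$, and $p_t^\fx(\fy)$ is nothing but the expectation of this product. The key idea is to apply the single-particle Duhamel identity \eqref{eq:Duhamel} to each factor separately in order to write $\one_{y_i}(X_t^{x_i}) = p_t^{x_i}(y_i) + M_{t,t}^{x_i,y_i}$ for every $i\in\cA$, and then to expand the resulting product over $i \in \cA$ into the $2^{|\cA|}$ terms indexed by the choice, for each $i$, of whether to keep the deterministic piece $p_t^{x_i}(y_i)$ or the martingale piece $M_{t,t}^{x_i,y_i}$. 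Grouping those terms by the subset $J \subseteq \cA$ of indices for which the deterministic piece was chosen and taking expectations then yields exactly \eqref{eq:decompositionofproba}. Observe that we do not need any independence of the martingales $M_{t,t}^{x_i,y_i}$ (which are in fact highly correlated through the shared stirring noise); only linearity of expectation is used in the final step.

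The one point that requires justification is that \eqref{eq:Duhamel} can be applied in the single-particle form described above, i.e., with the one-particle discrete heat kernel $p_t^{x_i}(y_i)$ rather than some many-body transition probability. This is where the tagged-particle property of the stirring construction enters: although the trajectories $X^{x_i}$ are tightly coupled through the shared Poisson clocks of the graphical representation, each marginal $X^{x_i}$ is a simple random walk on $\St$. To see this, apply $L_\cA$ from \eqref{eq:labelledexclusion} to a function of the form $f(\fx) = g(\fx_a)$ depending only on the $a$-th coordinate: the only non-vanishing contributions come from bonds incident to $\fx_a$, and they sum to $\sum_{z \sim \fx_a}[g(z)-g(\fx_a)]$, i.e., to the SRW generator of $g$ evaluated at $\fx_a$. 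Consequently the martingale problem \eqref{eq:MP} restricted to a single coordinate reduces to the SRW martingale problem, and Duhamel's formula then produces the one-particle kernel as required. Once this reduction is in place, the expansion described above is purely algebraic and no further obstacle arises; I expect this tagged-particle observation to be the only substantive content of the argument.
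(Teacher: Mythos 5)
Your proposal is correct and follows essentially the same route as the paper: the paper's proof likewise writes $p_t^{\fx}(\fy)=\E\bigl[\prod_{i\in\cA}\one\{X_t^{x_i}=y_i\}\bigr]=\E\bigl[\prod_{i\in\cA}[p_t^{x_i}(y_i)+M_{t,t}^{x_i,y_i}]\bigr]$ via the single-particle Duhamel identity and expands the product. The tagged-particle observation you spell out is left implicit in the paper (cf.\ Remark~\ref{rem:heatkernel}), but your justification of it is sound.
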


\begin{proof}
	With~\eqref{eq:Mfx} at hand we can write
	\begin{equation}
	\label{eq:decomposition}
	\begin{aligned}
	p_t^{\fx}(\fy)
	= \E\Big[\prod_{i \in \cA}\one\{X_t^{x_i}=y_i\}\Big]
	= \E\Big[\prod_{i\in\cA}[p_t^{x_i}(y_i)+M_{t,t}^{x_i,y_i}]\Big],
	\end{aligned}
	\end{equation}
	whence the claim follows by expanding the product.
\end{proof}
Proposition~\ref{prop:decompositionofproba} shows how $p_t^{\fx}(\fy)$ and $\prod_{i\in\cA}p_t^{x_i}(y_i)$ are related. Yet, this relation is only meaningful when we have a handle on the expectations of the various products of martingales appearing in~\eqref{eq:decompositionofproba}. The rest of this section is devoted to the analysis of these martingale products.

For this, it will be convenient to introduce the following notations. Given $\fx \in \St^A$ and $\bar A \subset A$,
we write $\fx_{\bar A} \in \St^{\bar A}$ for the restriction of $\fx$ to $\bar A$.
Given two disjoint index sets $A, B$ and $t \in \R_+^2$, we also write
\begin{equ}
P_t^{A,B}(\fx,\fy) = p_{t_1}^{\fx_A}(\fy_A)p_{t_2}^{\fx_B}(\fy_B)\;,\qquad \fx,\fy \in \St^{A\sqcup B}\;.
\end{equ}
Given furthermore two elements $i\in A$ and $j \in B$, we write
\begin{equ}[e:grad]
\nabla_{i,j} P_t^{A,B}(\fx,\fy) = 
\big(p_{t_1}^{\fx_A^{ij}}(\fy_A) - p_{t_1}^{\fx_A}(\fy_A)\big)\big(p_{t_2}^{\fx_B^{ij}}(\fy_B)-p_{t_2}^{\fx_B}(\fy_B)\big)\;,\quad 
\|\fx_i - \fx_j\| = 1\;,
\end{equ}
and $\nabla_{i,j} P_t^{A,B}(\fx,\fy) = 0$ otherwise. Here, we write
$\fx^{ij}$ for the state with indices $i$ and $j$ swapped. Finally, we write $\mathbb{E}_N$ for the set of edges in $\St$.

\begin{lemma}
	\label{lem:prodoflabelledexclusion}
	Fix a finite index set $\cA = \cA^1 \sqcup \cA^2$, as well as $\fx,\fy\in\St^{\cA}$ and write
	$\fx^i = \fx_{\cA^i}$. Then,
	\begin{equation}
	\label{eq:optvaroflabelledexclusion}
d{[M_{t_1,\cdot}^{\fx^1,\fy^1}, M_{t_2,\cdot}^{\fx^2,\fy^2}]_{s}}
	= \sum_{e\in \mathbb{E}_N} \sum_{\substack{i\in\cA^1,\\ j\in\cA^2}}\one\{X_{s^-}^{\fx}(i,j) = e\} 
\nabla_{i,j}P^{\cA^1,\cA^2}_{t-s}(X_{s^-}^{\fx},\fy)\, dN_s(e)\,,
	\end{equation}
	where on the right hand side $t=(t_1,t_2)$ and $s$ is identified with the $2$-dimensional vectors with both components equal to $s$.
	Moreover, the optional variation of three or more martingales as above vanishes almost surely. Recall that $P_{\cdot}(e)$ denotes the Poisson clock of rate $1$ attached to the edge $e$ introduced in the graphical construction of the exclusion process in Section~\ref{S1}.
\end{lemma}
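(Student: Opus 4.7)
The plan is to work directly from the jump structure of the martingales $M_{t,\cdot}^{\fx,\fy}$ and apply Proposition~\ref{prop:repopvar}. First I would observe that $M^\fx(\fz)$ is a purely discontinuous martingale: indeed $\one_\fz(X_t^\fx)$ is a pure jump process with jumps only at firing times of the Poisson clocks of the stirring dynamics, while the compensator in~\eqref{eq:MP} is continuous and of finite variation, so the martingale part $M^\fx(\fz)$ is itself purely discontinuous. Consequently, the stochastic integral $M_{t,\cdot}^{\fx,\fy}$ from~\eqref{eq:Mfx} is also purely discontinuous, and the continuous-covariation term in Proposition~\ref{prop:repopvar} drops out. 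Hence $[M_{t_1,\cdot}^{\fx^1,\fy^1},M_{t_2,\cdot}^{\fx^2,\fy^2}]_s$ reduces to the sum over $r \leq s$ of the product of jumps.

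Next I would compute the size of these jumps. At a firing time $r$ of an edge $e$, we have $X_r^\fx = \sigma^e X_{r^-}^\fx$, so $\Delta_r M^\fx(\fz) = \one_\fz(\sigma^e X_{r^-}^\fx) - \one_\fz(X_{r^-}^\fx)$ and therefore
\begin{equ}
\Delta_r M_{t,\cdot}^{\fx,\fy} \;=\; p_{t-r}^{\sigma^e X_{r^-}^\fx}(\fy) \,-\, p_{t-r}^{X_{r^-}^\fx}(\fy)\;.
\end{equ}
Applying this to $(\fx^i,\fy^i)$ for $i=1,2$, the factor $\sigma^e$ changes the $\cA^i$-marginal $X_{r^-}^{\fx^i}$ only when $e$ carries at least one label from $\cA^i$ at its endpoints. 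Since $e$ has exactly two endpoints and $\cA^1\cap\cA^2=\emptyset$, both jumps are nonzero if and only if there exist $i\in\cA^1$ and $j\in\cA^2$ with $\{X_{r^-}^{x_i},X_{r^-}^{x_j}\}=e$. In that case the joint swap maps $X_{r^-}^\fx$ to $(X_{r^-}^\fx)^{ij}$, so that the product of jumps at $r$ is exactly $\nabla_{i,j}P^{\cA^1,\cA^2}_{t-r}(X_{r^-}^\fx,\fy)$ in the notation of~\eqref{e:grad}. Summing over firing times with the counting measure $dN_r(e)$, and then over $e$ and $(i,j)$, gives precisely the right-hand side of~\eqref{eq:optvaroflabelledexclusion}.

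For the claim on three or more martingales, the key observation is that at a firing time of any single edge $e$, the swap $\sigma^e$ moves at most two labels, so at most two of the disjoint groups of labels are affected simultaneously. Thus at least one among three or more such martingales has a zero jump, and the product over all of them vanishes pointwise at every jump time. Since all the martingales are purely discontinuous, the optional variation defined through~\eqref{eq:opvar} is the sum of these vanishing products, so it is almost surely zero.

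The only place where care is needed is bookkeeping: matching the swap operator $\sigma^e$ acting on $\St^{\cA^i}$-marginals with the ``swap'' notation $\fx\mapsto\fx^{ij}$ acting on the full $\St^\cA$-configuration, and correctly identifying the event ``$\{X_{r^-}^{x_i},X_{r^-}^{x_j}\}=e$'' with the condition $\|X_{r^-}^{x_i}-X_{r^-}^{x_j}\|=1$ built into the definition of $\nabla_{i,j}$. Once these identifications are made, everything is a direct computation from Proposition~\ref{prop:repopvar}.
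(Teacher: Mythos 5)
Your proposal is correct and follows essentially the same route as the paper: identify the jumps $\Delta_r M_{t,\cdot}^{\fx,\fy} = p_{t-r}^{X_r^\fx}(\fy) - p_{t-r}^{X_{r^-}^\fx}(\fy)$, observe that the two martingales jump simultaneously only at Poisson events of an edge joining a particle labelled in $\cA^1$ to one labelled in $\cA^2$ (and that a single edge event can never affect three disjoint groups), and read off the formula from the jump representation of the optional covariation in Proposition~\ref{prop:repopvar}. Your explicit remark that the martingales are purely discontinuous, so the continuous covariation term drops out, is a detail the paper leaves implicit but is entirely in the spirit of its argument.
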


\begin{proof}
Since, by~\eqref{eq:MP}, the jumps of $M_s^{\fx}(\fz)$ are given by
\begin{equ}
\Delta_s M_s^{\fx}(\fz) = \Delta_s \one_\fz(X_s^\fx)\;,
\end{equ}
we have
\begin{equ}
\Delta_s M_{t,s}^{\fx,\fy} = \sum_{\fz} p_{t-s}^\fz(\fy) \Delta_s \one_\fz(X_s^\fx)
= p_{t-s}^{X_s^\fx}(\fy) - p_{t-s}^{X_{s-}^\fx}(\fy)\;.
\end{equ}
Since $\fx \in \St^{\cA}$ so that the $\fx(i)$ are all distinct, it follows that the only 
times at which $M_{t_1,\cdot}^{\fx^1,\fy^1}$ and
$M_{t_2,\cdot}^{\fx^2,\fy^2}$ can jump simultaneously are those Poisson events in $P(e)$,
where $e$ is an edge connecting $X_{s^-}^{\fx}(i)$ to $X_{s^-}^{\fx}(j)$ for some 
$i \in \cA^1$ and $j \in \cA^2$,
which leads to \eqref{eq:optvaroflabelledexclusion}. For the same reason, three such martingales 
corresponding to completely distinct points $\fx$ cannot jump simultaneously. 
\end{proof}

With Lemmas~\ref{lem:genprod} and~\ref{lem:prodoflabelledexclusion} at hand we are now able to derive an expression for $\E[\prod_{i\in\cA}M_{t,t}^{x_i,y_i}]$ which can be most conveniently be represented in 
terms of labelled rooted binary trees. 
Before introducing these we start with an illustrative example.
For $\fx, \fy \in\St^d$, we are interested in $\E[\prod_{i\le 3}M_{t,t}^{\fx_i,\fy_i}]$. To that end we note that, with hopefully obvious notation, by Lemmas~\ref{lem:genprod} and~\ref{lem:prodoflabelledexclusion}, and the fact that we are dealing with martingales starting at zero,
\begin{equation}
\label{eq:nequals3}
\E\Big[\prod_{i=1}^{3}M_{t,t}^{\fx_i,\fy_i}\Big]
=\sum_{i=1}^{3}\E\Big[\int_0^t M_{t,s^{-}}^{\fx_i,\fy_i}\, d[M_{t,\cdot}^{\fx_j,\fy_j}, j\neq i]_s\Big].
\end{equation}
We focus solely on the case $i=3$. The other cases can be treated in the same manner.
First of all, by Lemma~\ref{lem:prodoflabelledexclusion},
\begin{equation}
\label{eq:nequals3optvar}
d[M_{t,\cdot}^{\fx_1,\fy_1}, M_{t,\cdot}^{\fx_2,\fy_2}]_s = 
\sum_{e\in\mathbb{E}_N} \one\{X_{s^{-}}^{\fx_{1,2}}=e\}\nabla_{1,2}P^{\{1\},\{2\}}_{t-s}\big(X_{s^{-}}^{x_1,x_2}(y_1,y_2)\big)\, dN_s(e)\;.
\end{equation}
For every choice of $e\in\mathbb{E}_N$ the map $s\mapsto N_s(e)-s$ defines a zero mean martingale. Therefore, the summand with $i=3$ in~\eqref{eq:nequals3} equals
\begin{equation}
\label{eq:firsttimeoptvar}
\sum_{\fz\in\St^2}\E\Big[\int_0^t M_{t,s}^{\fx_3,\fy_3} \one\{X_{s^{-}}^{\fx_{\{1,2\}}}=\fz\}\nabla_{1,2}P^{\{1\},\{2\}}_{t-s}\big(\fz,\fy_{\{1,2\}}\big)\,ds\Big].
\end{equation}
Note that $s\mapsto  M_{t,s}^{x_3,y_3} \one\{X_{s^{-}}^{\fx_{\{1,2\}}}=\fz\}$ is not a martingale anymore, so that it seems that martingale techniques break down at this point. To overcome this problem we write in a similar way as in~\eqref{eq:Duhamel},
\begin{equation}
\label{eq:firstmerge}
\begin{aligned}
 \one\{X_{s^{-}}^{\fx_{\{1,2\}}}=\fz\}
=p_s^{\fx_{\{1,2\}}}(\fz)+ M_{s,s}^{\fx_{\{1,2\}}, \fz}.
\end{aligned}
\end{equation}
Inserting the transition kernel into~\eqref{eq:firsttimeoptvar} yields zero since $s\mapsto M_{t,s}^{\fx_3,\fy_3}$ is a mean zero martingale, so it remains to plug the martingale term into~\eqref{eq:firsttimeoptvar}. 
Using Lemmas~\ref{lem:genprod} and~\ref{lem:prodoflabelledexclusion} we conclude that \eqref{eq:firsttimeoptvar}
equals
\begin{equation}
\label{eq:finishexample}
\sum_{\fu \in \St^d\atop \fz \in \St^2} \sum_{j=1}^2 \int_0^t\int_0^s p_r^\fx(\fu) \nabla_{3,j} P^{\{3\},\{1,2\}}_{t-r,s-r}(\fu, \fy_{\{3\}} \sqcup \fz)\,\nabla_{1,2}  P^{\{1\},\{2\}}_{t-s}\big(\fz,\fy_{\{1,2\}}\big)\,ds\,dr
\end{equation}

\begin{remark}
	\label{rem:example}
	Below we introduce the class of labelled rooted binary trees alluded to above. We mention however already now that in this context~\eqref{eq:finishexample} can be represented as
	\begin{equ}[e:picTrees]
	\tikzsetnextfilename{treebin1}
	\begin{tikzpicture}[scale=0.35,baseline=0.5cm]
	\node at (-2,0) [dot] (left) {};
	\node at (0,0)  [dot] (middle) {};
	\node at (3,0)  [dot] (right) {};
	\node at (-1,1) [dot] (top1) {};
	\node at (1,3)  [dot] (top2) {};
	\node[below] at (-2,0) {\tiny 1};
	\node[below] at (0,0) {\tiny 2};
	\node[below] at (3,0) {\tiny 3};
	\node[left] at  (top1) {\tiny 1};
	
	\draw (left) to (top1);
	\draw (middle) to (top1); 
	\draw (top1) to (top2);
	\draw (right) to (top2);
	\end{tikzpicture}\;+
	\tikzsetnextfilename{treebin2}
	\begin{tikzpicture}[scale=0.35,baseline=0.5cm]
	\node at (-2,0) [dot] (left) {};
	\node at (0,0)  [dot] (middle) {};
	\node at (3,0)  [dot] (right) {};
	\node at (-1,1) [dot] (top1) {};
	\node at (1,3)  [dot] (top2) {};
	\node[below] at (-2,0) {\tiny 1};
	\node[below] at (0,0) {\tiny 2};
	\node[below] at (3,0) {\tiny 3};
	\node[left] at (top1)  {\tiny 2};
	
	\draw (left) to (top1);
	\draw (middle) to (top1); 
	\draw (top1) to (top2);
	\draw (right) to (top2);
	\end{tikzpicture}\;.
	\end{equ}
Reading~\eqref{eq:finishexample} backwards in time, the interpretation of the tree on the left hand side is that first particle $1$ and $2$ swap places.  Thereafter particle $1$ swaps places with particle $3$ and finally all three particles move to $x_1, x_2$ and $x_3$ respectively.
\end{remark}

We now introduce a class of labelled binary trees that will serve our purposes.
We fix a (non-empty) index set $\cA$ equipped with a partition $\hat \cA \neq \{\cA\}$ 
and we consider 
a forest $T$ of subsets of $\cA$ having the elements of $\hat \cA$ as its minimal sets. 
In other words, $T \subset \cP(\cA)$ is a collection of subsets (which we interpret as `nodes' of
the forest $T$) such that
\begin{claim}
\item One has $\hat \cA \subset T$ and, for any $u,v \in T$, $u \cap v \in \{\emptyset,u,v\}$.
\item For $u \in \hat\cA$ and $v \in T$, $v \subset u$ implies that $v = u$.
\item For any $u \in \hat\cA$, there exists $v \in T$ with $u \subset v$ and $v \neq u$.
\item For any $v \in T \setminus \hat \cA$, there exist $v_1,v_2 \in T$ such that $v = v_1 \cup v_2$.
\end{claim}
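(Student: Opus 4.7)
The plan is to treat conditions (1)--(4) as a combinatorial axiomatization of the structure of a labelled rooted binary forest whose leaves are labelled by the elements of $\hat\cA$, and to verify consistency by exhibiting an explicit correspondence with the usual graph-theoretic notion of such a forest.

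First I would attach to any $T \subset \cP(\cA)$ satisfying (1) its Hasse diagram: take nodes to be the elements of $T$ and draw an edge from $v$ to $w$ whenever $v \subsetneq w$ with no $u \in T$ satisfying $v \subsetneq u \subsetneq w$. Condition (1) asserts that $T$ is a laminar family of subsets of $\cA$, which is precisely equivalent to the Hasse diagram being a forest (a disjoint union of rooted trees oriented towards the maximal elements of $T$). This forest structure is what allows us to speak of \emph{children}, \emph{parents}, \emph{leaves} and \emph{roots}, and it is the backbone on top of which the remaining three conditions operate.

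Next I would read off the roles of (2)--(4) in this picture. Condition (2) asserts that each $u \in \hat\cA$ is minimal in $T$ under inclusion, so the elements of $\hat\cA$ are leaves of the forest. Condition (3) rules out the degenerate case in which such a $u$ is simultaneously a root, i.e.\ forms a one-node tree; equivalently every $u \in \hat\cA$ has a strict ancestor. Condition (4) then says that every non-leaf node $v \in T \setminus \hat\cA$ decomposes as a disjoint union $v = v_1 \cup v_2$ with $v_1, v_2 \in T$; combined with (1), these $v_1, v_2$ must be the maximal proper $T$-subsets of $v$, hence the children of $v$ in the Hasse diagram, and this is the \emph{binary branching} property.

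The delicate point, and the step I expect to be the main obstacle, is that the pair $\{v_1,v_2\}$ in (4) is uniquely determined by $v$, so that each internal node has unambiguously two children rather than merely some binary decomposition. If $v = v_1 \cup v_2 = v_1' \cup v_2'$ with all four members in $T$, then by (1) the intersection $v_1 \cap v_1'$ is either empty, equal to $v_1$, or equal to $v_1'$; the first case forces $v_1 \subseteq v_2'$ and $v_1' \subseteq v_2$ by laminarity together with equality of the unions, yielding $v_1=v_2'$ and $v_2=v_1'$, while the remaining cases reduce similarly. This is where laminarity and binary branching genuinely interact, and it is what promotes (1)--(4) from a plausible list of axioms to a faithful encoding of labelled rooted binary forests; once settled, an induction on $|\cA \setminus \hat\cA|$ yields the desired bijection and completes the verification.
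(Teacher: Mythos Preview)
The passage you are attempting to prove is not a theorem in the paper but a \emph{definition}: the four items are the axioms imposed on $T \subset \cP(\cA)$ in order for $(T,\fa)$ to belong to $\Trees_\cA$. Immediately after listing them, the paper gives precisely the interpretation you reconstruct: the first property makes the Hasse diagram of $(T,\subseteq)$ a forest, the second makes the elements of $\hat\cA$ its leaves, the third forbids any element of $\hat\cA$ from being a root, and the fourth forces binary branching. No proof is offered or needed; these are stipulations, not conclusions.

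What you have written is therefore not a proof of the statement but an elaboration of why these axioms encode labelled rooted binary forests, together with an attempt to establish uniqueness of the decomposition $v = v_1 \cup v_2$. The paper does rely on that uniqueness later (``Writing (uniquely) $v = v_1 \cup v_2$ with $v_i \in T$'') without justifying it, so your discussion is useful context, but your sketch has a small gap: from $v_1 \cap v_1' = \emptyset$ you correctly deduce $v_1 \subseteq v_2'$ and $v_1' \subseteq v_2$, but the leap to $v_1 = v_2'$ is not immediate. You need one more application of laminarity, e.g.\ to $v_2 \cap v_2'$, or the cleaner observation that any $v_1, v_2 \in T$ that are proper, disjoint, and union to $v$ must be \emph{maximal} proper $T$-subsets of $v$ (hence the children of $v$), and that no third child can exist. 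Once that is said, uniqueness follows, and your induction is unnecessary since nothing further is claimed.
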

The first property implies that the Hasse diagram for the poset $(T, \subseteq)$ 
is a forest, the second property implies that its leaves are given by the elements of $\hat \cA$, 
the third that its roots are \textit{not} given by elements of $\hat \cA$, and the 
last property implies that it is binary. We write $T_\rho \subset T$ for its set of maximal elements and we set
\begin{equ}
T_\downarrow = T \setminus T_\rho\;,\qquad 
T^\uparrow = T \setminus \hat \cA\;. 
\end{equ}
We then endow $T$ with a map $\fa \colon T_\downarrow \to \cA$ such that $\fa_v \in v$ for every $v \in T$
and we denote by $\Trees_\cA$ the set of all such pairs $(T,\fa)$. 
We also write $\Tree_\cA \subset \Trees_\cA$ for the pairs $(T,\fa)$
such that $T$ consists of a single tree, i.e.\ those such that $T_\rho = \{\cA\}$. 

\begin{remark}
Note that $\Trees_\cA = \emptyset$ if $\hat \cA = \{\cA\}$ while $\Trees_\emptyset = \{(\emptyset,e)\}$
with $e$ the empty function.
\end{remark}

We now fix sites
$\fx, \fy \in \St^\CA$ as well as positive numbers $\ft = (t_u)_{u\in\hat \cA}$ and some $t>0$ with $t\leq t_u$ for all $u\in\hat\cA$.
As before, we write $\fx_A$ for the restriction of $\fx$ to $A \subset \cA$.
Given $(T,\fa) \in \Trees_\cA$, we then assign to each $v \in T$
an integration variable $(s^v,\fz^v) \in \cD_t \eqdef [0,t] \times \St^v$.
For $v \in \hat \cA$, we furthermore set
\begin{equ}[e:leavevariables]
	s^v = t_{v}\;,\qquad \fz^v = \fy_{v}\;.
\end{equ}
For every $v \in T^\uparrow$, we then define a function
$P_v$ in the following way. Writing (uniquely) $v = v_1 \cup v_2$ with $v_i \in T$, we set
\begin{equ}[e:defPv]
	P_v(s,\fz) \eqdef \nabla_{\fa_{v_1},\fa_{v_2}} P^{v_1,v_2}_{s^{v_1} - s^v,s^{v_2} - s^v}(\fz^v, \fz^{v_1}\sqcup \fz^{v_2})\;. 
\end{equ}
(Note that this is well-defined since it is invariant under $v_1 \leftrightarrow v_2$.)
We then define
\begin{equ}[e:defI]
	\frak{I}_t^\cA(T,\fa,\fx,\fy,\ft)
	= \int_{\cD_t^{T^{\uparrow}}} \prod_{u \in T_\rho} p_{s_u}^{\fx_u}(\fz^u) \prod_{v \in T^\uparrow} P_v(s,\fz)\,ds\,\hat d\fz\;,
\end{equ}
where $\hat d\fz$ simply denotes counting measure (i.e.\ without normalising factor $2^{-dN}$).
In the special case $\cA = \emptyset$, it is natural to set 
$\frak{I}_t^{\emptyset}(\emptyset,e,e,e,e) = 1$. (Again, $e$ denotes the empty function.)

With these notations at hand, the expectation of products of martingales can be expressed in the
following way.
\begin{proposition}
	\label{prop:productofmartingales}
	For $\cA$, $\hat \cA$, $\fx$, $\fy$, $\ft$ and $t$ as above, one has
	\begin{equation}
	\label{eq:productofmartingales}
	\E\Big[\prod_{u\in \hat \cA}M_{t_u,t}^{\fx_u,\fy_u}\Big]
	= \sum_{(T,\fa)\in\Trees_\cA}\frak{I}_t^{\cA}(T,\fa,\fx,\fy,\ft)\;.
	\end{equation}
	Furthermore, their joint cumulant is given by 
	\begin{equation}
	\label{eq:cumulantmartingales}
	\E_c\Big\{M_{t_u,t}^{\fx_u,\fy_u}\,:\, u\in \hat \cA\Big\}
	= \sum_{(T,\fa)\in\Tree_\cA}\frak{I}_t^{\cA}(T,\fa,\fx,\fy,\ft)\;.
	\end{equation}
\end{proposition}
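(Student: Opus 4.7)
The plan is to establish~\eqref{eq:productofmartingales} by induction on $|\hat\cA|$, and then deduce~\eqref{eq:cumulantmartingales} from it using the defining relation between cumulants and expectations. The base cases $|\hat\cA|\leq 1$ are trivial: when $|\hat\cA|=1$ the left-hand side is the expectation of a mean-zero martingale, while $\Trees_\cA=\emptyset$ by the convention $\hat\cA\neq\{\cA\}$, so both sides vanish.

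For the inductive step, apply Lemma~\ref{lem:genprod} to $\prod_{u\in\hat\cA}M_{t_u,\cdot}^{\fx_u,\fy_u}$ and take expectation at time $t$. The terms with $|\hat\cA\setminus B|\geq 3$ vanish by the last statement in Lemma~\ref{lem:prodoflabelledexclusion}, while those with $|\hat\cA\setminus B|=1$ are stochastic integrals of bounded predictable processes against martingales and thus have zero expectation. Only the pair terms survive, giving
\begin{equation*}
\E\Big[\prod_{u}M_{t_u,t}^{\fx_u,\fy_u}\Big]=\sum_{\{u_1,u_2\}\subset\hat\cA}\E\Big[\int_0^t\prod_{w\neq u_1,u_2}M_{t_w,s^-}^{\fx_w,\fy_w}\,d[M_{t_{u_1},\cdot}^{\fx_{u_1},\fy_{u_1}},M_{t_{u_2},\cdot}^{\fx_{u_2},\fy_{u_2}}]_s\Big].
\end{equation*}
Using Lemma~\ref{lem:prodoflabelledexclusion} to express the optional covariation and writing $dN_s(e)=ds+d\tilde N_s(e)$ with $\tilde N$ the compensated Poisson martingale (whose contribution to the expectation is zero by the same martingale-integral argument), each pair term reduces to
\begin{equation*}
\sum_{i\in u_1,\,j\in u_2}\int_0^t\!ds\sum_{\fz}\nabla_{i,j}P^{u_1,u_2}_{t_{u_1}-s,\,t_{u_2}-s}(\fz,\fy_{u_1}\sqcup\fy_{u_2})\,\E\Big[\prod_{w\neq u_1,u_2}M_{t_w,s^-}^{\fx_w,\fy_w}\,\one\{X_{s^-}^{\fx_{u_1\cup u_2}}=\fz\}\Big].
\end{equation*}

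The crucial step is then to apply the Duhamel identity~\eqref{eq:Duhamel} in the form $\one\{X_{s^-}^{\fx_{u_1\cup u_2}}=\fz\}=p_s^{\fx_{u_1\cup u_2}}(\fz)+M_{s,s}^{\fx_{u_1\cup u_2},\fz}$, splitting each pair contribution into two pieces. For the $p_s$-piece, the inductive hypothesis applies to the product of $|\hat\cA|-2$ remaining martingales with the reduced partition $\hat\cA\setminus\{u_1,u_2\}$ of $\cA\setminus(u_1\cup u_2)$ at current time $s$; this produces forests on $\cA$ in which the new node $v=u_1\cup u_2$ is the root of an isolated two-leaf tree with leaves $u_1,u_2$. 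For the $M_{s,s}$-piece, the hypothesis applies with the refined partition $\hat\cA'=(\hat\cA\setminus\{u_1,u_2\})\cup\{v\}$ of $\cA$, where $v$ has terminal time $s$ and target position $\fz$; re-expanding the leaf $v$ of each $\hat\cA'$-forest into the pair $u_1,u_2$ (with labels $\fa_{u_1}=i$, $\fa_{u_2}=j$ provided by the sum in Lemma~\ref{lem:prodoflabelledexclusion}) produces forests on $\cA$ in which $v$ is an internal non-root node.

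Summing over all pairs therefore generates every element of $\Trees_\cA$. The combinatorial subtlety is that a given forest $(T,\fa)$ is produced by several pairs, one for each internal node $v\in T^\uparrow$ whose two children both lie in $\hat\cA$; however, the nested time-integrations enforce that each pair contributes only the slice of $[0,t]^{T^\uparrow}$ on which $s^{u_1\cup u_2}$ is maximal, and these slices partition the total integration region (up to a null set) and reassemble to exactly $\frak{I}_t^\cA(T,\fa)$. Verifying this time-ordering bookkeeping, together with the matching of labels via the sum over $i\in u_1,\,j\in u_2$, is the main technical obstacle. Once~\eqref{eq:productofmartingales} is in hand,~\eqref{eq:cumulantmartingales} follows at once: the integrand in $\frak{I}_t^\cA(T,\fa)$ factorises over the connected components of $T$, so grouping forests by their induced partition of $\hat\cA$ yields $\sum_{(T,\fa)\in\Trees_\cA}\frak{I}_t^\cA(T,\fa)=\sum_{\pi\in\cP(\hat\cA)}\prod_{B\in\pi}\sum_{(T',\fa')\in\Tree_{\cup B}}\frak{I}_t^{\cup B}(T',\fa')$, and comparing this with the relation $\E[\prod_u M_u]=\sum_\pi\prod_{B\in\pi}\E_c\{M_u:u\in B\}$ from Definition~\ref{def:cum} identifies the connected-tree sum with the joint cumulant by a second induction on $|\hat\cA|$.
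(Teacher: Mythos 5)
Your proposal is correct and follows essentially the same route as the paper: induction on $|\hat\cA|$, reduction to pair terms via Lemmas~\ref{lem:genprod} and~\ref{lem:prodoflabelledexclusion}, compensation of the Poisson clock, the Duhamel split into $p_s^{\fx_{a\cup b}}(\fz)+M_{s,s}^{\fx_{a\cup b},\fz}$ feeding the induction hypothesis on $\cA\setminus(a\cup b)$ and on the coarsened partition $\cA^{a,b}$ respectively, and finally the factorisation over connected components to pass from moments to cumulants. The ``time-ordering bookkeeping'' you flag as the main obstacle is exactly what the paper formalises by augmenting each $(T,\fa)$ with a compatible total order on $T^\uparrow$ and decomposing $\cD_t^T$ into the corresponding simplices, so that each ordered tree arises from exactly one pair (its chronologically last interaction); your description of the slicing is the correct resolution, modulo the slip of calling $\hat\cA^{a,b}$ a refinement rather than a coarsening of $\hat\cA$.
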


\begin{remark}
	We will really only ever be interested in the special case of Proposition~\ref{prop:productofmartingales}
	when $\hat \cA =\{\{a\}\,:\, a \in \cA\}$.
	The reason for giving this more general statement is that it lends itself better to a proof by induction.
\end{remark}
\begin{remark}\label{rem:replacebyprocess}
	Given random variables $X_1,\ldots, X_n$ with $n\geq 2$, their joint cumulant coincides with that of $\bar X_1,\ldots,\bar X_n$, where $\bar X_i=X_i-\E[X_i]$. As a consequence, if we have
$t_u = t$ for all $u$ in \eref{eq:cumulantmartingales}, then each instance of $M_{t,t}^{\fx_u,\fy_u}$ 
can be replaced by $\one_{\fy_u}(X_t^{\fx_u})$.
\end{remark}
\begin{proof}
We first show that \eqref{eq:cumulantmartingales} follows immediately from 
\eqref{eq:productofmartingales}. Indeed, the definition of $\frak{I}_t^{\cA}$ implies that 
if $T$ is not connected, then $\frak{I}_t^{\cA}$ factorises over its connected components
as
\begin{equ}
\frak{I}_t^{\cA}(T,\fa,\fx,\fy,\ft)
= 
\prod_{u \in T_\rho} \frak{I}_t^{u}(T_u,\fa,\fx_{u},\fy_{u},\ft_{u})\;,
\end{equ}
where $T_u$ denotes the connected component of $T$ with root $u$.
Note that one necessarily has $T_u \in \Tree_{u}$.
It follows that we can rewrite \eqref{eq:productofmartingales} as 
\begin{equ}
\E\Big[\prod_{u\in \hat \cA}M_{t_u,t}^{\fx_u,\fy_u}\Big]
= \sum_{\pi \ge \hat \cA}
\prod_{B \in \pi} \Bigl(\sum_{(T,\fa)\in\Tree_{B}}\frak{I}_t^{B}(T,\fa,\fx_B,\fy_B,\ft_B)\Bigr)\;,
\end{equ}
where, the outer sum runs over all partitions $\pi$ of $\cA$ that are coarser than $\hat \cA$.  
Comparing this to the definition of joint cumulants, \eqref{eq:cumulantmartingales} follows at once.

It is convenient to consider triples $(T,\fa,\le)$, where $\le$ is a
total order on $T^\uparrow$ such that $u \subseteq v$ implies $u \le v$. We write $\Trees_\cA^\le$
for the set of such triples and we write $\cD_t^{T,\le}$ for set of $(s,\fz) \colon T \to \cD_t$
such that $s^u \le s^v$ whenever $u \le v$. We then define $\frak{I}_t^{\cA,\le}$ as in
\eqref{e:defI} but with $\cD_t^T$ replaced by $\cD_t^{T,\le}$. Since $\cD_t^T = \bigcup \cD_t^{T,\le}$
with the union running over all total orders $\le$ compatible with $T$, it follows that we can replace 
$\Trees_\cA$ and $\frak{I}_t^{\cA}$ by $\Trees_\cA^\le$ and $\frak{I}_t^{\cA,\le}$ respectively in \eqref{eq:productofmartingales}.

In order to streamline notations, we consider $\fx$ to be fixed once and for all and we write
\begin{equ}
\pP^{\fy,\ft}_{\cA, t} \eqdef \prod_{u\in \hat \cA}M_{t_u,t}^{\fx_u,\fy_u}\;,
\end{equ}
where we consider $\cA$ as the underlying set endowed with its partition $\hat \cA$.
(We also use the convention $\pP^{\fy,\ft}_{\emptyset, t} = 1$.)
We then proceed by induction over the number of elements in the partition $\hat \cA$. 
If $|\hat \cA| = 0$, then $\cA = \emptyset$ and the statement is trivial since 
both sides of \eqref{eq:productofmartingales} equal $1$. The case $|\hat \cA| = 1$ so that $\cA \neq \emptyset$ and
$\hat \cA = \{\cA\}$ is equally trivial since the left-hand side vanishes, while $\Trees_\cA = \emptyset$
so that the right-hand side also vanishes.

%
Assume now that $|\hat \cA| \ge 2$.
Using Lemma~\ref{lem:genprod}, followed by Lemma~\ref{lem:prodoflabelledexclusion}, and finally~\eqref{eq:optvaroflabelledexclusion} combined with~\eqref{eq:Duhamel}, we then obtain
\begin{equs}
\E \pP^{\fy,\ft}_{\cA, t}
&= \E \pP^{\fy,\ft}_{\cA, 0}
+ \sum_{\hat B\subsetneq \hat \cA} \E \int_0^t \pP^{\fy,\ft}_{B, s^-}
d \bigl[\{M_{t_v,s^-}^{\fx_v,\fy_v}\}_{v\in \hat \cA \setminus \hat B}\bigr]_s \\ \label{e:expandP}
&= \sum_{\{a,b\}\subset \hat \cA} \E \int_0^t \pP^{\fy,\ft}_{\cA\setminus(a\cup b), s^-}
d \bigl[M_{t_a,\cdot}^{\fx_a,\fy_a}, M_{t_b,\cdot}^{\fx_b,\fy_b}\bigr]_s\\
&= \sum_{\{a,b\}\subset \hat \cA} \E \int_0^t 
\pP^{\fy,\ft}_{\cA\setminus (a\cup b), s^-} \sum_{i\in a\atop j \in b}
\sum_{\fz \in \St^{a\cup b}} \bigl(p_s^{\fx_{a\cup b}}(\fz) + M_{s,s}^{\fx_{a\cup b},\fz}\bigr) \\
&\qquad\times \nabla_{i,j} P^{a,b}_{t_a-s,t_b-s}(\fz,\fy_{a\cup b})\,ds\;.
\end{equs}
This suggests to define $\cA^{a,b}$ which, as a set, equals $\cA$ but 
is endowed with the coarser equivalence relation under which elements of $a$ are 
also equivalent to those of $b$, so that $a \cup b \in \hat \cA^{a,b}$.
Similarly, we define $\ft^{a,b}_s$ on $\hat \cA^{a,b}$ by setting $\ft^{a,b}_s(a\cup b) = s$
and $\ft^{a,b}_s(u) = \ft(u)$ otherwise. We also define $\fy^{a,b}_\fz$ 
by setting $\fy^{a,b}_\fz(u) = \fz(u)$ for $u \in a\cup b$ and
$\fy^{a,b}_\fz(u) = \fy(u)$ otherwise.

With these notations, it thus follows from \eqref{e:expandP} and the induction hypothesis
that
\begin{equs}
\E \pP^{\fy,\ft}_{\cA, t}
&= \sum_{\{a,b\}\subset \hat \cA}  \sum_{(T,\fa,\le)\in \Trees^\le_{\cA\setminus (a\cup b)}} 
\sum_{i\in a\atop j \in b}
\sum_{\fz \in \St^{a\cup b}}
\int_0^t \frak{I}_{s}^{\cA\setminus (a\cup b),\le}(T,\fa,\fx,\fy,\ft)\\
&\qquad\times  p_s^{\fx_{a\cup b}}(\fz) \nabla_{i,j} P^{a,b}_{t_a-s,t_b-s}(\fz,\fy_{a\cup b})\,ds\\
&+ \sum_{\{a,b\}\subset \hat \cA}  \sum_{(T,\fa,\le)\in \Trees^{\le}_{\cA^{a,b}}} 
\sum_{i\in a\atop j \in b}
\sum_{\fz \in \St^{a\cup b}}
\int_0^t \frak{I}_{s}^{\cA^{a,b},\le}(T,\fa,\fx,\fy^{a,b}_\fz,\ft^{a,b}_s)\\
&\qquad\times \nabla_{i,j} P^{a,b}_{t_a-s,t_b-s}(\fz,\fy_{a\cup b})\,ds\;.
\end{equs}
At this stage, given $a, b \in \hat \cA$ and $i \in a$, $j \in b$, as well as 
$(T,\fa,\le)\in \Trees^\le_{\cA\setminus (a\cup b)}$, we define
$(T_{a,b},\fa_{i,j},\le) \in \Trees^\le_{\cA}$ by setting $T_{a,b} = T \cup \{a,b,a\cup b\}$, 
$\fa_{i,j}(a) = i$, $\fa_{i,j}(b) = j$, and by setting $a \cup b \le u$ for all
$u \in T^\uparrow$.
Given 
$(T,\fa,\le)\in \Trees^\le_{\cA^{a,b}}$, we similarly define
$(T^{a,b},\fa_{i,j},\le) \in \tilde \Trees_{\cA}$ by setting $T^{a,b} = T \cup \{a,b\}$
and by setting $a \cup b \le u$ for all
$u \in T^\uparrow$. (Note that $a\cup b$ is a leaf of $T$ so it is not a priori
ordered with respect to any element in $T$.)

Using this notation, it follows from \eqref{e:defI} that we can rewrite the above expression as
\begin{equs}
\E \pP^{\fy,\ft}_{\cA, t}
&= \sum_{\{a,b\}\subset \hat \cA}  \sum_{(T,\fa,\le)\in \Trees^\le_{\cA\setminus (a\cup b)}} 
\sum_{i\in a\atop j \in b}
\frak{I}_{t}^{\cA,\le}(T_{a,b},\fa_{i,j},\fx,\fy,\ft)\\
&+ \sum_{\{a,b\}\subset \hat \cA}  \sum_{(T,\fa,\le)\in \Trees^\le_{\cA^{a,b}}} 
\sum_{i\in a\atop j \in b}
\frak{I}_{t}^{\cA,\le}(T^{a,b},\fa_{i,j},\fx,\fy,\ft)\;.
\end{equs}
It remains to note that
\begin{equs}
\Trees_{\cA}^\le
&= \bigcup_{\{a,b\}\subset \hat \cA}\bigcup_{i\in a\atop j \in b} \Big(\{(T^{a,b},\fa_{i,j},\le)\,:\, (T,\fa,\le)\in \Trees^\le_{\cA\setminus (a\cup b)}\} \\
&\qquad\qquad\cup \{(T_{a,b},\fa_{i,j},\le)\,:\, (T,\fa,\le)\in \Trees^\le_{\cA^{a,b}}\}\Big)\;.
\end{equs}
This is because, given $(S,\fb,\le) \in \Trees_\cA^\le$, if $v$ is the minimal element in $S^\uparrow$
for the order $\le$, one can write it uniquely as $v = a\cup b$ with $a,b \in \hat \cA$.
If $a \cup b$ is a root of $S$, we then have $S = (T^{a,b},\fa_{i,j},\le)$ for
$T = S \setminus \{a,b, a\cup b\} \in \Trees^\le_{\cA\setminus (a\cup b)}$, while otherwise
we have $S = (T_{a,b},\fa_{i,j},\le)$ for
$T = S \setminus \{a,b\} \in \Trees^\le_{\cA^{a, b}}$.
\end{proof}

\subsubsection{An upper bound on $(T,\fa)\in\Tree_\cA$}
\label{Sec:upperboundonbinary}
Fix a labelled tree $(T,\fa)\in\Tree_\cA$, and assume moreover that $\hat \cA = \{\{a\}\,:\, a \in \cA\}$, so that $\hat \cA$ can be canonically identified with $\cA$. Our goal is to integrate over the collection of ``free'' spatial points $\{\fz^v_b:\,  b\notin \{\fa_{v_1},\fa_{v_2}\}\}$. Here, ``free'' refers to the fact that the former collections of spatial points do not impose any restrictions on each other, as opposed to $\fz^{v}_{\fa_{v_1}}$ and $\fz^{v}_{\fa_{v_2}}$, which need to be at distance one, since the kernel in~\eqref{e:defPv} vanishes otherwise. Our goal is not to derive an identity but an upper bound. We start with an illustrative example which we then turn into a proof. Assume that $(T,\fa)$ is given by the following tree (the coloured edges can be ignored for now)
\begin{equ}[e:picTreesex1]
	\tikzsetnextfilename{treebig1}
	\begin{tikzpicture}[scale=0.35,baseline=0.5cm,thick]
	\node at (-5,0) [dot] (1) {};
	\node at (-2,0) [dot] (2) {};
	\node at (1,0) [dot] (3) {};
	\node at (4,0)  [dot] (4) {};
	\node at (7,0)  [dot] (5) {};
	\node at (10,0)  [dot] (6) {};
	\node at (-3.5,2) [dot] (top1) {};
	\node at (-1.2,4) [dot] (top2) {};
	\node at (1,6)  [dot] (top3) {};
	\node at (3.5,8)  [dot] (top4) {};
	\node at (6,10)  [dot] (top5) {};

	\draw (1)   to node[midway,left] {1} (top1);
	\draw (2) to node[midway,right] {2} (top1);
	\draw (3)[orange] to node[midway,right] {3} (top2);
	\draw (top1)[orange] to node[midway,left] {1} (top2); 
	\draw (top2)[DGreen] to node[midway,left] {3} (top3); 
	\draw (4)[DGreen] to node[midway,right] {4}  (top3); 
	\draw (top3) to node[midway,left] {1} (top4); 
	\draw (5) to node[midway,right] {5} (top4); 
	\draw (top4) to  node[midway,left] {4}(top5); 
	\draw (6) tonode[midway,right] {6} (top5); 
	\end{tikzpicture}\;.
\end{equ}
Here, for each node $v$ we moved the label assigned by $\fa_v$ to $v$ upwards towards the edge going out of $v$. In particular the leaf set  in this example is $\{1\}, \{2\},\ldots$. 
The way to think about the above tree is as follows. We see that the inner node that is most distant from the root joins the two edges with labels $1$ and $2$ respectively. The meaning is that first particle $1$ and $2$ interact by swapping places. The next interaction is between particle $1$ and $3$. This is signalled by the labels $1$ and $3$ on the orange edges. The labels $3$ and $4$ on the green edges mean that then particle $3$ and $4$ swap places. Afterwards the labels $1$ and $5$ mean that particle $1$ and $5$ swap places and the final interaction is between particle $4$ and $6$. 
In particular after its interaction with particle $1$, particle $2$ does not interact with any other particle. This suggests some kind of independence which would imply that, after its interaction with particle $1$, particle $2$ goes directly to $x_2$. If our kernels \emph{did} factorise, i.e., if we had the identity
\begin{equs}[e:factorisation]
p_t^{\fx}(\fy)= \prod_{a\in\cA} p_t^{\fx_a}(\fy_a)\;,
	\end{equs}
for any $\fx,\fy\in \St^{\cA}$ (which would mean that the exclusion particles are independent of each other), then this could be made rigorous.

Although~\eqref{e:factorisation} is unfortunately not true, Lemma~\ref{lem:transitionprob} provides a partial replacement. Indeed, applying Lemma~\ref{lem:transitionprob} to the expression on the right hand side of~\eqref{e:grad}, and estimating the gradient by the sum of the transition kernels shows that
\begin{equs}\label{eq:gradestAB}
|\nabla_{i,j} &P_t^{A,B}(\fx,\fy)|\\
&\leq |p_{t_1}^{\fx_A^{ij}}(\fy_A)+p_{t_1}^{\fx_A}(\fy_A)||p_{t_2}^{\fx_B^{ij}}(\fy_B)+p_{t_2}^{\fx_B}(\fy_B)|\\
&\lesssim \bigg(\prod_{\substack{k\in A,\\ k\neq i}}\bar{p}_{t_1}^{\fx_k}(\fy_k)\bigg)(\bar{p}_{t_1}^{\fx_j}(\fy_i)+\bar{p}_{t_1}^{\fx_i}(\fy_i))\bigg(\prod_{\substack{k\in B,\\ k\neq j}}\bar{p}_{t_2}^{\fx_k}(\fy_k)\bigg)(\bar{p}_{t_2}^{\fx_i}(\fy_j)+\bar{p}_{t_2}^{\fx_j}(\fy_j))\\
&\leq \bigg(\prod_{\substack{k\in A\cup B\\ k\neq i,j}}
\hat{p}_{t_k}^{\fx_k}(\fy_k)\bigg) \hat{p}_{t_1}^{\fx_i}(\fy_i)\hat{p}_{t_2}^{\fx_j}(\fy_j)\,,
	\end{equs}
where $t_k=t_1$ if $k\in A$ and $t_k=t_2$ otherwise. Here, we used the notation
$\hat{p}_{t}^{x}(y)= \sum_{z:\, \|z-x\|\leq 1} \bar{p}_t^{z}(y)$.
Since $\nabla_{i,j} P_t^{A,B}$ vanishes unless $\|\fx_i-\fx_j\|=1$, one actually has the stronger bound
\begin{equ}[e:gradupper]
|\nabla_{i,j} P_t^{A,B}(\fx,\fy)| \lesssim \bigg(\prod_{\substack{k\in A\cup B\\ k\neq i,j}}
\hat{p}_{t_k}^{\fx_k}(\fy_k)\bigg) \hat{p}_{t_1}^{\fx_i}(\fy_i)\hat{p}_{t_2}^{\fx_j}(\fy_j)\one\{\|\fx_i-\fx_j\|=1\}\;.
\end{equ}

Recall that in our notation each node coincides with the union of the nodes contained in the subtree having that node as a root. For example the node that joins the leaves $\{1\}$ and $\{2\}$ is identified with the set $\{1,2\}$, whereas the inner node with label $3$ is identified with the set $\{1,2,3\}$. Abbreviating $A=\{1,2\}$, $B=\{3\}$, $C=\{1,2,3\}$, $D=\{4\}$, and $E=\{1,2,3,4\}$ and going back to~\eqref{e:picTreesex1} we see that the kernel associated to the two orange edges can be estimated with the help of~\eqref{e:gradupper} by
\begin{equs}[e:grad13]
|\nabla_{1,3}&P_{s^A-s^C, s^B-s^C}^{A,B}(\fz^{C},\fz^A\sqcup \fy_3)|\\	
&\lesssim \hat{p}_{s^A-s^C}^{\fz_2^C}(\fz_2^A)\hat{p}_{s^A-s^C}^{\fz_1^C}(\fz_1^A)\hat{p}_{s^B-s^C}^{\fz_3^{C}}(y_3)\one\{\|\fz_1^C-\fz_3^C\|=1\}\,.
	\end{equs}
In the same way we see that the kernel associated to the two green edges in~\eqref{e:picTreesex1} satisfies
\begin{equs}[e:grad34]
|\nabla_{3,4}&P_{s^C-s^E, s^D-s^E}^{C,D}(\fz^{E},\fz^C\sqcup \fy_4)|\\
&\lesssim \hat{p}_{s^C-s^E}^{\fz_1^E}(\fz_1^C)\hat{p}_{s^C-s^E}^{\fz_2^E}(\fz_2^C)
\hat{p}_{s^C-s^E}^{\fz_3^E}(\fz_3^C)\hat{p}_{s^D-s^E}^{\fz_4^E}(\fy_4)
\one\{\|\fz_3^E-\fz_4^E\|=1\}\,.
	\end{equs}
Recall that \eqref{e:defI} implies that we need to multiply the kernels above. 
In particular one of the products that appears is
\begin{equ}
\hat{p}_{s^A-s^C}^{\fz_2^C}(\fz_2^A)\hat{p}_{s^C-s^E}^{\fz_2^E}(\fz_2^C)	\,.
	\end{equ}
The kernels above fall into the framework of Lemma~\ref{lem:convolutioninspace}, so that we can integrate over $\fz_2^C$ to obtain a new kernel connecting the spatial variables $\fz_2^A$ and $\fz_2^E$. Since this new kernel is also amenable to an application of Lemma~\ref{lem:convolutioninspace} this allows us to repeat the argument with the two edges that are on the shortest path that connect $\{1,2,3,4\}$ with the root. To be more precise, the remaining kernels that are encoded in the tree above are of the form $\nabla_{1,5}, \nabla_{4,6}$ and $p_{s^G}^{\fx}(\fz^G)$, where $G=\{1,2,3,4,5,6\}$ denotes the root of the tree. Applying the estimate~\eqref{e:gradupper} to these kernels and singling out the kernels corresponding to particle $2$, the following product appears
\begin{equ}
\hat{p}_{s^A-s^C}^{\fz_2^C}(\fz_2^A)\hat{p}_{s^C-s^E}^{\fz_2^E}(\fz_2^C)
\hat{p}_{s^E-s^F}^{\fz_2^F}(\fz_2^E)\hat{p}_{s^F-s^G}^{\fz_2^G}(\fz_2^F)\hat{p}_{s^G}^{x_2}(\fz_2^G)\,,
	\end{equ}
where $F=\{1,2,3,4,5\}$.
We now define
\begin{equs}
K_{s^A}&^{\{1,2\},x_2}(\fz_2^A,x_2) \\
&= \sum_{\fz_2^L:\, L \in\{C,E,F,G\}}	\hat{p}_{s^A-s^C}^{\fz_2^C}(\fz_2^A)\hat{p}_{s^C-s^E}^{\fz_2^E}(\fz_2^C)
\hat{p}_{s^E-s^F}^{\fz_2^F}(\fz_2^E)\hat{p}_{s^F-s^G}^{\fz_2^G}(\fz_2^F)\hat{p}_{s^G}^{x_2}(\fz_2^G)\,,
	\end{equs}
and $K^{\{1,2\},x_2}$ is the kernel assigned to the red edge in~\eqref{e:picTreesex2} connecting $\{1,2\}$ to $x_2$. Note at that point that the kernel $K^{\{1,2\},x_2}$ again satisfies the conditions of Lemma~\ref{lem:convolutioninspace} for any $\zeta\in [0,3]$.
The same argument cannot be applied to the kernels associated to particle $1$ in~\eqref{e:grad13} and~\eqref{e:grad34}, since $\fz_1^C$ is coupled with $\fz_3^C$.  This shows that~\eqref{e:picTreesex1} is indeed bounded by a proportionality constant times~\eqref{e:picTreesex2}.
In the same way as we did for particle $2$ we can argue that after particle $3$ interacted with particle $4$ it has no further interaction with any other particle. Hence, after that last interaction, repeating the arguments we applied to particle $2$ it goes all the way up to $x_3$, suggesting the following graphical representation
\begin{equ}
	\tikzsetnextfilename{treebigexpand1}
	\begin{tikzpicture}[scale=0.35,baseline=0.5cm,thick]
	\node at (-5,0) [dot] (1) {};
	\node at (-2,0) [dot] (2) {};
	\node at (1,0) [dot] (3) {};
	\node at (4,0)  [dot] (4) {};
	\node at (7,0)  [dot] (5) {};
	\node at (10,0)  [dot] (6) {};
	\node at (-3.5,2) [dot] (top1) {};
	\node at (-1.2,4) [dot] (top2) {};
	\node at (1,6)  [dot] (top3) {};
	\node at (3.5,8)  [dot] (top4) {};
	\node at (6,10)  [dot] (top5) {};
	\node at (-5,10) [dot] (x2)  {};
	\node at (-0.5, 10) [dot] (x3) {};
	\node[left] at (x2)  {\tiny $x_2$};
	\node[left] at (x3) {\tiny $x_3$};
	
	\draw (1) to node[midway,left] {1} (top1);
	\draw (2)[red] to node[midway,right] {2} (top1);
	\draw (top1)[red] to (x2);
	\draw (3)[DGreen] to node[midway,right] {3} (top2);
	\draw (top1) to node[midway,left] {1}  (top2); 
	\draw[bend left=30] (top2)[DGreen] to node[midway,left] {3} (top3); 
	\draw[bend right=30] (top2) to  (top3); 
	\draw (4) to node[midway,right] {4} (top3); 
	\draw (top3) to node[midway,left] {1} (top4); 
	\draw (5) to node[midway,right] {5} (top4); 
	\draw (top4) to node[midway,left] {4} (top5); 
	\draw (6) to node[midway,right] {6} (top5); 
	\draw (top3)[DGreen] to (x3);
	\end{tikzpicture}\;
\end{equ}
where the kernel connecting $\{1,2,3,4\}$ with $x_3$ is constructed in the same way as the kernel connecting $\{1,2\}$ with $x_2$, i.e., it is a multiple convolution of the $\hat{p}'$s.
Note that there is a black line parallel to the green line connecting the nodes labelled $3$ and $1$, which represents 
particle $1$. Turning our attention to that particle, we see that it first interacts with particle $2$, then directly with $3$ before it has its last interaction with $5$. Hence, in the same way as above we see that a convenient graphical notation is given by
\begin{equ}
	\tikzsetnextfilename{treebigexpand2}
	\begin{tikzpicture}[scale=0.35,baseline=0.5cm,thick]
	\node at (-5,0) [dot] (1) {};
	\node at (-2,0) [dot] (2) {};
	\node at (1,0) [dot] (3) {};
	\node at (4,0)  [dot] (4) {};
	\node at (7,0)  [dot] (5) {};
	\node at (10,0)  [dot] (6) {};
	\node at (-3.5,2) [dot] (top1) {};
	\node at (-1.2,4) [dot] (top2) {};
	\node at (1,6)  [dot] (top3) {};
	\node at (3.5,8)  [dot] (top4) {};
	\node at (6,10)  [dot] (top5) {};
	\node at (-5,10) [dot] (x2)  {};
	\node at (-0.5, 10) [dot] (x3) {};
	\node at (2.5, 10) [dot]  (x1) {};
	\node[left] at (x2)  {\tiny $x_2$};
	\node[left] at (x3) {\tiny $x_3$};
	\node[left] at (x1) {\tiny $x_1$};
	
	\draw (1)[blue] to node[midway,left] {\tiny$1$} (top1);
	\draw (2)[red] to node[midway,right] {\tiny$2$} (top1);
	\draw (top1)[red] to (x2);
	\draw (3)[DGreen] to node[midway,right] {\tiny$3$}  (top2);
	\draw (top1)[blue] to node[near end,left] {\tiny$1$}  (top2); 
	\draw (top2)[DGreen] to node[midway,right] {\tiny$3$} (top3); 
	\draw (top2)[blue] to[bend left]  (top4);
	\draw (top3) to node[midway,right] {\tiny$1$} (top4);
	\draw (4) to node[midway,right] {\tiny$4$} (top3); 
	\draw (5) to node[midway,right] {\tiny$5$} (top4); 
	\draw (top4) to  node[midway,left] {\tiny$4$} (top5); 
	\draw (6) to node[midway,right] {\tiny$6$} (top5); 
	\draw (top3)[DGreen] to (x3);
	\draw (top4)[blue] to (x1);
	\end{tikzpicture}\;.
\end{equ}
Continuing in this way for the remaining three particles, we obtain the following graphical description
\begin{equ}[e:picTreesex4]
	\tikzsetnextfilename{treebigexpand3}
	\begin{tikzpicture}[scale=0.35,baseline=0.5cm,thick]
	\node at (-5,0) [dot] (1) {};
	\node at (-2,0) [dot] (2) {};
	\node at (1,0) [dot] (3) {};
	\node at (4,0)  [dot] (4) {};
	\node at (7,0)  [dot] (5) {};
	\node at (10,0)  [dot] (6) {};
	\node at (-3.5,2) [dot] (top1) {};
	\node at (-1.2,4) [dot] (top2) {};
	\node at (1,6)  [dot] (top3) {};
	\node at (3.5,8)  [dot] (top4) {};
	\node at (6,10)  [dot] (top5) {};
	\node at (-5,12) [dot] (x2)  {};
	\node at (-0.5, 12) [dot] (x3) {};
	\node at (2.5, 12) [dot]  (x1) {};
	\node at (5,12)    [dot]  (x4) {};
	\node at (7,12)    [dot]  (x6) {};
	\node at (9,10)    [dot]  (x5) {};
	\node[left] at (x2)  {\tiny $x_2$};
	\node[left] at (x3) {\tiny $x_3$};
	\node[left] at (x1) {\tiny $x_1$};
	\node[left] at (x4) {\tiny $x_4$};
	\node[right] at (x5) {\tiny $x_5$};
	\node[right] at (x6)  {\tiny $x_6$};
	
	\draw (1)[blue] to node[midway,left] {1}  (top1);
	\draw (2)[red] to node[midway,right] {2} (top1);
	\draw (top1)[red] to (x2);
	\draw (3)[DGreen] to node[midway,right] {3} (top2);
	\draw (top1)[blue] to  (top2); 
	\draw (top2)[DGreen] to  (top3); 
	\draw (top2)[blue] to[bend right] (top4);
	\draw (4)[orange] to node[midway,right] {4} (top3); 
	\draw (5)[purple] to  node[midway,right] {5} (top4); 
	\draw (6) to node[midway,right] {6} (top5); 
	\draw (top3)[DGreen] to (x3);
	\draw (top3)[orange] to [bend left] (top5);
	\draw (top5)[orange] to (x4);
	\draw (top4)[purple] to (x5);
	\draw (top4)[blue] to (x1);
	\draw (top5) to (x6);
	\end{tikzpicture}\;,
\end{equ} 
where for the sake of readability we omitted the labels along some of the edges.
To generalise the above computations fix 
 $(T,\fa)\in\Tree_\cA$, as well as $\fx,\fy,\ft$ and $t$ as in the previous section.
 We define $\hat T$ to be the graph with vertex set $\hat T = T\cup\cA$ and edge set $E$ given as follows. 
 For every $a \in \cA$, write $V_a \subset \hat T$ for the
 set of vertices $v$ such that either $v = \{a\}$, or $v = a$, or one can write 
 $v = v_1 \cup v_2$ with $v_i \in T$ and $\fa_{v_1} = a$. 
The partial order on $T$ induces a total order on $V_a$ if we furthermore postulate that the
element $a$ is maximal.
We then postulate that $(v,w) \in E$ if and only if there exists $a \in \cA$ such that $v,w \in V_a$,
$v < w$ (for the order on $V_a$), and there is no $u \in V_a$ with $v < u < w$.

To each $v\in \hat T$ we assign an integration variable $(s^v,z^v)\in \hat\cD_t\overset{\text{def}}{=} [0,t]\times \St$. For $v\in \hat{\cA}$ we set $s^v=t_v$ and $z^v=\fy_v$ and finally to $v\in\cA$ we assign $s^v=0$ and $z^v=\fx_v$. Fix an edge $(v,w)\in E$, and write $v_0,v_1,\ldots, v_k$ with $v_0=v$ and $v_k=w$ for the shortest path connecting $v$ to $w$ with respect to the tree structure of $T$. We then define two kernels, where the first simply serves to simplify the proof, whereas the second is the one we are really interested in. The first, $K^{(v,w)}$, is defined as follows
\begin{equ}[e:Kvw]
K^{(v,w)}_{s^v-s^w}(x,y)= \sum_{z_1,\ldots, z_{k-1}\in\St} \prod_{i=1}^{k}\hat{p}_{s^{v_{i-1}}-s^{v_i}}^{z_i}(z_{i-1})\,,
	\end{equ}
where $z_0=x$ and $z_k=y$. The other kernel, denoted by $\hat K$, is defined via
\begin{equ}
\hat K_{s}(x,y)= (\sqrt{|s|} +\|x-y\| +1)^{-d}.	
	\end{equ}
We note that by Lemma~\ref{lem:convolutioninspace} one has for any $\zeta\in[0,d]$ that the rescaled kernel $2^{\zeta N} K_{2^{2N\cdot}}^{(v,w)}(2^N\cdot)$ defined on $\R\times \T_N^d$ is of order $\zeta$. Therefore, one has in particular that $ K^{(v,w)}\lesssim \hat K$.
Setting
\begin{equ}[e:JThat]
\frak{I}_t^{\cA}(\hat T,\fa,\fx,\fy,\ft)
=\int_{\hat\cD_t^{ T^{\uparrow}}}\prod_{(v,w)\in E}\hat K_{s^v-s^w}(z^v,z^w)\, ds\, \hat dz\,,
\end{equ}
the following result is the main point of this construction. 
\begin{lemma}\label{lem:upperoncumu}
	For $\cA,\hat{\cA},\fx,\fy,\ft, t$ as above such that in particular $\hat\cA=\{\{a\}:\, a\in\cA\}$ and $(T,\fa)\in\Tree_\cA$ one has that
	\begin{equ}[e:upperoncumu]
		\frak{I}_t^\cA(T,\fa,\fx,\fy,\ft)\lesssim	\frak{I}_t^\cA(\hat T,\fa,\fx,\fy,\ft).
	\end{equ}	
\end{lemma}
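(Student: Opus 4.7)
The plan is to bound each factor in the integrand of \eqref{e:defI} pointwise by single-particle kernels, rearrange the resulting product so that it factorizes across particles, and then apply Lemma~\ref{lem:convolutioninspace} iteratively to integrate out all intermediate spatial coordinates.

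First I will apply the gradient bound \eqref{e:gradupper} to each factor $P_v(s,\fz)$ and Lemma~\ref{lem:transitionprob} to the root kernel $p^\fx_{s^\cA}(\fz^\cA)$. This yields
\begin{equ}
\frak{I}_t^\cA(T,\fa,\fx,\fy,\ft) \lesssim \int \prod_{v\in T^\uparrow} \one\{\|\fz^v_{\fa_{v_1}}-\fz^v_{\fa_{v_2}}\|=1\} \prod_{a\in\cA}\chi_a(s,\fz) \,ds\,\hat d\fz\,,
\end{equ}
where, for each $a \in \cA$, $\chi_a$ collects $\hat{p}^{\fx_a}_{s^\cA}(\fz^\cA_a)$ together with one factor $\hat{p}^{\fz^v_a}_{s^w-s^v}(\fz^w_a)$ for every $v \in T^\uparrow$ containing $a$ (with $w$ the child of $v$ containing $a$, and $\fz^{\{a\}}_a := \fy_a$). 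The essential point is that $\chi_a$ depends only on the single-particle variables $\{\fz^v_a : v \in T,\ a \in v\}$, so coordinates of distinct particles are coupled \emph{only} through the distance-$1$ indicators, and at each $v\in T^\uparrow$ only the two coordinates indexed by $\fa_{v_1}$ and $\fa_{v_2}$ participate in that coupling.

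Next, fix a particle $a$ and consider the variables $\fz^v_a$ at those $v \in T^\uparrow$ containing $a$ with $v \notin V_a$. These appear only inside $\chi_a$, as intermediate variables of a chain of convolutions of $\hat{p}$-kernels. Since each $\hat{p}$ is of order $d$ and Corollary~\ref{cor:nfold} shows that iterated convolutions of order-$d$ kernels remain of order $d$, summing them out produces, for each pair of consecutive $v,w \in V_a$, exactly the kernel $K^{(v,w)}_{s^v-s^w}$ of \eqref{e:Kvw}, which is bounded by $C\hat{K}_{s^v-s^w}$.

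At each $v \in T^\uparrow$ there remain only the two coordinates $\fz^v_{\fa_{v_1}}$ and $\fz^v_{\fa_{v_2}}$, tied by the distance-$1$ indicator. Summing over the $O(1)$ admissible displacements and renaming the remaining coordinate $z^v \in \St$ incurs only a multiplicative constant, since $\hat{K}_s(x',y) \lesssim \hat{K}_s(x,y)$ whenever $\|x-x'\| \le 1$. Reassembling the product, the resulting integrand over $(s^v,z^v) \in \hat\cD_t$ for $v \in T^\uparrow$ is precisely $\prod_{(v,w)\in E}\hat{K}_{s^v-s^w}(z^v,z^w)$, i.e.\ $\frak{I}_t^\cA(\hat T,\fa,\fx,\fy,\ft)$. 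The only delicate step is the factorization $\prod_a \chi_a$ in the first display: it rests on the elementary observation that at each $v \in T^\uparrow$ every coordinate $\fz^v_k$ appears in $P_v$ (through one $\hat{p}$-factor of \eqref{e:gradupper}) and in exactly one other factor — either in $P_{v'}$ for $v'$ the parent of $v$ in $T$ when $v\neq \cA$, or in the bound on the root kernel when $v=\cA$ — so that its two occurrences are ready to be convolved along the single-particle chain $\chi_k$.
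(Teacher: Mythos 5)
Your proposal is correct and follows essentially the same route as the paper: bound each $P_v$ by \eqref{e:gradupper} and the root kernel by Lemma~\ref{lem:transitionprob}, observe that the resulting $\hat p$-factors chain along single-particle trajectories coupled only through the distance-one indicators, and convolve the intermediate spatial variables into the kernels $K^{(v,w)}$ of \eqref{e:Kvw}, which are then dominated by $\hat K$ via Lemma~\ref{lem:convolutioninspace}. The only difference is organizational: the paper carries out the convolution step by induction over $|\cA|$, peeling the tree at its root, whereas you perform it globally using the observation that each coordinate $\fz^v_k$ occurs in exactly two factors.
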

\begin{proof}
Recall that the integrand on the right hand side of~\eqref{e:defI} is
\begin{equ}
p_{s^\rho}^{\fx}(\fz^\rho) \prod_{v \in T^\uparrow} P_v(s,\fz).	
	\end{equ}
Using~\eqref{e:gradupper} we see that
\begin{equs}[e:hatPdef]
|P_v(s,\fz)|&\lesssim \prod_{\substack{k\in v,\\ k\neq \fa_{v_1},\fa_{v_2}}}\hat{p}_{s^{v_k}-s^v}^{\fz^v_k}(\fz^{v_k}_k)\hat{p}_{s^{v_1}-s^v}^{\fz^v_{\fa_{v_1}}}(\fz^{v_1}_{\fa_{v_1}})\hat{p}_{s^{v_2}-s^v}^{\fz^v_{\fa_{v_2}}}(\fz^{v_2}_{\fa_{v_2}})\one\{\fz_{\fa_{v_1}}^{v}=\fz_{\fa_{v_2}}^v\}\\
&\eqdef \hat P_v(s,\fz).
\end{equs}
Here we wrote $v=v_1\cup v_2$ and we used the notation $v_k=v_1$ if $k\in v_1$ and $v_k=v_2$ otherwise. It is then immediate that 
\begin{equ}
\frak{I}_t^\cA(T,\fa,\fx,\fy,\ft) \lesssim \frak{I}_t^\cA(T,\fa,\fx,\fy,\ft, \hat P)\,,	
	\end{equ}
where
\begin{equ}[e:defIhatP]
\frak{I}_t^\cA(T,\fa,\fx,\fy,\ft, \hat P)	
= \int_{\cD_t^{T^{\uparrow}}} \prod_{w\in \rho} \hat{p}_{s^\rho}^{\fx_w}(\fz^\rho_w) \prod_{v \in T^\uparrow} \hat P_v(s,\fz)\,ds\,\hat d\fz\;.
	\end{equ}
To proceed we introduce the quantity
\begin{equ}\label{e:quantityKvw}
\frak{I}_t^{\cA}(\hat T,\fa,\fx,\fy,\ft,\cK)
=\int_{\hat\cD_t^{ T^{\uparrow}}}\prod_{(v,w)\in E}K_{s^v-s^w}^{(v,w)}(z^v,z^w)\, ds\, \hat dz\,.
	\end{equ}
We next prove that the right hand side of~\eqref{e:defIhatP} is bounded from above by some proportionality constant times the right hand side of~\eqref{e:quantityKvw}. Since $K^{(v.w)}\lesssim \hat K$, this will be enough to finish the proof.
To that end we proceed by induction over $|\cA|$, i.e., over the number of leaves. If $|\cA|=2$, then it follows from the definitions of $\cK$ and $\hat {P}$'s that we actually have 
\begin{equ}
\frak{I}_t^\cA(T,\fa,\fx,\fy,\ft, \hat P) = \frak{I}_t^\cA(\hat T,\fa,\fx,\fy,\ft, \cK).	
	\end{equ}
We continue with the induction step. To that end write $\rho=\rho_1\cup\rho_2$, where $\rho$ denotes the root of $T$. Note that $\rho_1$ and $\rho_2$ are roots of two subtrees $T_{\rho_1}$ and $T_{\rho_2}$ of $T$ and their leaf sets $\hat\cA_1$ and $\hat\cA_2$ satisfy $\hat\cA=\hat\cA_1\cup\hat\cA_2$, so that $T=T_{\rho_1}\cup T_{\rho_2}\cup \rho$. Hence, we can write 
\begin{equ}
\prod_{w\in \rho} \hat{p}_{s^\rho}^{\fx_w}(\fz^\rho_w) \prod_{v \in T^\uparrow} \hat P_v(s,\fz)
= \prod_{w\in \rho} \hat{p}_{s^\rho}^{\fx_w}(\fz^\rho_w)\prod_{v \in T_{\rho_1}^\uparrow} \hat P_v(s,\fz)\prod_{v \in T_{\rho_2}^\uparrow} \hat P_v(s,\fz) \hat P_\rho(s,\fz)\,.
	\end{equ}
Using the expression in~\eqref{e:hatPdef} in the place of $\hat P_\rho(s,\fz)$ above we see that
\begin{equs}
&\frak{I}_t^\cA(T,\fa,\fx,\fy,\ft, \hat P) \\
&= \int_{\cD_t}\, ds^\rho\, \hat d\fz^\rho  	
	\prod_{w\in \rho} \hat{p}_{s^\rho}^{\fx_w}(\fz^\rho_w))
	\one\{\fz^{\rho}_{\fa_{\rho_1}}= \fz^{\rho}_{\fa_{\rho_2}}\}
	\int_{\cD_t^{T_{\rho_1}^{\uparrow}}}\, ds\, \hat d\fz \prod_{w\in\rho_1} \hat{p}_{s^{\rho_1}-s^\rho}^{\fz_w^\rho}(\fz_w^{\rho_1})\prod_{v \in T_{\rho_1}^\uparrow} \hat P_v(s,\fz)\\
	&\quad\quad \times \int_{\cD_t^{T_{\rho_2}^{\uparrow}}}\, ds\, \hat d\fz \prod_{w\in\rho_2} \hat{p}_{s^{\rho_2}-s^\rho}^{\fz_w^\rho}(\fz_w^{\rho_2})\prod_{v \in T_{\rho_2}^\uparrow} \hat P_v(s,\fz)\,.
	\end{equs}
Fix $i\in\{1,2\}$. For each $v\in\cD_t^{T_{\rho_i}^{\uparrow}}$ we do a change of variables of the form $\hat s^v= s^v-s^{\rho}$, which reduces the domain of integration to $\cD_{t-s^\rho}^{T_{\rho_i}^{\uparrow}}$. Hence, we see that the integral over $\cD_t^{T_{\rho_i}}$ actually equals
\begin{equ}
\frak{I}_{t-s^{\rho}}^{\cA_i}(T_{\rho_i},\fa_{|T_{\rho_i}},\fz^{\rho}_{\rho_i},\fy_{\cA_i},\ft_{\cA_i}, \hat P)\,.
	\end{equ}
Here, $\fa_{|T_{\rho_i}}$ denotes the restriction of $\fa$ to $T_{\rho_i}$.
Hence, by the induction hypothesis we have the estimate
\begin{equs}[e:inductionupperP]
\frak{I}_t^\cA(T,\fa,\fx,\fy,\ft, \hat P)
\lesssim 	\int_{\cD_t}\, ds^\rho\, &\hat d\fz^\rho  	
\prod_{w\in \rho} \hat{p}_{s^\rho}^{\fx_w}(\fz^\rho_w)
\one\{\fz^{\rho}_{\fa_{\rho_1}}= \fz^{\rho}_{\fa_{\rho_2}}\}\\
&\times\prod_{i=1}^{2}\frak{I}_{t-s^\rho}^{\cA_i}(\hat T_{\rho_i},\fa_{|T_{\rho_i}},\fz^{\rho}_{\rho_i},\fy_{\cA_i},\ft_{\cA_i}, \cK_i)\,.
	\end{equs}
Here, for $i\in\{1,2\}$ we wrote $\cK_i=\{K^{(v,w)}\}_{(v,w)\in E_i}$, where $E_i$ denotes the edge set of $\hat T_{\rho_i}$.
We can write the integrand of $\frak{I}_t^{\cA_i}(\hat T_{\rho_i},\fa_{|T_{\rho_i}},\fz^{\rho}_{\rho_i},\fy_{\cA_i},\ft_{\cA_i}, \cK_i)$ as
\begin{equ}
\prod_{(v,w)\in E_i}K_{s^v-s^w}^{(v,w)}(z^v,z^w)
= \prod_{w\in\cA_i}\prod_{v:\, (v,w)\in E_i}K_{s^v-s^w}^{(v,w)}(z^v,\fz^{\rho}_w) \prod_{w\notin\cA_i}\prod_{v:\, (v,w)\in E_i}K_{s^v-s^w}^{(v,w)}(z^v,z^w)\,.
	\end{equ}
We then note that by the construction of $\hat T_{\rho_i}$ for every $w\in\cA_i$ there exists exactly one $v$ such that $(v,w)\in E_i$, and moreover $\rho=\cA_1\cup\cA_2$. Therefore, plugging this expression into the right hand side of~\eqref{e:inductionupperP}, integrating over $\{\fz^{\rho}_w\}_{w\neq \fa_{\rho_1},\fa_{\rho_2}}$, and recalling~\eqref{e:Kvw} yields the following expression
\begin{equs}
&\int_{\hat\cD_t}\, ds^{\rho} \hat d\fz^{\rho}_{\fa_{\rho_1}} \prod_{i=1}^{2}\hat{p}_{s^\rho}^{\fx_{\fa_{\rho_i}}}(\fz^{\rho}_{\fa_{\rho_i}})\one\{\fz^{\rho}_{\fa_{\rho_1}}= \fz^{\rho}_{\fa_{\rho_2}}\}
\int_{\hat\cD_{t-s^{\rho}}^{T_{\rho_1}^{\uparrow}}\cup\hat\cD_{t-s^{\rho}}^{T_{\rho_2}^{\uparrow}} }\, ds\, \hat dz \prod_{\substack{w\in\cA,\\w\neq\fa_{\rho_1},\fa_{\rho_2}}}\\
&\times\prod_{v:(v,w)\in E}K^{(v,w)}_{s^v+s^{\rho}}(z^v,\fx_w) \prod_{i=1}^{2}K_{s^v+s^{\rho}}^{(v,\fa_{\rho_i})}(z^v,\fz_{\fa_{\rho_i}}^{\rho})\prod_{w\notin\cA}\prod_{v:\, (v,w)\in E}K_{s^v-s^w}^{(v,w)}(z^v,z^w)\,.
	\end{equs}
A change of variables of the form $\hat s^v= s^v+s^{\rho}$ for any $v\in T_{\rho_i}^{\uparrow}$ for $i=1,2$ yields the claim.
	\end{proof}

\subsection{Cumulants for the exclusion process}
\label{S4.3}

Let $\cA$ be a finite index set with $|\cA| \ge 2$ and let $\fx \in \St^\cA$, 
$\ft \in \R^\cA$. Since all the quantities we will consider are continuous in $\ft$, we can (and will) assume
without loss of generality that $\ft\colon \cA \to \R$ is injective, so that it equips $\cA$ with 
the induced total order. We also introduce the relation $a \to b$ for pairs $(a,b)$ such that $t_b > t_a$ and
there exists no $c \in \cA$ with $t_b > t_c > t_a$. (Note that the relation $\to$ depends of course on
the choice of $\ft$, but this is suppressed in our notation.)

Fix now furthermore a partition $\pi$ of $\cA$ and define $Z_\pi^{\fx}$ as the set of all maps $\fz\colon \cA \to \St^\pi$
with the property that $\fz(a)_B = x_a$ if $a \in B$. With this notation at hand, we define
\begin{equ}[e:defptpi]
	p^\pi_\ft(\fx) = \sum_{\fz \in Z_\pi^{\fx}} \prod_{a \to b} p^{\fz(a)}_{t_b - t_a}(\fz(b))\;.
\end{equ}
Another way of interpreting this quantity is that
\begin{equ}[e:crucial]
	p^\pi_\ft(\fx) = \P(\Omega^\pi_{\ft,\fx})\;,
\end{equ} 
where $\Omega^\pi_{\ft,\fx}$ is the event on which, for every $B \in \pi$, 
the graphical construction of the exclusion process given by Figure~\ref{fig1} 
contains a trajectory going through the space-time points $(t_a,x_a)$ for every $a \in B$.
We have the following preliminary result 
for the moments of the exclusion process.

\begin{proposition}
	\label{prop:nthmoment}
	For any collection of space-time points as above and for $\xi$ a stationary symmetric simple
	exclusion process with fixed-time distribution that is Bernoulli with intensity $q$, one has
	the identity
	\begin{equation}
	\label{eq:nthmoment}
	\E\Big[\prod_{a \in \cA}\xi_{t_a}(x_a)\Big]
	= \sum_{\pi\in\cP(\cA)} C_\pi(q) p^\pi_\ft(\fx)\;,\qquad
	C_\pi(q) \eqdef \prod_{A \in \pi}\kappa_{|A|}(B_q)\;,
	\end{equation}
	where $B_q$ denotes a Bernoulli random variable with parameter $q$ and $\kappa_p$ denotes the $p$th cumulant.
\end{proposition}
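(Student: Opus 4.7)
My plan is to exploit the graphical construction of $\xi$ from Figure~\ref{fig1} in order to reduce the moment to a probability over the stirring links, and then to invert the moment-cumulant identity for a Bernoulli variable block-by-block.

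First, I would pick a reference time $t_* < \min_{a\in\cA}t_a$ and use stationarity to write $\xi_{t_*}\sim\nu_q$, independent of the Poisson links at times $\geq t_*$. For each $a\in\cA$, let $Y_a$ denote the (random) position at time $t_*$ of the trajectory in the graphical representation that ends at $(t_a,x_a)$; since trajectories through distinct space-time points of the graphical construction never collide, the map $a\mapsto Y_a$ induces a random partition $\sigma(\omega)\in\cP(\cA)$ characterised by $a\sim_\sigma b$ iff $Y_a = Y_b$, with the property that the points $\{Y_B\}_{B\in\sigma(\omega)}$ (one per block) are pairwise distinct in $\St$.

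Second, I would exploit the identity $\xi_{t_a}(x_a) = \xi_{t_*}(Y_a)$, the Bernoulli nature of $\xi_{t_*}(y)\in\{0,1\}$, and the independence of $\xi_{t_*}$ from $(\sigma,(Y_B))$. Conditioning on $(\sigma,(Y_B))$ collapses the product to $\prod_{B\in\sigma}\xi_{t_*}(Y_B)$, whose conditional expectation is $q^{|\sigma|}$; hence
\begin{equ}
\E\Big[\prod_{a\in\cA}\xi_{t_a}(x_a)\Big]
= \sum_{\sigma\in\cP(\cA)} q^{|\sigma|}\,\P(\sigma(\omega)=\sigma)\;.
\end{equ}
Using the probabilistic characterisation \eqref{e:crucial}, the event $\Omega^\pi_{\ft,\fx}$ asks each block of $\pi$ to lie on a common trajectory, which is the same as asking $\sigma(\omega)$ to be coarser than $\pi$; hence $p^\pi_\ft(\fx) = \P(\sigma(\omega)\geq\pi)$.

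Finally, I would apply the classical moment-cumulant formula $\E[B_q^{n}] = \sum_{\pi\in\cP([n])}\prod_{A\in\pi}\kappa_{|A|}(B_q)$ separately inside each block $S\in\sigma$; since every $\pi\leq\sigma$ is obtained by partitioning each $S$ independently, this yields $q^{|\sigma|} = \sum_{\pi\leq\sigma}C_\pi(q)$. Substituting and swapping the order of summation turns the right-hand side of the display above into $\sum_\pi C_\pi(q)\,p^\pi_\ft(\fx)$, which is the claim. I do not anticipate a serious obstacle since the combinatorial step is routine; the only point that requires genuine care is the independence between $\xi_{t_*}$ and the Poisson links governing the $Y_a$'s, but this is built into the graphical representation and stationarity.
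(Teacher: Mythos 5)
Your argument is correct and is essentially the paper's own proof: your random coalescence partition $\sigma(\omega)$ and the identity $p^\pi_\ft(\fx)=\P(\sigma(\omega)\ge\pi)$ correspond exactly to the paper's $\tilde p^\pi_\ft(\fx)=\P(\tilde\Omega^\pi_{\ft,\fx})$ and the relation $p^\sigma_\ft(\fx)=\sum_{\pi\ge\sigma}\tilde p^\pi_\ft(\fx)$, after which both arguments conclude with the same block-by-block Bernoulli moment--cumulant inversion and an exchange of sums.
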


\begin{proof}
	Write $\tilde p^\pi_\ft(\fx)$ for
	\begin{equ}
		\tilde p^\pi_\ft(\fx) = \P(\tilde \Omega^\pi_{\ft,\fx})\;,
	\end{equ} 
	where $\tilde \Omega^\pi_{\ft,\fx}\subset \Omega^\pi_{\ft,\fx}$ is the event where 
	we furthermore impose that the trajectories corresponding to distinct elements of $\pi$ do
	not intersect.
	The crucial observation then is that
	\begin{equ}
		p^\sigma_\ft(\fx) = \sum_{\pi \ge \sigma} \tilde p^\pi_\ft(\fx)\;,
	\end{equ}
	where $\pi \ge \sigma$ denotes that $\sigma$ is a refinement of $\pi$.
	We can therefore write
	\begin{equs}
		\E\Big[\prod_{a \in \cA}\xi_{t_a}(x_a)\Big]
		&= \sum_{\pi\in\CP(\cA)} \Bigl(\prod_{A\in \pi} \E B_q^{|A|}\Bigr) \tilde p^\pi_\ft(\fx) 
		= \sum_{\pi\in\CP(\cA)}\sum_{\sigma \le \pi} \Bigl(\prod_{A\in \sigma} \kappa_{|A|}(B_q)\Bigr) \tilde p^\pi_\ft(\fx)\\
		&= \sum_{\sigma\in\CP(\cA)} \Bigl(\prod_{A\in \sigma} \kappa_{|A|}(B_q)\Bigr) \sum_{\pi \ge \sigma} \tilde p^\pi_\ft(\fx) 
		= \sum_{\sigma\in\CP(\cA)} \Bigl(\prod_{A\in \sigma} \kappa_{|A|}(B_q)\Bigr) p^\sigma_\ft(\fx)\;,
	\end{equs}
	as  claimed.
\end{proof}

Our aim however is to obtain an expression for the joint \textit{cumulants} of these observables, which
are expected to exhibit better decay at large scales.
For this, we first rewrite the quantity $p^\pi_\ft(\fx)$ from \eqref{e:defptpi} in a slightly different
way. We use the notation $\cA_\to$ for the set of pairs $e = (e_-,e_+)$ such that $e_- \to e_+$.
For every $e \in \cA_\to$, we write $\pi_{e} \subset \pi$ for those elements $A \in \pi$
which straddle $e$ in the sense that there exist $a_-, a_+ \in A$ with $a_- \le e_- < e_+ \le a_+$.
For $c \in \cA$, we also write $\pi_c = \bigcup \bigl\{\pi_{e}\,:\, c \in e\bigr\}$, and we denote
by $E_\pi \subset \cA \times \pi$ the set of pairs $(a,A)$ such that $A \in \pi_a$.
We also write
$\St^\pi_\star$ for the maps $z\colon E_\pi \to \St$
and $Z_{\star,\pi}^{\fx}\subset \St^\pi_\star$ for the set of such maps with $\fz(a,A) = x_a$ if $a \in A$. 

Given a pair $e \in \cA_\to$ and $\fz \in Z_{\star,\pi}^\fx$ we also write $\fz_-(e)$ for the 
restriction of $\fz(e_-,\cdot)$ to $\pi_{e}$ and $\fz_+(e)$ for the restriction of $\fz(e_+,\cdot)$ to $\pi_{e}$.
With these notations one has the alternative expression
\begin{equ}[e:defptpialt]
	p^\pi_\ft(\fx) = \sum_{\fz \in Z_{\star,\pi}^{\fx}} \prod_{e \in \cA_\to} p^{\fz_-(e)}_{t_e}(\fz_+(e))\;,
\end{equ}
where we set $t_e = t_{e_+} - t_{e_-}$, as a consequence of the consistency of the kernels $p_t^{\fx}(\fy)$.

Graphically, \eqref{eq:nthmoment} can be depicted as follows. In the example where $\cA$ has four elements,
then we can write the corresponding fourth moment of the exclusion process as
\begin{equ}[e:pbasic]
	\E\Big[\prod_{a \in \cA}\xi_{t_a}(x_a)\Big] = 
	\tikzsetnextfilename{partition0}
	\begin{tikzpicture}[scale=0.4,baseline=0.85cm]
	\row{d}{1}{x}
	\row{d}{2}{y}
	\row{d}{3}{z}
	\row{d}{4}{w}
	\connect{1}{x}{y}
	\connect{1}{y}{z}
	\connect{1}{z}{w}
	\end{tikzpicture}
	+
	\tikzsetnextfilename{partition1}
	\begin{tikzpicture}[scale=0.4,baseline=0.85cm]
	\row{d,c}{1}{x}
	\row{c,d}{2}{y}
	\row{c,d}{3}{z}
	\row{d,c}{4}{w}
	\connect{1,2}{x}{y}
	\connect{1,2}{y}{z}
	\connect{1,2}{z}{w}
	\end{tikzpicture}
	+
	\tikzsetnextfilename{partition2}
	\begin{tikzpicture}[scale=0.4,baseline=0.85cm]
	\row{d,c}{1}{x}
	\row{d,c}{2}{y}
	\row{c,d}{3}{z}
	\row{c,d}{4}{w}
	\connect{1,2}{x}{y}
	\connect{1,2}{y}{z}
	\connect{1,2}{z}{w}
	\end{tikzpicture}
	+
	\tikzsetnextfilename{partition3}
	\begin{tikzpicture}[scale=0.4,baseline=0.85cm]
	\row{d,c,c}{1}{x}
	\row{c,d,c}{2}{y}
	\row{c,c,d}{3}{z}
	\row{d,c,c}{4}{w}
	\connect{1,2,3}{x}{y}
	\connect{1,2,3}{y}{z}
	\connect{1,2,3}{z}{w}
	\end{tikzpicture}
	+ \ldots
\end{equ}
where the four displayed graphs correspond to the terms $C_\pi(q) p^\pi_\ft(\fx)$ 
for the partitions
$\{\{1,2,3,4\}\}$, $\{\{1,4\},\{2,3\}\}$, $\{\{1,2\},\{3,4\}\}$ and $\{\{1,4\},\{3\},\{2\}\}$.
The interpretation of these pictures should be as follows. Time runs vertically and
each black dot represents
one of the variables $(t_a,x_a)$ for $a \in \cA$. Each circle appearing on the same row represents 
a variable $(t_a,z)$ with an implicit summation over all possible values of $z$.
Black dots appearing in the same column belong to the same element of the partition $\pi$ and each such
column also represents a factor $\kappa_k(B_q)$, where $k$ is the number of black dots in the column.
Vertical black line segments joining a coordinate $(s,x)$ to a coordinate $(t,y)$ that do not connect to a
horizontal red line represent factors of $p_{t-s}(y-x)$. 
Finally, each occurrence of 
\begin{equ}
	\tikzsetnextfilename{multiparticle}
	\begin{tikzpicture}[scale=0.7,baseline=0.9cm]
	\draw[rounded corners=2mm,fill=black!15,draw=none]
	(0.8,0.2) rectangle (3.2,1.2);
	\draw[rounded corners=2mm,fill=black!15,draw=none]
	(0.8,1.8) rectangle (3.2,2.8);
	\row{d,l,d}{1}{x}
	\row{d,l,d}{2}{y}
	\connect{1,2,3}{x}{y}
	\end{tikzpicture}
\end{equ}
represents an instance of $p_{t-s}^\fx(\fy)$ where $s$ and $t$ are the time coordinates of the 
two rows of coordinates and $\fy$ and $\fx$ are their spatial coordinates.

With this graphical notation, the terms in \eqref{e:pbasic} are obtained for any given partition
$\pi$ in the following way:
\begin{claim}
	\item Create a node for each element of $\cA \times \pi$ arranged in a two-dimensional array
	such that the vertical coordinate corresponds to $\cA$ and is ordered accordingly.
	\item Draw each node $(a,A)$ as a black dot if $a \in A$ and as a circle otherwise.
	\item Connect any vertically adjacent nodes
	by a black line and then connect all black lines at the same level by a red line.
\end{claim}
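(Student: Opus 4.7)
The plan is to verify that, for each partition $\pi \in \CP(\cA)$, applying steps (i)--(iii) to $\pi$ produces a diagram whose value under the graphical conventions just described equals the summand $C_\pi(q)\, p^\pi_\ft(\fx)$ appearing in Proposition~\ref{prop:nthmoment}. Summing over $\pi$ then reproduces \eqref{e:pbasic}. Throughout, I would work from the alternative form \eqref{e:defptpialt}, since it already associates spatial variables to pairs in $\cA \times \pi$ and therefore matches the two-dimensional array created in step (i).

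First I would match the nodes. Step (i) places exactly one node at each coordinate $(a,A) \in \cA \times \pi$, with the vertical axis ordered by the total order $\ft$ induces on $\cA$. Step (ii) colours $(a,A)$ black iff $a \in A$, which by the graphical convention fixes the associated spatial label to $x_a$, and otherwise leaves it open (summed over). This is exactly the indexing set for the family of variables $\fz(a,A) \in \St$ with the constraint $\fz(a,A) = x_a$ on the diagonal, i.e.\ the maps in $Z_{\star,\pi}^\fx$ enlarged by dummy coordinates $(a,A) \notin E_\pi$ on which the overall integrand will not actually depend.

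Second I would match the vertical black structure. Step (iii) joins each pair of vertically adjacent nodes by a black line segment, so for every $e \in \cA_\to$ we obtain $|\pi|$ parallel segments at level $e$. By the graphical convention, a bundle of $|\pi|$ such segments connecting the row at time $t_{e_-}$ to the row at time $t_{e_+}$ represents the labelled-exclusion kernel $p^{\fz(e_-,\cdot)}_{t_e}\bigl(\fz(e_+,\cdot)\bigr)$ with spatial labels read off the rows. For columns $A \notin \pi_e$ (neither endpoint straddled), both endpoints are free and can be summed out using the consistency identity for the labelled exclusion kernel, contributing a factor $1$; this reduces the kernel to $p^{\fz_-(e)}_{t_e}\bigl(\fz_+(e)\bigr)$ in the notation above \eqref{e:defptpialt}. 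Taking the product of these factors over $e \in \cA_\to$ and summing over the remaining free variables in $Z_{\star,\pi}^\fx$ yields precisely the right-hand side of \eqref{e:defptpialt}, namely $p^\pi_\ft(\fx)$.

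Finally I would match the horizontal red structure and the Bernoulli cumulants. Step (iii) instructs us to link all black dots appearing in a common row by a red segment. By the graphical convention, a column $A \in \pi$ of $|A|$ black dots — which is exactly a group of dots linked through red segments reading across the rows labelled by $a \in A$ — contributes a factor $\kappa_{|A|}(B_q)$. Taking the product over $A \in \pi$ recovers $C_\pi(q) = \prod_{A\in\pi} \kappa_{|A|}(B_q)$. Combining the two structures gives $C_\pi(q) p^\pi_\ft(\fx)$, as required. The only real obstacle is careful bookkeeping: one must check that summing over the dummy variables $\fz(a,A)$ with $A \notin \pi_a$ genuinely produces the factor $1$ (this uses only that $p^\fx_t$ is a probability kernel in each argument), and that the way the red segments group black dots within a row agrees with the partition $\pi$ restricted to that row, which is immediate from the column arrangement dictated by step (i).
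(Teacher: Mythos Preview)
This passage is not a theorem in the paper but a descriptive procedure: the paper simply states the three-step recipe as a definition of the graphical notation and does not give any separate proof. Your verification that the recipe yields $C_\pi(q)\,p^\pi_\ft(\fx)$ is therefore more than the paper itself offers, and the core of your argument (matching nodes to the variables in $Z_{\star,\pi}^\fx$, matching the bundle of vertical segments at each level $e\in\cA_\to$ to the multiparticle kernel $p_{t_e}^{\fz_-(e)}(\fz_+(e))$, and reading off $\prod_{A\in\pi}\kappa_{|A|}(B_q)$ from the columns) is correct.

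There is, however, a misreading in your third paragraph. Step~(iii) does \emph{not} link black dots within a common row by red segments. It connects vertically adjacent \emph{nodes} (in the same column, consecutive rows) by black segments, and then connects all those black \emph{segments} sitting at the same horizontal level by a single horizontal red line. In the paper's conventions the red line is precisely the device that says ``interpret this bundle of $|\pi|$ parallel segments as one multiparticle exclusion kernel $p_t^\fx(\fy)$ rather than a product of single-particle kernels''. So the red structure belongs to your second paragraph, not your third: it is what justifies reading the vertical bundle at level $e$ as $p_{t_e}^{\fz(e_-,\cdot)}(\fz(e_+,\cdot))$. The Bernoulli cumulant factors $\kappa_{|A|}(B_q)$ are encoded purely by the column structure (one column per $A\in\pi$, with $|A|$ black dots in it), exactly as the paper states just above the passage; they have nothing to do with the red lines. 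Once you correct this attribution, your argument goes through cleanly.
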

This graphical notation is of course very redundant since there are much simpler ways of 
graphically representing a partition $\pi$ and an order on $\cA$, but it will allow us to perform graphical
manipulations corresponding to different ways of rewriting the terms in \eqref{e:pbasic}.

Note that $p_{t-s}^\fx(\fy)$ is the probability that the labelled exclusion process $X^{\fx}$ started at $\fx$ is at position $\fy$ at time $t-s$. Thus, since a circle corresponds to summing over all possible positions we see that one has the simplification
\begin{equ}
	\tikzsetnextfilename{left_gen}
	\begin{tikzpicture}[scale=0.7,baseline=0.9cm]
	\draw[rounded corners=2mm,fill=black!15,draw=none]
	(0.8,0.2) rectangle (3.2,1.2);
	\draw[rounded corners=2mm,fill=black!15,draw=none]
	(0.8,1.8) rectangle (4.2,2.8);
	\row{d,l,d,c}{1}{x}
	\row{d,l,d,d}{2}{y}
	\connect{1,...,4}{x}{y}
	\end{tikzpicture}
	=
	\tikzsetnextfilename{right_gen}
	\begin{tikzpicture}[scale=0.7,baseline=0.9cm]
	\draw[rounded corners=2mm,fill=black!15,draw=none]
	(0.8,0.2) rectangle (3.2,1.2);
	\draw[rounded corners=2mm,fill=black!15,draw=none]
	(0.8,1.8) rectangle (4.2,2.8);
	\row{d,l,d,n}{1}{x}
	\row{d,l,d,d}{2}{y}
	\connect{1,2,3}{x}{y}
	\end{tikzpicture}
	\;.
\end{equ}
(It is crucial here that the variable that is being summed over is not 
connected to anything else
but one single red line as shown in the figure.) Applying this repeatedly,
we see that \eqref{e:pbasic} can be simplified to
\begin{equ}[e:pbasicsimple]
	\E\Big[\prod_{a \in \cA}\xi_{t_a}(x_a)\Big] = 
	\tikzsetnextfilename{irred_partition0}
	\begin{tikzpicture}[scale=0.4,baseline=0.85cm]
	\row{d}{1}{x}
	\row{d}{2}{y}
	\row{d}{3}{z}
	\row{d}{4}{w}
	\connect{1}{x}{y}
	\connect{1}{y}{z}
	\connect{1}{z}{w}
	\end{tikzpicture}
	+
	\tikzsetnextfilename{irred_partition1}
	\begin{tikzpicture}[scale=0.4,baseline=0.85cm]
	\row{d,n}{1}{x}
	\row{c,d}{2}{y}
	\row{c,d}{3}{z}
	\row{d,n}{4}{w}
	\connect{1}{x}{y}
	\connect{1,2}{y}{z}
	\connect{1}{z}{w}
	\end{tikzpicture}
	+
	\tikzsetnextfilename{irred_partition2}
	\begin{tikzpicture}[scale=0.4,baseline=0.85cm]
	\row{d,n}{1}{x}
	\row{d,n}{2}{y}
	\row{n,d}{3}{z}
	\row{n,d}{4}{w}
	\connect{1}{x}{y}
	\connect{2}{z}{w}
	\end{tikzpicture}
	+
	\tikzsetnextfilename{irred_partition3}
	\begin{tikzpicture}[scale=0.4,baseline=0.85cm]
	\row{d,n,n}{1}{x}
	\row{c,d,n}{2}{y}
	\row{c,n,d}{3}{z}
	\row{d,n,n}{4}{w}
	\connect{1}{x}{y}
	\connect{1}{y}{z}
	\connect{1}{z}{w}
	\end{tikzpicture}
	+ \ldots\;,
\end{equ}
which precisely describes the corresponding terms in \eqref{e:defptpialt}.
(Here, the isolated black dots just contribute a factor $q = \kappa_1(B_q)$, independently of
their space-time coordinates.) In terms of the above graphical construction,
it means that we discard those circles that don't have a dot both above and below them,
which is precisely how the set $E_\pi$ used in \eqref{e:defptpialt} is defined.
Choosing an element $\fz \in Z_{\star,\pi}^{\fx}$ is then the same as choosing an element
of $\St$ for every `dot' remaining in the simplified graphical representation with the constraint
that the elements associated to black dots equal the corresponding coordinate of $\fx$.

In order to obtain a formula for the joint cumulants of the exclusion process, 
one would like to have an expression of the form 
\begin{equ}
	\E\Big[\prod_{a \in \cA}\xi_{t_a}(x_a)\Big]
	= \sum_{\pi\in\cP(\cA)} A^\pi_\ft(\fx)\;,
\end{equ}
for some quantity $A^\pi$ that factors over the elements of $\pi$, with each factor
only depending on the coordinates $(t_a,x_a)$ with $a$ belonging to the corresponding element of $\pi$. 
While \eqref{eq:nthmoment} looks like this at first sight and $C_\pi(q)$ does indeed factor
over $\pi$, the quantity $p^\pi_\ft(\fx)$ unfortunately does not. We remedy this by rewriting it
in terms of the ``connected transition probabilities''
\begin{equ}\label{eq:connectedtransitionproba}
	p_{c;t}^{\fx}(\fy)
	\eqdef \E_c\Big\{\one\{X_t^{x_i}=y_i\}\,:\,i \in \cA\Big\}\;.
\end{equ}
We then define a set
$\Cumus_\cA$ of all elements $\Cumus = (\pi, K)$ where 
$\pi \in \CP(\cA)$ and $K\colon \cA_\to \to \bigsqcup_{e \in \cA_\to} \CP(\pi_e)$ with the property that
$K_{e} \in \CP(\pi_{e})$ and, given $\Cumus = (\pi, K)$, we set
\begin{equ}\label{eq:cumuproba}
	p^\Cumus_{c;\ft}(\fx) = \sum_{\fz \in Z_{\star,\pi}^{\fx}} \prod_{e \in \cA_\to}  
	\prod_{A \in K_{e}} p^{\fz_-(e)_A}_{c;t_e}(\fz_+(e)_A)\;,
\end{equ}
so that 
\begin{equ}\label{eq:prodrewrite}
	\E\Big[\prod_{a \in \cA}\xi_{t_a}(x_a)\Big]
	= \sum_{\Cumus \in \Cumus_\cA} C_\Cumus(q) p^\Cumus_{c;\ft}(\fx)\;.
\end{equ}
Here we assume as usual that $\cA$ is endowed with the order given by the $t_a$
and we set $C_\Cumus(q) = C_\pi(q)$ for $\Cumus = (\pi,K)$. Indeed, using~\eqref{eq:cum}, we can conclude that
\begin{equation}
p_{\ft}^{\pi}(\fx)=
\sum_{\fz \in Z_{\star,\pi}^{\fx}} \prod_{e \in \cA_\to}\sum_{K_e\in\cP(\pi_e)}\prod_{A\in K_e}
p_{c;t_e}^{\fz_-(e)_A}(\fz_+(e)_A),
\end{equation} 
and~\eqref{eq:prodrewrite} follows.
We can use a similar graphical notation as before for these terms, the difference 
being that, for any given $\Cumus$, we only connect two black lines at the same 
level $e \in \cA_\to$ if they belong to the same element of $K_e$. Since these connected lines
now represent a factor $p^{\fx}_{c;t}(\fy)$ instead of a factor $p^{\fx}_{t}(\fy)$ as before,
we draw them in blue instead of red. For example, \eqref{e:pbasicsimple} can be rewritten
in this new notation as
\begin{equ}[e:pbasicsimple2]
	\E\Big[\prod_{a \in \cA}\xi_{t_a}(x_a)\Big] = 
	\tikzsetnextfilename{irred_connected0}
	\begin{tikzpicture}[scale=0.4,baseline=0.85cm]
	\row{d}{1}{x}
	\row{d}{2}{y}
	\row{d}{3}{z}
	\row{d}{4}{w}
	\connect{1}{x}{y}
	\connect{1}{y}{z}
	\connect{1}{z}{w}
	\end{tikzpicture}
	+
	\tikzsetnextfilename{irred_connected1}
	\begin{tikzpicture}[scale=0.4,baseline=0.85cm]
	\row{d,n}{1}{x}
	\row{c,d}{2}{y}
	\row{c,d}{3}{z}
	\row{d,n}{4}{w}
	\connect{1}{x}{y}
	\connect{1}{y}{z}
	\connect{2}{y}{z}
	\connect{1}{z}{w}
	\end{tikzpicture}
	+
	\tikzsetnextfilename{irred_connected2}
	\begin{tikzpicture}[scale=0.4,baseline=0.85cm]
	\row{d,n}{1}{x}
	\row{c,d}{2}{y}
	\row{c,d}{3}{z}
	\row{d,n}{4}{w}
	\connect{1}{x}{y}
	\connect[db]{1,2}{y}{z}
	\connect{1}{z}{w}
	\end{tikzpicture}
	+
	\tikzsetnextfilename{irred_connected3}
	\begin{tikzpicture}[scale=0.4,baseline=0.85cm]
	\row{d,n}{1}{x}
	\row{d,n}{2}{y}
	\row{n,d}{3}{z}
	\row{n,d}{4}{w}
	\connect{1}{x}{y}
	\connect{2}{z}{w}
	\end{tikzpicture}
	+
	\tikzsetnextfilename{irred_connected4}
	\begin{tikzpicture}[scale=0.4,baseline=0.85cm]
	\row{d,n,n}{1}{x}
	\row{c,d,n}{2}{y}
	\row{c,n,d}{3}{z}
	\row{d,n,n}{4}{w}
	\connect{1}{x}{y}
	\connect{1}{y}{z}
	\connect{1}{z}{w}
	\end{tikzpicture}
	+ \ldots\;.
\end{equ}
Every
$\Cumus = (\pi, K)$ defines naturally a partition $\hat \Part_\Cumus \in \CP(\pi)$
by postulating that $\hat \Part_\Cumus$ is the finest partition such that, 
for every $e \in \cA_\to$, its projection onto $\CP(\pi_e)$ is coarser than $K_e$.
We write $\Part_\Cumus \in \CP(\cA)$ for the
coarsening of $\pi$ induced by $\hat \Part_\Cumus$.
In the graphical representation of $\Cumus$, $\Part_\Cumus$ is nothing but the 
partition of $\cA$ induced by decomposing $\Cumus$ into its connected components.

Writing $\Cumus_\cA^c = \{\Cumus \in \Cumus_\cA\,:\, \Part_\Cumus = \{\cA\}\}$ for the connected
elements, we have the following result for the $n$-point cumulants of the symmetric simple 
exclusion process.

\begin{theorem}\label{thm:cumulantofexclusion}
In the setting of Proposition~\ref{prop:nthmoment}, one has
the identity
\begin{equation}
\label{eq:nthcumulant}
\E_c\Big\{\xi_{t_a}(x_a)\,:\, a \in \cA\Big\}
= \sum_{\Cumus \in \Cumus_\cA^c} C_\Cumus(q) p^\Cumus_{c;\ft}(\fx)\;.
\end{equation}
\end{theorem}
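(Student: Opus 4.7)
The plan is to derive \eqref{eq:nthcumulant} as a purely combinatorial consequence of the moment expansion \eqref{eq:prodrewrite} together with the cumulant inversion formula of Remark~\ref{rem:inversioncumulants}. In other words, I will not need any further probabilistic input: the key structural fact, namely that the joint moments decompose into a sum over $\Cumus \in \Cumus_\cA$ of terms that factor over the connectivity partition $\Part_\Cumus$, has already been established in \eqref{eq:prodrewrite}.

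First, I would apply \eqref{eq:inversioncumulants} and substitute \eqref{eq:prodrewrite} into each moment factor:
\begin{equs}
\E_c\{\xi_{t_a}(x_a)\}_{a\in\cA}
&= \sum_{\rho\in\cP(\cA)}(-1)^{|\rho|-1}(|\rho|-1)!\prod_{\bar B\in\rho}\E\Big[\prod_{a\in\bar B}\xi_{t_a}(x_a)\Big]\\
&= \sum_{\rho\in\cP(\cA)}(-1)^{|\rho|-1}(|\rho|-1)!\prod_{\bar B\in\rho}\sum_{\Cumus_{\bar B}\in\Cumus_{\bar B}}C_{\Cumus_{\bar B}}(q)\,p^{\Cumus_{\bar B}}_{c;\ft_{\bar B}}(\fx_{\bar B})\;.
\end{equs}
The next step is to recognise that a tuple $(\Cumus_{\bar B})_{\bar B\in\rho}$ corresponds bijectively to an element $\Cumus\in\Cumus_\cA$ whose connectivity partition $\Part_\Cumus$ refines $\rho$, and that under this identification one has
\begin{equ}
\prod_{\bar B\in\rho} C_{\Cumus_{\bar B}}(q)\,p^{\Cumus_{\bar B}}_{c;\ft_{\bar B}}(\fx_{\bar B}) = C_\Cumus(q)\,p^\Cumus_{c;\ft}(\fx)\;,
\end{equ}
since both $C_\Cumus(q)$ and $p^\Cumus_{c;\ft}(\fx)$ factor over connected components by their very definition.

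Swapping the order of summation then yields
\begin{equ}
\E_c\{\xi_{t_a}(x_a)\}_{a\in\cA} = \sum_{\Cumus\in\Cumus_\cA} C_\Cumus(q)\,p^\Cumus_{c;\ft}(\fx)\sum_{\rho\ge \Part_\Cumus}(-1)^{|\rho|-1}(|\rho|-1)!\;,
\end{equ}
where $\rho\ge\Part_\Cumus$ means that $\Part_\Cumus$ refines $\rho$. To conclude, I would invoke the classical identity on the partition lattice stating that, for any partition $\sigma$ with $|\sigma|=k$ blocks,
\begin{equ}
\sum_{\rho\ge \sigma}(-1)^{|\rho|-1}(|\rho|-1)! = \begin{cases}1,&k=1,\\ 0,&k\ge 2.\end{cases}
\end{equ}
This identity is itself a direct consequence of Remark~\ref{rem:inversioncumulants} applied to $k$ copies of a deterministic random variable equal to $1$ (whose joint cumulants vanish for $k\ge 2$), so no additional machinery is required. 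Only the terms with $\Part_\Cumus=\{\cA\}$, i.e.\ the connected ones $\Cumus\in\Cumus_\cA^c$, survive, giving \eqref{eq:nthcumulant}.

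There is no real analytic difficulty here; the argument is entirely formal. The only point that deserves some care is the combinatorial bijection between tuples $(\Cumus_{\bar B})_{\bar B\in\rho}$ and elements $\Cumus\in\Cumus_\cA$ with $\Part_\Cumus\le\rho$, together with the compatible factorisation of $C_\Cumus(q)$ and $p^\Cumus_{c;\ft}(\fx)$. This reduces to checking that the data $(\pi,K)$ defining an element of $\Cumus_\cA$ cannot create any ``connection'' across blocks of $\rho$ precisely when $\Part_\Cumus$ refines $\rho$, which is immediate from the definition of $\Part_\Cumus$ as the partition induced by connected components of the graphical representation.
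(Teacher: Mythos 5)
Your overall skeleton (M\"obius inversion over the partition lattice plus factorisation of the summands over connected components) is the same one the paper uses, and the final combinatorial identity $\sum_{\rho\ge\sigma}(-1)^{|\rho|-1}(|\rho|-1)!=\delta_{|\sigma|,1}$ is fine. The gap is in the step you flag as ``the only point that deserves some care'': the claimed bijection between tuples $(\Cumus_{\bar B})_{\bar B\in\rho}$ and elements $\Cumus\in\Cumus_\cA$ with $\Part_\Cumus\le\rho$, and the accompanying termwise identity $\prod_{\bar B\in\rho}p^{\Cumus_{\bar B}}_{c;\ft_{\bar B}}(\fx_{\bar B})=p^{\Cumus}_{c;\ft}(\fx)$, are both false. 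The reason is that the data $K^{\bar B}$ of an element of $\Cumus_{\bar B}$ is indexed by the consecutive pairs $\bar B_\to$ of $\bar B$, whereas the data $K$ of an element of $\Cumus_\cA$ is indexed by the finer set $\cA_\to$: a single time interval $[a,b]$ with $a\to b$ in $\bar B$ is subdivided by the points of $\cA\setminus\bar B$ lying in between, and one must choose, for each blue line of $K^{\bar B}_{(a,b)}$ and each subinterval, how that line is refined. Consequently the natural map goes the other way (it is the projection $\Pi$ of the paper) and its fibres generically contain several elements --- see the displayed example after \eqref{e:defProduct}, where $\Cumus^A\star\Cumus^B$ is a sum of three distinct graphs. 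Correspondingly, $p^{\Cumus_{\bar B}}_{c;\ft_{\bar B}}$ is built from connected transition probabilities over the coarse intervals $t_b-t_a$, while $p^\Cumus_{c;\ft}$ uses the refined intervals, so equality cannot hold term by term.

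What \emph{is} true, and what your argument actually needs, is that the product over blocks equals the \emph{sum} of $C_\Cumus(q)p^\Cumus_{c;\ft}(\fx)$ over the fibre $\Pi^{-1}\bigl((\Cumus_{\bar B})_{\bar B}\bigr)$; with that replacement your computation goes through verbatim. But establishing this is precisely the content of Proposition~\ref{prop:Gmultiplicative}, whose proof rests on Lemma~\ref{lem:insertingpoints}: one must show that a connected multi-particle transition probability $p^{\fx}_{c;t_e}(\fy)$ over a coarse interval decomposes as a sum, over all \emph{connected} refinements, of products of connected transition probabilities over the subintervals. That lemma uses the Markov property and the consistency of the kernels $p^\fx_t(\fy)$, i.e.\ genuine probabilistic input about the exclusion process --- contrary to your opening claim that the theorem is a purely formal consequence of \eqref{eq:prodrewrite}. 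So the proposal is not wrong in spirit, but it silently assumes the main lemma; as written it does not constitute a proof.
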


\begin{proof}
Write $\CH = \Vec \big( \bigsqcup_{\emptyset \neq A \subset \cA} \Cumus_A\big)$
and let $\CG\colon \CH \to \R$ be the linear map given by
\begin{equ}
\CG(\Cumus) = C_\Cumus(q) p^\Cumus_{c;\ft}(\fx)\;.
\end{equ}
For $A \subset \cA$, we also write $\CH_A \subset \CH$ for the subspace spanned by $\Cumus_A$.
Note that $\CH_A \cap \CH_B = \{0\}$ as soon as $A \neq B$, so that in particular 
$\CH_\cA \subsetneq \CH$. Setting $\one = \sum\{\Cumus \in \Cumus_\cA\} \in \CH_\cA$,
 Proposition~\ref{prop:nthmoment} can then be rewritten as
\begin{equ}
\E\Big[\prod_{a \in \cA}\xi_{t_a}(x_a)\Big]
= \CG(\one)\;.
\end{equ}

We also define a ``product'' $\star$ on $\CH$ which however only allows to multiply
elements $\Cumus^A \in \Cumus_A$ and $\Cumus^B \in \Cumus_B$ for $A\cap B = \emptyset$.
To define $\star$, given $A$ and $B$ as above, we write
\begin{equ}
\Cumus_{A,B} = \bigl\{\Cumus \in \Cumus_{A\cup B}\,:\, \Part_\Cumus \le \{A,B\}\bigr\}\;,
\end{equ}
and we consider the projection $\Pi \colon \Cumus_{A,B} \to \Cumus_A \times \Cumus_B$
defined as follows. Given $\Cumus = (\pi,K) \in \Cumus_{A,B}$, we set
$\Pi \Cumus = \bigl((\pi^A,K^A),(\pi^B,K^B)\bigr)$, where $\pi^A$ and $\pi^B$ are 
the only partitions of $A$ and $B$ such that $\pi^A \cup \pi^B = \pi$ and $K^A$
is given by
\begin{equ}\label{eq:defKA}
K^A_{a,b} = \powerset(A) \cap \bigvee \Bigl\{K_{c,d}\,:\, (c,d) \in (A\cup B)_\to,\, [c,d] \subset [a,b]\Bigr\}\;,
\end{equ}
where the interval $[a,b]$ is viewed in $A \cup B$.
The definition of $K^B$ is the same with the roles of $A$ and $B$ reversed.
With this definition at hand, we set
\begin{equ}[e:defProduct]
\Cumus^A \star \Cumus^B = \sum \{\Cumus \in \Cumus_{A,B}\,:\, \Pi \Cumus = (\Cumus^A,\Cumus^B)\}\;.
\end{equ}
One can readily verify that this partial product is commutative, associative, and that 
$\Cumus^\emptyset$ contains a single element which acts as a unit for $\star$.
Taking for example $\cA = \{1,\ldots,6\}$ and $A = \{1,2,4,5\}$, $B = \{3,6\}$ one has
\begin{equ}
\tikzsetnextfilename{starprod1}
\begin{tikzpicture}[scale=0.4,baseline=1.4cm]
\row{d,n}{2}{x}
\row{c,d}{3}{y}
\row{c,d}{5}{z}
\row{d,n}{6}{w}
\connect{1}{x}{y}
\connect[db]{1,2}{y}{z}
\connect{1}{z}{w}
\end{tikzpicture}
\star 
\tikzsetnextfilename{starprod2}
\begin{tikzpicture}[scale=0.4,baseline=1.4cm]
\row{d}{1}{x}
\row{d}{4}{y}
\connect{1}{x}{y}
\end{tikzpicture}
=
\tikzsetnextfilename{starprod3}
\begin{tikzpicture}[scale=0.4,baseline=1.4cm]
\row{n,n,d}{1}{xx}
\row{d,n,c}{2}{x}
\row{c,d,c}{3}{y}
\row{c,c,d}{4}{yx}
\row{c,d,n}{5}{z}
\row{d,n,n}{6}{w}
\connect{3}{xx}{x}
\connect{3}{x}{y}
\connect{3}{y}{yx}
\connect{1}{x}{y}
\connect[db]{1,2}{y}{yx}
\connect{1}{yx}{z}
\connect{2}{yx}{z}
\connect{1}{z}{w}
\end{tikzpicture}
+
\tikzsetnextfilename{starprod4}
\begin{tikzpicture}[scale=0.4,baseline=1.4cm]
\row{n,n,d}{1}{xx}
\row{d,n,c}{2}{x}
\row{c,d,c}{3}{y}
\row{c,c,d}{4}{yx}
\row{c,d,n}{5}{z}
\row{d,n,n}{6}{w}
\connect{3}{xx}{x}
\connect{3}{x}{y}
\connect{3}{y}{yx}
\connect{1}{x}{y}
\connect[db]{1,2}{yx}{z}
\connect{1}{yx}{y}
\connect{2}{yx}{y}
\connect{1}{z}{w}
\end{tikzpicture}
+
\tikzsetnextfilename{starprod5}
\begin{tikzpicture}[scale=0.4,baseline=1.4cm]
\row{n,n,d}{1}{xx}
\row{d,n,c}{2}{x}
\row{c,d,c}{3}{y}
\row{c,c,d}{4}{yx}
\row{c,d,n}{5}{z}
\row{d,n,n}{6}{w}
\connect{3}{xx}{x}
\connect{3}{x}{y}
\connect{3}{y}{yx}
\connect{1}{x}{y}
\connect[db]{1,2}{y}{yx}
\connect[db]{1,2}{yx}{z}
\connect{1}{z}{w}
\end{tikzpicture}
\;.
\end{equ}

For any $A \subset \cA$ and any partition $\pi$ of $A$,
we then set
\begin{equ}
\one_A^\pi = \sum\{\Cumus \in \Cumus_A\,:\, \Part_\Cumus = \pi\}\;,
\end{equ}
so that in particular $\one = \sum_{\pi \in \CP(\cA)} \one_\cA^\pi$.
It is then immediate from \eqref{e:defProduct} that one has the  identity
\begin{equ}
\one_A^\pi \star \one_B^\sigma = \one_{A\cup B}^{\pi\cup \sigma}\;.
\end{equ}

We also claim that $\CG$ is multiplicative with respect to $\star$, which is the
content of Proposition~\ref{prop:Gmultiplicative} below.
As a consequence, one has
\begin{equ}
\CG(\one)
= \sum_{\pi \in \CP(\cA)} \CG(\one_\cA^\pi) = \sum_{\pi \in \CP(\cA)} \prod_{A \in \pi} \CG(\one_A^{\{A\}})\;.
\end{equ}
Since $\one_A^{\{A\}}$ is precisely the indicator function of the connected elements, the
claim follows.
\end{proof}

\subsubsection{Proof of the multiplicativity of $\CG$}

The goal of this section is to show that the map $\CG$ defined in the proof of Theorem~\ref{thm:cumulantofexclusion} is 
indeed multiplicative with respect to $\star$. To that end, fix $A, B\subset \cA$ such that $A\cap B=\emptyset$,
$\Cumus^A=(\pi^A,K^A)\in \Cumus_A$ and $\Cumus^B=(\pi^B,K^B)\in\Cumus_B$. For $e\in A_\to$ and 
$L\in K_{e}^{A}$ (in the graphical representation, $L$ is one of the ``blue lines''), we then 
consider the set $\CK_{e,L}$ of all maps $K:(A\cup B)_\to^{e} \to \cP(L)$, where 
$(A\cup B)_\to^{e}=(A\cup B)_\to \cap [e_-,e_+]$.
Fix $\fz\in Z_{\star,\pi^A}^{\fx}$. We let $Z_{\star,e,L}^{\fz}$ consist of all maps $\hat \fz \colon E_{e,L}^{A\cup B}\to \St$ with $E_{e,L}^{A\cup B}\eqdef ((A\cup B) \cap [e_-,e_+]) \times L$, such that
$\hat \fz(\tilde a,c)=\fz(\tilde a, c)$ for all $\tilde a\in\{e_-,e_+\}$
and $c\in L$. Given a subset $C \subset L$, the notation $\hat \fz(\tilde a,C)$ 
should be interpreted as a shorthand for the tuple
$\{\hat\fz(\tilde a,c)\,:\, c\in C\}$.

Before we formulate the next lemma we associate to $K \in \CK_{e,L}$ the finest partition $\Part_{K} \in \CP(L)$ such that $K(\tilde e) \le \Part_K$ for all $\tilde e\in (A\cup B)_\to^{e}$. We then define the connected elements by $\CK_{e,L}^{c}=\{K\in\CK_{e,L}:\, \Part_{K}=\{L\}\}$. 
\begin{lemma}\label{lem:insertingpoints}
In the notation just introduced
\begin{equation}
p_{c;t_e}^{\fz_{-}(e)_{L}}(\fz_{+}(e)_{L})=
\sum_{K\in\CK_{e,L}^{c}}\sum_{\hat\fz\in Z_{\star,e,L}^{\fz}}
\prod_{\tilde e\in (A\cup B)_\to^{e}}\prod_{C\in K(\tilde e)}
p_{c;t_{\tilde e}}^{\hat\fz(\tilde e_-,C)}(\hat\fz(\tilde e_+,C)).
\end{equation}
\end{lemma}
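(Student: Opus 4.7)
The plan is to deduce the claimed identity by comparing two expansions of the unconnected joint transition probability and invoking uniqueness of the Möbius inversion, for which it is convenient to prove the statement simultaneously for every sub-block $L' \subseteq L$. For $L' \subseteq L$, write
\[
F(L') := p^{\fz_-(e)_{L'}}_{c;t_e}(\fz_+(e)_{L'}), \qquad M(L') := p^{\fz_-(e)_{L'}}_{t_e}(\fz_+(e)_{L'}),
\]
and let $G(L')$ denote the right-hand side of the claimed identity with $L$ replaced by $L'$. Applying~\eqref{eq:cum} to the family of indicators $\{\one_{\fz_+(e)_c}(X^{\fz_-(e)_c}_{t_e})\}_{c\in L'}$ immediately gives the moment--cumulant relation
\[
M(L') = \sum_{\Part \in \cP(L')} \prod_{B' \in \Part} F(B').
\]

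The heart of the argument is to show that $G$ satisfies the \emph{same} recursion. Introduce the auxiliary sum
\[
S(L') := \sum_{K \in \CK_{e,L'}} \sum_{\hat\fz \in Z^{\fz}_{\star,e,L'}} \prod_{\tilde e \in (A\cup B)_\to^{e}} \prod_{C \in K(\tilde e)} p^{\hat\fz(\tilde e_-,C)}_{c;t_{\tilde e}}(\hat\fz(\tilde e_+,C)),
\]
obtained from the lemma's right-hand side by dropping the connectedness constraint on $K$. Evaluating $S(L')$ in two ways yields the desired recursion for $G$. On the one hand, performing the inner sum over $K(\tilde e) \in \cP(L')$ at each intermediate time $\tilde e$ via~\eqref{eq:cum} read in reverse reassembles $p^{\hat\fz(\tilde e_-,L')}_{t_{\tilde e}}(\hat\fz(\tilde e_+,L'))$, whereupon the remaining summation over the intermediate configurations $\hat\fz$ telescopes by the Markov (Chapman--Kolmogorov) property of the labelled exclusion process, giving $S(L') = M(L')$. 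On the other hand, grouping the $K$'s by the associated coarsening $\Part_K \in \cP(L')$ sets up, for each fixed $\Part$, a bijection $K \leftrightarrow (K_{B'})_{B' \in \Part}$ onto $\prod_{B' \in \Part} \CK^c_{e,B'}$ via $K_{B'}(\tilde e) := \{C \in K(\tilde e) : C \subseteq B'\}$. Both the integrand and the summation domain $Z^{\fz}_{\star,e,L'} \cong \prod_{B' \in \Part} Z^{\fz}_{\star,e,B'}$ split block-wise under this bijection, producing $S(L') = \sum_{\Part \in \cP(L')} \prod_{B' \in \Part} G(B')$. Combining the two evaluations yields $M(L') = \sum_{\Part} \prod_{B'} G(B')$.

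Both $F$ and $G$ thus satisfy the same triangular relation $M(L') = F(L') + \sum_{\Part \neq \{L'\}} \prod_{B' \in \Part} F(B')$, and an immediate induction on $|L'|$ forces $F \equiv G$ on every subset of $L$. The base case $|L'|=1$ is trivial: then $\CK^c_{e,L'}$ contains a single element whose $K(\tilde e)$ is the trivial partition, single-particle cumulants coincide with single-particle transition probabilities, and a direct Chapman--Kolmogorov expansion gives $G(L') = M(L') = F(L')$. Taking $L' = L$ concludes the proof.

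The only mildly delicate point is the bijection and factorisation bookkeeping when regrouping $S(L')$ by $\Part_K$; it decouples cleanly because the constraints defining $Z^{\fz}_{\star,e,L'}$ only couple the $c$-th coordinates of $\hat\fz$ across times (and only at the endpoints $e_\pm$), and because the convention that $p^\fx_t$ and $p^\fx_{c;t}$ vanish for $\fx$ with repeated entries automatically enforces the labelled-exclusion constraint within each block.
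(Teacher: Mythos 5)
Your proposal is correct and follows essentially the same route as the paper: the paper likewise expands the unconnected transition probability via the Markov property, rewrites each intermediate one-step kernel as a sum of cumulants to obtain the sum over all $K\in\CK_{e,L}$, observes that for disconnected $K$ the sum over $\hat\fz$ factorises over the blocks of $\Part_K$, and then concludes by the same moment--cumulant (M\"obius-inversion) comparison used in the proof of \eqref{eq:cumulantmartingales}. Your write-up merely makes the final induction over sub-blocks $L'\subseteq L$ and the bijection bookkeeping explicit where the paper compresses them into a reference to an earlier argument.
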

The formulation of the above lemma unfortunately needs a fair amount of notation and therefore looks quite heavy. In terms of the graphical notation the meaning however is quite simple. Indeed, the above lemma for instance implies the identity
\begin{equ}
	\tikzsetnextfilename{insert1}
\begin{tikzpicture}[scale=0.4,baseline=0.7cm]
\row{d,d,d}{1}{x}
\row{d,d,d}{3}{y}
\connect[db]{1,2,3}{x}{y}
\end{tikzpicture}	
=
\tikzsetnextfilename{insert2}
\begin{tikzpicture}[scale=0.4,baseline=0.7cm]
\row{d,d,d}{1}{x}
\row{c,c,c}{2}{y}
\row{d,d,d}{3}{z}
\connect[db]{1,2}{x}{y}
\connect[db]{2,3}{x}{y}
\connect{1}{y}{z}
\connect{2}{y}{z}
\connect{3}{y}{z}
\end{tikzpicture}
+\tikzsetnextfilename{insert3}
\begin{tikzpicture}[scale=0.4, baseline=0.7cm]
\row{d,d,d}{1}{x}
\row{c,c,c}{2}{y}
\row{d,d,d}{3}{z}
\connect[db]{1,2}{y}{z}
\connect[db]{2,3}{x}{y}
\connect{1}{x}{y}
\connect{2}{x}{y}
\connect{3}{y}{z}
\end{tikzpicture}
+
\tikzsetnextfilename{insert4}
\begin{tikzpicture}[scale=0.4, baseline=0.7cm]
\row{d,d,d}{1}{x}
\row{c,c,c}{2}{y}
\row{d,d,d}{3}{z}
\connect[db]{1,2}{x}{y}
\connect[db]{2,3}{y}{z}
\connect{1}{y}{z}
\connect{2}{y}{z}
\connect{3}{x}{y}
\end{tikzpicture}
+
\tikzsetnextfilename{insert5}
\begin{tikzpicture}[scale=0.4,baseline=0.7cm]
\row{d,d,d}{1}{x}
\row{c,c,c}{2}{y}
\row{d,d,d}{3}{z}
\connect[db]{1,2}{y}{z}
\connect[db]{2,3}{y}{z}
\connect{1}{x}{y}
\connect{2}{x}{y}
\connect{3}{x}{y}
\end{tikzpicture}\,.
	\end{equ}
Thinking of blue lines as segments connecting two vertical lines, 
Lemma~\ref{lem:insertingpoints} states that if one inserts one additional row of 
space-time points, then one needs to sum over all ways of putting each blue line segment above 
or below the newly inserted row.
Note that if one starts from a connected graph, the graph one obtains in this way is 
again connected. 

This explains the necessity to sum over $\cK_{e,L}^{c}$. Morever, we see that the points on the bottom and the top never change, hence the restriction $\hat\fz (\tilde a,c) =\fz(\tilde a,c)$ for $\tilde a\in\{e_-,e_+\}$ and $c\in L$ in the definition of $Z_{\star,e,L}^{\fz}$. For the last step of the proof we also notice at that point that even when summing over all spatial variables, i.e., over all circles, in the last graph above the two components factorise into two independent sums.

\begin{proof}
To simplify the notation we assume that $t_{e_-}=0$. We can write using the Markov property,
\begin{equation}\label{eq:transitionexpectation}
\begin{aligned}
\E[\one\{X_{t_{e_+}}^{\fz_{-}(e)_{L}}=\fz_{+}(e)_{L}\}]
&=\sum_{\hat \fz\in Z_{\star,e,L}^{\fz}}
\prod_{\tilde e \in (A\cup B)_{\to}^{e}}
p_{t_{\tilde e}}^{\hat\fz_-(\tilde e)_L}(\hat\fz_+(\tilde e)_L).
\end{aligned}
\end{equation}
The transition probability can further be  rewritten in terms of cumulants as
\begin{equation}
p_{t_{\tilde e}}^{\hat\fz_-(\tilde e)_L}(\hat\fz_+(\tilde e)_L)
=\sum_{\pi_{\tilde{e}}\in \cP(L)}
\prod_{C\in\pi_{\tilde{e}}} p_{c;t_{\tilde e}}^{\hat\fz(\tilde e_-,C)}(\fz(\tilde e_+,C)).
\end{equation}
In particular, as suggested by the notation, that the partition $\pi_{\tilde e}$ depends on the edge $\tilde{e}\in  (A\cup B)_\to^{e}$. Therefore, when plugging this expression back in into the right hand side of~\eqref{eq:transitionexpectation} we see that
\begin{equs}
\E[\one\{X_{t_{e_+}}^{\fz_{-}(e)_{L}}=\fz_{+}(e)_{L}\}]
=\sum_{K\in\CK_{e,L}}\sum_{\hat\fz\in Z_{\star,e,L}^{\fz}}
\prod_{\tilde e\in (A\cup B)_\to^{e}}\prod_{C\in K(\tilde e)}
p_{c;t_{\tilde e}}^{\hat\fz(\tilde e_-,C)}(\hat\fz(\tilde e_+,C)).
\end{equs}
Assume now that $K\notin \CK_{e,L}^{c}$, so that we can write $\cP_K=\{L_1,\ldots, L_n\}$ for some $n\geq 2$. In that case the sum over $\hat\fz\in Z_{\star,e,L}^{\fz}$ above factorises as
\begin{equ}
\prod_{i=1}^{n}\sum_{\hat\fz\in Z_{\star,e,L_i}^{\fz}}
\prod_{\tilde e\in (A\cup B)_\to^{e}}\prod_{C\in K(\tilde e)}
p_{c;t_{\tilde e}}^{\hat\fz(\tilde e_-,C)}(\hat\fz(\tilde e_+,C))
	\end{equ}
where $\hat\fz\in Z_{\star,e,L_i}^{\fz}$ is defined as $ Z_{\star,e,L}^{\fz}$ with the only restriction that we have $\hat \fz(\tilde a,c)=\fz(\tilde a,c)$ for any $\tilde a\in\{e_-,e_+\}$ and $c\in L_i$.
Thus, we may conclude as in the proof of~\eqref{eq:cumulantmartingales}.
\end{proof}

\begin{proposition}\label{prop:Gmultiplicative}
The map $\CG$ defined in the proof of Theorem~\ref{thm:cumulantofexclusion} is multiplicative with respect to $\star$.
\end{proposition}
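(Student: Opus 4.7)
The plan is to reduce multiplicativity to a purely combinatorial identity for the quantities $p^\Cumus_{c;\ft}(\fx)$, and then derive that identity by applying Lemma~\ref{lem:insertingpoints} independently to each ``blue line'' of $\Cumus^A$ and of $\Cumus^B$.

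First, since every $\Cumus = (\pi,K) \in \Cumus_{A,B}$ with $\Pi\Cumus = (\Cumus^A,\Cumus^B)$ satisfies $\pi = \pi^A \sqcup \pi^B$, the constants factor: $C_\Cumus(q) = C_{\Cumus^A}(q)\,C_{\Cumus^B}(q)$. It thus suffices to establish
\begin{equation}\label{eq:keyidentity}
\sum_{\substack{\Cumus \in \Cumus_{A,B}\\ \Pi\Cumus = (\Cumus^A,\Cumus^B)}} p^\Cumus_{c;\ft}(\fx)
= p^{\Cumus^A}_{c;\ft_A}(\fx_A)\, p^{\Cumus^B}_{c;\ft_B}(\fx_B)\;.
\end{equation}

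Starting from the right-hand side of \eqref{eq:keyidentity}, expand each factor in the product defining $p^{\Cumus^A}_{c;\ft_A}(\fx_A)$ using Lemma~\ref{lem:insertingpoints}. Concretely, every $e = (a,b) \in A_\to$ and every $L \in K^A_e$ contributes a factor $p^{\fz_-(e)_L}_{c;t_e}(\fz_+(e)_L)$, and inside $A \cup B$ the interval $[a,b]$ is subdivided by the elements of $B$ it contains into the sub-edges $(A\cup B)_\to^e = (A\cup B)_\to \cap [a,b]$. Lemma~\ref{lem:insertingpoints} rewrites this factor as a sum over connected refinements $K^{A,L} \in \CK^c_{e,L}$, introducing new spatial variables $\hat\fz$ at the intermediate $B$-times. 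Proceed analogously for every blue line $L' \in K^B_{e'}$ in $\Cumus^B$, using instead the $A$-times inside each $e' \in B_\to$ to subdivide. After these expansions, the right-hand side of \eqref{eq:keyidentity} becomes a sum indexed by the family $(\{K^{A,L}\},\{K^{B,L'}\})$ of all such choices.

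Collecting the pieces at each sub-edge $\tilde e \in (A\cup B)_\to$, the contributions coming from blue lines in $\Cumus^A$ yield a partition of $\pi^A_{\tilde e}$, those from $\Cumus^B$ a partition of $\pi^B_{\tilde e}$, and these together define a partition $K_{\tilde e} \in \CP(\pi_{\tilde e})$ in which no block mixes $A$- and $B$-parts. The fresh spatial variables produced on either side assemble precisely into an element of $Z_{\star,\pi}^\fx$ for $\pi = \pi^A \sqcup \pi^B$, and the resulting integrand is exactly $p^\Cumus_{c;\ft}(\fx)$ for $\Cumus = (\pi,K)$. It remains to verify that the map
\[
\bigl(\{K^{A,L}\}_{e \in A_\to,\,L\in K^A_e},\ \{K^{B,L'}\}_{e'\in B_\to,\,L'\in K^B_{e'}}\bigr) \;\longmapsto\; (K_{\tilde e})_{\tilde e \in (A\cup B)_\to}
\]
is a bijection onto the fiber $\Pi^{-1}(\Cumus^A,\Cumus^B)$. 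Because the definition \eqref{eq:defKA} of $K^A_e$ only retains $A$-blocks, it is insensitive to any $B$-structure and to any putative mixed blocks, which is consistent with the fact that no mixed blocks arise in Step~3. The requirement that the join of the $K^{A,L}_{\tilde e}$'s over $\tilde e \subset e$ restricted to $L$ equals $\{L\}$ is precisely the connectedness condition $K^{A,L} \in \CK^c_{e,L}$; the analogous statement holds on the $B$-side. Since the $A$- and $B$-constraints are independent, the fiber decomposes as the claimed product of choices.

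The main obstacle is the combinatorial bookkeeping in this last identification: one has to check that no terms with mixed $A$/$B$ blocks at some sub-edge appear in $\Pi^{-1}(\Cumus^A,\Cumus^B)$ (so that the parametrisation above is surjective) and that the connectedness condition on each blue line translates correctly under \eqref{eq:defKA}. Once this is in place, both sides of \eqref{eq:keyidentity} reduce to the same sum, completing the proof.
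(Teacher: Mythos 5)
Your proposal is correct and follows essentially the same route as the paper: both reduce multiplicativity to the identity $p^{\Cumus^A}_{c;\ft}(\fx)\,p^{\Cumus^B}_{c;\ft}(\fx)=p^{\Cumus^A\star\Cumus^B}_{c;\ft}(\fx)$ and prove it by applying Lemma~\ref{lem:insertingpoints} to each blue line so as to insert the times of the other set, then identifying pairs of connected refinements with the fiber $\Pi^{-1}(\Cumus^A,\Cumus^B)$. The one point you flag but leave open---that no mixed $A$/$B$ blocks occur in the fiber---follows directly from the constraint $\Part_\Cumus\le\{A,B\}$ in the definition of $\Cumus_{A,B}$, which is how the paper disposes of it.
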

\begin{proof}
Fix $\Cumus^A=(\pi^A,K^A)\in\Cumus_A$, and $\Cumus^B=(\pi^B,K^B)\in\Cumus_B$. We need to introduce more notation. Let $E_{\pi^A,B}\subset (A\cup B)\times\pi^A$ be such that 
\begin{itemize}
	\item[(1)] $(a,C)\in E_{\pi^A,B}$ if $a\in A$ and $C\in\pi_a^A$,
	\item[(2)] or $a\in B$ and $C\in\pi_e^A$. Here $e\in A_\to$ is such that $e_-< a< e_+$.
\end{itemize}
We then define 
\begin{equ}
Z_{\star,\pi^A,A\cup B}^{\fx}=\{\fz^A:E_{\pi^A,B}\to \St:\, \fz^A(a,C)=x_a\text{ if }	a\in C\}.
	\end{equ}
The space $\CK^A$ is defined as the space of all sections
$K:\bigcup_{e\in \cA_{\to}}(A\cup B)_\to^e \to \sqcup_{e\in\cA_\to}\sqcup_{\tilde{e}\in (A\cup B)_\to^e}\cP(\tilde \pi_{\tilde{e}}^A)$ satisfying
satisfy the relation
\begin{equ}\label{eq:KandKA}
	K^A_{e} =  \bigvee \Bigl\{K_{\bar e}\,:\, \bar e \in (A\cup B)_\to^{e}\Bigr\}\;\quad\text{for all }e\in\cA_\to\,.
\end{equ}
Here to define $\tilde\pi_{\tilde{e}}^A$ we distinguish between two cases.
\begin{itemize}
	\item[(1)] If $\tilde{e}\in A_\to$, then $\tilde\pi_{\tilde{e}}^A=\pi_{\tilde{e}}^A$.
	\item[(2)] If $\tilde{e}\in (A\cup B)_\to\setminus A_\to$, then $\tilde \pi_{\tilde{e}}^A=\pi_{e}^{A}$, where $e\in A_\to$ is such that $ e_- \leq  \tilde{e}_- < \tilde{e}_+\leq  e_+$.
\end{itemize} 
To see the relation of that definition with Lemma~\ref{lem:insertingpoints} note that if for some $e\in\cA_\to$ one has that $K_e^A=\{L_1,\ldots, L_n\}$, then 
$P_{K|(A\cup B)_\to^e}=\{L_1,\ldots, L_n\}$. 
For later use we already note now that if $\tilde{e}$ does not satisfy any of the two conditions in the two items above, then we define $\tilde{\pi}_{\tilde{e}}^A=\emptyset$.
%
With this notation at hand, we see that, using Lemma~\ref{lem:insertingpoints} in the second equality,
\begin{equs}
p_{c;\ft}^{\Cumus^A}(\fx)&=\sum_{\fz^A\in Z_{\star,\pi^A}^{\fx}}\prod_{e\in A_\to}\prod_{L\in K_e^{A}} p_{c;t_e}^{\fz^A_-(e)_{L}}(\fz^A_+(e)_{L})\\
&= \sum_{\fz^A\in Z_{\star,\pi^A}^{\fx}}\prod_{e\in A_\to}\prod_{L\in K_e^{A}}\sum_{K\in\cK_{e,L}^{c}}\sum_{\hat\fz\in Z_{\star,e,L}^{\fz^A}}\prod_{\tilde e\in (A\cup B)_\to^{e}}\prod_{C\in K(\tilde e)}
p_{c;t_{\tilde e}}^{\hat\fz(\tilde e_-,C)}(\hat\fz(\tilde e_+,C))\\
&= \sum_{K\in\CK^A}\sum_{\fz^A\in Z_{\star,\pi^A,A\cup B}^{\fx}}
\prod_{\tilde{e}\in\cA_\to}\prod_{e\in (A\cup B)_\to^{\tilde{e}}}\prod_{C\in K_e}p_{c;t_e}^{\fz^A(e_-,C)}(\fz^A(e_+,C)).
\end{equs} 
We can therefore conclude that
\begin{equs}
&p_{c;\ft}^{\Cumus^A}(\fx)p_{c;\ft}^{\Cumus^B}(\fx)\\
&=\sum_{K^1, K^2}\sum_{\fz^A,\fz^B}
\prod_{e\in (A\cup B)_\to}\prod_{\substack{C_1\in K_e^1,\\ C_2 \in K_e^2}}
p_{c;t_e}^{\fz^A(e_-,C_1)}(\fz^A(e_+,C_1))p_{c;t_e}^{\fz^B(e_-,C_2)}(\fz^A(e_+,C_2)),
\end{equs}
where the two outer sums are over the sets $\{\fz^A\in Z_{\star,\pi^A,A\cup B}^{\fx}, \fz^B\in Z_{\star,\pi^B,A\cup B}^{\fx}\}$ and $\{K^1\in \CK^A, K^2 \in\CK^B\}$.
Now note that for any $e\in (A\cup B)_\to$ we have that $(K_e^1,K_e^2)\in\cP(\tilde \pi_e^A)\sqcup\cP(\tilde \pi_e^B)=\cP(\pi_e)$, where $\pi=\pi^A\cup \pi^B$. 
Thus, defining $K$ on $(A\cup B)_\to$ via $K_e=(K_e^1,K_e^2)$, and recalling~\eqref{eq:KandKA} we see that
$\Pi\Cumus=\Pi(\pi,K)=(\Cumus^A,\Cumus^B)$. Moreover, any $K$ satisfying $\Pi\Cumus=\Pi(\pi,K)=(\Cumus^A,\Cumus^B)$ is of the form just described. Hence,
\begin{equation}
p_{c;\ft}^{\Cumus^A}(\fx)p_{c;\ft}^{\Cumus^B}(\fx)=p_{c;\ft}^{\Cumus^A\star\Cumus^B}(\fx),
\end{equation}
and since moreover $C_\Cumus(q)=C_\pi(q)$ factors over $\pi$, we can conclude the proof.
\end{proof}

\subsubsection{Bounding cumulants in terms of coalescence trees}\label{sec:coalescence}
In this section we use Theorem~\ref{thm:cumulantofexclusion} to estimate the cumulants of the exclusion process. We assume throughout this and the next section that $d\geq 3$. The estimates we obtain are inspired by~\cite[Appendix A]{Ajay}. To that end let $\CV=\{v_0,v_1,\ldots, v_p\}$ be an index set with $p\geq 1$ (in many cases of interest we have $\CV=\CA$, with $\CA$ as in the previous subsections). We also define $\CV_0=\CV\setminus\{v_0\}$. We are further given a collection of space time points $(y_v)_{v\in\CV}$ indexed by $\CV$ such that each $y_v\in \R\times 2^{-N}\St$ has norm bounded by one with respect to $\|\cdot\|_\s$. We then consider the collection $\CU_{\CV}$ of all labelled binary trees $(\fT,\fs)$, where
\begin{itemize}
	\item[\rm{(1)}] $\fT$ is a binary tree with leaf set $\CV$.
	\item[\rm{(2)}] $\fs:\fT_i\to \N$ is a label on the set of inner nodes of $\fT$ such that for any two inner nodes $\nu < \omega$, one has $\fs(\omega) < \fs(\nu)$. Here, $\nu\leq \omega$ if $\nu$ is a descendant of $\omega$.
\end{itemize}
We furthermore fix a measurable map $\CT:(\R\times 2^{-N}\St)^{\CV}\to \CU_{\CV}$ with the additional property
that, given $y \in (\R\times 2^{-N}\St)^{\CV}$, the associated labelled tree $(\fT,\fs) = \CT(y)$
is such that, for any two leaves $v,w$, one has
\begin{equ}[e:defDistTree]
	2^{-\fs(v\wedge w)}\leq \|y_v - y_w\|_\s\vee 2^{-N}\leq C_{|\CV|} 2^{-\fs(v\wedge w)}\;,
\end{equ}
for some constant $C_{|\CV|}$ depending only on the number of elements of $\CV$, but not on $N$.
Such maps exist: take for example for $\CT(y)$ the binary tree obtained by 
applying Kruskal's algorithm, see Appendix A.2 in~\cite{KPZJeremy}. 
For our purposes the precise construction is not important so that we will not further dwell on it. 
Given $(\fT,\fs)$ we finally define a map $c_\fT:\fT_i\to\R$ by
\begin{equ}
	c_\fT(a)=\begin{cases}
		d,\, \mbox{if }a=\rho_\fT,\\
		{d\over 2},\, \mbox{otherwise,}
	\end{cases}
\end{equ}
where $\rho_\fT$ denotes the root.
We also write $\c=(c_\fT)_\fT$ for the collection of maps $c_\fT$ defined as above indexed by the set of binary labelled trees.
Given a binary labelled tree $(\fT,\fs)$ we define
\begin{equ}
	\langle c_\fT,\fs\rangle =\sum_{a\in\fT_i} c_\fT(a)\fs(a).
\end{equ}
The following definition is adapted from Definition A.5 in~\cite{Ajay}.
\begin{definition}\label{def:boundedBy}
	Let $N\in\N$. We say that a function $F_N\colon (\R\times 2^{-N}\St)^{\CV} \to \R$ is bounded by $\c$ if one has the bound
\begin{equation}\label{supremum bound} 
	\| 
	F_N
	\|_{\c}
	\eqdef
		\sup_{\substack{y \in (\R\times 2^{-N}\St)^{\CV}\\ \|y\|_\s\leq 1}} 
	2^{-\langle c_{\fT},\fs\rangle}\big| F_N(y) \big|
	< \infty,
\end{equation}
	where the pair $(\fT,\fs)$ in the definition is such that $\CT(y)=(\fT,\fs)$. Here, the condition $\|y\|_\s\leq 1$ means that for all $v\in\CV$ one has that $\|y_v\|_\s\leq 1$.
	Given a family of functions $(F_N)_{N\in\N}$ that are bounded by $\c$, we say that this family is bounded uniformly by $\c$ if the supremum above is bounded uniformly in $N$.
\end{definition}
The key result of this section is as follows.
\begin{theorem}\label{thm:cumuboundedbyc} Fix $d\geq 3$.
	Let $\xi$ be the simple symmetric exclusion process with fixed-time distribution that is Bernoulli with parameter $q\in (0,1)$. For $t\geq 0$ and $x\in 2^{-N}\St$ define a rescaled version of $\xi$ via $\xi^N_t(x)= 2^{dN/2}\xi_{2^{2N}t}(2^Nx)$. Let $\cA$ be an index set with $|\cA|\geq 2$. Then, the family of functions $F=(F_N)_{N\in\N}$ given by
	\begin{equation}
	F_N((t_a,x_a),\, a\in\cA)= \E_c\Big\{\xi^N_{t_a}(x_a)\,:\, a \in \cA\Big\}
	\end{equation}
	defined on $(\R\times 2^{-N}\St)^{\cA}$ is bounded uniformly by $\c$.
\end{theorem}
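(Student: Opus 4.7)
My plan is to build the bound in layers, each of which reduces the combinatorial/analytic complexity. The starting point is Theorem~\ref{thm:cumulantofexclusion}, which expresses
\[
\E_c\{\xi_{t_a}(x_a):a\in\cA\}=\sum_{\Cumus\in\Cumus_\cA^c}C_\Cumus(q)\,p_{c;\ft}^{\Cumus}(\fx)\;,
\]
so that it suffices to bound each $p_{c;\ft}^{\Cumus}(\fx)$. Inside~\eqref{eq:cumuproba} the building blocks are the connected transition probabilities $p_{c;t_e}^{\fz_-(e)_L}(\fz_+(e)_L)$; applying Proposition~\ref{prop:productofmartingales} together with Remark~\ref{rem:replacebyprocess} I rewrite each of these as a sum over connected labelled binary trees $(T,\fa)\in\Tree_L$ of the iterated integrals $\frak{I}^L_{t_e}(T,\fa,\fz_-(e)_L,\fz_+(e)_L,\ft|_L)$. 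This gives an explicit representation of $p_{c;\ft}^{\Cumus}(\fx)$ as a finite sum (the cardinalities depend only on $|\cA|$) of integrals over all the intermediate time/space variables coming from $\Cumus$ and from the binary trees attached at each slab $e\in\cA_\to$.

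Next I apply Lemma~\ref{lem:upperoncumu} inside each slab to replace each $\frak{I}^L_{t_e}(T,\fa,\cdot,\cdot,\cdot)$ by the simpler $\frak{I}^L_{t_e}(\hat T,\fa,\cdot,\cdot,\cdot)$, whose integrand is a product of kernels $\hat K_{s^v-s^w}(z^v,z^w)$ of order $d$. Combining the slabs, the quantity $p_{c;\ft}^{\Cumus}(\fx)$ is thus bounded by a finite sum of space-time integrals whose integrand is a product of such $\hat K$ factors indexed by the edges of a single ``global'' graph on the union of the leaf sets $\cA$, the $\Cumus$-nodes $Z_{\star,\pi}^{\fx}$, and the internal nodes of all the binary trees. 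Performing the integrations over all the intermediate variables one by one using Lemma~\ref{lem:convolutioninspace} (and its iteration Corollary~\ref{cor:nfold}), every convolution of two kernels of order $d$ produces a kernel again of order $d$, and after all intermediate variables have been eliminated one is left with a finite product of kernels of order $d$ evaluated at differences of the external space-time points $(t_a,x_a)_{a\in\cA}$.

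The final step is to match this product with $2^{\langle c_\fT,\fs\rangle}$. Accounting first for the rescaling $\xi^N_t(x)=2^{dN/2}\xi_{2^{2N}t}(2^Nx)$, the cumulant gains a prefactor $2^{dN|\cA|/2}$, while the argument of every $\hat K$ gets replaced by the corresponding rescaled space-time separation. Using the definition of $\hat K$ and the defining property~\eqref{e:defDistTree} of the coalescence map $\CT$, each such factor is bounded by a power of $2^{\fs(v\wedge w)}$ for the appropriate pair of leaves, producing an overall bound of the form $2^{dN|\cA|/2}\cdot\prod_{v\in \fT_i}2^{-\alpha_v(N-\fs(v))}$ for suitable exponents $\alpha_v$. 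A careful accounting, starting from the leaves of $\fT$ and proceeding upward to the root, shows that this reorganises exactly as $2^{\langle c_\fT,\fs\rangle}$, with the root carrying weight $d$ (corresponding to the single uncancelled ``$2^{dN/2}$''-factor per Wick-connected cluster, doubled into $d$ by the overall ``centre of mass'' integration) and the remaining inner nodes carrying weight $d/2$ (corresponding to each further ``splitting'' of a cluster).

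The main obstacle will be the last step: the uniform combinatorial bookkeeping across all $\Cumus\in\Cumus_\cA^c$ and all binary trees $(T,\fa)$ inside each slab. Concretely, one has to verify that the iterated integration, whose order is dictated by a tree structure entirely different from $\fT$, always produces a bound compatible with the ultrametric $\fs(v\wedge w)$ coming from the coalescence tree $\fT=\CT(y)$. The argument of~\cite[Appendix A]{Ajay} does essentially this in a closely related Gaussian setting, and the principal work will be to transplant that inductive scale analysis to the present framework where the relevant ``Wick-contraction'' structure is replaced by the pair (connected cumulant decomposition, binary martingale tree) and the Gaussian heat kernels are replaced by the $\hat K$ kernels provided by Lemma~\ref{lem:upperoncumu}.
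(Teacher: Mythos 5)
Your first two layers match the paper exactly: Theorem~\ref{thm:cumulantofexclusion} to reduce to connected $p^\Cumus_{c;\ft}$, Proposition~\ref{prop:productofmartingales} with Remark~\ref{rem:replacebyprocess} to expand each connected transition probability over binary trees, and Lemma~\ref{lem:upperoncumu} to pass to the kernels $\hat K$. The gap is in the next step. After gluing the slabs one does not obtain a chain or a tree of kernels: the resulting graphs $G\in\CG(\tT)$ have internal vertices $u\in\cU$ with \emph{two incoming and two outgoing} edges, and double edges between internal vertices are allowed. Lemma~\ref{lem:convolutioninspace} only eliminates a degree-two intermediate vertex joining two kernels, so "integrating out the intermediate variables one by one" is not available. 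Worse, the corresponding integrals are genuinely sub-divergent: for a double edge one faces $\int \hat K_s(x,y)^2\,dy\,ds$ with $\hat K$ of order $d$, i.e.\ a local singularity of order $2d\geq d+2$, so the output is \emph{not} a kernel of order $d$ and picks up positive powers of $2^N$. Handling exactly these configurations is the core of the paper's proof and cannot be bypassed by convolution.

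The paper instead builds the coalescence tree $(\fT',\fs')$ on the \emph{enlarged} vertex set $\CV=\cA\cup\cU$, before any integration. Each vertex then distributes its scaling budget ($\tfrac d2$ per external point, $d+2$ per internal point) among its incident edges of $G$, ordered by tree-distance in $\fT'$ (see \eqref{e:defwT}--\eqref{e:defc'}); the integrand is bounded pointwise by $2^{\langle c'(\cdot,\fT'),\fs'\rangle}$ (Lemma~\ref{lem:estimateofF}); the integration over $\fu$ is performed as a \emph{volume} estimate on the coalescence classes (Lemma~\ref{lem:domainestimate}), not as a convolution; and the combinatorial heart is Lemma~\ref{lem:boundweights}, which uses connectedness, acyclicity, and the boundary-edge counts $|\d G_\nu|$ to show that the resulting weight is dominated by $c_{\mod,\eps}$, followed by a geometric-series summation over all labellings $\fs'$ extending $\fs$. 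This is also where the origin of the weights $d$ at the root and $\tfrac d2$ at the other inner nodes of $\fT$ lies (each vertex gives $\tfrac d2$ to each of its two shortest incident edges), not in a product of order-$d$ kernels between external points as your final paragraph suggests. Your closing remark correctly locates the difficulty in the spirit of \cite[Appendix~A]{Ajay}, but as written the proposal replaces the essential multiscale argument by a convolution step that fails, so the proof is incomplete at its decisive point.
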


\begin{proof}
Using Theorem~\ref{thm:cumulantofexclusion}, we write
	\begin{equation}\label{eq:FN}
	F_N((t_a,x_a),\, a\in\cA)
	=2^{dN|\cA|/2}\sum_{\Cumus \in \Cumus_\cA^c} C_\Cumus(q) p^{N,\Cumus}_{c;\ft}(\fx)\;,
	\end{equation}
	where $p^{N,\Cumus}_{c;\ft}(\fx)$ is given by
	\begin{equ}
		p^{N,\Cumus}_{c;\ft}(\fx) = p^{\Cumus}_{c;2^{2N}\ft}(2^{N}\fx)\,, \quad\text{for } \fx\in 2^{-N}\St^\cA\,.
	\end{equ}
	Since the collections of constants $ \{C_\Cumus(q):\,\Cumus \in \Cumus_\cA^c\}$ and $|\Cumus_\cA^c|$ are bounded from above uniformly in $N$, it suffices to estimate each term
	$2^{dN|\cA|/2}p^{N,\Cumus}_{c;\ft}(\fx)$ appearing above separately. To that end fix $\Cumus=(\pi,K)\in\Cumus_\cA^c$ for the rest of the proof and recall~\eqref{eq:cumuproba}. Then, for each $e\in\cA_\rightarrow$, $A\in K_e$ with $|A|\geq 2$ and $\fz\in 2^{-N}Z_{\star,\pi}^{\fx}$ we can write using Proposition~\ref{prop:productofmartingales} and Remark~\ref{rem:replacebyprocess}
	\begin{equation}
	p^{N,\fz_-(e)_A}_{c;t_e}(\fz_+(e)_A)=\sum_{(T,\fa)\in\Tree_A}\frak{I}_{t_e}^{A,N}(T,\fa,\fz_-(e)_A,\fz_+(e)_A,\ft_e)\;,
	\end{equation}
	where $\ft_e$ is simply the vector $\ft_e=(t_u)_{u\in A}$ such that $t_u=t_e$ for all $u\in A$, and
	\begin{equation}
	\frak{I}_{t_e}^{A,N}(T,\fa,\fz_-(e)_A,\fz_+(e)_A,\ft_e)= \frak{I}_{2^{2N}t_e}^{A}(T,\fa,2^N\fz_-(e)_A,2^N\fz_+(e)_A,2^{2N}\ft_e)\,.
	\end{equation} 
	Writing $\Tree$ for the set of all sections 
	\begin{equation}
	\mathtt{T}:\{(e,A):\, e\in\cA_\rightarrow,\, A\in K_e\}\to \bigsqcup_{\substack{e\in\cA_\rightarrow\\
			A\in K_e}}\Tree_A\;,
	\end{equation}
	and applying Lemma~\ref{lem:upperoncumu}, we write
	\begin{equs}\label{eq:pCumu}
	p^{N,\Cumus}_{c;\ft}(\fx)&= \sum_{\fz\in 2^{-N}Z_{\star, \pi}^{\fx}} \sum_{\mathtt{T}\in\Tree}
	\prod_{e\in\cA_\rightarrow}\prod_{A\in K_e} \frak{I}_{t_e}^{A,N}(\mathtt{T}(e,A),\fz_-(e)_A,\fz_+(e)_A,\ft_e)\\
	&\lesssim \sum_{\fz\in 2^{-N}Z_{\star, \pi}^{\fx}} \sum_{\mathtt{T}\in\Tree}
	\prod_{e\in\cA_\rightarrow}\prod_{A\in K_e} \frak{I}_{t_e}^{A,N}(\hat {\mathtt{T}}(e,A),\fz_-(e)_A,\fz_+(e)_A,\ft_e).
	\end{equs}
Above, the label $\fa$ does not appear in the notation anymore since fixing $\mathtt{T}\in \Tree$ one automatically fixes the label $\fa$ in each tree $\mathtt{T}(e,A)$.
Note that in the above expression there are actually two types of sums over points in $\St$, 
over all circles in the graphical notation introduced in~\eqref{e:pbasicsimple2}, and over all inner nodes in $\hat {\mathtt{T}}(e,A)$, the graph constructed in Section~\ref{Sec:upperboundonbinary}.  We first sum over the former. By the discussion in Section~\ref{S4.3} after~\eqref{e:pbasicsimple} each circle has a dot above and below, and the way a circle shows up is at the connection point between two cumulant terms like so:
\begin{equ}
	\tikzsetnextfilename{cumullinked}
\begin{tikzpicture}[scale=1,baseline=0.9cm]
\draw[rounded corners=2mm,fill=black!15,draw=none]
(0.8,3.5) rectangle (4.2,2.8);
\draw[rounded corners=2mm,fill=black!15,draw=none]
(3.8,0.4) rectangle (7.2,1.2);
\draw[rounded corners=2mm,fill=black!15,draw=none]
(4.5,1.8) rectangle (7.2,2.5);
\draw[rounded corners=2mm,fill=black!15,draw=none]
(0.8,1.4) rectangle (3.5,2.2);
\row{n,n,n,c,l,c,c}{1}{w}
\row{c,l,c,c,l,c,c}{2}{x}
\row{c,l,c,c,n,n,n}{3}{y}
\connect[db]{1,...,4}{x}{y}
\connect[db]{4,5,6,7}{x}{w}
\end{tikzpicture}\,.
	\end{equ}
Now recall that each of these cumulant terms was replaced in~\eqref{eq:pCumu} by the corresponding graph constructed in Section~\ref{Sec:upperboundonbinary}. In particular there are exactly two kernels attached to the circle above, and both these kernels are constructed via~\eqref{e:Kvw}. Thus, summing over the circle above is simply a spatial convolution of two of such kernels, and doing so for all circles we see that we are only left with kernels that are all of the form~\eqref{e:Kvw}. To facilitate the understanding of the following construction we provide one more illustration. Assume we encounter the following situation
\begin{equ}
	\tikzsetnextfilename{simpleconnect}
	\begin{tikzpicture}[scale=0.6, baseline=0.7cm]
	\row{n,n}{1}{x}
	\row{c,c}{2}{y}
	\row{n,n}{3}{z}
	\connect[db]{1,2}{x}{y}
	\connect[db]{1,2}{y}{z}
	\end{tikzpicture}\;,
\end{equ}
i.e.\ two cumulant terms are attached to each other. First replacing each cumulant term by one of the graphs constructed in Section~\ref{Sec:upperboundonbinary} (which in this case is simply a binary tree) and then summing over the `circles' yields the following
\begin{center}
	\tikzsetnextfilename{simplifyconnect1}
	\begin{tikzpicture}[scale=0.2, baseline=0.5cm, thick]
	\node [] (1) at (-3,-2) {};
	\node [] (2) at (-1,-2) {};
	\node [style=dot] (e1) at (-2,0) {};
	\node [circle,draw=black, fill=white, inner sep=0pt,minimum size=3pt] (c1) at (-3,2) {};
	\node [circle,draw=black, fill=white, inner sep=0pt,minimum size=3pt] (c2) at (-1,2) {};
    \node [style=dot] (e2) at (-2,4) {};
    \node [] (3) at (-3,6) {};
    \node [] (4) at (-1,6) {};

	\draw (1) -- (e1); 
	\draw (2) -- (e1);
	
	\draw (e1) -- (c1);
	\draw (e1) -- (c2);
	
	\draw (c1) -- (e2);
	\draw (c2) -- (e2);
	
	\draw (e2) -- (3);
	\draw (e2) -- (4);
	\end{tikzpicture}\;
	=
	\tikzsetnextfilename{simplifyconnect2}
		\begin{tikzpicture}[scale=0.2, baseline=0.5cm, thick]
	\node [] (1) at (-3,-2) {};
	\node [] (2) at (-1,-2) {};
	\node [style=dot] (e1) at (-2,0) {};
	\node [style=dot] (e2) at (-2,4) {};
	\node [] (3) at (-3,6) {};
	\node [] (4) at (-1,6) {};

	\draw (1) -- (e1); 
	\draw (2) -- (e1);
		
	\draw (e1) to [bend left] (e2);
	\draw (e1) to [bend right] (e2);
		
	\draw (e2) -- (3);
	\draw (e2) -- (4);
	\end{tikzpicture}\;.
\end{center}
On the right hand side above each edge represents one of the kernels constructed in~\eqref{e:Kvw}, and most importantly they fall into the framework of Lemma~\ref{lem:convolutioninspace}.
For each $\mathtt{T}\in\Tree$, 
the right hand side in~\eqref{eq:pCumu} may then be bounded from above by a generalised convolution obtained as follows. Consider a collection $\CG(\tT)$ of directed graphs $G$ whose common vertex set consists of two types of nodes, those indexed by $\cA$, and those indexed by the nodes $\cU$ of $\{\hat{\mathtt{T}}(e,A)\}_{e\in\cA_\to, A\in K_e}$ that are not circles. We moreover impose that the cardinality of $\cU$ is bounded from above by $|\cA|(|\cA|-1)$ (this upper bound comes from a simple counting argument). With this
vertex set fixed, $\CG(\tT)$ then consists of all graphs whose (directed) edge set obeys the following rules:
\begin{itemize}
	\item[(1)] Write $\pi=\{\cA_1,\ldots,\cA_\ell\}$ and consider a vertex $a\in\cA_i$ such that there are $a',a''\in\cA_i$ with $t_{a'} < t_a <t_{a''}$. Then, $a$ has exactly one incoming and one
	outgoing edge. If $t_a$ is maximal (minimal) among all $(t_{a'})_{a'\in\cA_i}$ then $a$ 
	has exactly one incident edge which is incoming (outgoing).
	\item[(2)] If $u\in\cU$, then $u$ has exactly two incoming and two outgoing edges.
(See Section~\ref{Sec:upperboundonbinary}.)
	\item[(3)] We allow double edges connecting $u,u'\in\cU$. Triple or quadruple edges however are not allowed.
	\item[(4)] The resulting graph is connected (see Theorem~\ref{thm:cumulantofexclusion}) and does not contain any directed loops.
\end{itemize}

\begin{remark}
The first item above reflects~\eqref{eq:cumuproba}, while the second reflects the construction 
of Section~\ref{Sec:upperboundonbinary}. Regarding the third item, we saw above an example of a 
double edge. Triple or quadruple edges however are not possible because of the binary nature 
of the trees in Proposition~\ref{prop:productofmartingales}.	
\end{remark}	

To each $u\in\cU$ we assign a time-space point $(s_u,y_u)\in [0,t]\times 2^{-N}\St$, where $t=\max_{a\in\cA} t_a$. We denote the collection of all time-space points $(s_v,y_v)_{v\in\cU}$ by $\fu$. To each edge $(v,w)\in G$ we assign a kernel $K$, as in Section~\ref{Sec:upperboundonbinary}, but this time defined on $\R\times 2^{-N}\St$ via
\begin{equ}
K_t(x) = (\sqrt{|t|}+\|x\| +2^{-N})^{-d}\,,	
\end{equ}	
Note that $K$ has the property that $2^{\zeta N} K$ is of order $\zeta$ for any $\zeta\in[0,d]$.
We then write
\begin{equ}\label{eq:FN2}
\cF^N(\mathtt{T})(\ft,\fx,\fu) = \sum_{G \in \CG(\CT)}\prod_{(v,w)\in G} K(z_w-z_v)\,,
	\end{equ}
where $z_v$ is the time-space point assigned to the vertex $v$ (which either may be of the form $(s_v,y_v)$ if $v\in\cU$ or of the form $(t_v,x_v)$ if $v\in \cA$.
Note that for fixed $\hat{\tT}(e,A)$ the integration over time is restricted to the 
interval $[0,2^{2N}t_e]$. Thus, using a change of variables in time to reduce each 
such integration to $[0,t_e]$ shows that the left hand side of~\eqref{eq:pCumu}
is bounded from above by some proportionality constant times
	\begin{equation}
	\label{eq:pCumu2}
	\sum_{\mathtt{T}\in\Tree} 2^{\tfrac{d}{2}N|\cA| + (d+2)N|\cU|}\int \cF^N(\tT)(\ft,\fx,\fu)\, d\fu\,,
	\end{equation}
	where the integral over space should be understood as a Riemann sum. Here, the factor $2^{(d+2)N|\cU|}$ comes from the change of variables in time providing a factor $2^{2N}$ 
	for each node in $\cU$ and from the fact that changing each sum $\sum_{u\in 2^{-N}\St}$ to a Riemann sum $2^{-dN}\sum_{u\in 2^{-N}\St}$ yields an additional factor $2^{dN}$.

\begin{remark}
We interpret the prefactor $2^{\tfrac{d}{2}N|\cA|+(d+2)N|\cU|}$ by saying that nodes $a\in\cA$ and 
$u\in\cU$ are each equipped with a weight $\tfrac{d}{2}$, and $(d+2)$ respectively. In the remainder of the section we explain how to redistribute these weights in order to obtain a good bound
on $\cF(\tT)$. 
\end{remark}

We now fix $\tT\in\Tree$ and a graph $G \in \CG(\tT)$ and obtain a bound on the integral  
over $\fu$ of the term $2^{\tfrac{d}{2}N|\cA|+(d+2)N|\cU|} \prod_{(v,w) \in G} K(z_w - z_v)$ appearing in  
\eqref{eq:pCumu2}.
To that end define
$\CV=\cA\cup\cU$, and apply the procedure outlined at the beginning of this section to the set $\CV$ to obtain a labelled binary tree $(\fT',\fs')$.

\begin{remark}\label{rem:identif}
Since the leaves of $(\fT,\fs)$ are identified with $\CA$, which is a
subset of the set $\CV$ of leaves of $(\fT',\fs')$, the nodes of $(\fT,\fs)$ are 
naturally identified with those nodes of $(\fT',\fs')$ that are of the form $u\vee v$
for some $u,v \in \CA$. 
\end{remark}

We now construct a weight assignment $\{c'(\nu,\fT')\,:\, \nu \in \fT'\}$
with the property that $\sum_{\nu \in \fT'} c'(\nu,\fT') = \tfrac{d}{2}|\CA| + (d+2)|\CU|$ 
in the following way. 
Given two leaves $\nu, \nu' \in \CV$, write 
$\nu \vee \nu' \in \fT'$ for their (unique) common ancestor furthest from the root. 

For any $\nu \in \CV$, consider the set $\CN_\nu = \{e \in G\,:\, \nu \in e\}$ of neighbouring edges 
in $G$ and write $e_\nu^{(j)}$
with $j = \{1,\ldots,|\CN_\nu|\}$ for an ordering of $\CN_\nu$ by ``distance''
in the sense that 
$e_\nu^{(j)} \in \CN_\nu$ and $j \le k$ implies 
$\fs'(\nu \vee \nu^{(j)}) \le \fs'(\nu \vee \nu^{(k)})$, where 
$\nu^{(j)}$ is the node such that $e_\nu^{(j)} = (\nu,\nu^{(j)})$
or $e_\nu^{(j)} = (\nu^{(j)},\nu)$.
(Such an ordering isn't unique, but $\nu \vee \nu^{(j)}$ is uniquely determined since
$\fs'$ is assumed to be injective on any path linking a leaf to the root.)
For any edge $e \in G$, we then write $\delta_e \colon G \to \R$ for the map
such that $\delta_e(e') = 1$ if $e = e'$ and $0$ otherwise. 

With these notations at hand, we then define a map $w_{\fT'}\colon G \to \R$ by
setting
$w_{\fT'} = \sum_{\nu \in \CV} w_{\fT'}(\nu, \cdot)$,
where 
\begin{equs}[2][e:defwT]
w_{\fT'}(\nu, \cdot) &= \tfrac{d}{2} \delta_{e_\nu^{(1)}} \;, &\quad \nu &\in \CA\;, \\
w_{\fT'}(\nu, \cdot) &= \tfrac{d}{2} \bigl(\delta_{e_\nu^{(1)}} +\delta_{e_\nu^{(2)}}\bigr)
+\delta_{e_\nu^{(3)}}  + \delta_{e_\nu^{(4)}} \;, &\quad \nu &\in \CU\;.
\end{equs}
Note that this definition makes sense since elements of $\CU$ have four neighbours
in $G$ (property~2), while elements of $\CA$ have at least one (property~1).
We also note at this point that since $d\geq 3$, we always have that $1\leq \tfrac{d}{2}$, which will soon play a crucial role.
We then ``push'' these weight assignments from $G$ onto the tree $\fT'$ in the natural
way by setting
\begin{equ}[e:defc']
w_{\fT'}^\vee(\nu, u)
 = \sum \{w_{\fT'}(\nu, e)\,:\, e^\vee = u\}\;,
\end{equ}
where $e^\vee = u \vee v$ for an edge $e = (u,v)$.
We also set $c'(\cdot ,\fT') = w_{\fT'}^\vee = \sum_{\nu\in \CV} w_{\fT'}^\vee(\nu, \cdot)$.
In particular, we note that the connectedness of $G$ implies 
that $c'(\rho_{\fT'},\fT') \ge d$ since there must exist an edge
$e = (u,v) \in G$ with $e^\vee = \rho_{\fT'}$ (otherwise the leaves of the 
two non-empty subtrees attached to the root would yield two connected 
components for $G$) and, since $\fs(\rho_{\fT'})$ is minimal, 
one then has $e_u^{(1)} = e_v^{(1)} = e$ so that $w_{\fT'}(e) = d$
so that $c'(\rho_{\fT'},\fT') \ge d$ by \eqref{e:defc'}.

The analysis so far results in the following lemma.
	\begin{lemma}\label{lem:estimateofF}
		Fix $\tT\in\Tree$, then for any collection of time space points $\{(t_a,x_a)\}_{a\in\cA}$ and $\{u_\nu\}_{\nu\in\cU}$ such that $\CT(\ft,\fx,\fu) =(\fT',\fs')$ one has the bound
		\begin{equ}[e:boundFT]
		2^{dN|\cA|/2}2^{(d+2)N|\cU|}\cF(\tT)(\ft,\fx,\fu)\lesssim 2^{\langle c'(\cdot,\fT'),\fs'\rangle}.
		\end{equ}
	\end{lemma}
\begin{proof}
Fix $G\in \CG(\tT)$. The main remark is that $w_{\fT'}(e) \in [0,d]$ since every vertex assigns a weight at most $\tfrac{d}{2}$ to any of its neighbouring edge, and any edge is of course adjacent to exactly two
vertices. As a consequence, since $|K(z)| \lesssim 2^{-dN}|z|^{-d}$, it follows from \eqref{e:defDistTree} that
\begin{equ}
\prod_{e \in G} 2^{w_{\fT'}(e) N} |K(z_u - z_v)| \lesssim \prod_{e \in G} 2^{w_{\fT'}(e) \fs(u\vee v)}\;.
\end{equ}
Summing over all finitely many choices of $G$, the left-hand side is nothing but 
the left-hand side of \eqref{e:boundFT} since $\sum_{e \in G} w_{\fT'}(e) = \tfrac{d}{2} |\CA| + (d+2)|\cU|$
by \eqref{e:defwT}. The right-hand side also equals that of \eqref{e:boundFT}
by \eqref{e:defc'}.
\end{proof}
In the next step we integrate over all variables indexed by $u\in\cU$. For this, the following
notation will be useful.
For any $\nu \in \fT'$, we write $\fT'_L(\nu) = \{v \in \CV\,:\, v \le \nu\}$ and
\begin{equ}[e:defAU]
\CA_\nu \eqdef |\fT'_L(\nu) \cap \CA|\;,\qquad
\CU_\nu \eqdef |\fT'_L(\nu) \cap \CU|\;.
\end{equ}
We also set $\fT'(\cU) = \fT' \setminus \fT$ (see Remark~\ref{rem:identif} for the 
inclusion $\fT \subset \fT'$)
and write $\fT'_\rho(\cU) \subset \fT'(\cU)$ for those elements $\nu \in \fT'$
such that $\CA_\nu = \emptyset$ but $\CA_{\nu^\uparrow} \neq \emptyset$.
In particular, it follows from these definitions that 
\begin{equation}\label{eq:disjointU}
	\cU = \bigcup_{\nu \in \fT'_\rho(\cU)}\cU_\nu\;,\quad \fT'(\cU) = \bigcup_{\nu \in \fT'_\rho(\cU)} \fT'_\nu\;,\quad \fT'_\nu = \{\nu^\uparrow\} \cup \{\mu \le \nu\}\;,
\end{equation}
and both unions are disjoint.

\begin{lemma}\label{lem:domainestimate}
		With the notation introduced above we have the bound
		\begin{equation}
		\int\one\{{\CT(\ft,\fx,\fu)=(\fT',\fs')}\} \, d\fu
		\lesssim \prod_{\nu\in\fT'(\cU)} 2^{-(d+2)\fs'(\nu)},
		\end{equation}
		where $d\fu$ denotes the Riemann sum over the collection $\{u_\nu\}_{\nu\in\cU}$.
\end{lemma}
\begin{proof}
For any $\nu \in \fT'_\rho(\cU)$ and any fixed values of 
$\{\fu_\mu\,:\, \mu \in \cU \setminus \cU_\nu\}$, we claim that the semi-discrete 
Lebesgue measure of
\begin{equ}[e:boundZnu]
		\CZ_\nu \eqdef \{\fu_\mu:\, \mu\in \cU_\nu\,, \CT(\ft,\fx,\fu)=(\fT',\fs')\} \subset (\R\times2^{-N}\St)^{\cU_\nu}
\end{equ}
is bounded from above by some fixed multiple of
$\prod_{\mu\in\fT'_\nu}2^{-(d+2)\fs'(\mu)}$.
Indeed, if we fix the value of $\fu_{\bar \nu}$ for some fixed arbitrary $\bar\nu \in \cU_\nu$,
then the measure of the remaining values is bounded by 
$\prod_{\mu \le \nu}2^{-(d+2)\fs'(\mu)}$ as in \cite[Lem.~A.13]{KPZJeremy}. 
Since we can find $a \in \CA$
such that $a \vee \bar \nu = \nu^\uparrow$ by the definition of $\fT'_\rho(\cU)$, 
the constraint $\|\fu_{\bar \nu} - (t_a,x_a)\| \le 2^{-\fs'(\nu^\uparrow)}$ guarantees that the
measure of all possible values for $u_{\bar \nu}$ is bounded by 
$2^{-(d+2)\fs'(\nu^\uparrow)}$, yielding \eqref{e:boundZnu}.

The claim then follows at once by integrating over all the $\cU_\nu$'s one by one.
\end{proof}
With the lemma at hand, defining 
\begin{equ}[e:defcbar]
	\bar c(\nu)=
	\begin{cases}
	c'(\nu, \fT'),\, &\mbox{if } \nu\notin \fT'(\cU),\\
	c'(\nu, \fT')- (d+2),\, &\mbox{if }\nu\in\fT'(\cU),
	\end{cases}
\end{equ}
it follows from Lemmas~\ref{lem:estimateofF} and~\ref{lem:domainestimate} that 
\begin{equation}
	2^{dN|\cA|/2+(d+2)N|\cU|}\int\one\{ \CT(\ft,\fx,\fu)=(\fT',\fs')\}|\cF^N(\tT)(\ft,\fx,\fu)| \, d\fu
	\lesssim 2^{\langle \bar c(\nu,\fT'),\fs'\rangle}.
\end{equation}
To finish the proof it remains to analyse $\bar c(\cdot,\fT')$ and to compare it to $c_\fT$ as defined at the beginning of this section.

For this, we define a weight $c_{\mod,\eps}$ by setting
\begin{equ}
c_{\mod,\eps}(\nu) = \hat c_\fT(\nu) + c_{\CU,\eps}(\nu) + 
c_{\gain}(\nu)\;,
\end{equ}
where the three terms are defined as follows. First, we set
\begin{equ}[e:defcthat]
\hat c_\fT(\nu)= \tfrac{d}{2} \big(\one_{\nu = \rho_{\fT'}} + \one_{\nu \in \fT}\big)\;,
\end{equ}
where we used the identification $\fT \subset \fT'$ of Remark~\ref{rem:identif}.
The term $c_{\CU,\eps}$ is then defined by 
setting $c_{\CU,\eps} = \sum_{\nu \in \fT'_\rho(\CU)} c_{\CU,\eps}^\nu$ with
\begin{equ}
c_{\CU,\eps}^\nu(\mu) = 
\left\{\begin{array}{cl}
	(|\CU_\nu|-1)\eps & \text{if $\mu = \nu^\uparrow$,} \\
	-\eps & \text{if $\mu \le \nu$,} \\
	0 & \text{otherwise.}
\end{array}\right.
\end{equ}
Note in particular that 
\begin{equ}[e:zerocU]
\sum_{\mu \le \nu} c_{\CU,\eps}(\mu) = 0\;,\qquad 
\forall \nu \in \fT \cup \{\bar \nu^\uparrow\,:\, \bar \nu \in \fT'_\rho(\CU)\}\;.
\end{equ}
The definition of $c_{\gain,\eps}$ is slightly more subtle.

Given any $\nu \in \fT$ (including its leaves), let $\bar \nu \in \fT'$
denote the maximal element with $\nu \le \bar \nu$ such that the
segment $S_\nu = [\nu,\bar \nu] \subset \fT'$ satisfies $S_\nu \cap \fT = \{\nu\}$. 
We then write 
\begin{equ}[e:defSnu]
S_\nu = \{\nu = \nu_0 < \nu_1 < \ldots < \nu_m = \bar \nu\}\;.
\end{equ}
(The case $\bar \nu = \nu$ and therefore $m = 0$ is allowed.)
Note also that for every $i > 0$, there exists exactly one 
element $\mu_i \in \fT_\rho'(\CU)$ such that $\mu_i^\uparrow = \nu_i$.

For every $\mu \in \fT'$ we then write $G_\mu \subset G$ for the (not necessarily connected) 
subgraph induced by $\fT'_\mu$. Its boundary $\d G_\mu \subset G$ consists of those edges 
with exactly one incident vertex in $\fT'_\mu$. 
With this notation at hand, we then define
\begin{equ}[e:defnustar]
m_\star(\nu) = m \wedge \inf\{i \ge 0 \,:\, |\d G_{\nu_i}| \le 2\}\;,\qquad \nu_\star = \nu_{m_\star(\nu)}\;.
\end{equ}
We then set $c_{\gain} = \sum_{\nu \in \fT} c_{\gain}^\nu$ with
\begin{equ}[e:defcgain]
c_{\gain}^\nu(\mu) = 
-\tfrac{d}{2} \one_{\mu = \nu} + \tfrac{d}{2}\one_{\mu = \nu_\star} - \tfrac{d}{2}\one_{\mu = \nu_\star^\uparrow} + \tfrac{d}{2}\one_{\mu = \bar \nu^\uparrow} \;.
\end{equ}
(We use the convention that $\rho_{\fT'}^\uparrow = \rho_{\fT'}$.)
The important feature of $c_{\gain}^\nu$ is that one has
\begin{equ}[e:boundcgain]
\sum_{\mu \le \tilde \nu} c_{\gain}^\nu(\mu) =
\left\{\begin{array}{cl}
	-\tfrac{d}{2} & \text{if $\tilde \nu \in S_\nu \setminus \{\nu_\star\}$,} \\
	0 & \text{otherwise,}
\end{array}\right.
\end{equ}
with the possible exception of the case $\tilde \nu = \rho_{\fT'}$, in which case
the sum always vanishes.

Since one has
\begin{equ}[e:numberAs]
|\{\mu \le \nu\,:\, \mu \in \fT\}| = 
\left\{\begin{array}{cl}
	0 & \text{if $|\CA_\nu| = 0$,} \\
	|\CA_\nu|-1 & \text{otherwise,}
\end{array}\right.
\end{equ}
(leaves aren't counted here) this implies that, for every $\nu \in \fT'$, one has 
\begin{equ}[e:numberUs]
|\{\mu \le \nu\,:\, \mu \in \fT'(\CU)\}| = 
\left\{\begin{array}{cl}
	|\CU_\nu|-1 & \text{if $|\CA_\nu| = 0$,} \\
	|\CU_\nu| & \text{otherwise.}
\end{array}\right.
\end{equ}
We then have the following.

\begin{lemma}\label{lem:boundweights}
Let $c_{\mod,\eps}$ be as above and let $\eps |\CU| \le 1$.
Then, for every $\nu \in \fT'$, one has $\sum_{\mu \le \nu} \bar c(\mu) \le \sum_{\mu \le \nu} c_{\mod,\eps}(\mu)$.
\end{lemma}
	
\begin{proof}
When $\nu$ equals the root, both expressions equal $\tfrac{d}{2} |\CA|$.
Fix now some $\nu \in \fT' \setminus \{\rho_{\fT'}\}$. With the 
notation \eqref{e:defAU} at hand, we distinguish the following two cases.

\medskip
\noindent\textbf{The case $|\CA_\nu| = 0$.}
In this case, only $c_{\CU,\eps}$ contributes to $c_{\mod,\eps}$ and one has
\begin{equ}
\sum_{\mu \le \nu} c_{\mod,\eps}(\mu) = -\eps(|\CU_\nu|-1) > - \eps |\CU| \ge -1\;.
\end{equ}
On the other hand, it follows from \eqref{e:defcbar} and \eqref{e:numberUs} that
\begin{equ}[e:wantedsum]
\sum_{\mu \le \nu} \bar c(\mu) = (d+2) - \sum_{u \in \fT'_L(\nu)} \sum_{\bar \nu > \nu}
w_{\fT'}^\vee(u,\bar \nu)\;.
\end{equ}
Here, the number $(d+2)$ appears by subtracting the weight 
$(d+2)|\{\mu \le \nu\,:\, \mu \in \fT'(\CU)\}|$ appearing in \eqref{e:defcbar} from the 
total weight $(d+2)|\CU_\nu|$ distributed by all the elements of $\CU_\nu$.

Since edges in $G$ are directed and $G$ does not contain any loops, we can define a 
partial order on $\CV$ by setting $v \prec u$ if there exists a 
path connecting $u$ to $v$ in $G$. Write $\underline u \in \CU_\nu$ for one of  
its minimal elements, and $\bar u \neq \underline u$ for one of its maximal elements.

By $G$'s property~2, $\underline u$ has two outgoing
edges $\underline e_{1,2}$, but since 
$\underline u$ is minimal in $\CU_\nu$, these must belong to $\d G_\nu$.
Furthermore, any edge $e$ in $\d G_\nu$ 
must satisfy $e^\vee > \nu$, so the only way in which the ``shortest'' edge
adjacent to $\underline u$ can be among $\underline e_{1,2}$ is if \textit{all} four
edges adjacent to $\underline u$ connect to outside elements. In any case, it follows
that $\underline u$ gives weight $\tfrac{d}{2}$ to at least two edges $e$ with $e^\vee > \nu$.
A virtually identical argument goes for $\bar u$ so that $\sum_{u \in \fT'_L(\nu)} \sum_{\bar \nu > \nu}
w(u,\bar \nu) \ge 2d$, thus bounding the right-hand side of \eqref{e:wantedsum} by $-1$ as required.

\medskip
\noindent\textbf{The case $|\CA_\nu| \ge 1$.}
In this case, there exists a unique $\underline \nu \in \fT$ such that 
$\nu \in S_{\underline \nu} = [\underline \nu, \bar \nu]$ as in \eqref{e:defSnu}.
By \eqref{e:zerocU} and \eqref{e:boundcgain} this shows that 
\begin{equ}[e:wantedUpper]
\sum_{\mu \le \nu} c_{\mod,\eps} = \tfrac{d}{2} (|\CA_\nu|-1) - \tfrac{d}{2}\one_{\nu \neq \nu_\star}\;,
\end{equ}
with $\nu_\star$ as in \eqref{e:defnustar}. 
Furthermore, one has
\begin{equ}[e:prop1]
\sum_{\mu \le \nu} \bar c(\mu) = \tfrac{d}{2} |\CA_\nu| - \sum_{u \in \fT'_L(\nu)} \sum_{\bar \nu > \nu}
w_{\fT'}^\vee(u,\bar \nu)\;.
\end{equ}
Since $G_\nu \neq G$ and $G$ is connected, one has $\d G_\nu \neq \emptyset$,
so that by the same reasoning as above one always has the ``trivial'' lower bound 
$\sum_{u \in \fT'_L(\nu)} \sum_{\bar \nu > \nu} w(u,\bar \nu) \ge \tfrac{d}{2}$.
Comparing this to \eqref{e:wantedUpper}, it remains to show that this bound can
always be improved upon when $\nu \neq \nu_\star$.

When $\nu < \nu_\star$, it follows from \eqref{e:defnustar} that 
$|\d G_\nu| \ge 3$ so that it follows as above that $\sum_{u \in \fT'_L(\nu)} \sum_{\bar \nu > \nu} w(u,\bar \nu) \ge \tfrac{3d}{2}$.

When $\nu > \nu_\star$, we consider the subgraph $\bar G_\nu \subset G$ induced by
\begin{equ}
\bigcup \{\CU_\mu\,:\, \mu^\uparrow \in (\nu_\star, \nu]\}\;.
\end{equ}
Since all the vertices of $\bar G_\nu$ are in $\CU$, we know that 
$|\d \bar G_\nu| \ge 4$ as before. On the other hand, we also know by definition of
$\nu_\star$ that $|\d G_{\nu_{\star}}| \le 2$ (the only possible exception is
when $m_\star = m$ in \eqref{e:defnustar}, but this cannot happen 
when $\nu > \nu_\star$). This implies
that there are at least two edges in $\d \bar G_\nu$ that connect to
the complement of $G_\nu$, and we can finish as above.
\end{proof}
\begin{remark}\label{rem:2d}
	We note that we used in the conclusion of the case $|\cA_\nu|=0$ the fact that $d\geq 3$. Indeed, with our current estimates if $d=2$ the right hand side of~\eqref{e:wantedsum} can only be bounded from above by $0$. If we would be able to bound it by some small negative number $-\kappa$, then there would be no problem in adapting our arguments. To achieve that one would need to replace $\tfrac{d}{2}$ in~\eqref{e:defwT} by $\tfrac{d+\kappa}{2}$. This can be done if one would be able to show that the kernel on the left hand side of~\eqref{eq:gradestAB} satisfies
	\begin{equation}
	\begin{aligned}
		&2^{Nd(|A| + |B|-2)}2^{2N(d+\kappa)} |\nabla_{i,j} P_{2^{2N}t}^{A,B}(2^N\fx,2^N\fy)|	\lesssim 
		\prod_{\substack{k\in A\cup B\\ k\neq i,j}}(\sqrt{|t|} +\|\fx_k-\fy_k\| + 2^{-N})^{-d}\\
		& \times(\sqrt{|t|} +\|\fx_i-\fy_i\| + 2^{-N})^{-d-\kappa} (\sqrt{|t|} +\|\fx_j-\fy_j\| + 2^{-N})^{-d-\kappa}\,.
		\end{aligned}
	\end{equation}
	Thus, this estimate is the only missing ingredient to establish Theorem~\ref{thm:cumu} and~\ref{thm} for $d=2$.
\end{remark}

\begin{corollary}
\label{cor:cmod}
For every $\kappa\in (0,1)$ there exists $\eps \in(0,1)$ such that 
$2^{\langle \bar c,\fs'\rangle}\leq 2^{\langle c_{\mod,\eps},\fs'\rangle}$.\qed
\end{corollary}

We are now able to complete the proof of Theorem~\ref{thm:cumuboundedbyc}.
To that end, we note that by \eqref{eq:pCumu2}, Lemmas~\ref{lem:estimateofF},~\ref{lem:domainestimate}, Corollary~\ref{cor:cmod}, 
the fact that there exist only finitely many binary trees
with leaf set $\CV$, and finitely many graphs $G$ it is sufficient to obtain the bound
\begin{equation}\label{eq:sumoverTprime}
	\sum_{\fs'}
	2^{\langle c_{\mod,\eps},\fs'\rangle} \lesssim 2^{\langle c_{\fT},\fs\rangle} \;,
\end{equation}
where $\fs'$ runs over all labels for the fixed binary tree $\fT'$ such that 
furthermore $\fs'(\nu) = \fs(\nu)$ for all $\nu \in \fT$ (see Remark~\ref{rem:identif}).
For this, we first introduce the tree $\fT'' = \{\nu \in \fT' \,:\, \CA_\nu \neq \emptyset\}$
(which is not binary anymore!) and show that 
\begin{equ}[e:firstBound]
\sum_{\fs'}
	2^{\langle c_{\mod,\eps},\fs'\rangle} \lesssim
	\sum_{\fs''}
	2^{\langle \hat c_{\fT} + c_{\gain},\fs''\rangle}\;,
\end{equ}
where similarly to above $\fs''$ runs over all labels for $\fT''$ such that 
$\fs''(\nu) = \fs(\nu)$ for all $\nu \in \fT$. The reason why this is the case
is that, given $\fs''$ extending $\fs$, we have
\begin{equs}
\sum_{\fs'\,|\, \fs''}
	2^{\langle c_{\mod,\eps},\fs'\rangle} &=
2^{\langle \hat c_{\fT} + c_{\gain},\fs''\rangle}  \sum_{\fs'\,|\, \fs''}
	 \prod_{\nu \in \fT'_\rho(\CU)} \Big(
2^{(|\CU_\nu|-1)\eps \fs''(\nu^\uparrow)} \prod_{\mu \le \nu} 2^{-\eps \fs'(\mu)}\Big) \\
&\le 
2^{\langle \hat c_{\fT} + c_{\gain},\fs''\rangle}
	 \prod_{\nu \in \fT'_\rho(\CU)} \Big(
2^{(|\CU_\nu|-1)\eps \fs''(\nu^\uparrow)} \prod_{\mu \le \nu} \sum_{k > \fs''(\nu)} 2^{-\eps k}\Big) \\
&\lesssim 
2^{\langle \hat c_{\fT} + c_{\gain},\fs''\rangle}\;,
\end{equs}
as claimed, where the last bound comes from the fact that $|\{\mu\,:\, \mu\le \nu\}| = |\CU_\nu|-1$. We now show that, for any $\nu \in \fT$ such that  $\bar \nu \not \in\{\nu, \rho_{\fT'}\}$ and any labelling $\fs$ of $\fT$, one has
\begin{equ}
\sum_{\fs(\nu) > n_1 >\ldots > n_m > \fs(\bar \nu^\uparrow)} \prod_{i=0}^{m+1} 2^{n_i c_{\gain}^\nu(\nu_i)}
\lesssim 1 \;,
\end{equ}
where we made use of the same notations as in \eqref{e:defSnu}, as well as the
conventions
\begin{equ}
\nu_0 = \nu\;,\qquad n_0=\fs(\nu)\;,\qquad \nu_{m+1} = \bar \nu^\uparrow\;,\qquad n_{m+1}=\fs(\bar \nu^\uparrow).
\end{equ}
This is because, by \eqref{e:defcgain}, the left-hand side is bounded by 
\begin{equ}
\Big(\sum_{\fs(\nu) > n_1 >\ldots > n_{m_\star}} 2^{\tfrac{d}{2}(n_{m_\star} - n_0)} \Big)\Big(\sum_{n_{m_\star+1} >\ldots > n_{m} > \fs(\bar \nu^\uparrow)} 2^{\tfrac{d}{2}(n_{m+1} - n_{m_\star+1})} \Big)
\lesssim 1 \;,
\end{equ}
as claimed. In the particular case of $m > 0$ and $\nu_m = \rho_{\fT'}$ (or equivalently
$\nu = \rho_\fT$), we instead use the bound
\begin{equs}
\sum_{\fs(\nu) > n_1 >\ldots > n_m} &\prod_{i=0}^m 2^{n_i (\hat c_\fT + c_{\gain}^\nu)(\nu_i)} \\
&\lesssim \Big(\sum_{\fs(\nu) > n_1 >\ldots > n_{m_\star}} 2^{\tfrac{d}{2}n_{m_\star}} \Big)\Big(\sum_{n_m < \fs(\nu)}\sum_{n_{m_\star+1} >\ldots > n_{m}} 2^{- \tfrac{d}{2}n_{m_\star+1} + d n_m} \Big)\;,
\end{equs}
where the weight $d$ at the root comes from the weight $\tfrac{d}{2}$ given by the last term
of \eqref{e:defcgain} and in \eqref{e:defcthat}.
A similar calculation to above then shows that this is bounded by 
some fixed multiple of 
\begin{equ}
2^{(d - \tfrac{d}{2} + \tfrac{d}{2}) \fs(\nu)} = 2^{c_\fT(\nu)\fs(\nu)}\;.
\end{equ}
One furthermore verifies that in both cases the boundary cases 
$m_\star = 0$ and $m_\star = m$ are handled correctly when interpreting the 
corresponding missing factors as being equal to $\tfrac{d}{2}$. 

Combining these bounds with \eqref{e:firstBound}, the claim now follows at once.
\end{proof}

\subsubsection{Bounding cumulants by cycles}

In this section we derive the kind of bounds on the joint cumulants of the exclusion process
that we employ to finally prove Theorem~\ref{thm}. In this section, given a finite set $\CA$,
we write $\sigma(\CA)$ for all maps $\sigma\colon \N \to \CA$ that are surjective and 
periodic  with period $|\CA|$.

\begin{theorem}
	\label{thm:cycle}
	Fix $d\geq 3$.
	Let $\xi$ be the simple symmetric exclusion process with fixed-time distribution that is Bernoulli with parameter $q\in (0,1)$. For $t\geq 0$ and $x\in 2^{-N}\St$ define a rescaled version of $\xi$ via $\xi^N_t(x)= 2^{dN/2}\xi_{2^{2N}t}(2^Nx)$. Let $\cA$ be an index set with $|\cA|\geq 2$. One then has the bound
	\begin{equation}
	\E_c\Big\{\xi^N_{t_a}(x_a)\,:\, a \in \cA\Big\}
	\lesssim \sum_{\sigma \in \sigma(\CA)}
	\prod_{i=1}^{|\CA|}(\|z_{\sigma(i+1)} - z_{\sigma(i)}\|_\s\vee 2^{-N})^{-\tfrac{d}{2}},
	\end{equation}
uniformly over $N>0$ and over all collections of time-space points $z_a = (t_a,x_a)$ 
indexed by $a\in\cA$.
\end{theorem}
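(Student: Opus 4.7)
The plan is to combine Theorem~\ref{thm:cumuboundedbyc} with a combinatorial identity that relates the tree-weight $\langle c_\fT,\fs\rangle$ to a sum over a cyclic traversal of the leaves. First, since $\xi$ is stationary in space and time, the cumulant depends only on the pairwise differences $z_a - z_b$; by choosing an appropriate translation one may therefore reduce to the case $\|y\|_\s \leq 1$ required by Definition~\ref{def:boundedBy}, at least whenever the diameter $\max_{a,b}\|z_a - z_b\|_\s$ is bounded by $2$. The opposite regime, where some pair is well separated in time, is handled separately: the right-hand side of the claimed bound is already small there, and the cumulant itself decays at a matching rate thanks to the effective independence of the exclusion process across large temporal gaps.

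Once we are in the regime $\|y\|_\s \leq 1$, Theorem~\ref{thm:cumuboundedbyc} yields
\[
\bigl|\E_c\{\xi^N_{t_a}(x_a):a\in\cA\}\bigr| \lesssim 2^{\langle c_\fT,\fs\rangle}\;,
\]
where $(\fT,\fs) = \CT(y)$ is the labelled coalescence tree produced from the configuration. Because $c_\fT(\rho_\fT) = d$ and $c_\fT(a) = d/2$ for every other internal node, this weight rearranges as $\langle c_\fT,\fs\rangle = \tfrac{d}{2}\bigl(\fs(\rho_\fT) + \sum_{a\in \fT_i}\fs(a)\bigr)$. The key combinatorial step is then to exhibit a particular cycle $\sigma_\fT \in \sigma(\cA)$, obtained from the depth-first ordering of the leaves of $\fT$, for which
\[
\sum_{i=1}^{|\cA|}\fs\bigl(\sigma_\fT(i)\wedge\sigma_\fT(i+1)\bigr) = \fs(\rho_\fT) + \sum_{a\in\fT_i}\fs(a)\;.
\]
This identity is a standard Euler-tour fact: every internal node $v \in \fT_i$ appears exactly once as the meet of two DFS-consecutive leaves, namely at the transition from the last leaf of $v$'s left subtree to the first leaf of its right subtree; and the single wraparound term $\fs(\sigma_\fT(|\cA|)\wedge\sigma_\fT(1))$ contributes precisely $\fs(\rho_\fT)$, since the first and last DFS-leaves lie in opposite subtrees of the root.

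Combining these two observations with the inequality $2^{-\fs(v\wedge w)} \leq \|z_v - z_w\|_\s \vee 2^{-N}$ from \eqref{e:defDistTree}, the tree bound converts into
\[
\bigl|\E_c\{\xi^N_{t_a}(x_a):a\in\cA\}\bigr| \lesssim \prod_{i=1}^{|\cA|}\bigl(\|z_{\sigma_\fT(i+1)} - z_{\sigma_\fT(i)}\|_\s \vee 2^{-N}\bigr)^{-d/2}\;,
\]
and since $\sigma_\fT$ is one element of $\sigma(\cA)$, this single term is trivially dominated by the full sum on the right-hand side of the theorem. The main technical obstacle is really the reduction to $\|y\|_\s \leq 1$, since Theorem~\ref{thm:cumuboundedbyc} only provides control on configurations lying in the unit ball; the combinatorial identity itself is a classical observation about binary trees and all remaining steps are straightforward bookkeeping.
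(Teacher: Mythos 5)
Your proposal follows essentially the same route as the paper: construct the labelled coalescence tree $(\fT,\fs)=\CT(y)$, traverse its leaves by a depth-first exploration to produce a cycle $\sigma_\fT$, observe that each internal node is the meet of exactly one pair of consecutive leaves (with the wraparound contributing the root, whence the weight $d$ there versus $d/2$ elsewhere), and conclude from Theorem~\ref{thm:cumuboundedbyc} together with \eqref{e:defDistTree}. Two small remarks. First, the inequality you need to convert $2^{\langle c_\fT,\fs\rangle}$ into the product of $(\|z_{\sigma(i+1)}-z_{\sigma(i)}\|_\s\vee 2^{-N})^{-d/2}$ is the \emph{upper} bound $\|z_v-z_w\|_\s\vee 2^{-N}\leq C_{|\CV|}2^{-\fs(v\wedge w)}$ of \eqref{e:defDistTree}, not the lower bound you quote. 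Second, your separate treatment of widely separated configurations is the one point where you depart from the paper, and as written it is circular: asserting that "the cumulant itself decays at a matching rate thanks to effective independence" is precisely the kind of statement the theorem is meant to establish. The paper simply works within the regime $\|y\|_\s\leq 1$ in which the map $\CT$ and Theorem~\ref{thm:cumuboundedbyc} are formulated (and which is all that is used downstream), so you may either restrict the statement accordingly or supply a genuine mixing estimate for the far-separated case rather than the sketch you give.
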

\begin{proof}
	Given $\cA$ and the collection $\{z_a\}$, we construct a labelled binary 
	tree $(\fT,\fs)$ as in the previous section.
It then suffices to consider any depth-first exploration of $\fT$ starting at the root
and to note that if $\sigma_\fT$ denotes the ordering of $\CA$ given by
the order in which the leaves are traversed by the exploration then, as a consequence 
of the upper and lower bound \eqref{e:defDistTree}, one has
\begin{equ}
2^{\scal{\bar c_{\fT},\fs}} \lesssim \prod_{i=1}^{|\CA|}(\|z_{\sigma_\fT(i+1)} - z_{\sigma_\fT(i)}\|_\s\vee 2^{-N})^{-\tfrac{d}{2}}\;,
\end{equ} 
and the claim follows from Theorem~\ref{thm:cumuboundedbyc}.
\end{proof}

\begin{remark}
An alternative way of obtaining one of the orderings $\sigma_\fT$ coming from a breadth-first exploration 
is to consider a planar embedding of $\fT$ and to traverse the leaves in clockwise order. 
\end{remark}

\begin{remark}
In terms of the original symmetric simple exclusion process on the torus $\St = \Z^d / 2^N \Z^d$,
one can rewrite this bound as
\begin{equ}
	\E_c\Big\{\xi_{t_a}(x_a)\,:\, a \in \cA\Big\}
	\lesssim \sum_{\sigma \in \sigma(\CA)}
	\prod_{i=1}^{|\CA|}(\|z_{\sigma(i+1)} - z_{\sigma(i)}\|_\s\vee 1)^{-\tfrac{d}{2}}\;,
\end{equ}
where $(t_a,x_a) \in \R \times \St$. Since this holds uniformly in $N$, it does in
particular also hold for the symmetric simple exclusion process on all of $\Z^d$ by
a straightforward approximation argument.
\end{remark}

\section{Bounds on renormalised models}
\label{SBRM}
Throughout this section we assume that $d=3$.
In this section we derive uniform moment bounds on the renormalised models
$(\hat\Pi^N,\hat\Gamma^N)$ constructed in Section~\ref{S3}. 
We moreover will derive bounds on the distance between the model $(\hat\Pi^N,\hat\Gamma^N)$ and a ``smooth'' version of it which we will denote by $(\hat\Pi^{\delta,N},\hat\Gamma^{\delta,N})$ and which we will define now. 
Let $\rho$ be a smooth, compactly supported function defined on $\R^4$ that integrates to one.
For $\delta\in (2^{-N},1]$ we define a rescaled version $\rho^{\delta}(t,x)\overset{\text{def}}{=}\delta^{-5}\rho(\delta^{-2}t, \delta^{-1}x)$, and also
$\rho^{\delta,N}(t,x)\overset{\text{def}}{=}2^{-3N}\int\rho^{\delta}(t,y)\, \one\{|y-x|\leq 2^{-N-1}\}\, dy$.
The definition of $\rho^{\delta,N}$ is chosen such that 
its semi-discrete integral still equals one. With these definitions at hand we then define $\xi^{\delta,N}$ via $\xi^{\delta,N}=\rho^{\delta,N}\star_N \xi^N$. Here, 
the symbol $\star_N$ means that the convolution is discrete in space, i.e., is given by a Riemann sum. We can then define a new model 
$(\hat\Pi^{\delta,N},\hat\Gamma^{\delta,N})$ by copying the construction of the model $(\hat\Pi^N,\hat\Gamma^N)$ but each appearance of $\xi^N$ is replaced by $\xi^{\delta,N}$.
\begin{proposition}
	\label{prop:momentbounds}
	For any $\kappa>0$, for any $\tau\in\{\Xi,\<Xi2>,\<Xi3>,\<XiX>, \<Xi2X>,\<Xi4>\}$ there exists a $\theta \in (0,\kappa)$ such that
	for every integer $p$ one has for every $z=(t,x)\in\R\times 2^{-N}\St$,
	\begin{equation}
	\label{eq:momentbounds}
	\E[(\hat\Pi_z^{N}\tau)(\phi_z^{\lambda})^p]\lesssim \lambda^{p(|\tau|+\kappa)},\quad 
	\E\Big[\Big(\int (\hat\Pi_z^{N}\tau)(s,x)(\CS_{2,t}^{\lambda}\phi)(s)\, ds\Big)^p\Big]\lesssim \lambda^{p(|\tau|+\kappa)}\;,
	\end{equation}
	as well as
	\begin{equs}
		\label{eq:momentboundsdifference}
		&\E[|(\hat\Pi_z^{N}-\hat\Pi_z^{\delta,N})(\tau)(\phi_z^{\lambda})|^p]\lesssim \delta^{p\theta}\lambda^{p(|\tau|+\kappa-\theta)},\quad\text{and}\\
		&\E\Big[\Big|\int (\hat\Pi_z^{N}-\hat\Pi_z^{\delta,N}(\tau)(s,x)(\CS_{2,t}^{\lambda}\phi)(s)\, ds\Big|^p\Big]\lesssim \delta^{p\theta}\lambda^{p(|\tau|+\kappa-\theta)}\;,
	\end{equs}
	uniformly over $N \ge 0$ and $\delta\in (2^{-N},1)$. The first bounds in~\eqref{eq:momentbounds} and~\eqref{eq:momentboundsdifference} are moreover uniform in $\lambda\in (2^{-N},1]$ and all test functions $\phi$ as in~\eqref{eq:Pi}. The second bounds are uniform in $\lambda\in (0,2^{-N}]$ and all functions $\phi$ as in~\eqref{eq:seminorm}.
\end{proposition}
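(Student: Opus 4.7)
The plan is to write each renormalised symbol $\hat\Pi_z^N\tau$ as a Wick polynomial in $\bar\xi^N$ whose coefficients are iterated semi-discrete convolutions against $K^N$, and then to compute the $p$-th moments via the diagram formula (Lemma~\ref{lem:diagram}), bounding each contribution by the cumulant estimate of Theorem~\ref{thm:cycle} together with the convolution estimates of Lemma~\ref{lem:convolutioninspace} and Corollary~\ref{cor:nfold}. For the lowest order symbols the Wick representation is immediate: $\hat\Pi_z^N\Xi = \bar\xi^N$, while for $\<Xi2>$ the construction in Section~\ref{S3} shows that the subtraction of $c_N$ is precisely what turns the formal product $(\Pi_z^N\<IXi>)\cdot(\Pi_z^N\Xi)$ into a Wick product. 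The corresponding representations for $\<Xi3>$, $\<XiX>$, $\<Xi2X>$ and $\<Xi4>$ are obtained by iterating this recipe, with the subtractions $c_N^{(1)}$ and the decomposition $c_N^{(2)}=c_N^{(2,1)}+c_N^{(2,2)}+c_N^{(2,3)}$ being designed exactly to cancel the diagonal pairings that would otherwise diverge.

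Once the Wick representation is fixed, the $p$-th moment of $(\hat\Pi_z^N\tau)(\phi_z^\lambda)$ becomes a finite sum (indexed by admissible partitions in the sense of Definition~\ref{def:partition}) of expressions of the schematic form
\begin{equation*}
\int \Bigl(\prod_j K^N(z_j^{-}-z_j^{+})\Bigr) \Bigl(\prod_j (\CS_{\s,z}^{\lambda}\phi)(z_j^{\star})\Bigr) \prod_{B\in\pi}\E_c\{\bar\xi^N(z_b)\,:\, b\in B\}\,d\fz\;,
\end{equation*}
where the $K^N$-tree is prescribed by the shape of $\tau$ and each block $B\in\pi$ has at least two elements by definition of the Wick product. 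Applying Theorem~\ref{thm:cycle} to each cumulant factor bounds it by a sum of cyclic products $\prod_i(\|z_{\sigma(i+1)}-z_{\sigma(i)}\|_\s\vee 2^{-N})^{-d/2}$, so the task reduces to estimating a generalised convolution of $K^N$-kernels and scale-$d/2$ cycle kernels. Since $d/2$ is exactly the regularity of parabolic space-time white noise in dimension $d=3$, power counting proceeds as in~\cite{WongZakai}: every convergent subgraph is controlled by iterated applications of Corollary~\ref{cor:nfold}.

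The two regimes in~\eqref{eq:momentbounds} are treated in parallel: for $\lambda\in(2^{-N},1]$ one treats $\phi_z^\lambda$ as a smooth test function on a scale that is resolved by the grid, and the kernel bounds of Lemma~\ref{lem:scaling} directly produce the $\lambda^{p(|\tau|+\kappa)}$ scaling; for $\lambda\in(0,2^{-N}]$ the test function integrates only in time, the spatial variable is pinned at $x$, and the missing spatial averaging is exactly compensated by the $\lambda^{-\alpha}$ normalisation of the seminorm in~\eqref{eq:seminorm}. Integer $p$ suffices since the argument proceeds by explicitly expanding all Wick contractions.

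For the difference estimates~\eqref{eq:momentboundsdifference}, one expands $\hat\Pi_z^N\tau-\hat\Pi_z^{\delta,N}\tau$ as a telescoping sum in which one factor of $\bar\xi^N$ in the Wick representation is replaced by $\bar\xi^{\delta,N}-\bar\xi^N$. Since $\rho^{\delta,N}-\delta_0$ is supported on scale $\delta$ and has vanishing semi-discrete integral, each such replacement contributes a factor $\delta^\theta$ at the price of worsening the adjacent kernel's order by $\theta\in(0,\kappa)$; picking $\theta$ small enough that all subgraphs remain absolutely convergent then yields~\eqref{eq:momentboundsdifference}. The main obstacle throughout is the treatment of $\<Xi4>$: the logarithmically divergent subtraction $c_N^{(2)}\approx\gamma_N N$ must be matched \emph{exactly} by the diagonal pairings coming from the Wick expansion, and verifying this cancellation requires the full strength of the combinatorial identities of Section~\ref{S3.3} combined with the sharpness of Theorem~\ref{thm:cycle} at $d=3$. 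This is also the point at which the analysis departs genuinely from the Gaussian case of~\cite{WongZakai}, where the corresponding estimates on sixth and higher cumulants are automatic.
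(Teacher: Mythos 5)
Your overall architecture for the bounds \eqref{eq:momentbounds} coincides with the paper's: expand each renormalised symbol into Wick/cumulant contributions, compute $p$-th moments with the diagram formula of Lemma~\ref{lem:diagram}, bound every joint cumulant by the cyclic kernels of Theorem~\ref{thm:cycle}, and then estimate the resulting generalised convolution. The genuine gap is in the last step. You assert that ``power counting proceeds as in \cite{WongZakai}: every convergent subgraph is controlled by iterated applications of Corollary~\ref{cor:nfold}.'' Corollary~\ref{cor:nfold} only integrates out a vertex of degree two in a linear chain of kernels; the graphs produced by the diagram formula have vertices of degree three and four (a node carrying a $K^N$-edge, two cumulant-cycle edges, and possibly the test function), and --- more importantly --- they contain subgraphs of \emph{non-positive} degree, which is precisely why the renormalisation is there. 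Naive power counting fails on these subgraphs: one must exploit the Taylor subtractions encoded in the barred kernels and in the renormalised kernels $\Ren Q^N$, $\Ren Q^{2,N}$ (the labels $r_e\neq 0$), and simultaneously verify large-scale integrability. This is what Theorem~\ref{theo:ultimate1} and Assumption~\ref{ass:mainGraph} (adapted from the multiscale analysis of Hairer--Quastel) are for, and checking items 2--4 of that assumption for the $p$-th moment graphs is the bulk of the work: it requires Proposition~\ref{prop:pcondition}, a case-by-case analysis, the re-decomposition of several graphs that fail \eqref{eq:5thcondition} (Remark~\ref{rem:correction}), and one graph for $\<Xi4>$ that falls outside the theorem and must be treated by hand. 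You correctly identify $\<Xi4>$ and the cancellation of the $\log$-divergence as the hard point, but you give no mechanism for it; ``matched exactly by the diagonal pairings'' is the statement of the problem, not its solution.

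On the difference bounds \eqref{eq:momentboundsdifference}, your telescoping over noise factors is a different route from the paper's, which instead reduces everything deterministically to \eqref{eq:momentbounds}: for symbols containing $\cI$ the mollification of the noise is transferred onto the deterministic kernel $K^N$ and one invokes $|D^kK^N-D^k(K^N\star_N\rho^{\delta,N})|\lesssim \delta^{\theta'}\|z\|_{\s,N}^{-3-|k|_\s-\theta'}$, while for $\Xi$ and $\<XiX>$ it is transferred onto the test function (Lemma~\ref{lem:deltatestfct}). Your version is workable only if, in each telescoped term, you likewise move $\rho^{\delta,N}-\delta_0$ onto a \emph{smooth} adjacent object; if it lands on a cumulant of the exclusion process you would need H\"older regularity of those cumulants, which is not established in the paper (cf.\ Remark~\ref{rem:2d}). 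As written, the proposal is a correct outline of the strategy but does not contain the analytic core of the proof.
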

The rest of this section is devoted to the proof of Proposition~\ref{prop:momentbounds}.

It turns out that to establish Proposition~\ref{prop:momentbounds} it is actually enough to establish only~\eqref{eq:momentbounds}. Indeed, it follows from~\cite[Lemma 7.6]{MatetskiDiscrete}
that there is a $\theta'>0$ such that for all multi-indices 
$|k|\leq 2$ we have the estimate
\begin{equation}
\sup_{z=(t,x)\in\R^4: t\neq 0}|D^kK^N(z)-D^k( K^N\star_N\rho^{\delta,N})(z)|\lesssim \delta^{\theta'}
\|z\|_{\s,N}^{-3-|k|_\s-\theta'},
\end{equation}
where $\|z\|_{\s,N}\overset{\text{def}}{=}\|z\|_\s\vee2^{-N}$.
Combining this with the estimates in~\eqref{eq:momentbounds} yields the bounds in~\eqref{eq:momentboundsdifference} for all symbols $\tau\in\{\<Xi2>,\<Xi3>, \<Xi2X>,\<Xi4>\}$, i.e., for all symbols $\tau$ whose definition contains the integration map $\cI$. The second bound in~\eqref{eq:momentboundsdifference} for $\tau\in\{\Xi, \<XiX>\}$ follows from the estimate $\lambda^{|\tau|+\kappa}\leq \delta^{\theta}\lambda^{|\tau|+\kappa -\theta}$, which directly follows from the choice $\lambda\leq 2^{-N} < \delta$. Finally, to establish the first bound  in~\eqref{eq:momentboundsdifference} for $\tau\in\{\Xi, \<XiX>\}$ note that we have for instance the identity
\begin{equation}
\big(\hat{\Pi}_z^N- \hat{\Pi}_z^{\delta,N}\big)(\Xi)(\varphi_z^\lambda)
=\int \xi^N(y)\Phi^{\lambda,\delta}_z(y)\, dy,
\end{equation}
where
\begin{equation}
\Phi^{\lambda,\delta}_z(y)= \int [\varphi^\lambda_z(y)-\varphi^\lambda_z(\bar y)]\rho^{\delta,N}(\bar y- y)\, d\bar y,
\end{equation}
and a similar formula holds for $\tau=\<XiX>$.

\begin{lemma}
	\label{lem:deltatestfct}
	Given a test function $\varphi^\lambda_z$ as in the formulation of Proposition~\ref{prop:momentbounds} we define $\Phi^{\lambda,\delta}_z:\R^4\to\R$ via
	\begin{equation}
	\Phi^{\lambda,\delta}_z(y)= \int [\varphi^\lambda_z(y)-\varphi^\lambda_z(\bar y)]\rho^{\delta,N}(\bar y- y)\, d\bar y,
	\end{equation}
	where the integral over space is the discrete integral.
	Then $\Phi^{\lambda,\delta}_z$ is a test function whose support contains $z$ and has a diameter of order $\delta$ and there exists $\theta'>0$ such that 
	\begin{equation}
	\sup_{y\in\R^4}|\Phi^{\lambda,\delta}_z(y)|\lesssim \delta^{\theta'} \lambda^{-5-\theta'}.
	\end{equation}
\end{lemma}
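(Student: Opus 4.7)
The plan is to exploit the smoothness of $\varphi$ together with the concentration of $\rho^{\delta,N}$ on parabolic scale $\delta$.

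For the support assertion, observe that $\rho^{\delta}$ has support in a parabolic $\delta$-ball around the origin and $\rho^{\delta,N}$ is obtained by a spatial averaging on scale $2^{-N}\le\delta$, so that $\rho^{\delta,N}(\bar y-y)$ vanishes unless $\|\bar y-y\|_\s\lesssim \delta$. Since additionally $\varphi_z^\lambda$ is supported inside the parabolic $\lambda$-ball centred at $z$ and, in the regime of interest, $\lambda\le 2^{-N}<\delta$, both $\varphi_z^\lambda(y)$ and $\varphi_z^\lambda(\bar y)$ vanish as soon as $\|y-z\|_\s$ exceeds a fixed multiple of $\delta$. This gives the claimed support property.

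For the pointwise bound, I would combine the two a priori estimates
\begin{equation*}
\|\varphi_z^\lambda\|_\infty\lesssim\lambda^{-5},\qquad |\varphi_z^\lambda(y)-\varphi_z^\lambda(\bar y)|\lesssim \lambda^{-6}\|y-\bar y\|_\s\quad\text{if }\|y-\bar y\|_\s\le\lambda,
\end{equation*}
(the second one coming from the Lipschitz regularity of $\varphi\in\mathcal{C}^r$ with $r\ge1$ in the parabolic metric, and the chain rule for $\CS_\s^\lambda$). Treating the cases $\|y-\bar y\|_\s\le\lambda$ and $\|y-\bar y\|_\s>\lambda$ separately yields, for any $\theta'\in(0,1]$,
\begin{equation*}
|\varphi_z^\lambda(y)-\varphi_z^\lambda(\bar y)|\lesssim \lambda^{-5-\theta'}\|y-\bar y\|_\s^{\theta'}.
\end{equation*}
Restricting to the support of $\rho^{\delta,N}(\bar y-y)$ gives $\|y-\bar y\|_\s\lesssim\delta$, and combining with the uniform semi-discrete $L^1$ bound $\int|\rho^{\delta,N}(\bar y-y)|\,d\bar y\lesssim 1$ (which is inherited from the construction of $\rho^{\delta,N}$ as a local spatial average of $\rho^{\delta}$) one obtains
\begin{equation*}
|\Phi^{\lambda,\delta}_z(y)|\le \int|\varphi_z^\lambda(y)-\varphi_z^\lambda(\bar y)|\,|\rho^{\delta,N}(\bar y-y)|\,d\bar y\lesssim \lambda^{-5-\theta'}\delta^{\theta'},
\end{equation*}
which is precisely the desired bound.

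There is no real obstacle in this argument; the only minor subtlety is book-keeping the parabolic-versus-Euclidean norm comparison when reducing the regularity of $\varphi$ to a parabolic Hölder estimate, which causes no trouble for exponents in $(0,1]$.
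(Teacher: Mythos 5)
The paper states this lemma without proof, so there is no ``paper's argument'' to compare against; your interpolation argument for the quantitative bound is surely the intended one, and it is correct. Combining $\|\varphi_z^\lambda\|_\infty\lesssim\lambda^{-|\s|}$ with the parabolic Lipschitz bound $|\varphi_z^\lambda(y)-\varphi_z^\lambda(\bar y)|\lesssim\lambda^{-|\s|-1}\|y-\bar y\|_\s$ on scales $\|y-\bar y\|_\s\le\lambda$ does give $|\varphi_z^\lambda(y)-\varphi_z^\lambda(\bar y)|\lesssim\lambda^{-|\s|-\theta'}\|y-\bar y\|_\s^{\theta'}$ for any $\theta'\in(0,1]$, and since $\rho^{\delta,N}(\cdot)$ is supported in a parabolic ball of radius $O(\delta)$ (recall $2^{-N}<\delta$) and has semi-discrete $L^1$ norm $O(1)$, the claimed bound $\sup_y|\Phi_z^{\lambda,\delta}(y)|\lesssim\delta^{\theta'}\lambda^{-5-\theta'}$ follows.

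The weak point is your treatment of the support. You justify ``diameter of order $\delta$'' by assuming $\lambda\le 2^{-N}<\delta$, but this is precisely the regime in which the lemma is \emph{not} invoked: immediately after the lemma the paper uses it only for $\lambda\ge\delta$ (the case $\lambda<\delta$ is handled separately by the cruder estimate $\lambda^{|\tau|+\kappa}\le\delta^{\theta'}\lambda^{|\tau|+\kappa-\theta'}$), and for the first bound in \eqref{eq:momentboundsdifference} one has $\lambda\in(2^{-N},1]$ in any case. For $\lambda\ge\delta$ the support of $\Phi_z^{\lambda,\delta}$ is the $O(\delta)$-fattening of $\mathrm{supp}\,\varphi_z^\lambda$, hence of diameter of order $\lambda$, not $\delta$ (the lemma's own phrasing is loose here). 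What the application to \eqref{eq:momentbounds} actually requires is that $\Phi_z^{\lambda,\delta}$ be $O(\delta^{\theta'}\lambda^{-\theta'})$ times a \emph{normalized} rescaled test function at scale $\lambda$ based at $z$; besides your sup bound this needs the analogous estimates $|D^k\Phi_z^{\lambda,\delta}|\lesssim\delta^{\theta'}\lambda^{-5-|k|_\s-\theta'}$ for the finitely many multi-indices with $|k|_\s\le r$, which follow by running your interpolation on $D^k\varphi_z^\lambda$ instead of $\varphi_z^\lambda$. That extension is verbatim, so the fix is small, but as written your support argument only covers a regime where the lemma is never applied, and the derivative bounds needed to make the sup bound usable are not addressed.
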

The desired bound for $\lambda\geq \delta$ then follows from the above lemma and~\eqref{eq:momentbounds}. For $\lambda < \delta$ we first estimate the difference by the sum. Then, we use that $\lambda^{|\tau|+\kappa}\leq \delta^{\theta'}\lambda^{|\tau|+\kappa-\theta'}$ to obtain the desired bound for $\hat{\Pi}_z^N(\tau)$. Finally it is enough to note that $\varphi_z^\lambda \star_N\rho^{\delta,N}$ is a test-function localised at scale $\delta$ and that $\delta^{|\tau|+\kappa}\leq \delta^{\theta'}\lambda^{|\tau|+\kappa-\theta'}$ to obtain the corresponding bound for $\hat{\Pi}_z^{\delta,N}(\tau)$.

By stationarity, it is enough to show~\eqref{eq:momentbounds} for $z=0$.
We start with the case $\tau=\Xi$. To that end recall that $\bar\xi^N_t(x)= \xi_{2^{2N}t}(2^Nx)-\rho$. The second bound in~\eqref{eq:momentbounds} is a simple consequence of the fact that $|2^{3N/2}\bar\xi^N|$ is bounded  by $2^{3N/2}$. Thus, we can focus
on the first bound. First of all, note that
\begin{equation}
\label{eq:momentxi}
\E[(\hat\Pi_0^{N}\Xi)^p(\phi_0^{\lambda})]
=\int \E\Big[\prod_{i=1}^{p} \bar \xi_{s_i}^{N}(x_i)\Big]\prod_{i=1}^{p}\phi_0^{\lambda}(z_i)\, d z_i.
\end{equation} 
By Lemma~\ref{lem:diagram} with $m=1$, and using that cumulants of order two or higher of $\bar \xi^N$ and $\xi^N$ coincide, the left hand side of~\eqref{eq:momentxi} equals
\begin{equation}\label{eq:xisumoverpi}
\sum_{\pi\in\CP_{M}(M\times P)}
\prod_{B\in\pi}\int \E_c(\{\xi_{s_i}^N(x_i),\, i\in B\})\prod_{i\in B}\phi_0^{\lambda}(z_i)\, dz_i,
\end{equation}
where $M=\{1\}$ and $P=\{i:\, 1\leq i\leq p\}$.
Now, for each $\pi\in\CP_{M}(M\times P)$ and each $B\in\pi$, by Theorem~\ref{thm:cycle} the corresponding term above is bounded by a multiple times
\begin{equation}\label{eq:xibound}
\int \prod_{(u,v)\in\sigma}(\|z_u-z_v\|_{\s,N})^{-3/2 -\eta}\prod_{i\in B}\phi_0^{\lambda}(z_i)\, dz_i
\end{equation}
where $\sigma\in\sigma (B)$ is an arbitrarily chosen cycle. Note that in the present situation all choices of $\sigma\in\sigma(B)$ lead to the same result. 
As a result of a direct computation, as well as a consequence of Theorem~\ref{theo:ultimate1}, the 
term in~\eqref{eq:xibound} is bounded by a multiple times 
$\lambda^{-3/2|B|-\eta |B|}$. This together with the multiplicative structure in~\eqref{eq:momentxi} and the fact that we may chose $\eta>0$ arbitrarily small yields the desired estimate.
The bound on $\tau=\<XiX>$ follows in exactly the same way since this amounts to replacing the test function $\varphi_0^{\lambda}$ by $(t,x)\mapsto x_i\varphi_0^{\lambda}(t,x)$, where $i\in\{1,2,3,\}$ (since space is 
three dimensional, the symbol $\<XiX>$ actually stands for a vector of three symbols, one for each spatial coordinate). 

\subsection{Some generalities for the remaining symbols}
\label{Sgeneralities}

In this section we do some computations with the first non trivial symbol, namely $\tau=\<Xi2>$, and we explain afterwards how to generalise these computations to the remaining symbols. To that end we introduce some more graphical notation. In this notation nodes drawn as \tikz[baseline=-3] \node [dot] {}; represent dummy variables that are to be integrated out, and  \tikz[baseline=-3] \node [root] {}; is a special node that represents the origin.

Each line represents a kernel, with 
\tikz[baseline=-0.1cm]{\draw[white] (0,-0.2) -- (0,0.2);\draw[kernel] (0,0) to (1,0);}
representing the kernel $K^N$, 
\tikz[baseline=-0.1cm] \draw[kernel1] (0,0) to (1,0);
represents a factor $K^N(t-s,y-x) - K^N(-s,-x)$, where
$(s,x)$ and $(t,y)$ are the coordinates of the starting and end
point respectively, and
a double barred arrow 
\tikz[baseline=-0.1cm] \draw[kernel2] (0,0) to (1,0);
represents a factor 
$K^N(t-s,y-x) - K^N(-s,-x) - \sum_{i=1}^{3}y_i\, \partial_i K^N(-s,-x)$.
Moreover \tikz[baseline=-0.1cm] \draw[kprime] (0,0) to (1,0);
represents $\sum_{i=1}^{3}\partial_i K^N$
These four factors will be equipped with labels $(a_e,r_e)= (3,0)$,
$(a_e,r_e)= (3,1)$, $(a_e,r_e)= (3,2)$, and $(a_e,r_e)=(4,0)$, respectively. We recall here that $K^N$ can be decomposed as $K^N=\sum_{n=1}^{N} K_n$, and the spatial derivatives for the double barred arrow are only applied to the first $N-1$ terms.
Moreover,
\tikz[baseline=-0.1cm]{\draw[white] (0,-0.2) -- (0,0.2); \draw[testfcn] (1,0) to (0,0);}
represents a generic test function $\phi_\lambda$ rescaled to 
scale $\lambda$, and finally
\tikz[baseline=-0.1cm] \draw[rho] (0,0) to (1,0);
represents an occurrence of a noise term $\bar \xi^N$ evaluated at the time-space point to which 
the corresponding dotted line is attached to. Products of dotted lines have to be interpreted as Wick products.
Moreover, a red polygon with $p$ dots inside, for instance \begin{tikzpicture}[baseline=-4]
\node		(mid)  	at (0,0) {};
\node[cumu4]	(mid-cumu) 	at (mid) {};
\node[dot] at (mid.north west) {};
\node[dot] at (mid.south west) {};
\node[dot] at (mid.north east) {};
\node[dot] at (mid.south east) {};
\end{tikzpicture}
with $p=4$, represents a joint cumulant of $p$-th order of $\bar \xi^N$, with each dot 
representing one of the corresponding space-time locations. 
With this notation at hand we see for instance that we have the identity
\begin{equation}
\label{eq:xiixibasic}
\begin{aligned}
(\hat\Pi_0^{N}\<Xi2>)(\varphi_0^{\lambda})
&= \begin{tikzpicture}[scale=0.35,baseline=0.3cm]
\node at (-2,-1)  [root] (root) {};
\node at (-2,1)  [dot] (left) {};
\node at (-2,3)  [dot] (left1) {};
\node at (0,1) [] (variable1) {};
\node at (0,3) [] (variable2) {};

\draw[testfcn] (left) to  (root);

\draw[kernel1] (left1) to  node[midway, left] {\tiny $(3,1)$} (left);
\draw[rho] (variable2) to (left1); 
\draw[rho] (variable1) to (left); 
\end{tikzpicture}\;
- \;
\begin{tikzpicture}[scale=0.35,baseline=0.3cm]
\node at (-2,-1)  [root] (root) {};
\node at (-2,1)  [dot] (left) {};
\node at (-2,3)  [dot] (top) {};
\node[cumu2n] (a) at (-4,2){};
\draw[cumu2] (a) ellipse (12pt and 24pt);

\draw[testfcn] (left) to  (root);

\draw[kernel] (top) to [bend left=60] node[midway, right] {\tiny $(3,0)$} (root);
\draw[] (a.south) node[dot] {} to (left);
\draw[] (a.north)node[dot] {} to (top); 
\end{tikzpicture}\;.
\end{aligned}
\end{equation} 
At this point we refer the reader who is not yet familiar with the kind of renormalisation and notation which is used here to~\cite[Section 5.1.1]{WongZakai}, where in a very similar context the symbol $\tau=\<Xi2>$ is analysed.
Note that the second cumulant of the exclusion process (i.e.\ its covariance) is simply given by 
the discrete heat kernel, which is a kernel of order three in the sense of Definition~\ref{def:orderofkernel}.
Hence, an application of Theorem~\ref{theo:ultimate1} shows that the second term above is bounded by a multiple times $\lambda^{-1}$ which, being deterministic, is as desired.
It remains to prove a similar bound for the first graph in~\eqref{eq:xiixibasic}. For this, we note that  
its $p$-th moment is given by
\begin{equation}\label{eq:pthmomentXiIXi}
\int\E\Big[ \prod_{i=1}^{p}\varphi_0^{\lambda}(z_i)[K^N(z_i^{(1)}-z_i^{(2)})-K^N(-z_i^{(2)})]
\Wick{\bar \xi_{t_{i}^{(1)}}^N(x_{i}^{(1)})\bar \xi_{t_{i}^{(2)}}^N(x_{i}^{(2)})}\Big]\, dz_i,
\end{equation}
where $dz$ denotes the integral over all time-space points $z_i^{(j)} = (t_i^{(j)},x_i^{(j)})$ above (recall that the integral over space is indeed a Riemann sum).
With Lemma~\ref{lem:diagram} we then see that the above expression equals
\begin{equation}
\sum_{\pi}
\int \prod_{i=1}^{p}\varphi_0^{\lambda}(z_i)[K^N(z_i^{(1)}-z_i^{(2)})-K^N(-z_i^{(2)})]\prod_{B\in\pi}\E_c(\{\xi_t^N(x):\, (t,x)\in B\})\, dz_i,
\end{equation}
where $\pi$ runs over all partitions in $\cP_{\{1,2\}}(\{1,2\}\times\{1,2,\ldots, p\})$.
As an example, for $p=2$ and $p=4$ respectively, the following two graphs show up,
\begin{equation}
\label{eq:xiixi}
\begin{aligned}
\begin{tikzpicture}[scale=0.35,baseline=0.3cm]
\node at (0,-1)  [root] (root) {};
\node at (-2,1)  [dot] (left) {};
\node at (-2,3)  [dot] (left1) {};
\node at (2,1)   [dot] (right) {};
\node at (2,3)   [dot] (right1) {}; 
\node[cumu2n] (a) at (0,3){};
\draw[cumu2] (a) ellipse (24pt and 12pt);
\node[cumu2n] (b) at (0,1){};
\draw[cumu2] (b) ellipse (24pt and 12pt);

\draw[testfcn] (left) to  (root);
\draw[testfcn] (right) to (root);

\draw[kernel1] (left1) to   (left);
\draw[kernel1] (right1) to  (right);
\draw[] (a.west) node[dot] {} to (left1);
\draw[] (a.east) node[dot] {} to (right1);
\draw[] (b.west) node[dot] {} to (left);
\draw[] (b.east) node[dot] {} to (right);
\end{tikzpicture}\;,\qquad
\begin{tikzpicture}[scale=0.35,baseline=0.3cm]
\node at (0,-1)  [root] (root) {};
\node at (-5,1)  [dot] (vleft) {};
\node at (-5,3)  [dot] (vleft1) {};
\node at (-1,1)  [dot] (left)   {};
\node at (-1,3)  [dot] (left1) {};
\node at (1,1)   [dot] (right) {};
\node at (1,3)   [dot] (right1) {}; 
\node at (5,1)   [dot] (vright) {};
\node at (5,3)   [dot] (vright1){};
\node[cumu2n] (a) at (-3,1){};
\draw[cumu2] (a) ellipse (24pt and 12pt);
\node[cumu2n] (b) at (3,1){};
\draw[cumu2] (b) ellipse (24pt and 12pt);
\node[]		(cumu)  	at (0,5) {};
\node[cumu4]	(top-cumu) 	at (cumu) {};

\draw[testfcn] (left) to  (root);
\draw[testfcn] (right) to (root);
\draw[testfcn] (vright) to (root);
\draw[testfcn] (vleft) to (root);

\draw[kernel1] (left1) to   (left);
\draw[kernel1] (right1) to   (right);
\draw[kernel1] (vright1) to   (vright);
\draw[kernel1] (vleft1) to  (vleft);
\draw[] (a.west) node[dot] {} to (vleft);
\draw[] (a.east) node[dot] {} to (left);
\draw[] (b.west) node[dot] {} to (right);
\draw[] (b.east) node[dot] {} to (vright);
\draw[] (cumu.north west) node[dot] {} to (vleft1);
\draw[] (cumu.north east) node[dot] {}  to (vright1);
\draw[] (cumu.south west) node[dot] {} to (left1);
\draw[] (cumu.south east) node[dot] {} to (right1);
\end{tikzpicture}\;.
\end{aligned}
\end{equation} 
At this point, it only remains to make use of Theorem~\ref{thm:cycle} and to verify the assumptions of Theorem~\ref{theo:ultimate1} to obtain the desired bound for $\tau=\<Xi2>$. However, since it will turn out that for the remaining symbols the arguments will be very similar, we continue by laying the groundwork for bounding all terms altogether.
To that end it is convenient to have a shorthand for the graphical notation defined via a map $h$ that maps the symbols $\tau$ appearing below to its graphical representation, i.e.,
\begin{equ}\label{eq:h}
	h(\<Xi2>) = 
	\begin{tikzpicture}[scale=0.35,baseline=0.3cm]
	\node at (-2,-1)  [root] (root) {};
	\node at (-2,1)  [dot] (left) {};
	\node at (-2,3)  [dot] (left1) {};
	\node at (0,1) [] (variable1) {};
	\node at (0,3) [] (variable2) {};
	
	\draw[testfcn] (left) to  (root);
	
	\draw[kernel1] (left1) to  (left);
	\draw[rho] (variable2) to (left1); 
	\draw[rho] (variable1) to (left); 
	\end{tikzpicture}\;,\quad
	h(\<Xi3>)= \begin{tikzpicture}[scale=0.35,baseline=0.5cm]
	\node at (-2,-1)  [root] (root) {};
	\node at (-2,1)  [dot] (left) {};
	\node at (-2,3)  [dot] (left1) {};
	\node at (-2,5)  [dot] (left2) {};
	\node at (0,1) [] (variable1) {};
	\node at (0,3) [] (variable2) {};
	\node at (0,5) [] (variable3) {};
	
	\draw[testfcn] (left) to  (root);
	
	\draw[kernel1] (left1) to (left);
	\draw[kernel1] (left2) to (left1);
	\draw[rho] (variable3) to (left2); 
	\draw[rho] (variable2) to (left1); 
	\draw[rho] (variable1) to (left); 
	\end{tikzpicture}\;,\quad
	h(\<Xi4>)=	
	\begin{tikzpicture}[scale=0.35,baseline=0.5cm]
	\node at (-2,-1)  [root] (root) {};
	\node at (-2,1)  [dot] (left) {};
	\node at (-2,3)  [dot] (left1) {};
	\node at (-2,5)  [dot] (left2) {};
	\node at (-2,7)  [dot] (left3) {};
	\node at (0,1) [] (variable1) {};
	\node at (0,3) [] (variable2) {};
	\node at (0,5) [] (variable3) {};
	\node at (0,7) [] (variable4) {};
	
	\draw[testfcn] (left) to  (root);
	
	\draw[kernel2] (left1) to (left);
	\draw[kernel1] (left2) to (left1);
	\draw[kernel1] (left3) to (left2);
	\draw[rho] (variable4) to (left3);
	\draw[rho] (variable3) to (left2); 
	\draw[rho] (variable2) to (left1); 
	\draw[rho] (variable1) to (left); 
	\end{tikzpicture}\;,\quad
	h(\<Xi2X>)=
	\begin{tikzpicture}[scale=0.35,baseline=0.3cm]
	\node at (0,-1)  [root] (root) {};
	\node at (-1,1)  [dot] (left) {};
	\node at (1,1)  [dot] (left1) {};
	\node at (-1,3) [] (variable1) {};
	\node at (1,3) [] (variable2) {};
	
	\draw[testfcn] (left) to  (root);
	
	\draw[kernel2] (left1) to (left);
	\draw[rho] (variable2) to (left1); 
	\draw[rho] (variable1) to (left); 
	\draw[multx] (left1) to (root); 
	\end{tikzpicture}
	\;.
\end{equ}
Here, we used the notation \tikz[baseline=-0.1cm] \draw[multx] (0,0) to (1,0); 
for the kernel $(t,x) \mapsto x_i$ (which is of order $-1$).
In the sequel whenever we write that a set of time-space points gets contracted, we always mean that we replace the noise terms that are connected to them by a dotted line by a joint cumulant evaluated at the respective set of time-space points. Further below we analyse each term of the form $(\hat\Pi_0^N\tau)(\varphi_0^{\lambda})$, where $\tau$ is one of the symbols of Proposition~\ref{prop:momentbounds}. The result of that analysis is summarised in the following Remark.
\begin{remark}\label{rem:renorm}
	Let $\tau$ be one of the symbols in Proposition~\ref{prop:momentbounds}, then we can write
	\begin{equation}\label{eq:renorm}
	|(\hat\Pi_0^N\tau)(\varphi_0^{\lambda})|\leq\sum_{\hat\tau\in\CG_\tau} |\hat\tau|,
	\end{equation}
	where for each $\tau$ the set $\CG_\tau$ is a finite set of symbols, such that each $\hat\tau\in\CG_\tau$ is constructed in a way that obeys the following rules:
	\begin{itemize}
		\item[1.] consider the symbol $h(\tau)$ and partition its set of black nodes into two disjoint sets $A$ and $B$, where $A$ is either empty or contains at least two elements;
		\item[2.] consider a partition $\pi$ of $A$ such that each of its elements has cardinality at least two. Then for each set $a\in\pi$ contract all its elements;
		\item[3.] leave the nodes belonging to $B$ untouched;
		\item[4.] if after replacing each cumulant term by a cycle according to Theorem~\ref{thm:cycle}, where each kernel in this cycle has weight $a=3/2+\eta$, there is a subset of nodes $\bar\CCV$ of non-positive degree, i.e. is such that in the notation of Appendix~\ref{A}
		\begin{equation}
		\sum_{e\in\CCE_0(\bar\CCV)}\hat a_e\geq 5(|\bar\CCV|-1),
		\end{equation}
		then for at least one edge $e\in\CCE_0(\bar\CCV)$ we perform one of the following substitutions:
		\begin{equation}\label{e:subs}
		\begin{aligned}
		&\begin{tikzpicture}[scale=0.5, baseline=0cm]
		
		\draw[kernel1] (0,1) to (0,-1); 
		\node[cumu2n] (a) at (1,0){};
		\draw[cumu2] (a) ellipse (12pt and 24pt);

		\draw[] (a.north) node[dot] {} to (0,1);
		\draw[] (a.south) node[dot] {} to (0,-1);
		\end{tikzpicture}\to
		\begin{tikzpicture}[scale=0.5, baseline=0cm]
		
		\draw[kernelBig] (0,1) to (0,-1);
		
		\end{tikzpicture}\;, \quad
		\begin{tikzpicture}[scale=0.5, baseline=0cm]
		
		\draw[kernel1] (0,1) to (0,-1);

		\end{tikzpicture}\to
		\begin{tikzpicture}[scale=0.5, baseline=0cm]
		
		\node at (0,-1) [root] (root) {};
		
		\draw[kernel] (0,1) to [bend left=60] (0,-1);
		
		\end{tikzpicture}\;,\quad
		\begin{tikzpicture}[scale=0.5, baseline=0cm]
		
		\draw[kernel2] (0,1) to (0,-1);
		\node[cumu2n] (a) at (1,0){};
		\draw[cumu2] (a) ellipse (12pt and 24pt);

		\draw[] (a.north) node[dot] {} to (0,1);
		\draw[] (a.south) node[dot] {} to (0,-1);
		
		\end{tikzpicture}\to
		\begin{tikzpicture}[scale=0.5, baseline=0cm]
		
		\draw[kernelBig] (0,1) to (0,-1); 
		\end{tikzpicture}\;,\quad
		\begin{tikzpicture}[scale=0.5, baseline=0cm]
		
		\draw[kernel2] (0,1) to (0,-1);
		
		\end{tikzpicture}\to
		\begin{tikzpicture}[scale=0.5, baseline=0cm]
		
		\node at (0,-1) [root](root) {};
		
		\draw[kernel] (0,1) to [bend left=60] (0,-1);
		\end{tikzpicture}\;,\quad
		\begin{tikzpicture}[scale=0.5, baseline=0cm]
		
		\draw[kernel2] (0,1) to (0,-1);
		
		\end{tikzpicture}\to
		\begin{tikzpicture}[scale=0.5, baseline=0cm]
		
		\node at (0,-1) [root](root) {};
		
		\draw[kprime] (0,1) to [bend left=60] (0,-1);
		\end{tikzpicture}\;,\\
		&\begin{tikzpicture}[scale=0.5, baseline=0cm]

		\node at (0,0) (mid) {};
		\node[]		(cumu)  	at (1,0) {};
		\node[cumu3, rotate=90]  (top-cumu )at (cumu) {};

		\draw[] (cumu.north) node[dot] {} to (0,1);
		\draw[] (cumu.west) node[dot] {} to (0,0);
		\draw[] (cumu.south) node[dot] {} to (0,-1);
		
		\draw[kernel1] (0,1) to (0,0);
		\draw[kernel1] (0,0) to (0,-1);
		
		\end{tikzpicture}\to
		\begin{tikzpicture}[scale=0.5, baseline=0cm]
		
		\draw[kernelBig] (0,1) to node[labl, pos=0.45] {\tiny $2$} (0,-1);
		
		\end{tikzpicture}\;,\quad	
		\begin{tikzpicture}[scale=0.5, baseline=0cm]
		
		\node at (0,0) (mid) {};
		
		\node[]		(cumu)  	at (1,0) {};
		\node[cumu3, rotate=90]  (top-cumu) at (cumu) {};

		\draw[] (cumu.north) node[dot] {} to (0,1);
		\draw[] (cumu.west) node[dot] {} to (0,0);
		\draw[] (cumu.south) node[dot] {} to (0,-1);

		\draw[kernel1] (0,1) to (0,0);
		\draw[kernel2] (0,0) to (0,-1);
		
		\end{tikzpicture}\to
		\begin{tikzpicture}[scale=0.5, baseline=0cm]
		
		\draw[kernelBig] (0,1) to node[labl, pos=0.45] {\tiny $2$} (0,-1);
		
		\end{tikzpicture}\;.
		\end{aligned}
		\end{equation}
	\end{itemize}
	Here, the first zig-zag-line denotes the renormalised kernel \\
	$\Ren Q^{N}(s,x) = K^{N}(s,x)\E[\bar\xi_s^{N}(x)\bar\xi_0^{N}(0)] - c_N2^{3N}\delta_0(s,x)$. Since $ K^{N}(s,x)\E[\bar\xi_s^{N}(x)\bar\xi_0^{N}(0)]$ is an even function in space, and consequently kills polynomials of parabolic degree one, the kernel $\Ren Q^{N}$ comes equipped with a label $(6,-2)$. The second zig-zag-line denotes the renormalised kernel
	\begin{equation}
	\label{eq:Q2}
	\begin{aligned}
	&\Ren Q^{2,N}(\bar z) =
	\int
	K^{N}(-z)K^{N}(z-\bar z)\E_c(\{\bar\xi_0^{N}(0),\bar\xi_t^{N}(x),\bar\xi_{\bar t}^{N}(\bar x)\})\, dz - c_N^{(1)}\delta_0(\bar z),
	\end{aligned}
	\end{equation}
	where the constant $c_N^{(1)}$ was introduced in Section~\ref{S3.3} and we used the abbreviations $z=(t,x)$, and $\bar z=(\bar t,\bar x)$. One can now show that this kernel comes equipped with the label $(5.5,-1)$. We refer to~\eqref{e:defRen} for more details.
	Moreover, in case of the dotted line above a factor $x_i$ gets assigned to the test function.
\end{remark}
\begin{remark}
	We emphasize at this point that Remark~\eqref{rem:renorm} does not give a complete and precise description of the way the graphical representation of each of the symbols $h(\tau)$ is manipulated. Indeed, it turns out that some of the barred arrows will loose their bar, i.e., for some of the symbols we will do the substitution
	\begin{equation}\label{eq:substituteextra}
	\begin{tikzpicture}[scale=0.5, baseline=0cm]
	
	\draw[kernel1] (0,1) to (0,-1);

	\end{tikzpicture}\to
	\begin{tikzpicture}[scale=0.5, baseline=0cm]
	
	\draw[kernel] (0,1) to (0,-1);

	\end{tikzpicture}\;
	\end{equation}
	and similar for the kernel with two bars. The reason why this substitution is not listed in in the above remark is that any substitution of the type~\eqref{eq:substituteextra} is always accompanied with a substitution of the type~\eqref{e:subs}, and it turns out that these are really decisive when checking the applicability of Theorem~\ref{theo:ultimate1}.
\end{remark}
\begin{remark}
	Having the representation of~\eqref{eq:renorm} at hand to bound the $p$-th moment of the left hand side we use the bound
	\begin{equation}
	\Big(|(\hat\Pi_0^N\tau)(\varphi_0^{\lambda})|\Big)^p\leq C_p \sum_{\hat\tau\in\CG_\tau} (|\hat\tau|)^p,
	\end{equation}
	where $C_p$ is a constant depending solely on $p$. This bound allows us to focus on each term $\hat\tau$ separately. 
\end{remark}
We now ``prove'' Remark~\ref{rem:renorm}.
For the symbol $\tau=\<Xi2>$, the identity~\eqref{eq:xiixibasic} shows that Remark~\ref{rem:renorm} is true. The first symbol on the right hand side in~\eqref{eq:xiixibasic} corresponds to the choice where $A$ is the empty set, whereas the second symbol corresponds to the choice where $A$ contains both black nodes. Note at this point that the reason for the appearance of the second term in~\eqref{eq:xiixibasic} is exactly caused by the fact that the contraction of the two noise terms yields an object of negative degree, so that the kernel gets moved to the origin, which corresponds to the second substitution listed in~\eqref{e:subs}. We now proceed with the remaining symbols.
It follows as in~\cite[Sec.~5.3.1]{WongZakai} that one has the identity
\begin{equ}[e:decompPiXiTwo2]
	\bigl(\hat \Pi_0^{(N)} \<Xi2X>\bigr)(\phi_\lambda) = \;
	\begin{tikzpicture}[scale=0.35,baseline=0.3cm]
	\node at (0,-1)  [root] (root) {};
	\node at (-1,1)  [dot] (left) {};
	\node at (1,1)  [dot] (left1) {};
	\node at (-1,3) [] (variable1) {};
	\node at (1,3) [] (variable2) {};
	
	\draw[testfcn] (left) to  (root);
	
	\draw[kernel2] (left1) to (left);
	\draw[rho] (variable2) to (left1); 
	\draw[rho] (variable1) to (left); 
	\draw[multx] (left1) to (root); 
	\end{tikzpicture}
	\; + \;
	\begin{tikzpicture}[scale=0.35,baseline=0.3cm]
	\node at (0,-1)  [root] (root) {};
	\node at (-1,1)  [dot] (left) {};
	\node at (1,1) [dot] (right) {};
	
	\draw[testfcn] (left) to  (root);
	\draw[kernelBig] (left) to (right);
	\draw[multx] (right) to (root);
	\end{tikzpicture}
	\; - \;
	\begin{tikzpicture}[scale=0.35,baseline=0.3cm]
	\node at (0,-1)  [root] (root) {};
	\node at (-1,1)  [dot] (left) {};
	\node at (1,1) [dot] (right) {};
	\node[cumu2n] (a) at (0,3){};
	\draw[cumu2] (a) ellipse (24pt and 12pt);
	\draw[testfcn] (left) to  (root);
	
	\draw[kernelx] (right) to (root);
	\draw[] (a.west) node[dot] {} to (left);
	\draw[] (a.east) node[dot] {} to (right);
	\end{tikzpicture}
	\; - \;
	\begin{tikzpicture}[scale=0.35,baseline=0.3cm]
	\node at (0,-1)  [root] (root) {};
	\node at (-1,1)  [dot] (left) {};
	\node at (1,1) [dot] (right) {};
	\node[cumu2n] (a) at (0,3){};
	\draw[cumu2] (a) ellipse (24pt and 12pt);
	
	\draw[testfcnx] (left) to  (root);
	
	\draw[kprimex] (right) to (root);
	\draw[] (a.west) node[dot] {} to (left);
	\draw[] (a.east) node[dot] {} to (right);
	\end{tikzpicture}\;.
\end{equ}
Here we used the notation
\tikz[baseline=-0.1cm] \draw[testfcnx] (0,0) -- (1,0);
for the test function $(t,x) \mapsto x_i\phi^\lambda(t,x)$, where $i$ ranges between $1$ and $3$,
\tikz[baseline=-0.1cm] \draw[kernelx] (0,0) to (1,0); for the kernel
$(t,x) \mapsto x_iK(t,x)$, and similarly for \tikz[baseline=-0.1cm] \draw[kprimex] (0,0) to (1,0);. 
Here, $i$ ranges from $1$ to $3$. To compare the above expression with the construction outlined in Remark~\ref{rem:renorm} we note that the first term on the right hand side corresponds to the case where $A=\emptyset$, whereas the remaining terms correspond to the case where $A$ consists of the two black nodes. In that case the only admissible partition that can be formed out of $A$ is the partition whose only element is $A$ itself. Contracting the two black nodes then yields
\begin{equ}
	\begin{tikzpicture}[scale=0.35,baseline=0.3cm]
\node at (0,-1)  [root] (root) {};
\node at (-1,1)  [dot] (left) {};
\node at (1,1)  [dot] (left1) {};
\node[cumu2n] (a) at (0,3){};
\draw[cumu2] (a) ellipse (24pt and 12pt);

\draw[testfcn] (left) to  (root);

\draw[kernel2] (left1) to (left);
\draw[multx] (left1) to (root); 
\draw[] (a.west) node[dot] {} to (left);
\draw[] (a.east) node[dot] {} to (left1);
\end{tikzpicture}\,.
	\end{equ}
Since the double barred arrow comes with a label $a_e=3$ and the cumulant is simply the discrete heat kernel, it comes also with a weight $a_e=3$ (in this specific case there is no need to add an extra weight $\eta$), so that the sum of the two weights over the edge connecting the two black nodes is $6$. We are therefore in the situation in which we need to perform the substitutions outlined in~\eqref{e:subs}. The second symbol in~\eqref{e:decompPiXiTwo2} is a result of the third substitution in~\eqref{e:subs}, the third symbol results from the fourth substitution and the fourth symbol results from the fifth substitution. We recall that we remarked after~\eqref{e:subs} that in case the fifth substitution is performed we multiply the test function additionally with $x_i$.
Hence, this justifies Remark~\ref{rem:renorm} in this case. We turn to $\tau=\<Xi3>$. An expansion into Wick products together with the definition of the renormalised model yields
\begin{equs}
	\label{eq:Xi3dec}
	&(\hat\Pi_0^{N}\<Xi3>)(\phi_0^{\lambda})=\\
	&\begin{tikzpicture}[scale=0.35,baseline=0.5cm]
	\node at (-2,-1)  [root] (root) {};
	\node at (-2,1)  [dot] (left) {};
	\node at (-2,3)  [dot] (left1) {};
	\node at (-2,5)  [dot] (left2) {};
	\node at (0,1) [] (variable1) {};
	\node at (0,3) [] (variable2) {};
	\node at (0,5) [] (variable3) {};
	
	\draw[testfcn] (left) to  (root);
	
	\draw[kernel1] (left1) to (left);
	\draw[kernel1] (left2) to (left1);
	\draw[rho] (variable3) to (left2); 
	\draw[rho] (variable2) to (left1); 
	\draw[rho] (variable1) to (left); 
	\end{tikzpicture}
	\; + \;
	\left(
	\begin{tikzpicture}[scale=0.35,baseline=0.5cm]
	\node at (-2,-1)  [root] (root) {};
	\node at (-2,1)  [dot] (left) {};
	\node at (-2,3)  [dot] (left1) {};
	\node at (-2,5)  [dot] (left2) {};
	\node at (0,5) [] (variable3) {};
	\node[cumu2n] (a) at (0,2){};
	\draw[cumu2] (a) ellipse (12pt and 24pt);
	
	\draw[testfcn] (left) to  (root);6
	
	\draw[kernel1] (left1) to (left);
	\draw[kernel1] (left2) to (left1);
	\draw[rho] (variable3) to (left2); 
	\draw[] (a.north) node[dot] {} to (left1);
	\draw[] (a.south) node[dot] {} to (left);
	
	\end{tikzpicture}
	\; - c_N\times\;
	\begin{tikzpicture}[scale=0.35,baseline=0.5cm]
	\node at (-2,-1)  [root] (root) {};
	\node at (-2,1)  [dot] (left) {};
	\node at (-2,5)  [dot] (left2) {};
	\node at (0,5) [] (variable3) {};
	
	\draw[testfcn] (left) to  (root);
	
	\draw[kernel1] (left2) to (left);
	\draw[rho] (variable3) to (left2); 
	\end{tikzpicture}
	\right)
	\; + \;
	\begin{tikzpicture}[scale=0.35,baseline=0.5cm]
	\node at (-2,-1)  [root] (root) {};
	\node at (-2,1)  [dot] (left) {};
	\node at (-2,3)  [dot] (jnct) {};
	\node at (-0.75,3)  [dot] (left1) {};
	\node at (-2,5)  [dot] (left2) {};
	\node at (0.5,3) [] (variable2) {};
	\node[cumu2n] (a) at (-2,3){};
	\draw[cumu2] (a) ellipse (12pt and 24pt);
	
	\draw[testfcn] (left) to  (root);
	
	\draw[kernel1] (left1) to (left);
	\draw[kernel1] (left2) to (left1);
	\draw[rho] (variable2) to (left1); 
	\draw[] (a.north) node[dot] {} to (left2);
	\draw[] (a.south) node[dot] {} to (left);
	\end{tikzpicture}
	\; + \;
	\begin{tikzpicture}[scale=0.35,baseline=0.5cm]
	\node at (-2,-1)  [root] (root) {};
	\node at (-2,1)  [dot] (bottom) {};
	\node at (-2,3)  [dot] (mid) {};
	\node at (-2,5)  [dot] (top) {};
	\node at (0,1) [] (variable2) {};
	\node[cumu2n] (a) at (0,4){};
	\draw[cumu2] (a) ellipse (12pt and 24pt);
	
	\draw[testfcn] (bottom) to  (root);
	
	\draw[kernel1] (mid) to (bottom);
	\draw[kernel1] (top) to (mid);
	\draw[rho] (variable2) to (bottom); 
	\draw[] (a.north) node[dot] {} to (top);
	\draw[] (a.south) node[dot] {} to (mid); 
	\end{tikzpicture}\;
	+\;\left( 
	\begin{tikzpicture}[scale=0.35, baseline=0.5cm]
	\node at (-2,-1)  [root] (root) {};
	\node at (-2,1)  [dot] (bottom) {};
	\node at (-2,3)  [dot] (mid) {};
	\node at (-2,5)  [dot] (top) {};
	\node[]		(cumu)  	at (0,3) {};
	\node[cumu3,rotate=90]	(top-cumu) 	at (cumu) {};
	
	\draw[testfcn] (bottom) to  (root);
	
	\draw[kernel1] (mid) to (bottom);
	\draw[kernel1] (top) to (mid);
	\draw[] (cumu.north) node[dot] {} to (top);
	\draw[] (cumu.west) node[dot] {} to (mid);
	\draw[] (cumu.south) node[dot] {} to (bottom);
	\end{tikzpicture}\;
	-c_N^{(1)}\right)\\
	&=:\I +\II + \III + \IV + \V.
\end{equs}
With the definition of the second renormalised kernel at hand we can then write,
\begin{equ}
	\label{eq:Xi3dec2}
	\begin{aligned}
		&\II
		= 	\begin{tikzpicture}[scale=0.35,baseline=0.5cm]
		\node at (-2,-1)  [root] (root) {};
		\node at (-2,1)  [dot] (left) {};
		\node at (-2,3)  [dot] (left1) {};
		\node at (-2,5)  [dot] (left2) {};
		\node at (0,5) [] (variable3) {};
		
		\draw[testfcn] (left) to  (root);
		
		\draw[kernelBig] (left1) to (left);
		\draw[kernel1] (left2) to (left1);
		\draw[rho] (variable3) to (left2); 
		\end{tikzpicture}\;
		-\;
		\begin{tikzpicture}[scale=0.35,baseline=0.5cm]
		\node at (-2,-1)  [root] (root) {};
		\node at (-2,1)  [dot] (left) {};
		\node at (-2,3)  [dot] (left1) {};
		\node at (-2,5)  [dot] (left2) {};
		\node at (0,5) [] (variable3) {};
		\node[cumu2n] (a) at (-4,2){};
		\draw[cumu2] (a) ellipse (12pt and 24pt);
		
		\draw[testfcn] (left) to  (root);
		
		\draw[kernel1] (left2) to (left1);
		\draw[rho] (variable3) to (left2); 
		\draw[kernel] (left1) to [bend left=60] (root); 
		\draw[] (a.north) node[dot] {} to (left1);
		\draw[] (a.south) node[dot] {} to (left);
		\end{tikzpicture}\;	,
		\quad \IV=
		- \;
		\begin{tikzpicture}[scale=0.35,baseline=0.3cm]
		\node at (-2,-1)  [root] (root) {};
		\node at (-2,1)  [dot] (left) {};
		\node at (-2,3)  [dot] (left1) {};
		\node at (-2,5) [dot] (left2) {};
		\node at (-4,1)  [] (variable) {};
		\node[cumu2n] (a) at (-4,4){};
		\draw[cumu2] (a) ellipse (12pt and 24pt);
		
		\draw[testfcn] (left) to  (root);
		
		\draw[kernel1] (left1) to (left);
		\draw[kernel] (left2) to [bend left=60] (root); 
		\draw[rho] (left) to (variable); 
		\draw[] (a.north) node[dot] {} to (left2);
		\draw[] (a.south) node[dot] {} to (left1); 
		\end{tikzpicture}\; ,\\
		&\V= 
		-\;
		\begin{tikzpicture}[scale=0.35, baseline=0.5cm]
		\node at (-2,-1)  [root] (root) {};
		\node at (-2,1)  [dot] (left) {};
		\node at (-2,3)  [dot] (left1) {};
		\node at (-2,5)  [dot] (left2) {};
		\node[]		(cumu)  	at (-4,3) {};
		\node[cumu3,rotate=90]	(top-cumu) 	at (cumu) {};
		
		\draw[testfcn] (left) to  (root);
		
		\draw[kernel] (left1) to (left);
		\draw[kernel] (left2) to [bend left=60] (root);
		\draw[] (cumu.north) node[dot] {} to (left2); 
		\draw[] (cumu.west) node[dot] {} to (left1);
		\draw[] (cumu.south) node[dot] {} to (left); 
		\end{tikzpicture}\;
		-\;
		\begin{tikzpicture}[scale=0.35, baseline=0.5cm]
		\node at (-2,-1)  [root] (root) {};
		\node at (-2,1)  [dot] (left) {};
		\node at (-2,3)  [dot] (left1) {};
		\node at (-2,5)  [dot] (left2) {};
		\node[]		(cumu)  	at (-4,3) {};
		\node[cumu3,rotate=90]	(top-cumu) 	at (cumu) {};
		
		\draw[testfcn] (left) to  (root);
		
		\draw[kernel] (left1) to [bend left=60] (root);
		\draw[kernel1] (left2) to (left1);
		\draw[] (cumu.north) node[dot] {} to (left2); 
		\draw[] (cumu.west) node[dot] {} to (left1);
		\draw[] (cumu.south) node[dot] {} to (left); 
		\end{tikzpicture}\;.
	\end{aligned}
\end{equ}
Here, we used that $K^{N}$ kills polynomials
to get the equality for $\IV$ and the choice of $c_N^{(1)}$ to obtain the relation for $\V$. It is then straightforward to check that also this symbol falls into the framework of Remark~\ref{rem:renorm}.
We turn to the last symbol, namely to $\tau=\<Xi4>$. To do so, we apply the definition of $\hat\Pi^N\<Xi4>$ from Section~\ref{S3}. 
Expanding this expression, we see that it equals the sum of~\eqref{e:exprXi4} and~\eqref{eq:Xi4dec} below.  Note that the graphs in~\eqref{e:exprXi4} are actually those from~\cite[Equations~(5.20)--(5.22)]{WongZakai} and it is here that we make use of the renormalisation constants $c_N^{(2,2)}$ and $c_N^{(2,3)}$.
\begin{equs}
	\label{e:exprXi4}
	&\begin{tikzpicture}[scale=0.35,baseline=0.5cm]
	\node at (-2,-1)  [root] (root) {};
	\node at (-2,1)  [dot] (left) {};
	\node at (-2,3)  [dot] (left1) {};
	\node at (-2,5)  [dot] (left2) {};
	\node at (-2,7)  [dot] (left3) {};
	\node at (0,1) [] (variable1) {};
	\node at (0,3) [] (variable2) {};
	\node at (0,5) [] (variable3) {};
	\node at (0,7) [] (variable4) {};
	
	\draw[testfcn] (left) to  (root);
	
	\draw[kernel2] (left1) to (left);
	\draw[kernel1] (left2) to (left1);
	\draw[kernel1] (left3) to (left2);
	\draw[rho] (variable4) to (left3);
	\draw[rho] (variable3) to (left2); 
	\draw[rho] (variable2) to (left1); 
	\draw[rho] (variable1) to (left); 
	\end{tikzpicture}
	\;-\;
	\begin{tikzpicture}[scale=0.35,baseline=0.5cm]
	\node at (-2,-1)  [root] (root) {};
	\node at (-2,1)  [dot] (left) {};
	\node at (-2,3)  [dot] (left1) {};
	\node at (-2,5)  [dot] (left2) {};
	\node at (-2,7)  [dot] (left3) {};
	\node at (-4,1) [] (variable) {};
	\node at (0,7) [] (variable3) {};
	\node[cumu2n] (a) at (-4,4) {};
	\draw[cumu2] (a) ellipse (12pt and 24pt);

	\draw[testfcn] (left) to  (root);
	
	\draw[kernel2] (left1) to (left);
	\draw[kernel1] (left3) to (left2);
	\draw[kernel] (left2) to [bend left=60] (root);
	\draw[] (a.north) node[dot] {} to (left2);
	\draw[] (a.south) node[dot] {} to (left1);
	\draw[rho] (variable3) to (left3); 
	\draw[rho] (variable) to (left); 
	\end{tikzpicture}
	\;+\;
	\begin{tikzpicture}[scale=0.35,baseline=0.5cm]
	\node at (-2,-1)  [root] (root) {};
	\node at (-2,1)  [dot] (left) {};
	\node at (-2,3)  [dot] (left1) {};
	\node at (-2,5)  [dot] (left2) {};
	\node at (-2,7)  [dot] (left3) {};
	\node at (0,1) [] (variable1) {};
	\node at (0,5) [] (variable3) {};
	\node[cumu2n] (a) at (-4,5) {};
	\draw[cumu2] (a) ellipse (12pt and 24pt);

	\draw[testfcn] (left) to  (root);
	
	\draw[kernel2] (left1) to (left);
	\draw[kernel1] (left2) to (left1);
	\draw[kernel1] (left3) to (left2);
	\draw[rho] (variable3) to (left2); 
	\draw[] (a.north) node[dot] {} to (left3);
	\draw[] (a.south) node[dot] {} to (left1);
	\draw[rho] (variable1) to (left); 
	\end{tikzpicture}
	\;-\;
	\begin{tikzpicture}[scale=0.35,baseline=0.5cm]
	\node at (-2,-1)  [root] (root) {};
	\node at (-2,1)  [dot] (left) {};
	\node at (-2,3)  [dot] (left1) {};
	\node at (-2,5)  [dot] (left2) {};
	\node at (-2,7)  [dot] (left3) {};
	\node[cumu2n] (a) at (-4,6) {};
	\draw[cumu2] (a) ellipse (12pt and 24pt);

	\node at (-4,1) [] (variable) {};
	\node at (-4,3) [] (variable1) {};
	
	\draw[testfcn] (left) to  (root);
	
	\draw[kernel2] (left1) to (left);
	\draw[kernel1] (left2) to (left1);
	\draw[kernel] (left3) [bend left=60] to (root); 
	\draw[rho] (variable1) to (left1); 
	\draw[rho] (variable) to (left); 
	\draw[] (a.north) node[dot] {} to (left3);
	\draw[] (a.south) node[dot] {} to (left2);
	\end{tikzpicture}
	\\ &
	\;+\;
	\begin{tikzpicture}[scale=0.35,baseline=0.5cm]
	\node at (-2,-1)  [root] (root) {};
	\node at (-2,1)  [dot] (left) {};
	\node at (-2,3)  [dot] (left1) {};
	\node at (-2,5)  [dot] (left2) {};
	\node at (-2,7)  [dot] (left3) {};
	\node at (0,1) [] (variable) {};
	\node at (0,7) [] (variable3) {};
	
	\draw[testfcn] (left) to  (root);
	
	\draw[kernel2] (left1) to (left);
	\draw[kernelBig] (left2) to (left1);
	\draw[kernel1] (left3) to (left2);
	\draw[rho] (variable3) to (left3); 
	\draw[rho] (variable) to (left); 
	\end{tikzpicture}
	\;+\;
	\begin{tikzpicture}[scale=0.35,baseline=0.5cm]
	\node at (-2,-1)  [root] (root) {};
	\node at (-2,1)  [dot] (left) {};
	\node at (-2,3)  [dot] (left1) {};
	\node at (-2,5)  [dot] (left2) {};
	\node at (-2,7)  [dot] (left3) {};
	\node at (0,5) [] (variable2) {};
	\node at (0,7) [] (variable3) {};
	
	\draw[testfcn] (left) to  (root);
	
	\draw[kernelBig] (left1) to (left);
	\draw[kernel1] (left2) to (left1);
	\draw[kernel1] (left3) to (left2);
	\draw[rho] (variable3) to (left3); 
	\draw[rho] (variable2) to (left2); 
	\end{tikzpicture}
	\;-\;
	\begin{tikzpicture}[scale=0.35,baseline=0.5cm]
	\node at (-2,-1)  [root] (root) {};
	\node at (-2,1)  [dot] (left) {};
	\node at (-2,3)  [dot] (left1) {};
	\node at (-2,5)  [dot] (left2) {};
	\node at (-2,7)  [dot] (left3) {};
	\node at (0,5) [] (variable2) {};
	\node at (0,7) [] (variable3) {};
	\node[cumu2n] (a) at (-4,2) {};
	\draw[cumu2] (a) ellipse (12pt and 24pt);
	
	\draw[testfcn] (left) to  (root);

	\draw[kernel1] (left2) to (left1);
	\draw[kernel1] (left3) to (left2);
	\draw[kernel] (left1) [bend left=60] to (root);
	\draw[] (a.north) node[dot] {} to (left1);
	\draw[] (a.south) node[dot] {} to (left);
	\draw[rho] (variable3) to (left3); 
	\draw[rho] (variable2) to (left2); 
	\end{tikzpicture}
	\;-\;
	\begin{tikzpicture}[scale=0.35,baseline=0.5cm]
	\node at (-2,-1)  [root] (root) {};
	\node at (-2,1)  [dot] (left) {};
	\node at (-2,3)  [dot] (left1) {};
	\node at (-2,5)  [dot] (left2) {};
	\node at (-2,7)  [dot] (left3) {};
	\node at (0,5) [] (variable2) {};
	\node at (0,7) [] (variable3) {};
	\node[cumu2n] (a) at (-4,2) {};
	\draw[cumu2] (a) ellipse (12pt and 24pt);
	
	\draw[testfcnx] (left) to  (root);

	\draw[kernel1] (left2) to (left1);
	\draw[kernel1] (left3) to (left2);
	\draw[kprime] (left1) [bend left=60] to (root);
	\draw[] (a.north) node[dot] {} to (left1);
	\draw[] (a.south) node[dot] {} to (left);
	\draw[rho] (variable3) to (left3); 
	\draw[rho] (variable2) to (left2); 
	\end{tikzpicture}
	\;+\;
	\begin{tikzpicture}[scale=0.35,baseline=0.5cm]
	\node at (-2,-1)  [root] (root) {};
	\node at (-2,1)  [dot] (left) {};
	\node at (-2,3)  [dot] (left1) {};
	\node at (-2,5)  [dot] (left2) {};
	\node at (-2,7)  [dot] (left3) {};
	\node at (0,3) [] (variable2) {};
	\node at (0,7) [] (variable4) {};
	\node[cumu2n] (a) at (-4,3) {};
	\draw[cumu2] (a) ellipse (12pt and 24pt);

	\draw[testfcn] (left) to  (root);
	
	\draw[kernel2] (left1) to (left);
	\draw[kernel1] (left2) to (left1);
	\draw[kernel1] (left3) to (left2);
	\draw[rho] (variable4) to (left3); 
	\draw[] (a.north) node[dot] {} to (left2);
	\draw[] (a.south) node[dot] {} to (left);
	\draw[rho] (variable2) to (left1); 
	\end{tikzpicture}
	\;+\;
	\begin{tikzpicture}[scale=0.35,baseline=0.5cm]
	\node at (-2,-1)  [root] (root) {};
	\node at (-2,1)  [dot] (left) {};
	\node at (-2,3)  [dot] (left1) {};
	\node at (-2,5)  [dot] (left2) {};
	\node at (-2,7)  [dot] (left3) {};
	\node at (0,3) [] (variable2) {};
	\node at (0,5) [] (variable3) {};
	\node[cumu2n] (a) at (-4,4) {};
	\draw[cumu2] (a) ellipse (12pt and 24pt);

	\draw[testfcn] (left) to  (root);
	
	\draw[kernel2] (left1) to (left);
	\draw[kernel1] (left2) to (left1);
	\draw[kernel1] (left3) to (left2);
	\draw[rho] (variable3) to (left2); 
	\draw[] (a.north) node[dot] {} to (left3);
	\draw[] (a.south) node[dot] {} to (left);
	\draw[rho] (variable2) to (left1); 
	\end{tikzpicture}
	\\&
	\;+\;
	\begin{tikzpicture}[scale=0.35,baseline=0.5cm]
	\node at (-2,-1)  [root] (root) {};
	\node at (-2,1)  [dot] (left) {};
	\node at (-2,3)  [dot] (left1) {};
	\node at (-2,5)  [dot] (left2) {};
	\node at (-2,7)  [dot] (left3) {};
	\node[cumu2n] (a) at (-4,2) {};
	\node[cumu2n] (b) at (-4,6) {};
	\draw[cumu2] (a) ellipse (12pt and 24pt);
	\draw[cumu2] (b) ellipse (12pt and 24pt);

	\draw[testfcn] (left) to  (root);

	\draw[kernel] (left3) [bend left=60] to (root); 
	\draw[kernel1] (left2) to (left1);
	\draw[kernel] (left1) [bend left=60] to (root); 
	\draw[] (a.north) node[dot] {} to (left1);
	\draw[] (a.south) node[dot] {} to (left);
	\draw[] (b.north) node[dot] {} to (left3);
	\draw[] (b.south) node[dot] {} to (left2);
	\end{tikzpicture}
	\;+\;
	\begin{tikzpicture}[scale=0.35,baseline=0.5cm]
	\node at (-2,-1)  [root] (root) {};
	\node at (-2,1)  [dot] (left) {};
	\node at (-2,3)  [dot] (left1) {};
	\node at (-2,5)  [dot] (left2) {};
	\node at (-2,7)  [dot] (left3) {};
	\node[cumu2n] (a) at (-4,2) {};
	\node[cumu2n] (b) at (-4,6) {};
	\draw[cumu2] (a) ellipse (12pt and 24pt);
	\draw[cumu2] (b) ellipse (12pt and 24pt);

	\draw[testfcnx] (left) to  (root);

	\draw[kernel] (left3) [bend left=60] to (root); 
	\draw[kernel1] (left2) to (left1);
	\draw[kprime] (left1) [bend left=60] to (root); 
	\draw[] (a.north) node[dot] {} to (left1);
	\draw[] (a.south) node[dot] {} to (left);
	\draw[] (b.north) node[dot] {} to (left3);
	\draw[] (b.south) node[dot] {} to (left2);
	\end{tikzpicture}
	\;-\;
	\begin{tikzpicture}[scale=0.35,baseline=0.5cm]
	\node at (-2,-1)  [root] (root) {};
	\node at (-2,1)  [dot] (left) {};
	\node at (-2,3)  [dot] (left1) {};
	\node at (-2,5)  [dot] (left2) {};
	\node at (-2,7)  [dot] (left3) {};
	\node[cumu2n] (a)  at (-4,6)  {};
	\draw[cumu2] (a) ellipse (12pt and 24pt);	
	
	\draw[testfcn] (left) to  (root);
	
	\draw[kernelBig] (left1) to (left);
	\draw[kernel1] (left2) to (left1);
	\draw[] (a.north) node[dot] {} to (left3); 
	\draw[] (a.south) node[dot] {} to (left2); 
	\draw[kernel] (left3) [bend left=60] to (root);
	\end{tikzpicture}
	\;-\;
	\begin{tikzpicture}[scale=0.35,baseline=0.5cm]
	\node at (-2,-1)  [root] (root) {};
	\node at (-2,1)  [dot] (left) {};
	\node at (-2,3)  [dot] (left1) {};
	\node at (-2,5)  [dot] (left2) {};
	\node at (-2,7)  [dot] (left3) {};
	\node[cumu2n] (a) at (-4,4) {};
	\node[cumu2n] (b) at (-6,4) {};
	\draw[cumu2] (a) ellipse (12pt and 24pt);
	\draw[cumu2] (b) ellipse (12pt and 24pt);
	
	\draw[testfcn] (left) to  (root);
	
	\draw[kernel1] (left3) to (left2);
	\draw[kernel] (left2) [bend left=60] to (root);
	\draw[kernel2] (left1) to (left);
	\draw[] (a.north) node[dot] {} to (left2);
	\draw[] (a.south) node[dot] {} to (left1);
	\draw[] (b.north) node[dot] {} to (left3);
	\draw[] (b.south) node[dot] {} to (left);
	
	\end{tikzpicture}
	\\&
	\;-\;
	\begin{tikzpicture}[scale=0.35,baseline=0.5cm]
	\node at (-2,-1)  [root] (root) {};
	\node at (-2,1)  [dot] (left) {};
	\node at (-2,3)  [dot] (left1) {};
	\node at (-2,5)  [dot] (left2) {};
	\node at (-2,7)  [dot] (left3) {};
	\node[cumu2n] (a) at (-4,5) {};
	\node[cumu2n] (b) at (0,3) {};
	\draw[cumu2] (a) ellipse (12pt and 24pt);
	\draw[cumu2] (b) ellipse (12pt and 24pt);
	
	\draw[testfcn] (left) to  (root);
	
	\draw[kernel] (left1) [bend right=60] to (root);
	\draw[kernel1] (left2) to (left1);
	\draw[kernel1] (left3) to (left2);
	\draw[] (a.north) node[dot] {} to (left3);
	\draw[] (a.south) node[dot] {} to (left1);
	\draw[] (b.north) node[dot] {} to (left2);
	\draw[] (b.south) node[dot] {} to (left);
	\end{tikzpicture}
	\;-\;\begin{tikzpicture}[scale=0.35,baseline=0.5cm]
	\node at (-2,-1)  [root] (root) {};
	\node at (-2,1)  [dot] (left) {};
	\node at (-2,3)  [dot] (left1) {};
	\node at (-2,5)  [dot] (left2) {};
	\node at (-2,7)  [dot] (left3) {};
	\node[cumu2n] (a) at (-4,5) {};
	\node[cumu2n] (b) at (0,3) {};
	\draw[cumu2] (a) ellipse (12pt and 24pt);
	\draw[cumu2] (b) ellipse (12pt and 24pt);
	
	\draw[testfcnx] (left) to  (root);
	
	\draw[kprime] (left1) [bend right=60] to (root);
	\draw[kernel1] (left2) to (left1);
	\draw[kernel1] (left3) to (left2);
	\draw[] (a.north) node[dot] {} to (left3);
	\draw[] (a.south) node[dot] {} to (left1);
	\draw[] (b.north) node[dot] {} to (left2);
	\draw[] (b.south) node[dot] {} to (left);
	\end{tikzpicture}
	\;-\:
	\begin{tikzpicture}[scale=0.35,baseline=0.5cm]
	\node at (-2,-1)  [root] (root) {};
	\node at (-2,1)  [dot] (left) {};
	\node at (-2,3)  [dot] (left1) {};
	\node at (-2,5)  [dot] (left2) {};
	\node at (-2,7)  [dot] (left3) {};
	\node[cumu2n] (a) at (-4,5) {};
	\node[cumu2n] (b) at (0,3) {};
	\draw[cumu2] (a) ellipse (12pt and 24pt);
	\draw[cumu2] (b) ellipse (12pt and 24pt);
	
	\draw[testfcn] (left) to  (root);
	
	\draw[kernel] (left1)  to (left);
	\draw[kernel] (left2) [bend right=60] to (root);
	\draw[kernel1] (left3) to (left2);
	\draw[] (a.north) node[dot] {} to (left3);
	\draw[] (a.south) node[dot] {} to (left1);
	\draw[] (b.north) node[dot] {} to (left2);
	\draw[] (b.south) node[dot] {} to (left);
	\end{tikzpicture}\;
	-\:
	\begin{tikzpicture}[scale=0.35,baseline=0.5cm]
	\node at (-2,-1)  [root] (root) {};
	\node at (-2,1)  [dot] (left) {};
	\node at (-2,3)  [dot] (left1) {};
	\node at (-2,5)  [dot] (left2) {};
	\node at (-2,7)  [dot] (left3) {};
	\node[cumu2n] (a) at (-4,5) {};
	\node[cumu2n] (b) at (0,3) {};
	\draw[cumu2] (a) ellipse (12pt and 24pt);
	\draw[cumu2] (b) ellipse (12pt and 24pt);
	
	\draw[testfcn] (left) to  (root);
	
	\draw[kernel] (left1)  to (left);
	\draw[kernel] (left2) to (left1);
	\draw[kernel] (left3) [bend left=80]  to (root);
	\draw[] (a.north) node[dot] {} to (left3);
	\draw[] (a.south) node[dot] {} to (left1);
	\draw[] (b.north) node[dot] {} to (left2);
	\draw[] (b.south) node[dot] {} to (left);
	\end{tikzpicture}
	\\&
	\;-\;
	\begin{tikzpicture}[scale=0.35,baseline=0.5cm]
	\node at (-2,-1)  [root] (root) {};
	\node at (-2,1)  [dot] (left) {};
	\node at (-2,3)  [dot] (left1) {};
	\node at (-2,5)  [dot] (left2) {};
	\node at (-2,7)  [dot] (left3) {};
	\node[cumu2n] (a) at (-4,4) {};
	\draw[cumu2] (a) ellipse (12pt and 24pt);
	
	\draw[testfcn] (left) to  (root);
	
	\draw[kernel] (left1) [bend left =60] to (root);
	\draw[kernelBig] (left2) to (left1);
	\draw[kernel] (left3) to (left2);
	\draw[] (a.north) node[dot] {} to (left3);
	\draw[] (a.south) node[dot] {} to (left);
	\end{tikzpicture}
	\;-\;
	\begin{tikzpicture}[scale=0.35,baseline=0.5cm]
	\node at (-2,-1)  [root] (root) {};
	\node at (-2,1)  [dot] (left) {};
	\node at (-2,3)  [dot] (left1) {};
	\node at (-2,5)  [dot] (left2) {};
	\node at (-2,7)  [dot] (left3) {};
	\node[cumu2n] (a) at (-4,4) {};
	\draw[cumu2] (a) ellipse (12pt and 24pt);
	
	\draw[testfcnx] (left) to  (root);
	
	\draw[kprime] (left1) [bend left=60 ]to (root);
	\draw[kernelBig] (left2) to (left1);
	\draw[kernel] (left3) to (left2);
	\draw[] (a.north) node[dot] {} to (left3);
	\draw[] (a.south) node[dot] {} to (left);
	\end{tikzpicture}\;.
\end{equs}	
Recall that we used the notation
\tikz[baseline=-0.1cm] \draw[testfcnx] (0,0) -- (1,0);
for the test function $(t,x) \mapsto x_i\phi^\lambda(t,x)$, where $i$ ranges between $1$ and $3$.
Moreover, if we set $\tilde \phi(t,x) = x_i \phi(t,x)$, then $\tilde \phi$ is again an 
admissible test function and one has $x_i\phi^\lambda(t,x) = \lambda \tilde \phi^\lambda(t,x)$.
As a consequence, when applying Theorem~\ref{theo:ultimate1} 
to a graph with test function $\tilde \phi$,
one gains an additional power of $\lambda$. This however is exactly compensated by the fact 
that in this case one instance of the kernel $K^N$ is replaced by $\partial_i K^N$, thus lowering
the total degree of the graph by one.  
Since cumulants of third and fourth order do not vanish in our situation, new graphs that show up are
\begin{equs}{}
&\begin{tikzpicture}[scale=0.35,baseline=0.8cm]
\node at (-2,-1)  [root] (root) {};
\node at (-2,1)  [dot] (left) {};
\node at (-2,3)  [dot] (left1) {};
\node at (-2,5)  [dot] (left2) {};
\node at (-2,7)  [dot] (left3) {};
\node at (0,1) [] (variable1) {};
\node[] (a) at (0,5) {};
\node[cumu3,rotate =90] (cumu) at (a) {};

\draw[testfcn] (left) to  (root);

\draw[kernel2] (left1) to (left);
\draw[kernel1] (left2) to (left1);
\draw[kernel1] (left3) to (left2);
\draw[] (a.north) node[dot] {} to (left3);
\draw[] (a.west) node[dot] {} to (left2);
\draw[] (a.south) node[dot] {} to (left1);
\draw[rho] (variable1) to (left); 
\end{tikzpicture}
\;+\;
\begin{tikzpicture}[scale=0.35,baseline=0.8cm]
\node at (-2,-1)  [root] (root) {};
\node at (-2,1)  [dot] (left) {};
\node at (-2,3)  [dot] (left1) {};
\node at (-2,5)  [dot] (left2) {};
\node at (-2,7)  [dot] (left3) {};
\node[] (a) at (0,4) {};
\node[cumu3, rotate=90] (cumu) at (a) {};
\node at (-4,3) [] (variable2) {};

\draw[testfcn] (left) to  (root);

\draw[kernel2] (left1) to (left);
\draw[kernel1] (left2) to (left1);
\draw[kernel1] (left3) to (left2);
\draw[rho] (variable2) to (left1); 
\draw[] (a.north) node[dot] {} to (left3);
\draw[] (a.west) node[dot] {} to (left2);
\draw[] (a.south) node[dot] {} to (left);
\end{tikzpicture}
\;+\;
\begin{tikzpicture}[scale=0.35,baseline=0.8cm]
\node at (-2,-1)  [root] (root) {};
\node at (-2,1)  [dot] (left) {};
\node at (-2,3)  [dot] (left1) {};
\node at (-2,5)  [dot] (left2) {};
\node at (-2,7)  [dot] (left3) {};
\node[] (a)  at (0,4) {};
\node[cumu3, rotate=90] (cumu) at (a) {};
\node at (-4,5)  [] (variable3) {};

\draw[testfcn] (left) to  (root);

\draw[kernel2] (left1) to (left);
\draw[kernel1] (left2) to (left1);
\draw[kernel1] (left3) to (left2);
\draw[rho] (variable3) to (left2); 
\draw[] (a.north) node[dot] {} to (left3);
\draw[] (a.west) node[dot] {} to (left1);
\draw[] (a.south) node[dot] {} to (left);
\end{tikzpicture}
\;+\;
\left(\begin{tikzpicture}[scale=0.35,baseline=0.8cm]
\node at (-2,-1)  [root] (root) {};
\node at (-2,1)  [dot] (left) {};
\node at (-2,3)  [dot] (left1) {};
\node at (-2,5)  [dot] (left2) {};
\node at (-2,7)  [dot] (left3) {};
\node[] (a) at (0,3) {};
\node[cumu3, rotate=90] (cumu) at (a) {};
\node at (0,7) [] (variable4) {};

\draw[testfcn] (left) to  (root);

\draw[kernel2] (left1) to (left);
\draw[kernel1] (left2) to (left1);
\draw[kernel1] (left3) to (left2);
\draw[rho] (variable4) to (left3);
\draw[] (a.north) node[dot] {} to (left2);
\draw[] (a.west) node[dot] {} to (left1);
\draw[] (a.south) node[dot] {} to (left);
\end{tikzpicture}
\;-\;
c_N^{(1)}\times
\begin{tikzpicture}[scale=0.35,baseline=0.8cm]
\node at (-2,-1)  [root] (root) {};
\node at (-2,1)  [dot] (left) {};
\node at (-2,7)  [dot] (left3) {};
\node at (0,7) [] (variable4) {};

\draw[testfcn] (left) to  (root);

\draw[kernel1] (left3) to (left);
\draw[rho] (variable4) to (left3); 
\end{tikzpicture}\right)\;\\ &+\; 
\left(
\begin{tikzpicture}[scale=0.35,baseline=0.8cm]
\node at (-2,-1)  [root] (root) {};
\node at (-2,1)  [dot] (left) {};
\node at (-2,3)  [dot] (left1) {};
\node at (-2,5)  [dot] (left2) {};
\node at (-2,7)  [dot] (left3) {};
\node[] (a) at (0,4) {};
\node[cumu4,rotate=45] (cumu) at (a) {};

\draw[testfcn] (left) to  (root);

\draw[kernel2] (left1) to (left);
\draw[kernel1] (left2) to (left1);
\draw[kernel1] (left3) to (left2);
\draw[] (a.north) node[dot] {} to (left3);
\draw[] (a.west) node[dot] {} to (left2);
\draw[] (a.east) node[dot] {} to (left1);
\draw[] (a.south) node[dot] {} to (left);
\end{tikzpicture}
\;-\;
C_N^{(2,1)}\right)
=\I+\II+\III+\IV+\V.\label{eq:Xi4dec}
\end{equs}
We now use the fact that $K^N$ kills constants, to rewrite $\I$ as
\begin{equation}
\label{eq:Xi4Irewrite}
\I=\;-\;
\begin{tikzpicture}[scale=0.35,baseline=0.5cm]
\node at (-2,-1)  [root] (root) {};
\node at (-2,1)  [dot] (left) {};
\node at (-2,3)  [dot] (left1) {};
\node at (-2,5)  [dot] (left2) {};
\node at (-2,7)  [dot] (left3) {};
\node at (-4,1) [] (variable1) {};
\node[] (a) at (-4,5) {};
\node[cumu3, rotate=90] (cumu) at (a) {};

\draw[testfcn] (left) to  (root);

\draw[kernel2] (left1) to (left);
\draw[kernel] (left2) to (left1);
\draw[kernel] (left3) [bend left=60] to (root);
\draw[] (a.north) node[dot] {} to (left3);
\draw[] (a.east) node[dot] {} to (left2);
\draw[] (a.south) node[dot] {} to (left1);
\draw[rho] (variable1) to (left); 
\end{tikzpicture}
\;-\;
\begin{tikzpicture}[scale=0.35,baseline=0.5cm]
\node at (-2,-1)  [root] (root) {};
\node at (-2,1)  [dot] (left) {};
\node at (-2,3)  [dot] (left1) {};
\node at (-2,5)  [dot] (left2) {};
\node at (-2,7)  [dot] (left3) {};
\node at (-4,1) [] (variable1) {};
\node[] (a) at (-4,5) {};
\node[cumu3, rotate=90] (cumu) at (a) {};

\draw[testfcn] (left) to  (root);

\draw[kernel2] (left1) to (left);
\draw[kernel] (left2) [bend left=60] to (root);
\draw[kernel1] (left3)  to (left2);
\draw[] (a.north) node[dot] {} to (left3);
\draw[] (a.east) node[dot] {} to (left2);
\draw[] (a.south) node[dot] {} to (left1);
\draw[rho] (variable1) to (left); 
\end{tikzpicture}\;.
\end{equation}
Using the definition of $c_N^{(1)}$, we further rewrite $\IV$ as,
\begin{equation}
\label{eq:Xi4IVrewrite}
\IV= \begin{tikzpicture}[scale=0.35,baseline=0.5cm]
\node at (-2,-1)  [root] (root) {};
\node at (-2,1)  [dot] (left) {};
\node at (-2,5)  [dot] (left2) {};
\node at (-2,7)  [dot] (left3) {};
\node at (0,7) [] (variable3) {};

\draw[testfcn] (left) to  (root);

\draw[kernelBig] (left2) to node[labl, pos=0.45] {\tiny $2$}  (left);
\draw[kernel1] (left3) to (left2);
\draw[rho] (variable3) to (left3); 
\end{tikzpicture}
\;-\;
\begin{tikzpicture}[scale=0.35,baseline=0.5cm]
\node at (-2,-1)  [root] (root) {};
\node at (-2,1)  [dot]  (left) {};
\node at (-2,3)  [dot]  (left1) {};
\node at (-2,5)  [dot] (left2) {};
\node at (-2,7)  [dot] (left3) {};
\node at (0,7) [] (variable3) {};
\node[] (a) at (-4,3) {};
\node[cumu3, rotate=90] (cumu) at (a) {};

\draw[testfcn] (left) to  (root);
\draw[kernel]  (left1) to (left);
\draw[kernel]  (left2) [bend left=60] to (root);
\draw[kernel1] (left3) to (left2);
\draw[] (a.north) node[dot] {} to (left2);
\draw[] (a.east) node[dot] {} to (left1);
\draw[] (a.south) node[dot] {} to (left);
\draw[rho] (variable3) to (left3);
\end{tikzpicture}
\;-\;
\begin{tikzpicture}[scale=0.35,baseline=0.5cm]
\node at (-2,-1)  [root] (root) {};
\node at (-2,1)  [dot]  (left) {};
\node at (-2,3)  [dot]  (left1) {};
\node at (-2,5)  [dot] (left2) {};
\node at (-2,7)  [dot] (left3) {};
\node at (0,7) [] (variable3) {};
\node[] (a) at (-4,3) {};
\node[cumu3, rotate=90] (cumu) at (a) {};

\draw[testfcn] (left) to  (root);
\draw[kernel]  (left1) [bend left=60] to (root);
\draw[kernel1]  (left2)  to (left1);
\draw[kernel1] (left3) to (left2);
\draw[] (a.north) node[dot] {} to (left2);
\draw[] (a.east) node[dot] {} to (left1);
\draw[] (a.south) node[dot] {} to (left);
\draw[rho] (variable3) to (left3);
\end{tikzpicture}
\;-\;
\begin{tikzpicture}[scale=0.35,baseline=0.5cm]
\node at (-2,-1)  [root] (root) {};
\node at (-2,1)  [dot]  (left) {};
\node at (-2,3)  [dot]  (left1) {};
\node at (-2,5)  [dot] (left2) {};
\node at (-2,7)  [dot] (left3) {};
\node at (0,7) [] (variable3) {};
\node[] (a) at (-4,3) {};
\node[cumu3, rotate=90] (cumu) at (a) {};

\draw[testfcnx] (left) to  (root);
\draw[kprime]  (left1) [bend left=60] to (root);
\draw[kernel1]  (left2)  to (left1);
\draw[kernel1] (left3) to (left2);
\draw[] (a.north) node[dot] {} to (left2);
\draw[] (a.east) node[dot] {} to (left1);
\draw[] (a.south) node[dot] {} to (left);
\draw[rho] (variable3) to (left3);
\end{tikzpicture}.
\end{equation}
Finally, using the definition of $c_N^{(2,1)}$, we write
\begin{equation}
\label{eq:Xi4Vrewrite} 
\V= \;-\;
\begin{tikzpicture}[scale=0.35,baseline=0.5cm]
\node at (-2,-1)  [root] (root) {};
\node at (-2,1)  [dot] (left) {};
\node at (-2,3)  [dot] (left1) {};
\node at (-2,5)  [dot] (left2) {};
\node at (-2,7)  [dot] (left3) {};
\node[]  (a) at (-4,4) {};
\node[cumu4, rotate=45] (cumu) at (a) {};

\draw[testfcn] (left) to  (root);

\draw[kernel] (left1) to (left);
\draw[kernel] (left2) to (left1);
\draw[kernel] (left3) [bend left=60] to (root);
\draw[] (a.north) node[dot] {} to (left3);
\draw[] (a.east) node[dot] {} to (left2);
\draw[] (a.west) node[dot] {} to (left1);
\draw[] (a.south) node[dot] {} to (left);
\end{tikzpicture}
\;-\;
\begin{tikzpicture}[scale=0.35,baseline=0.5cm]
\node at (-2,-1)  [root] (root) {};
\node at (-2,1)  [dot] (left) {};
\node at (-2,3)  [dot] (left1) {};
\node at (-2,5)  [dot] (left2) {};
\node at (-2,7)  [dot] (left3) {};
\node[]  (a) at (-4,4) {};
\node[cumu4, rotate=45] (cumu) at (a) {};

\draw[testfcn] (left) to  (root);

\draw[kernel] (left1) to (left);
\draw[kernel] (left2) [bend left=60] to (root);
\draw[kernel1] (left3) to (left2);
\draw[] (a.north) node[dot] {} to (left3);
\draw[] (a.east) node[dot] {} to (left2);
\draw[] (a.west) node[dot] {} to (left1);
\draw[] (a.south) node[dot] {} to (left);
\end{tikzpicture}
\;-\;
\begin{tikzpicture}[scale=0.35,baseline=0.5cm]
\node at (-2,-1)  [root] (root) {};
\node at (-2,1)  [dot] (left) {};
\node at (-2,3)  [dot] (left1) {};
\node at (-2,5)  [dot] (left2) {};
\node at (-2,7)  [dot] (left3) {};
\node[]  (a) at (-4,4) {};
\node[cumu4, rotate=45] (cumu) at (a) {};

\draw[testfcn] (left) to  (root);

\draw[kernel] (left1) [bend left=60] to (root);
\draw[kernel1] (left2) to (left1);
\draw[kernel1] (left3) to (left2);
\draw[] (a.north) node[dot] {} to (left3);
\draw[] (a.east) node[dot] {} to (left2);
\draw[] (a.west) node[dot] {} to (left1);
\draw[] (a.south) node[dot] {} to (left);
\end{tikzpicture}
\;-\;
\begin{tikzpicture}[scale=0.35,baseline=0.5cm]
\node at (-2,-1)  [root] (root) {};
\node at (-2,1)  [dot] (left) {};
\node at (-2,3)  [dot] (left1) {};
\node at (-2,5)  [dot] (left2) {};
\node at (-2,7)  [dot] (left3) {};
\node[]  (a) at (-4,4) {};
\node[cumu4, rotate=45] (cumu) at (a) {};

\draw[testfcnx] (left) to  (root);

\draw[kprime] (left1) [bend left=60] to (root);
\draw[kernel1] (left2) to (left1);
\draw[kernel1] (left3) to (left2);
\draw[] (a.north) node[dot] {} to (left3);
\draw[] (a.east) node[dot] {} to (left2);
\draw[] (a.west) node[dot] {} to (left1);
\draw[] (a.south) node[dot] {} to (left);
\end{tikzpicture}
\end{equation}
We see that also all graphs above fall into the framework of Remark~\ref{rem:renorm}. Hence, we can conclude.
\begin{remark}\label{rem:correction}
	In the next section we will derive conditions on the graph appearing in the current section under which we can apply Theorem~\ref{theo:ultimate1} to finally deduce the desired bounds. It will however turn out that not all graphs satisfy these conditions, namely there are some that fail~\eqref{eq:5thcondition} below. Fortunately it is possible to decompose them in such a way that they satisfy all conditions needed. In what follows we list these graphs and we show how to decompose them so that Proposition~\ref{prop:pcondition} applies to them. For the sake of transparency we also draw the node in red that correspond to the choice of $\bar\CCV$ below, that makes~\eqref{eq:5thcondition} fail.
	The first graph that fails~\eqref{eq:5thcondition} is the last graph in~\eqref{eq:Xi3dec2} which we may rewrite as
	\begin{equation}\label{eq:1stbadgraph}
	\;
	\begin{tikzpicture}[scale=0.35, baseline=0.5cm]
	\node at (-2,-1)  [root] (root) {};
	\node at (-2,1)  [dot] (left) {};
	\node at (-2,3)  [dot, red] (left1) {};
	\node at (-2,5)  [dot] (left2) {};
	\node[]		(cumu)  	at (-4,3) {};
	\node[cumu3,rotate=90]	(top-cumu) 	at (cumu) {};
	
	\draw[testfcn] (left) to  (root);
	
	\draw[kernel] (left1) to [bend left=60] (root);
	\draw[kernel1] (left2) to (left1);
	\draw[] (cumu.north) node[dot] {} to (left2); 
	\draw[] (cumu.west) node[dot] {} to (left1);
	\draw[] (cumu.south) node[dot] {} to (left); 
	\end{tikzpicture}\;=
	\;
	\begin{tikzpicture}[scale=0.35, baseline=0.5cm]
	\node at (-2,-1)  [root] (root) {};
	\node at (-2,1)  [dot] (left) {};
	\node at (-2,3)  [dot] (left1) {};
	\node at (-2,5)  [dot] (left2) {};
	\node[]		(cumu)  	at (-4,3) {};
	\node[cumu3,rotate=90]	(top-cumu) 	at (cumu) {};
	
	\draw[testfcn] (left) to  (root);
	
	\draw[kernel] (left1) to [bend left=60] (root);
	\draw[kernel] (left2) to (left1);
	\draw[] (cumu.north) node[dot] {} to (left2); 
	\draw[] (cumu.west) node[dot] {} to (left1);
	\draw[] (cumu.south) node[dot] {} to (left); 
	\end{tikzpicture}\;-
	\;
	\begin{tikzpicture}[scale=0.35, baseline=0.5cm]
	\node at (-2,-1)  [root] (root) {};
	\node at (-2,1)  [dot] (left) {};
	\node at (-2,3)  [dot] (left1) {};
	\node at (-2,5)  [dot] (left2) {};
	\node[]		(cumu)  	at (-4,3) {};
	\node[cumu3,rotate=90]	(top-cumu) 	at (cumu) {};
	
	\draw[testfcn] (left) to  (root);
	
	\draw[kernel] (left1) to [bend left=60] (root);
	\draw[kernel] (left2) to [bend left=60] (root);
	\draw[] (cumu.north) node[dot] {} to (left2); 
	\draw[] (cumu.west) node[dot] {} to (left1);
	\draw[] (cumu.south) node[dot] {} to (left); 
	\end{tikzpicture}\;.
	\end{equation}
	The second graph that does not meet the conditions of Proposition~\ref{prop:pcondition} is the second graph in~\eqref{eq:Xi4Irewrite}, which we may write as
	\begin{equation}\label{eq:2ndbadgraph}
	\;
	\begin{tikzpicture}[scale=0.35,baseline=0.5cm]
	\node at (-2,-1)  [root] (root) {};
	\node at (-2,1)  [dot] (left) {};
	\node at (-2,3)  [dot] (left1) {};
	\node at (-2,5)  [dot,red] (left2) {};
	\node at (-2,7)  [dot] (left3) {};
	\node at (-4,1) [] (variable1) {};
	\node[] (a) at (-4,5) {};
	\node[cumu3, rotate=90] (cumu) at (a) {};

	\draw[testfcn] (left) to  (root);
	
	\draw[kernel2] (left1) to (left);
	\draw[kernel] (left2) [bend left=60] to (root);
	\draw[kernel1] (left3)  to (left2);
	\draw[] (a.north) node[dot] {} to (left3);
	\draw[] (a.east) node[dot] {} to (left2);
	\draw[] (a.south) node[dot] {} to (left1);
	\draw[rho] (variable1) to (left); 
	\end{tikzpicture}\;=
	\;
	\begin{tikzpicture}[scale=0.35,baseline=0.5cm]
	\node at (-2,-1)  [root] (root) {};
	\node at (-2,1)  [dot] (left) {};
	\node at (-2,3)  [dot] (left1) {};
	\node at (-2,5)  [dot] (left2) {};
	\node at (-2,7)  [dot] (left3) {};
	\node at (-4,1) [] (variable1) {};
	\node[] (a) at (-4,5) {};
	\node[cumu3, rotate=90] (cumu) at (a) {};

	\draw[testfcn] (left) to  (root);
	
	\draw[kernel2] (left1) to (left);
	\draw[kernel] (left2) [bend left=60] to (root);
	\draw[kernel] (left3)  to (left2);
	\draw[] (a.north) node[dot] {} to (left3);
	\draw[] (a.east) node[dot] {} to (left2);
	\draw[] (a.south) node[dot] {} to (left1);
	\draw[rho] (variable1) to (left); 
	\end{tikzpicture}\;-\;
	\begin{tikzpicture}[scale=0.35,baseline=0.5cm]
	\node at (-2,-1)  [root] (root) {};
	\node at (-2,1)  [dot] (left) {};
	\node at (-2,3)  [dot] (left1) {};
	\node at (-2,5)  [dot] (left2) {};
	\node at (-2,7)  [dot] (left3) {};
	\node at (-4,1) [] (variable1) {};
	\node[] (a) at (-4,5) {};
	\node[cumu3, rotate=90] (cumu) at (a) {};

	\draw[testfcn] (left) to  (root);
	
	\draw[kernel2] (left1) to (left);
	\draw[kernel] (left2) [bend left=60] to (root);
	\draw[kernel] (left3) [bend left=60] to (root);
	\draw[] (a.north) node[dot] {} to (left3);
	\draw[] (a.east) node[dot] {} to (left2);
	\draw[] (a.south) node[dot] {} to (left1);
	\draw[rho] (variable1) to (left); 
	\end{tikzpicture}\;.
	\end{equation}
	Moreover, we need to rewrite the third graph in~\eqref{eq:Xi4IVrewrite} as follows
	\begin{equation}\label{eq:3rdbadgraph}
	\;
	\begin{tikzpicture}[scale=0.35,baseline=0.5cm]
	\node at (-2,-1)  [root] (root) {};
	\node at (-2,1)  [dot]  (left) {};
	\node at (-2,3)  [dot, red]  (left1) {};
	\node at (-2,5)  [dot] (left2) {};
	\node at (-2,7)  [dot] (left3) {};
	\node at (0,7) [] (variable3) {};
	\node[] (a) at (-4,3) {};
	\node[cumu3, rotate=90] (cumu) at (a) {};

	\draw[testfcn] (left) to  (root);
	\draw[kernel]  (left1) [bend left=60] to (root);
	\draw[kernel1]  (left2)  to (left1);
	\draw[kernel1] (left3) to (left2);
	\draw[] (a.north) node[dot] {} to (left2);
	\draw[] (a.east) node[dot] {} to (left1);
	\draw[] (a.south) node[dot] {} to (left);
	\draw[rho] (variable3) to (left3);
	\end{tikzpicture}
	\;=
	\;
	\begin{tikzpicture}[scale=0.35,baseline=0.5cm]
	\node at (-2,-1)  [root] (root) {};
	\node at (-2,1)  [dot]  (left) {};
	\node at (-2,3)  [dot]  (left1) {};
	\node at (-2,5)  [dot] (left2) {};
	\node at (-2,7)  [dot] (left3) {};
	\node at (0,7) [] (variable3) {};
	\node[] (a) at (-4,3) {};
	\node[cumu3, rotate=90] (cumu) at (a) {};

	\draw[testfcn] (left) to  (root);
	\draw[kernel]  (left1) [bend left=60] to (root);
	\draw[kernel]  (left2)  to (left1);
	\draw[kernel1] (left3) to (left2);
	\draw[] (a.north) node[dot] {} to (left2);
	\draw[] (a.east) node[dot] {} to (left1);
	\draw[] (a.south) node[dot] {} to (left);
	\draw[rho] (variable3) to (left3);
	\end{tikzpicture}
	\;-
	\;
	\begin{tikzpicture}[scale=0.35,baseline=0.5cm]
	\node at (-2,-1)  [root] (root) {};
	\node at (-2,1)  [dot]  (left) {};
	\node at (-2,3)  [dot]  (left1) {};
	\node at (-2,5)  [dot] (left2) {};
	\node at (-2,7)  [dot] (left3) {};
	\node at (0,7) [] (variable3) {};
	\node[] (a) at (-4,3) {};
	\node[cumu3, rotate=90] (cumu) at (a) {};

	\draw[testfcn] (left) to  (root);
	\draw[kernel]  (left1) [bend left=60] to (root);
	\draw[kernel]  (left2) [bend left=60]  to (root);
	\draw[kernel1] (left3) to (left2);
	\draw[] (a.north) node[dot] {} to (left2);
	\draw[] (a.east) node[dot] {} to (left1);
	\draw[] (a.south) node[dot] {} to (left);
	\draw[rho] (variable3) to (left3);
	\end{tikzpicture}
	\;,
	\end{equation}
	finally the last three graphs in the representation of $\V$ above need to be rewritten as 
	\begin{equation}\label{eq:4thbadgraph}
	\begin{tikzpicture}[scale=0.35,baseline=0.5cm]
	\node at (-2,-1)  [root] (root) {};
	\node at (-2,1)  [dot] (left) {};
	\node at (-2,3)  [dot] (left1) {};
	\node at (-2,5)  [dot, red] (left2) {};
	\node at (-2,7)  [dot] (left3) {};
	\node[]  (a) at (-4,4) {};
	\node[cumu4, rotate=45] (cumu) at (a) {};

	\draw[testfcn] (left) to  (root);
	
	\draw[kernel] (left1) to (left);
	\draw[kernel] (left2) [bend left=60] to (root);
	\draw[kernel1] (left3) to (left2);
	\draw[] (a.north) node[dot] {} to (left3);
	\draw[] (a.east) node[dot] {} to (left2);
	\draw[] (a.west) node[dot] {} to (left1);
	\draw[] (a.south) node[dot] {} to (left);
	\end{tikzpicture}
	\; =
	\begin{tikzpicture}[scale=0.35,baseline=0.5cm]
	\node at (-2,-1)  [root] (root) {};
	\node at (-2,1)  [dot] (left) {};
	\node at (-2,3)  [dot] (left1) {};
	\node at (-2,5)  [dot] (left2) {};
	\node at (-2,7)  [dot] (left3) {};
	\node[]  (a) at (-4,4) {};
	\node[cumu4, rotate=45] (cumu) at (a) {};

	\draw[testfcn] (left) to  (root);
	
	\draw[kernel] (left1) to (left);
	\draw[kernel] (left2) [bend left=60] to (root);
	\draw[kernel] (left3) to (left2);
	\draw[] (a.north) node[dot] {} to (left3);
	\draw[] (a.east) node[dot] {} to (left2);
	\draw[] (a.west) node[dot] {} to (left1);
	\draw[] (a.south) node[dot] {} to (left);
	\end{tikzpicture}
	\;-
	\begin{tikzpicture}[scale=0.35,baseline=0.5cm]
	\node at (-2,-1)  [root] (root) {};
	\node at (-2,1)  [dot] (left) {};
	\node at (-2,3)  [dot] (left1) {};
	\node at (-2,5)  [dot] (left2) {};
	\node at (-2,7)  [dot] (left3) {};
	\node[]  (a) at (-4,4) {};
	\node[cumu4, rotate=45] (cumu) at (a) {};

	\draw[testfcn] (left) to  (root);
	
	\draw[kernel] (left1) to (left);
	\draw[kernel] (left2) [bend left=60] to (root);
	\draw[kernel] (left3) to [bend left =60] (root);
	\draw[] (a.north) node[dot] {} to (left3);
	\draw[] (a.east) node[dot] {} to (left2);
	\draw[] (a.west) node[dot] {} to (left1);
	\draw[] (a.south) node[dot] {} to (left);
	\end{tikzpicture},
	\;
	\end{equation}
	\begin{equation}\label{eq:5thbadgraph}
	\begin{tikzpicture}[scale=0.35,baseline=0.5cm]
	\node at (-2,-1)  [root] (root) {};
	\node at (-2,1)  [dot] (left) {};
	\node at (-2,3)  [dot, red] (left1) {};
	\node at (-2,5)  [dot, blue!50!white] (left2) {};
	\node at (-2,7)  [dot] (left3) {};
	\node[]  (a) at (-4,4) {};
	\node[cumu4, rotate=45] (cumu) at (a) {};

	\draw[testfcn] (left) to  (root);
	
	\draw[kernel] (left1) [bend left=60] to (root);
	\draw[kernel1] (left2) to (left1);
	\draw[kernel1] (left3) to (left2);
	\draw[] (a.north) node[dot] {} to (left3);
	\draw[] (a.east) node[dot] {} to (left2);
	\draw[] (a.west) node[dot] {} to (left1);
	\draw[] (a.south) node[dot] {} to (left);
	\end{tikzpicture}
	\; =
	\begin{tikzpicture}[scale=0.35,baseline=0.5cm]
	\node at (-2,-1)  [root] (root) {};
	\node at (-2,1)  [dot] (left) {};
	\node at (-2,3)  [dot] (left1) {};
	\node at (-2,5)  [dot] (left2) {};
	\node at (-2,7)  [dot] (left3) {};
	\node[]  (a) at (-4,4) {};
	\node[cumu4, rotate=45] (cumu) at (a) {};

	\draw[testfcn] (left) to  (root);
	
	\draw[kernel] (left1) [bend left=60] to (root);
	\draw[kernel] (left2) to (left1);
	\draw[kernel] (left3) to (left2);
	\draw[] (a.north) node[dot] {} to (left3);
	\draw[] (a.east) node[dot] {} to (left2);
	\draw[] (a.west) node[dot] {} to (left1);
	\draw[] (a.south) node[dot] {} to (left);
	\end{tikzpicture}
	\;-
	\begin{tikzpicture}[scale=0.35,baseline=0.5cm]
	\node at (-2,-1)  [root] (root) {};
	\node at (-2,1)  [dot] (left) {};
	\node at (-2,3)  [dot] (left1) {};
	\node at (-2,5)  [dot] (left2) {};
	\node at (-2,7)  [dot] (left3) {};
	\node[]  (a) at (-4,4) {};
	\node[cumu4, rotate=45] (cumu) at (a) {};

	\draw[testfcn] (left) to  (root);
	
	\draw[kernel] (left1) [bend left=60] to (root);
	\draw[kernel] (left2) to [bend left=60] (root);
	\draw[kernel] (left3) to (left2);
	\draw[] (a.north) node[dot] {} to (left3);
	\draw[] (a.east) node[dot] {} to (left2);
	\draw[] (a.west) node[dot] {} to (left1);
	\draw[] (a.south) node[dot] {} to (left);
	\end{tikzpicture}
	\;-
	\begin{tikzpicture}[scale=0.35,baseline=0.5cm]
	\node at (-2,-1)  [root] (root) {};
	\node at (-2,1)  [dot] (left) {};
	\node at (-2,3)  [dot] (left1) {};
	\node at (-2,5)  [dot] (left2) {};
	\node at (-2,7)  [dot] (left3) {};
	\node[]  (a) at (-4,4) {};
	\node[cumu4, rotate=45] (cumu) at (a) {};

	\draw[testfcn] (left) to  (root);
	
	\draw[kernel] (left1) [bend left=60] to (root);
	\draw[kernel] (left2) to (left1);
	\draw[kernel] (left3) to [bend left=60] (root);
	\draw[] (a.north) node[dot] {} to (left3);
	\draw[] (a.east) node[dot] {} to (left2);
	\draw[] (a.west) node[dot] {} to (left1);
	\draw[] (a.south) node[dot] {} to (left);
	\end{tikzpicture}
	\;+
	\begin{tikzpicture}[scale=0.35,baseline=0.5cm]
	\node at (-2,-1)  [root] (root) {};
	\node at (-2,1)  [dot] (left) {};
	\node at (-2,3)  [dot] (left1) {};
	\node at (-2,5)  [dot] (left2) {};
	\node at (-2,7)  [dot] (left3) {};
	\node[]  (a) at (-4,4) {};
	\node[cumu4, rotate=45] (cumu) at (a) {};

	\draw[testfcn] (left) to  (root);
	
	\draw[kernel] (left1) [bend left=60] to (root);
	\draw[kernel] (left2) to [bend left=60] (root);
	\draw[kernel] (left3) to [bend left=60] (root);
	\draw[] (a.north) node[dot] {} to (left3);
	\draw[] (a.east) node[dot] {} to (left2);
	\draw[] (a.west) node[dot] {} to (left1);
	\draw[] (a.south) node[dot] {} to (left);
	\end{tikzpicture},
	\;
	\end{equation}
	and 
	\begin{equation}\label{eq:6thbadgraph}
	\begin{tikzpicture}[scale=0.35,baseline=0.5cm]
	\node at (-2,-1)  [root] (root) {};
	\node at (-2,1)  [dot] (left) {};
	\node at (-2,3)  [dot] (left1) {};
	\node at (-2,5)  [dot, red] (left2) {};
	\node at (-2,7)  [dot] (left3) {};
	\node[]  (a) at (-4,4) {};
	\node[cumu4, rotate=45] (cumu) at (a) {};

	\draw[testfcnx] (left) to  (root);
	
	\draw[kprime] (left1) [bend left=60] to (root);
	\draw[kernel1] (left2) to (left1);
	\draw[kernel1] (left3) to (left2);
	\draw[] (a.north) node[dot] {} to (left3);
	\draw[] (a.east) node[dot] {} to (left2);
	\draw[] (a.west) node[dot] {} to (left1);
	\draw[] (a.south) node[dot] {} to (left);
	\end{tikzpicture}
	\; =
	\begin{tikzpicture}[scale=0.35,baseline=0.5cm]
	\node at (-2,-1)  [root] (root) {};
	\node at (-2,1)  [dot] (left) {};
	\node at (-2,3)  [dot] (left1) {};
	\node at (-2,5)  [dot] (left2) {};
	\node at (-2,7)  [dot] (left3) {};
	\node[]  (a) at (-4,4) {};
	\node[cumu4, rotate=45] (cumu) at (a) {};

	\draw[testfcnx] (left) to  (root);
	
	\draw[kprime] (left1) [bend left=60] to (root);
	\draw[kernel] (left2) to (left1);
	\draw[kernel] (left3) to (left2);
	\draw[] (a.north) node[dot] {} to (left3);
	\draw[] (a.east) node[dot] {} to (left2);
	\draw[] (a.west) node[dot] {} to (left1);
	\draw[] (a.south) node[dot] {} to (left);
	\end{tikzpicture}
	\;-
	\begin{tikzpicture}[scale=0.35,baseline=0.5cm]
	\node at (-2,-1)  [root] (root) {};
	\node at (-2,1)  [dot] (left) {};
	\node at (-2,3)  [dot] (left1) {};
	\node at (-2,5)  [dot] (left2) {};
	\node at (-2,7)  [dot] (left3) {};
	\node[]  (a) at (-4,4) {};
	\node[cumu4, rotate=45] (cumu) at (a) {};

	\draw[testfcnx] (left) to  (root);
	
	\draw[kprime] (left1) [bend left=60] to (root);
	\draw[kernel] (left2) to [bend left=60] (root);
	\draw[kernel] (left3) to (left2);
	\draw[] (a.north) node[dot] {} to (left3);
	\draw[] (a.east) node[dot] {} to (left2);
	\draw[] (a.west) node[dot] {} to (left1);
	\draw[] (a.south) node[dot] {} to (left);
	\end{tikzpicture}
	\;-
	\begin{tikzpicture}[scale=0.35,baseline=0.5cm]
	\node at (-2,-1)  [root] (root) {};
	\node at (-2,1)  [dot] (left) {};
	\node at (-2,3)  [dot] (left1) {};
	\node at (-2,5)  [dot] (left2) {};
	\node at (-2,7)  [dot] (left3) {};
	\node[]  (a) at (-4,4) {};
	\node[cumu4, rotate=45] (cumu) at (a) {};

	\draw[testfcnx] (left) to  (root);
	
	\draw[kprime] (left1) [bend left=60] to (root);
	\draw[kernel] (left2) to (left1);
	\draw[kernel] (left3) to [bend left=60] (root);
	\draw[] (a.north) node[dot] {} to (left3);
	\draw[] (a.east) node[dot] {} to (left2);
	\draw[] (a.west) node[dot] {} to (left1);
	\draw[] (a.south) node[dot] {} to (left);
	\end{tikzpicture}
	\;+
	\begin{tikzpicture}[scale=0.35,baseline=0.5cm]
	\node at (-2,-1)  [root] (root) {};
	\node at (-2,1)  [dot] (left) {};
	\node at (-2,3)  [dot] (left1) {};
	\node at (-2,5)  [dot] (left2) {};
	\node at (-2,7)  [dot] (left3) {};
	\node[]  (a) at (-4,4) {};
	\node[cumu4, rotate=45] (cumu) at (a) {};

	\draw[testfcnx] (left) to  (root);
	
	\draw[kprime] (left1) [bend left=60] to (root);
	\draw[kernel] (left2) to [bend left=60] (root);
	\draw[kernel] (left3) to [bend left=60] (root);
	\draw[] (a.north) node[dot] {} to (left3);
	\draw[] (a.east) node[dot] {} to (left2);
	\draw[] (a.west) node[dot] {} to (left1);
	\draw[] (a.south) node[dot] {} to (left);
	\end{tikzpicture}.
	\;
	\end{equation}
	Above, in~\eqref{eq:5thbadgraph}, the blue node is another possible choice of $\bar\CCV$ in~Proposition~\ref{prop:pcondition} that makes~\eqref{eq:5thcondition} fail. Thus, in these cases the problematic choices for $\bar\CCV$ are $\bar\CCV_1=\{\text{red node}\}$, $\bar\CCV_2=\{\text{blue node}\}$ and $\bar\CCV_3=\bar\CCV_1\cup\bar\CCV_2$.
	Fortunately, it will turn out that all graphs above on the respective right hand sides fall into the framework of Proposition~\ref{prop:pcondition}. Therefore, whenever in the sequel we refer to a generic graph of this section we never refer to one of the graphs on the left hand sides above, but  always to one on the right hand side above.
\end{remark}
\subsection{Condition on basic graphs}
In this section we derive a general criterion under which the $p$-th moment of any graph listed in the previous section satisfies Assumption~\ref{ass:mainGraph}, so that Theorem~\ref{theo:ultimate1} applies to it. To that end, fix a graph $\CG$ as in the previous section obtained from one of the symbols $h(\tau)$, and replace each cumulant term by a weighted simple cycle with weight $3/2+\eta$. We denote its set of black nodes to which a noise term is attached (i.e. a dotted line in our graphical notation) by $\CCV_\xi$ (note that $\CCV_\xi$ is nothing but the set $B$ in the notation of Remark~\ref{rem:renorm}). Its complement is denoted by $\CCV_\xi^c$.
The $p$-th moment of it may be described as follows (see \eqref{eq:pthmomentXiIXi} for a concrete example):
\begin{claim}
	\item[1.] Take $p$ copies of the graph under consideration. 
	\item[2.] Fix a partition $\pi$ of $\CCV_\xi^{\otimes p}$ such that each $B\in\pi$ contains at least two elements that are from different copies of $\CG$ (this additional condition is a consequence of Lemma~\ref{lem:diagram});
	\item[3.] For each $B\in\pi$ draw a red polygon with $|B|$ dots inside and connect these $|B|$ dots to the elements of $B$;
	\item[4.] Sum over all expressions obtained in this way.
\end{claim}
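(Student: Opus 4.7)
The plan is to verify this enumerated description by a direct application of the diagram formula (Lemma~\ref{lem:diagram}). The starting point is that every graph $\CG$ produced by the recipe of Remark~\ref{rem:renorm} encodes an integral expression of the form
\begin{equation*}
\CG \;=\; \int F(\fz)\,\Wick{\prod_{v\in\CCV_\xi}\bar\xi^N(z_v)}\,d\fz,
\end{equation*}
where $\fz$ collects all the space-time labels attached to the black nodes of $\CG$ and $F(\fz)$ is a deterministic weight built from the kernels along the arrows, the rescaled test function, possible polynomial decorations $x_i$, and those joint cumulants of $\bar\xi^N$ that are already encoded by the red polygons present inside $\CG$ itself. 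This factorisation is visible in the worked example~\eqref{eq:pthmomentXiIXi} and is exactly how the Wick expansions producing \eqref{eq:xiixibasic}, \eqref{e:decompPiXiTwo2}, \eqref{eq:Xi3dec} and \eqref{eq:Xi4dec} were obtained from the definition of the renormalised model in Section~\ref{S3}.

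First I would take $p$ independent copies of $\CG$, assigning disjoint space-time labels $\fz^{(1)},\ldots,\fz^{(p)}$, and use Fubini to pull the expectation through the $p$-fold integral. All deterministic factors come out of the expectation, so the only truly random quantity left to evaluate is
\begin{equation*}
\E\Big[\prod_{i=1}^p\Wick{\prod_{v\in\CCV_\xi}\bar\xi^N\bigl(z_v^{(i)}\bigr)}\Big].
\end{equation*}
This is precisely the object to which Lemma~\ref{lem:diagram} applies, with $M=\CCV_\xi$ and $P=\{1,\ldots,p\}$, and it outputs the sum $\sum_{\pi\in\cP_M(M\times P)}\prod_{B\in\pi}\E_c\bigl(\{\bar\xi^N(z_v^{(i)}):(v,i)\in B\}\bigr)$. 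The condition $\pi\in\cP_M(M\times P)$ is by Definition~\ref{def:partition} exactly the requirement in item~2 that each block of $\pi$ contain elements from at least two distinct copies of $\CG$.

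To finish I would translate this algebraic expression into the graphical language of items 1--4: by the convention fixed earlier in Section~\ref{Sgeneralities}, a joint cumulant of order $|B|$ of $\bar\xi^N$ at the points $\{z_v^{(i)}:(v,i)\in B\}$ is drawn as a red polygon with $|B|$ internal dots, each joined by a dotted line to the corresponding node of the relevant copy of $\CG$, and summing over all admissible $\pi$ yields item~4. There is no genuine obstacle here; this is a bookkeeping statement whose content is the combination of Lemma~\ref{lem:diagram} with the graphical convention for cumulants. The only point that deserves care, and which I would flag explicitly in writing the proof, is that the red polygons already sitting \emph{inside} each individual copy of $\CG$ are deterministic factors contributing to $F$ and are not disturbed by $\pi$; the partition lives on $\CCV_\xi^{\otimes p}$, not on all black nodes, precisely because only the free noise terms (the dotted lines going out to $\CCV_\xi$) participate in the new cumulant contractions linking different copies of $\CG$.
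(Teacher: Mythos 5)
Your proposal is correct and follows essentially the same route as the paper, which justifies the enumerated procedure by writing each graph as a deterministic kernel integrated against a Wick product over $\CCV_\xi$ (as in~\eqref{eq:pthmomentXiIXi}) and then invoking the diagram formula of Lemma~\ref{lem:diagram} with $M=\CCV_\xi$ and $P=\{1,\ldots,p\}$, the class $\cP_M(M\times P)$ giving exactly the constraint in item~2. Your explicit remark that the cumulants already present inside a single copy of $\CG$ are part of the deterministic weight and are untouched by the partition is the right point to flag and is consistent with the paper's definition of $\CCV_\xi$ as the nodes carrying a dotted noise line.
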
 
The above procedure gives an exact description of the $p$-th moment of any graph appearing in this work. Since all the inequalities we aim at proving are strict inequalities and since the $\eta$ in the weight $3/2+\eta$ of the simple cycles can be chosen arbitrarily small we will at times ignore the additional summand $\eta$.
Before we formulate the next result we refer the reader to Appendix~\ref{A}, where some of the notation used in the proposition below is introduced.
\begin{proposition}\label{prop:pcondition}
Fix a graph $\CG$ with vertex set $\CCV$ resulting from one of the symbols $h(\tau)$ and replace each cumulant term by a simple weighted cycle with weight $3/2+\eta$. Denote its set of black nodes that are attached to a noise term by $\CCV_\xi$. Let $\bar\CCV$ be a subset of  $\CCV_0$, and assume that
	\begin{equation}\label{eq:3rdcondition}
	\begin{aligned}
	&(i)\, \sum_{e\in\hat\CCE_0(\bar \CCV)}\hat a_e +\frac32|\bar \CCV\cap \CCV_\xi| < 5\big(|\bar \CCV|-\frac12\big),\, \text{and}\\
	&(ii)\, \text{if }\,|\bar \CCV|\geq 3,\, \text{then }\,\sum_{e\in\hat\CCE_0(\bar \CCV)}\hat a_e < 5(|\bar \CCV|-1).
	\end{aligned}
	\end{equation}
	Then the $p$-th moment of $\CG$ satisfies the second item in Assumption~\ref{ass:mainGraph}. 
	If for all $\bar\CCV\subseteq \CCV$ with $0\in\bar\CCV$ one has
	\begin{equation}\label{eq:4thcondition}
	\begin{aligned}
	(i)\, &\sum_{e  \in \hat \CCE_0 (\bar \CCV) } \hat a_e 
	+\sum_{e  \in \hat\CCE^{\uparrow}_+ (\bar \CCV)  } (\hat a_e+ r_e -1)  - \sum_{e  \in \hat\CCE^{\downarrow}_+ (\bar \CCV)  }  r_e + \frac32|\bar\CCV\cap\CCV_\xi| < 5\big(|\bar\CCV|-\frac12\big),\\
	(ii)\, &\sum_{e  \in \hat \CCE_0 (\bar \CCV) } \hat a_e 
	+\sum_{e  \in \hat\CCE^{\uparrow}_+ (\bar \CCV)  } (\hat a_e+ r_e -1)  - \sum_{e  \in \hat\CCE^{\downarrow}_+ (\bar \CCV)  }  r_e < 5(|\bar\CCV|-1),
	\end{aligned}
	\end{equation}
	where the first condition is subject to $\bar\CCV\cap\CCV_\xi\neq\emptyset$, and the second is subject to $|\bar\CCV|\geq 2$, then the $p$-th moment of $\CG$ satisfies the third item of Assumption~\ref{ass:mainGraph}.
	Finally, if for all $\bar\CCV\subseteq \CCV\setminus\CCV_\star$,
	\begin{equation}\label{eq:5thcondition}
	\sum_{e\in \hat\CCE(\bar\CCV)\setminus \hat\CCE^{\downarrow}_+(\bar \CCV) }  \hat a_e 
	+\sum_{e\in \hat \CCE^{\uparrow}_+(\bar\CCV)}  r_e
	- \sum_{e \in \hat \CCE^\downarrow_+(\bar \CCV)} (r_e-1) +\frac32|\bar\CCV\cap\CCV_\xi|
	> 5|\bar \CCV|\;,
	\end{equation}
	then the $p$-th moment of $\CG$ satisfied the last item of Assumption~\ref{ass:mainGraph}.
\end{proposition}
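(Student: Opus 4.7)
The plan is to reduce the three conditions for the $p$-th moment of $\CG$ to the hypotheses on the single graph $\CG$ itself. Writing the $p$-th moment as a graph $\CG^{(p)}$, it consists of $p$ disjoint copies of $\CG$ (where in each copy the cumulant cycles of $\CG$ remain) together with additional cumulant cycles, one per block $B \in \pi$, attached to the nodes of $\CCV_\xi^{(p)} \eqdef \bigsqcup_{i=1}^p \CCV_\xi$, each of whose edges carries weight $3/2 + \eta$. Given any $\bar \CCV \subseteq \CCV^{(p)}$, decompose it as $\bar \CCV = \bigsqcup_{i=1}^p \bar \CCV_i$ according to the copies, and write $k_i = |\bar \CCV_i \cap \CCV_\xi|$. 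The edges $e$ of $\CG^{(p)}$ split into (i) edges internal to some copy, and (ii) cumulant-cycle edges linking different copies.

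First I would treat the second condition of Assumption~\ref{ass:mainGraph}. The internal contribution summed over copies is bounded using \eqref{eq:3rdcondition} applied to each $\bar \CCV_i$ with $|\bar \CCV_i| \ge 1$. For the cumulant-cycle edges, the crucial combinatorial remark is that for any cycle $B \in \pi$, the number of edges of $B$ with both endpoints in $\bar \CCV$ is at most $|B \cap \bar \CCV|$, with equality only if $B \subseteq \bar \CCV$. Hence the total weight from cycle edges in $\hat \CCE_0(\bar \CCV)$ is at most $\tfrac{3}{2}|\bar \CCV \cap \CCV_\xi^{(p)}|$, which is exactly absorbed by the extra $\tfrac{3}{2}|\bar \CCV \cap \CCV_\xi|$ term appearing in~\eqref{eq:3rdcondition}~(i). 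Summing over $i$, using that $\sum_i 5(|\bar \CCV_i| - \tfrac{1}{2}) \le 5(|\bar \CCV| - \tfrac{1}{2})$, and handling separately the edge case where all $\bar \CCV_i$ have cardinality at most one (in which case~\eqref{eq:3rdcondition}~(ii) is used instead), yields the desired bound.

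The third condition, involving the sets $\hat \CCE_+^{\uparrow}(\bar \CCV)$ and $\hat \CCE_+^{\downarrow}(\bar \CCV)$, is handled analogously: the $r_e$-contributions are unchanged since they come only from edges internal to a single copy (cumulant-cycle edges have $r_e = 0$), so one sums \eqref{eq:4thcondition} over $i$ and again bounds the cumulant-cycle contribution in $\hat \CCE_0(\bar \CCV)$ by $\tfrac{3}{2}|\bar \CCV \cap \CCV_\xi^{(p)}|$. The case $0 \in \bar \CCV$ is preserved under the decomposition since the root sits in a single copy. For the last (fifth) condition, the argument is dualised: now we seek a \emph{lower} bound on the sum over $\hat \CCE(\bar \CCV) \setminus \hat \CCE_+^{\downarrow}(\bar \CCV)$, and for each cumulant cycle $B$ intersecting $\bar \CCV$ non-trivially but not contained in $\bar \CCV$, we have at least $|B \cap \bar \CCV|$ boundary edges contributing $3/2$ each to $\hat \CCE^{\uparrow}(\bar \CCV)$, which provides the required $\tfrac{3}{2}|\bar \CCV \cap \CCV_\xi|$ term.

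The main obstacle will be the careful book-keeping for cumulant cycles that straddle $\bar \CCV$ and its complement—in particular classifying cycle edges into $\hat \CCE_0$ versus $\hat \CCE_+^{\uparrow}$ versus $\hat \CCE_+^{\downarrow}$, and verifying that strict inequalities in~\eqref{eq:3rdcondition}--\eqref{eq:5thcondition} for a single copy pass through to strict inequalities for $\CG^{(p)}$. The borderline cases—where some $\bar \CCV_i$ is empty, where $\bar \CCV$ contains an entire cycle, or where $|\bar \CCV| = 1$ and the $(ii)$-parts of the hypotheses do not apply—require separate verification, using the fact that an isolated vertex contributes no internal edges but also does not trigger~\eqref{eq:3rdcondition}~(ii). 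Finally, the small $\eta > 0$ slack in the cumulant weight ensures that the strict single-graph inequalities degrade into still-strict inequalities for the product after summing, so no separate argument is needed at the boundary of the inequalities.
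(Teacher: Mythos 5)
Your proposal is correct and follows essentially the same route as the paper's proof: decompose $\bar\CCV$ over the $p$ copies, use the cycle structure of the moment-generating cumulant terms to bound their contribution by $\tfrac32$ per vertex of $\CCV_\xi$ (for the upper bounds) or from below by the same quantity (for the lower bound), and then sum the per-copy hypotheses. The edge cases you flag (in particular when $\bar\CCV$ meets only one copy, where one must fall back on the $(ii)$-parts of the hypotheses) are exactly the ones the paper also has to treat separately.
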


\begin{proof}
	Fix a graph $\CG$ as in the proposition that satisfies Condition~\eqref{eq:3rdcondition} and consider $p$ copies of it. Note that in the case $p=1$ the second condition of Assumption~\ref{ass:mainGraph} follows immediately from the second item in~\eqref{eq:3rdcondition}. Hence, we may assume that $p\geq 2$. Let $\bar\CCV\subseteq\CCV_{0}$ be of cardinality at least three. Here, $\CCV_{0}$ refers to the union of vertices of all $p$ copies. We write $\bar\CCV=\cup_{j=1}^{p}\bar\CCV_j$, where $\bar\CCV_j$ is the set of vertices belonging to the $j$-th copy of $\CG$. 
	Now recall that we replace each cumulant term by a simple weighted cycle. We refer to each kernel in this cycle as a cumulant kernel. Note that there are two types of cumulant kernels present, those that are part of the definition of each copy of $\CG$, i.e., those connecting nodes in $\CCV_\xi^c$, and those that are created when forming the $p$-th moment, i.e., those that connect nodes from possibly different copies of $\CCV_\xi$.
	Let $e=\{v,w\}\in\hat\CCE_0(\bar\CCV)$ be an edge such that $v,w\in\CCV_\xi$. We note that if there is a cumulant kernel associated to that edge it has weight $3/2+$. Moreover, the weight of a cumulant kernel connecting the two nodes $v$ and $w$ is only counted if both $v$ and $w$ belong to $\hat\CCE_0(\bar\CCV)$. Since, moreover cumulant kernels come in cycles we see that each $v\in \bar\CCV_j\cap\CCV_\xi$ contributes at most a weight $3/2+$ through its cumulant kernel. Hence,
	\begin{equation}
	\begin{aligned}
	\sum_{e\in\hat\CCE_0(\bar\CCV)}\hat a_e \leq
	&\sum_{\substack{j=1:\\ \bar\CCV_j\cap\CCV_\xi\neq\emptyset}}^{p} \Big(\sum_{e\in\hat\CCE_0(\bar\CCV_j)}\hat a_e+ \frac 32| \bar\CCV_j\cap\CCV_\xi|\Big) +\sum_{\substack{j=1:\\ \bar\CCV_j\cap\CCV_\xi=\emptyset}}^p \sum_{e\in\hat\CCE_0(\bar\CCV_j)}\hat a_e\\
	&< 5\sum_{\substack{j=1:\\ \bar\CCV_j\cap\CCV_\xi\neq\emptyset}}^{p}(|\bar\CCV_j|-\frac12)
	+ 5\sum_{\substack{j=1:\\ \bar\CCV_j\cap\CCV_\xi=\emptyset}}^p(|\bar\CCV_j|-\frac12)\\
	&< 5(|\bar\CCV|-1),
	\end{aligned}
	\end{equation} 
	where in the last line we used that $p\geq 2$. Therefore, the second condition follows.
	We now turn to the proof of the third condition in Assumption~\ref{ass:mainGraph}. To that end let $\bar\CCV$ be of cardinality at least two such that it contains the origin. As above we can assume that $p\geq 2$, since the case $p=1$ is covered by the second condition above. We write $\bar\CCV\setminus\{0\}= \cup_{j=1}^{p} \bar\CCV_j$, where $\bar\CCV_j$ are the vertices belonging to the $p$-th copy of $\CG$, and neither of them contains the origin. We can then argue in the same way as we did for the third condition. The only difference is that each summand with $\bar\CCV_j\cap\CCV_\xi=\emptyset$ can now be bounded from above by $5(|\CCV_j\cup\{0\}|-1)$, since $|\CCV_j\cup\{0\}|\geq 2$ in case that $\CCV_j\neq\emptyset$.
	We turn to the last part of the proof. To that end we assume that $\CG$ satisfies~\eqref{eq:5thcondition}, and we fix $\bar\CCV\subseteq\CCV\setminus\CCV_\star$. We then note that for each $v\in \CCV_\xi$ there are two cumulant kernels emerging from it, and moreover $v$ can belong to at most one edge $e$ with $e\in\hat\CCE_+^{\downarrow}(\bar\CCV)$.  Therefore, writing $\bar\CCV=\cup_{j=1}^{p}\bar\CCV_j$ as above, we may conclude that
	\begin{equation}
	\begin{aligned}
	&\sum_{e\in\hat\CCE(\bar\CCV)\setminus \hat\CCE^{\downarrow}_+(\bar \CCV) }  \hat a_e 
	+\sum_{e\in \hat \CCE^{\uparrow}_+(\bar\CCV)}  r_e
	- \sum_{e \in \hat \CCE^\downarrow_+(\bar \CCV)} (r_e-1)\\
	&\geq \sum_{j=1}^{p}\Big(\sum_{e\in \hat\CCE(\bar\CCV_j)\setminus \hat\CCE^{\downarrow}_+(\bar \CCV_j) }  \hat a_e 
	+\sum_{e\in \hat \CCE^{\uparrow}_+(\bar\CCV_j)}  r_e
	- \sum_{e \in \hat \CCE^\downarrow_+(\bar \CCV_j)} (r_e-1)+ \frac32|\bar\CCV_j\cap\CCV_\xi|\Big)\\
	&> 5|\bar\CCV|.
	\end{aligned}
	\end{equation}
	This yields the claim.
\end{proof}

\subsection{Verifying the assumptions of Theorem~\ref{theo:ultimate1}}
In this section we show that almost all graphs that can be built from $p$-th moments of any of the graphs in Section~\ref{Sgeneralities} satisfy Assumption~\ref{ass:mainGraph}. 
To that end note that by simple inspection one may conclude that the first assumption is indeed satisfied. To show the remaining conditions we make use of Proposition~\ref{prop:pcondition}. Fix a graph $\CG$ from Section~\ref{Sgeneralities}.
We first show that $\CG$ satisfies the first item in~\eqref{eq:3rdcondition}.
To that end fix $\bar\CCV\subset\CCV$. If $\bar\CCV\cap\CCV_\xi=\emptyset$, then since the second item in Equation~\eqref{eq:3rdcondition} is more restrictive in that case, it is enough to consider the case in which $|\bar\CCV|\leq 2$. In this case however the first item in~\eqref{eq:3rdcondition} can easily seen to be fulfilled since it simply amounts to the statement that $\hat a_e< 7.5$ for any edge $e$ in the graph, which one can verify via a direct inspection. Hence, we may assume that $\bar\CCV$ has a non-empty intersection with $\CCV_\xi$.
For any edge $e\in\hat\CCE(\bar\CCV)$ we write $\hat a_e=\hat a_e(B) + \hat a_e(C)$, where $\hat a_e(B)$ is the weight of the edge when erasing all cumulant kernels. Note that for $e\in\hat\CCE_0(\bar\CCV)$ in almost all cases we have that $\hat a_e(B)=3$. The only exceptions are the renormalised kernels, which have weight $5.5$ and $6$ respectively (the kernel depicted by a dotted line has weight four, however it is always connected to the origin and $\bar\CCV$ is chosen such that $0\notin\bar\CCV$). In this case however, we can artificially split the kernel into two  and simply \emph{redefine} $\hat a_e(B)\overset{\text{def}}{=}3$. Note that in this case $\hat a_e(C)\in\{2.5+, 3+\}$, which can be interpreted as $e_-$ and $e_+$ both having a cumulant kernel attached to them with weight $\in\{2.5/2 +, 3/2 +\}$.
Since cycles are only created by cumulant kernels, we see that 
\begin{equation}\label{eq:sumwithoutcycles}
\sum_{e\in\hat\CCE_0(\bar\CCV)} \hat a_e(B) \leq 3(|\bar\CCV|-1),
\end{equation}
and since $\hat a_e(C)\leq 3/2 +$, whenever $e=(e_-,e_+)$ is such that $e_-,e_+\in\CCV_\xi^c$,
\begin{equation}
\sum_{\substack{e\in\hat\CCE_0(\bar\CCV):\\ e_-,e_+\in\bar\CCV\cap\CCV_\xi^c}}\hat a_e(C)\leq 
\frac32|\bar\CCV\cap\CCV_\xi^c|\, 
\end{equation}
where we omitted a term of order $\eta$ on the right hand side. 
Thus,
\begin{equation}
\begin{aligned}
\sum_{e\in\hat\CCE_0(\bar\CCV)} \hat a_e +\frac32|\bar\CCV\cap\CCV_\xi|
&=\sum_{e\in\hat\CCE_0(\bar\CCV)} \hat a_e(B) + \sum_{\substack{e\in\hat\CCE_0(\bar\CCV):\\ e_-,e_+\in\bar\CCV\cap\CCV_\xi^c}}\hat a_e(C) + \frac32|\bar\CCV\cap\CCV_\xi|\\
&\leq 3(|\bar\CCV|-1) +\frac32|\bar\CCV| < 5(|\bar\CCV|-\frac12),
\end{aligned}
\end{equation}
and the last inequality is true whenever $|\bar\CCV|>-1$, which is of course satisfied.
We turn to the second item in~\eqref{eq:3rdcondition}. Since cumulant kernels form cycles, it is possible that
\begin{equation}
\sum_{e\in\hat\CCE_0(\bar\CCV)} \hat a_e(C)=\frac32|\bar\CCV|\,.
\end{equation}
In this case we can not use the same arguments as above. Indeed, 
$4.5|\bar\CCV| -3 < 5(|\bar\CCV|-1)$ if and only if $4 < |\bar\CCV|$.
However, if the second item in Equation~\ref{eq:3rdcondition} is violated this means that the graph is of non-positive degree. In this case the fourth item of
Remark~\ref{rem:renorm} applies. There are two cases to be considered here. Either a kernel gets connected to the origin, in which case the left hand side in ~\eqref{eq:sumwithoutcycles} gets reduced by three (since $0\notin\bar\CCV$), and we may conclude as for the first item. Or it gets replaced by a renormalised kernel. In this case however note that if $v,w\in\bar\CCV$ are the start and end point of the renormalised kernel, then we always have that $v,w\notin\CCV_\xi$. Thus, we therefore see that we always have that
\begin{equation}
\sum_{e\in\hat\CCE_0(\bar\CCV)} \hat a_e(C)\leq \frac32(|\bar\CCV|-1),
\end{equation}
and so we may conclude using that $|\bar\CCV|\geq 3$. 
We now show that almost all graphs $\CG$ satisfy~\eqref{eq:4thcondition}. Let $\bar\CCV\subset\CCV$ contain zero. We first focus on the first item. Since $\bar\CCV$ contains the origin we see that
\begin{equation}
\sum_{e\in\hat\CCE_0(\bar\CCV)}\one\{\hat a_e(B) >0\} + \sum_{e\in\hat\CCE_+^{\uparrow}(\bar\CCV)}\one\{\hat a_e(B) >0\}\leq |\bar\CCV|-1.
\end{equation}
Moreover, in all graphs under consideration there is at most one edge $e$ that has label $4$ or for which
$r_e=2$, and both cases do not appear simultaneously. Since edges with $r_e>0$ are never connected to the origin, and since vertices that have a kernel connected to the origin are never the starting point of a kernel with a $r_e>0$ we can thus conclude that
\begin{equation}
\sum_{e\in\hat\CCE_0(\bar\CCV)}\hat a_e(B) + 
\sum_{e\in\hat\CCE_{+}^{\uparrow}(\bar\CCV)}(\hat a_e(B)+r_e-1)\leq 3(|\bar\CCV|-1) +1.
\end{equation} 
Finally, since cumulant kernels are not connected to the origin as well, we see that
\begin{equs}
\sum_{e\in\hat\CCE_0(\bar\CCV)}&\hat a_e + 
\sum_{e\in\hat\CCE_{+}^{\uparrow}(\bar\CCV)}(\hat a_e+r_e-1)+\frac32|\bar\CCV\cap\CCV_\xi|\\
&= \sum_{e\in\hat\CCE_0(\bar\CCV)}\hat a_e(B) + \sum_{\substack{e\in\hat\CCE_0(\bar\CCV):\\
		e_-,e_+\in\bar\CCV_\cap\CCV_\xi^{c}}}\hat a_e(C)+
\sum_{e\in\hat\CCE_{+}^{\uparrow}(\bar\CCV)}(\hat a_e(B)+r_e-1)+\frac32|\bar\CCV\cap\CCV_\xi|\\
&\leq 3(|\bar\CCV|-1) +1 +\frac32(|\bar\CCV|-1)\\
&< 5(|\bar\CCV|-\frac12), \label{eq:4thcondverified}
\end{equs} 
and the last inequality holds as soon as $|\bar\CCV|> -2$.
We turn to the second part of~\eqref{eq:4thcondition}. As before this case turns out to be more complicated. We distinguish between several cases.
\begin{itemize}
	\item[1.] For all $e\in\hat\CCE_0(\bar\CCV)$ we have $\hat a_e(B)\leq 3$ and
	for all $e\in\hat\CCE_{+}^{\uparrow}(\bar\CCV)$ we have that $r_e\leq 1$. In this case, very similar to~\eqref{eq:4thcondverified}, the left hand side of the second item in~\eqref{eq:4thcondition} is at most
	\begin{equation}
	3(|\bar\CCV|-1)+\frac32(|\bar\CCV|-1)< 5(|\bar\CCV|-1),
	\end{equation}
	so we can conclude.
	\item[2.] There is $e\in\hat\CCE_0(\bar\CCV)$ with $\hat a_e(B)=4$ or there is $e\in\hat\CCE_{+}^{\uparrow}(\bar\CCV)$ with $r_e=2$. Recall that both cases do not appear simultaneously. Moreover, we assume that
	\begin{equation}
	\sum_{e\in\hat\CCE_0(\bar\CCV)}\hat a_e(C)\leq \frac32(|\bar\CCV|-2).
	\end{equation}
	In this case the left hand side of the second item in~\eqref{eq:4thcondition} is bounded by
	$3(|\bar\CCV|-1)+1 +3/2(|\bar\CCV|-2)$, which is strictly bounded from above by $5(|\bar\CCV|-1)$ whenever $|\bar\CCV|>0$.
	\item[3.] There is $e\in\hat\CCE_0(\bar\CCV)$ with $\hat a_e(B)=4$ or there is $e\in\hat\CCE_{+}^{\uparrow}(\bar\CCV)$ with $r_e=2$.  Moreover we assume that,
	\begin{equation}\label{eq:3rdcase4thcond}
	\sum_{e\in\hat\CCE_0(\bar\CCV)}\hat a_e(C)= \frac32(|\bar\CCV|-1),
	\end{equation}
	and we note that the right hand side in~\eqref{eq:3rdcase4thcond} is always an upper bound for the corresponding left hand side (since $0\in\bar\CCV$ and cumulant kernels are never connected to the origin).
	Hence, our previous analysis yields
	\begin{equation}\label{eq:3rdcase4thcondverified}
	\sum_{e\in\hat\CCE_0(\bar\CCV)}\hat a_e +\sum_{e\in\hat\CCE_{+}^{\uparrow}(\bar\CCV)}(\hat a_e + r_e-1)
	\leq 3(|\bar\CCV|-1) + 1+\frac32(|\bar\CCV|-1),
	\end{equation}
	which is strictly bounded from above by $5(|\bar\CCV|-1)$ if and only if $|\bar\CCV|>3$. This shows that our arguments do not apply in that case and we need to proceed
	by direct inspection.
	To that end we note that we can restrict ourselves to the case that for each $e\in\hat\CCE_0(\bar\CCV)$ one has that  $\hat a_e(C)>0$, so that in particular $\bar\CCV\cap\CCV_\xi=\emptyset$.
	This is a direct consequence of~\eqref{eq:3rdcase4thcond}.
	Moreover, since~\eqref{eq:3rdcase4thcondverified} holds for $|\bar\CCV|\geq 4$, we only need to focus on the case $|\bar\CCV|\leq 3$. If $|\bar\CCV|=2$, recalling that $0\in\bar\CCV$, and since cumulant kernels are never connected to the origin it would  follow that
	\begin{equation}
	\sum_{e\in\hat\CCE_0(\bar\CCV)}\hat a_e(C)=0.
	\end{equation}
    This case however was already treated in the second item above, so that we can conclude. 
	Hence, it is sufficient to only consider those cases for which $|\bar\CCV|=3$. Since then $|\bar\CCV\setminus\{0\}|=2$ we also only need to consider graphs with cumulant terms of order two. By inspection one sees that for most of such graphs the right hand side of~\eqref{eq:4thcondverified} is strictly larger than its left hand side. Therefore, we are really left with the following graphs
	\begin{equs}
		\begin{aligned}
			&\begin{tikzpicture}[scale=0.35,baseline=0.5cm]
			\node at (-2,-1)  [root] (root) {};
			\node at (-2,1)  [dot] (left) {};
			\node at (-2,3)  [dot, red] (left1) {};
			\node at (-2,5)  [dot] (left2) {};
			\node at (-2,7)  [dot,red] (left3) {};
			\node at (0,1) [] (variable1) {};
			\node at (0,5) [] (variable3) {};
			\node[cumu2n] (a) at (-4,5) {};
			\draw[cumu2] (a) ellipse (12pt and 24pt);

			\draw[testfcn] (left) to  (root);
			
			\draw[kernel2] (left1) to (left);
			\draw[kernel1] (left2) to (left1);
			\draw[kernel1] (left3) to (left2);
			\draw[rho] (variable3) to (left2); 
			\draw[] (a.north) node[dot] {} to (left3);
			\draw[] (a.south) node[dot] {} to (left1);
			\draw[rho] (variable1) to (left); 
			\end{tikzpicture},\;\quad
			\begin{tikzpicture}[scale=0.35,baseline=0.5cm]
			\node at (-2,-1)  [root] (root) {};
			\node at (-2,1)  [dot] (left) {};
			\node at (-2,3)  [dot,red] (left1) {};
			\node at (-2,5)  [dot,red] (left2) {};
			\node at (-2,7)  [dot] (left3) {};
			\node at (0,1) [] (variable) {};
			\node at (0,7) [] (variable3) {};
			
			\draw[testfcn] (left) to  (root);
			
			\draw[kernel2] (left1) to (left);
			\draw[kernelBig] (left2) to (left1);
			\draw[kernel1] (left3) to (left2);
			\draw[rho] (variable3) to (left3); 
			\draw[rho] (variable) to (left); 
			\end{tikzpicture},
			\;\quad
			\begin{tikzpicture}[scale=0.35,baseline=0.5cm]
			\node at (-2,-1)  [root] (root) {};
			\node at (-2,1)  [dot] (left) {};
			\node at (-2,3)  [dot,red] (left1) {};
			\node at (-2,5)  [dot] (left2) {};
			\node at (-2,7)  [dot,red] (left3) {};
			\node[cumu2n] (a) at (-4,5) {};
			\node[cumu2n] (b) at (0,3) {};
			\draw[cumu2] (a) ellipse (12pt and 24pt);
			\draw[cumu2] (b) ellipse (12pt and 24pt);
			
			\draw[testfcn] (left) to  (root);
			
			\draw[kernel2] (left1) to (left);
			\draw[kernel1] (left2) to (left1);
			\draw[kernel1] (left3) to (left2);
			\draw[] (a.north) node[dot] {} to (left3);
			\draw[] (a.south) node[dot] {} to (left1);
			\draw[] (b.north) node[dot] {} to (left2);
			\draw[] (b.south) node[dot] {} to (left);
			\end{tikzpicture},\;\quad
			\begin{tikzpicture}[scale=0.35,baseline=0.5cm]
			\node at (-2,-1)  [root] (root) {};
			\node at (-2,1)  [dot] (left) {};
			\node at (-2,3)  [dot,red] (left1) {};
			\node at (-2,5)  [dot,red] (left2) {};
			\node at (-2,7)  [dot] (left3) {};
			\node[cumu2n] (a) at (-4,4) {};
			\draw[cumu2] (a) ellipse (12pt and 24pt);
			
			\draw[testfcn] (left) to  (root);
			
			\draw[kernel2] (left1) to (left);
			\draw[kernelBig] (left2) to (left1);
			\draw[kernel1] (left3) to (left2);
			\draw[] (a.north) node[dot] {} to (left3);
			\draw[] (a.south) node[dot] {} to (left);
			\end{tikzpicture}\\
			\;&\;
			\begin{tikzpicture}[scale=0.35,baseline=0.5cm]
			\node at (-2,-1)  [root] (root) {};
			\node at (-2,1)  [dot] (left) {};
			\node at (-2,3)  [dot, red] (left1) {};
			\node at (-2,5)  [dot, red] (left2) {};
			\node at (-2,7)  [dot] (left3) {};
			\node[cumu2n] (a) at (-4,4) {};
			\node[cumu2n] (b) at (-6,4) {};
			\draw[cumu2] (a) ellipse (12pt and 24pt);
			\draw[cumu2] (b) ellipse (12pt and 24pt);
			
			\draw[testfcn] (left) to  (root);
			
			\draw[kernel1] (left3) to (left2);
			\draw[kernel] (left2) [bend left=60] to (root);
			\draw[kernel2] (left1) to (left);
			\draw[] (a.north) node[dot] {} to (left2);
			\draw[] (a.south) node[dot] {} to (left1);
			\draw[] (b.north) node[dot] {} to (left3);
			\draw[] (b.south) node[dot] {} to (left);
			
			\end{tikzpicture}\;,\quad\;
			\begin{tikzpicture}[scale=0.35,baseline=0.5cm]
			\node at (-2,-1)  [root] (root) {};
			\node at (-2,1)  [dot] (left) {};
			\node at (-2,3)  [dot, red] (left1) {};
			\node at (-2,5)  [dot] (left2) {};
			\node at (-2,7)  [dot, red] (left3) {};
			\node[cumu2n] (a) at (-4,5) {};
			\node[cumu2n] (b) at (0,3) {};
			\draw[cumu2] (a) ellipse (12pt and 24pt);
			\draw[cumu2] (b) ellipse (12pt and 24pt);
			
			\draw[testfcnx] (left) to  (root);
			
			\draw[kprime] (left1) [bend right=60] to (root);
			\draw[kernel1] (left2) to (left1);
			\draw[kernel1] (left3) to (left2);
			\draw[] (a.north) node[dot] {} to (left3);
			\draw[] (a.south) node[dot] {} to (left1);
			\draw[] (b.north) node[dot] {} to (left2);
			\draw[] (b.south) node[dot] {} to (left);
			\end{tikzpicture}\;,\quad\;
			\begin{tikzpicture}[scale=0.35,baseline=0.5cm]
			\node at (-2,-1)  [root] (root) {};
			\node at (-2,1)  [dot] (left) {};
			\node at (-2,3)  [dot, red] (left1) {};
			\node at (-2,5)  [dot, red] (left2) {};
			\node at (-2,7)  [dot] (left3) {};
			\node[cumu2n] (a) at (-4,4) {};
			\draw[cumu2] (a) ellipse (12pt and 24pt);
			
			\draw[testfcnx] (left) to  (root);
			
			\draw[kprime] (left1) [bend left=60 ]to (root);
			\draw[kernelBig] (left2) to (left1);
			\draw[kernel] (left3) to (left2);
			\draw[] (a.north) node[dot] {} to (left3);
			\draw[] (a.south) node[dot] {} to (left);
			\end{tikzpicture}\;.
		\end{aligned}
	\end{equs}
	where the red nodes are the nodes corresponding to the choices of $\bar\CCV\setminus\{0\}$ that need to be considered. Thus, by direct inspection (note that at this point we really need the summand $-r_e$) we see that all graphs but the last satisfy the second item in~\eqref{eq:4thcondition}. As for the last graph, Theorem~\ref{theo:ultimate1} does not apply, but as in~\cite[Equation (5.22)]{WongZakai} one may treat this graph by hand.
	We now show that all graphs satisfy~\eqref{eq:5thcondition}. To that end we first note that all graphs containing only cumulant terms up to order $2$ satisfy~\eqref{eq:5thcondition} as a consequence of~\cite[Section5]{WongZakai}. Indeed, in this work the aforementioned type of graphs coincide modulo its labelling with the those graphs in the current setting. Then, given any such graph $\CG$ the following ``doubling'' procedure was performed in~\cite{WongZakai}:
	For any copy $\CG_i$, $i\in\{1,2\}$, denote by $\CCV_{\xi,i}$ the set of vertices of $\CG_i$ to which there is a noise term attached. Then, consider a bijective mapping $f:\CCV_{\xi,1}\to\CCV_{\xi,2}$ and for any $v_1\in\CCV_{\xi,1}$ connect $v_1$ with $f(v_1)$ and equip the edge $(v_1, f(v_1))$ with label $3$. Denote the resulting graph with vertex set $\CCV_f=\CCV_1\cup\CCV_2$ by $\CG_f$. Then, for any bijection $f$, and any $\bar\CCV_f\subset\CCV_f\setminus\CCV_{f,\star}$ it was shown that
	\begin{equation}
	\sum_{e\in\hat\CCE(\bar\CCV_f)\setminus\hat\CCE_{+}^{\downarrow}(\bar\CCV_f)}\hat a_e^{\rm{HP}}
	+\sum_{e\in\hat\CCE_{+}^{\uparrow}(\bar\CCV_f)}r_e-\sum_{e\in\hat\CCE_{+}^{\downarrow}(\bar\CCV_f)}(r_e-1)
	> 3|\bar\CCV_f|,
	\end{equation}
	where $\hat a_e^{\rm{HP}}$ denotes the labelling of the edges in~\cite{WongZakai}.
    Taking as bijection $f$ the function that maps each $v_1\in\CCV_{\xi,1}$ to its copy in $\CCV_{\xi,2}$, we see that
	\begin{equation}
	\begin{aligned}
	\sum_{e\in\hat\CCE(\bar\CCV_{f,1})\setminus\hat\CCE_{+}^{\downarrow}(\bar\CCV_{f,1})}&\hat a_e^{\rm{HP}}(B)
	+\sum_{e\in\hat\CCE_{+}^{\uparrow}(\bar\CCV_{f,1})}r_e-\sum_{e\in\hat\CCE_{+}^{\downarrow}(\bar\CCV_{f,1})}(r_e-1)+\frac32|\bar\CCV_{f,1}\cap\CCV_{\xi,1}|\\
	&>3|\bar\CCV_{f,1}|
	\end{aligned}
	\end{equation}
	with hopefully obvious notation. Now, using that $|\{e\in\hat\CCE(\bar\CCV_{f,1})\setminus\hat\CCE_{+}^{\downarrow}(\bar\CCV_{f,1})\}|\geq |\bar\CCV_{f,1}|$, since $\bar\CCV_{f}\subseteq \CCV_f\setminus\CCV_{f,\star}$, so that any $v\in \bar\CCV_{f,1}$ has an outgoing admissible edge, and that for any edge $e\in\CG$ we have the identity $\hat a_e(B)= \hat a_e^{\rm{HP}}(B)+2$, we can conclude.
	It therefore remains to check~\eqref{eq:5thcondition} for all graphs $\CG$ that contain cumulant terms of order three or higher. We present the necessary calculations for two examples, all remaining graphs may be treated in exactly the same way. First, we consider the first graph in~\eqref{eq:Xi3dec2} in the presentation of $\V$. The red nodes will denote the choice of $\bar\CCV\subseteq\CCV_\star$. Possible choices are
	\begin{equation}
	\;
	\begin{tikzpicture}[scale=0.35, baseline=0.5cm]
	\node at (-2,-1)  [root] (root) {};
	\node at (-2,1)  [dot] (left) {};
	\node at (-2,3)  [dot] (left1) {};
	\node at (-2,5)  [dot, red] (left2) {};
	\node[]		(cumu)  	at (-4,3) {};
	\node[cumu3,rotate=90]	(top-cumu) 	at (cumu) {};
	
	\draw[testfcn] (left) to  (root);
	
	\draw[kernel] (left1) to (left);
	\draw[kernel] (left2) to [bend left=60] (root);
	\draw[] (cumu.north) node[dot] {} to (left2); 
	\draw[] (cumu.west) node[dot] {} to (left1);
	\draw[] (cumu.south) node[dot] {} to (left); 
	\end{tikzpicture}\;,\quad
	\;
	\begin{tikzpicture}[scale=0.35, baseline=0.5cm]
	\node at (-2,-1)  [root] (root) {};
	\node at (-2,1)  [dot] (left) {};
	\node at (-2,3)  [dot, red] (left1) {};
	\node at (-2,5)  [dot] (left2) {};
	\node[]		(cumu)  	at (-4,3) {};
	\node[cumu3,rotate=90]	(top-cumu) 	at (cumu) {};
	
	\draw[testfcn] (left) to  (root);
	
	\draw[kernel] (left1) to (left);
	\draw[kernel] (left2) to [bend left=60] (root);
	\draw[] (cumu.north) node[dot] {} to (left2); 
	\draw[] (cumu.west) node[dot] {} to (left1);
	\draw[] (cumu.south) node[dot] {} to (left); 
	\end{tikzpicture}\;,\quad
	\;
	\begin{tikzpicture}[scale=0.35, baseline=0.5cm]
	\node at (-2,-1)  [root] (root) {};
	\node at (-2,1)  [dot] (left) {};
	\node at (-2,3)  [dot, red] (left1) {};
	\node at (-2,5)  [dot, red] (left2) {};
	\node[]		(cumu)  	at (-4,3) {};
	\node[cumu3,rotate=90]	(top-cumu) 	at (cumu) {};
	
	\draw[testfcn] (left) to  (root);
	
	\draw[kernel] (left1) to (left);
	\draw[kernel] (left2) to [bend left=60] (root);
	\draw[] (cumu.north) node[dot] {} to (left2); 
	\draw[] (cumu.west) node[dot] {} to (left1);
	\draw[] (cumu.south) node[dot] {} to (left); 
	\end{tikzpicture}\;.
	\end{equation}
	Rewriting now the cumulant term as a cycle where each edge has label $3/2 +$ we see that for the above choices of $\bar\CCV$ the left hand side in~\eqref{eq:5thcondition}
	becomes $6 +$ for the first and second graph and $10.5 +$ for the last graph, which is as desired. 
	The second example we look at is the second graph in~\eqref{eq:Xi4IVrewrite}, for which the following choices of $\bar\CCV$ are possible
	\begin{equs}{}
	&\begin{tikzpicture}[scale=0.35,baseline=0.5cm]
	\node at (-2,-1)  [root] (root) {};
	\node at (-2,1)  [dot]  (left) {};
	\node at (-2,3)  [dot]  (left1) {};
	\node at (-2,5)  [dot] (left2) {};
	\node at (-2,7)  [dot, red] (left3) {};
	\node at (0,7) [] (variable3) {};
	\node[] (a) at (-4,3) {};
	\node[cumu3, rotate=90] (cumu) at (a) {};

	\draw[testfcn] (left) to  (root);
	\draw[kernel]  (left1) to (left);
	\draw[kernel]  (left2) [bend left=60] to (root);
	\draw[kernel1] (left3) to (left2);
	\draw[] (a.north) node[dot] {} to (left2);
	\draw[] (a.east) node[dot] {} to (left1);
	\draw[] (a.south) node[dot] {} to (left);
	\draw[rho] (variable3) to (left3);
	\end{tikzpicture}
	\;, \quad
	\;
	\begin{tikzpicture}[scale=0.35,baseline=0.5cm]
	\node at (-2,-1)  [root] (root) {};
	\node at (-2,1)  [dot]  (left) {};
	\node at (-2,3)  [dot]  (left1) {};
	\node at (-2,5)  [dot, red] (left2) {};
	\node at (-2,7)  [dot] (left3) {};
	\node at (0,7) [] (variable3) {};
	\node[] (a) at (-4,3) {};
	\node[cumu3, rotate=90] (cumu) at (a) {};

	\draw[testfcn] (left) to  (root);
	\draw[kernel]  (left1) to (left);
	\draw[kernel]  (left2) [bend left=60] to (root);
	\draw[kernel1] (left3) to (left2);
	\draw[] (a.north) node[dot] {} to (left2);
	\draw[] (a.east) node[dot] {} to (left1);
	\draw[] (a.south) node[dot] {} to (left);
	\draw[rho] (variable3) to (left3);
	\end{tikzpicture}
	\;,\quad
	\;
	\begin{tikzpicture}[scale=0.35,baseline=0.5cm]
	\node at (-2,-1)  [root] (root) {};
	\node at (-2,1)  [dot]  (left) {};
	\node at (-2,3)  [dot, red]  (left1) {};
	\node at (-2,5)  [dot] (left2) {};
	\node at (-2,7)  [dot] (left3) {};
	\node at (0,7) [] (variable3) {};
	\node[] (a) at (-4,3) {};
	\node[cumu3, rotate=90] (cumu) at (a) {};

	\draw[testfcn] (left) to  (root);
	\draw[kernel]  (left1) to (left);
	\draw[kernel]  (left2) [bend left=60] to (root);
	\draw[kernel1] (left3) to (left2);
	\draw[] (a.north) node[dot] {} to (left2);
	\draw[] (a.east) node[dot] {} to (left1);
	\draw[] (a.south) node[dot] {} to (left);
	\draw[rho] (variable3) to (left3);
	\end{tikzpicture}
	\;,\quad
	\;
	\begin{tikzpicture}[scale=0.35,baseline=0.5cm]
	\node at (-2,-1)  [root] (root) {};
	\node at (-2,1)  [dot]  (left) {};
	\node at (-2,3)  [dot]  (left1) {};
	\node at (-2,5)  [dot, red] (left2) {};
	\node at (-2,7)  [dot, red] (left3) {};
	\node at (0,7) [] (variable3) {};
	\node[] (a) at (-4,3) {};
	\node[cumu3, rotate=90] (cumu) at (a) {};

	\draw[testfcn] (left) to  (root);
	\draw[kernel]  (left1) to (left);
	\draw[kernel]  (left2) [bend left=60] to (root);
	\draw[kernel1] (left3) to (left2);
	\draw[] (a.north) node[dot] {} to (left2);
	\draw[] (a.east) node[dot] {} to (left1);
	\draw[] (a.south) node[dot] {} to (left);
	\draw[rho] (variable3) to (left3);
	\end{tikzpicture}
	\;,\\
	\;
	&\begin{tikzpicture}[scale=0.35,baseline=0.5cm]
	\node at (-2,-1)  [root] (root) {};
	\node at (-2,1)  [dot]  (left) {};
	\node at (-2,3)  [dot, red]  (left1) {};
	\node at (-2,5)  [dot] (left2) {};
	\node at (-2,7)  [dot, red] (left3) {};
	\node at (0,7) [] (variable3) {};
	\node[] (a) at (-4,3) {};
	\node[cumu3, rotate=90] (cumu) at (a) {};

	\draw[testfcn] (left) to  (root);
	\draw[kernel]  (left1) to (left);
	\draw[kernel]  (left2) [bend left=60] to (root);
	\draw[kernel1] (left3) to (left2);
	\draw[] (a.north) node[dot] {} to (left2);
	\draw[] (a.east) node[dot] {} to (left1);
	\draw[] (a.south) node[dot] {} to (left);
	\draw[rho] (variable3) to (left3);
	\end{tikzpicture}
	\;,\quad
	\;
	\begin{tikzpicture}[scale=0.35,baseline=0.5cm]
	\node at (-2,-1)  [root] (root) {};
	\node at (-2,1)  [dot]  (left) {};
	\node at (-2,3)  [dot, red]  (left1) {};
	\node at (-2,5)  [dot, red] (left2) {};
	\node at (-2,7)  [dot] (left3) {};
	\node at (0,7) [] (variable3) {};
	\node[] (a) at (-4,3) {};
	\node[cumu3, rotate=90] (cumu) at (a) {};

	\draw[testfcn] (left) to  (root);
	\draw[kernel]  (left1) to (left);
	\draw[kernel]  (left2) [bend left=60] to (root);
	\draw[kernel1] (left3) to (left2);
	\draw[] (a.north) node[dot] {} to (left2);
	\draw[] (a.east) node[dot] {} to (left1);
	\draw[] (a.south) node[dot] {} to (left);
	\draw[rho] (variable3) to (left3);
	\end{tikzpicture}
	\;,\quad
	\;
	\begin{tikzpicture}[scale=0.35,baseline=0.5cm]
	\node at (-2,-1)  [root] (root) {};
	\node at (-2,1)  [dot]  (left) {};
	\node at (-2,3)  [dot, red]  (left1) {};
	\node at (-2,5)  [dot, red] (left2) {};
	\node at (-2,7)  [dot, red] (left3) {};
	\node at (0,7) [] (variable3) {};
	\node[] (a) at (-4,3) {};
	\node[cumu3, rotate=90] (cumu) at (a) {};

	\draw[testfcn] (left) to  (root);
	\draw[kernel]  (left1) to (left);
	\draw[kernel]  (left2) [bend left=60] to (root);
	\draw[kernel1] (left3) to (left2);
	\draw[] (a.north) node[dot] {} to (left2);
	\draw[] (a.east) node[dot] {} to (left1);
	\draw[] (a.south) node[dot] {} to (left);
	\draw[rho] (variable3) to (left3);
	\end{tikzpicture}
	\;.
	\end{equs}
	In the above case the right hand side in~\eqref{eq:5thcondition} yields $5.5 +$ for the first graph, $6 +$ for the second and third, $10.5 +$ for the fourth and sixth graph, $11.5 +$ for the fifth and $15+$ for the last graph. Note here, that for the last graph we for the first time really needed to make use of the fact that the weight on cumulant kernels is $3/2+$, a fact that we ignored so far most of the time. Proceeding in the same way for the remaining graphs we can conclude that all but one graph satisfy the conditions of Proposition~\ref{prop:pcondition} and therefore its $p$-th moment satisfy the conditions of Theorem~\ref{theo:ultimate1}. For the graph that is not amenable to an application of Theorem~\ref{theo:ultimate1} we already argued that it can be treated by hand.
\end{itemize}

\subsection{Finishing the proof of Proposition~\ref{prop:momentbounds}}
In this section we finish the proof of Proposition~\eqref{prop:momentbounds}. To do so we apply Theorem~\ref{theo:ultimate1} to the $p$-th moment of any graph that appears on the right hand side of~\eqref{eq:renorm}. We also note at this point that the first and second bound in~\eqref{eq:momentbounds} may be shown in exactly the same way. We first look at the graphs corresponding to the symbol $\<Xi2>$. 
Having $p$ copies of it we note that $|\CCV\setminus\CCV_\star|=p$. Moreover, the symbol at hand is constructed from two copies of the noise, therefore the sum of the labels of all cumulant terms that appear in the computation of the $p$-th moment equals $2p\times 3/2 +$. Finally, $\<Xi2>$ is constructed from a single random walk kernel, which has label $3$. Hence, we see that $\tilde\alpha=5p-3p-3p-=-p-$, as desired. Here, the last minus sign has to be understood as a additional summand of the form $-\kappa$, where $\kappa$ can be chosen to be arbitrarily small.
We can proceed in almost the exact way for the symbol $\<Xi2X>$, with the only difference that the polynomial term increases the degree by $1$. On the level of calculating the corresponding value of $\tilde\alpha$ in this case it follows from the fact that the test function and the kernel connecting to the origin are (in some cases) multiplied by a factor $x_i$ which increases the order of the respective kernel by $1$.
Regarding the two remaining symbols $\<Xi3>$ and $\<Xi4>$ we note that $\tilde\alpha$ equals in this case
\begin{equation}
\tilde\alpha=
\begin{cases}
5\times 2p-3p\times \frac32-2p\times 3-, &\mbox{ if }\tau=\<Xi3>,\\
5\times 3p- 4p\times \frac32 - 3p\times 3-, &\mbox{ if }\tau=\<Xi4>,
\end{cases}
\end{equation}
where the first term in the two lines above equal $|\CCV\setminus\CCV_\star|$, the second term comes from the total weight of all cumulant kernels, and the last one from the total weight of all random walk kernels.
We can therefore finish the proof of Proposition~\ref{prop:momentbounds}.

\section{Identification of the limit}
\label{S:Identification}
In this section we will show that there is a coupling between $\xi^N$ and $Y$, where $Y$ denotes the Ornstein--Uhlenbeck process introduced in~\eqref{eq:Y}, such that for each fixed $T>0$ and $\bar \eta \in (0,\eta\wedge \frac12)$ the sequence of solutions $u^N$ converges in probability in the space $C_{N}^{\bar \eta,T}$ to $u$, where $u$ is the solution to the PAM equation with noise $Y$.
Given the function $\rho$ introduced at the beginning of Section~\ref{SBRM} we recall that we introduced the smooth discrete noise $\xi^{\delta,N}$ via $\xi^{\delta,N}= \rho^{\delta,N}\star_N \xi^N$. We can then introduce the equation
\begin{equation}
\label{eq:PAM_discrete_smooth}
\partial_t u^{\delta,N} u(t,x) = (2^{2N}\Delta u^{\delta,N})(t,x) - [\xi^{\delta,N}(t,x) - C_{\delta,N}]u^{\delta,N}(t,x),
\end{equation}
defined on $\R_+\times 2^{-N}\St$. Here, the sequence of constants is defined as in Section~\ref{S3.3} with the only difference that each appearance of $\xi^N$ is replaced by $\xi^{\delta,N}$. We then note that denoting by $U^{\delta,N}$ the solution to the corresponding abstract fixed point problem introduced in~\eqref{eq:fixedpoint} and by $\hat\CR^{\delta,N}$ the reconstruction operator associated to the model $(\hat{\Pi}^{\delta,N},\hat{\Gamma}^{\delta,N})$ we have the identity $u^{\delta,N}= \hat{\CR}^{\delta,N} U^{\delta,N}$. 
Thus, as a consequence of the reconstruction theorem, the abstract fixed point theorem in~\cite{ErhardHairerRegularity}, 
the fact that by linearity the fixed point problem has a global in time solution, and by Proposition~\ref{prop:momentbounds} we see that for any $T>0$, and any $p\geq 2$, and $\bar\eta\in (0,\eta\wedge \frac12)$
\begin{equation}
\lim_{\delta\to 0}\lim_{N\to\infty}\E[\|u^N;u^{\delta,N}\|^p_{C_N^{\bar\eta,T}}]=0,
\end{equation}
which readily implies that $\|u^N;u^{\delta,N}\|_{C_N^{\bar\eta,T}}$ converges to zero in probability provided that we first send $N$ to infinity and then $\delta$ to zero.
To continue we define a smooth version of $Y$ via $Y^\delta= \rho^{\delta}\star Y$, and associate the equation
\begin{equation}
\partial_t u^{\delta}(t,x) = \Delta u^{\delta}(t,x) -(Y^\delta -C^\delta) u^\delta(t,x)
\end{equation}
defined on $\R_+\times \T^3$ to it. Here the sequence of constants is essentially defined as in Section~\ref{S3.3}, but all integrals are continuous and the heat kernel is the usual heat kernel associated to $(\partial_t -\Delta)$, and the noise is given by $Y^\delta$. It then follows from the methods developed in~\cite{Ajay} and~\cite{BHZalg} that the function $u^{\delta}$ converges in probability in $\CC_\s^{\bar\eta,T}$ to a limit $u$ as $\delta\to 0$ (for a proof using methods that rely on less heavy machinery we also refer to~\cite{WongZakai}). Thus, to conclude the argument it suffices to show that there exists a coupling such that
\begin{equation} \label{eq:smoothapprox}
\lim_{\delta\to 0}\lim_{N\to\infty}\|u^{\delta, N}; u^\delta\|_{C_N^{\bar\eta,T}}=0
\end{equation}
in probability.
To that end note that the input noise $Y^\delta$ is smooth, hence $u^\delta$ is as well. It therefore would be enough to show that for all $\delta >0$,
\begin{equation}\label{eq:Linftyapprox}
\lim_{N\to\infty} \sup_{(t,x)\in [0,T]\times 2^{-N}\St}|\xi^{\delta,N}(t,x)- Y^\delta (t,x)|=0
\end{equation}
in probability.
To that end we first note that it was shown in~\cite{Ravi92} that $\xi^N$ converges to $Y$ in law in the space $\cS'(\R\times\T^3)$ of all Schwartz distributions. Defining $\tilde{\xi}^{\delta,N}$ via $\tilde{\xi}^{\delta,N}= \xi^N \star_N \rho^\delta$, it therefore follows that $\tilde{\xi}^{\delta,N}$ converges in law to $Y^\delta$.
Using the bounds
\begin{equation}
\sup_{z\in\R^4}|D^k \rho^{\delta,N}(z) - D^k \rho^{\delta}(z)|\lesssim 
2^{-N} \delta^{-|\s|-|k|_\s -1},
\end{equation}
one readily shows that the same holds for $\xi^{\delta,N}$, so it only remains to establish tightness of $\xi^{\delta,N}$ in the space of continuous functions $C([0,T]\times \T^3)$. Since $\xi^{\delta,N}$ is defined only on $[0,\infty)\times 2^{-N}\St$, we first extend it to $[0,\infty)\times\T^3$ by linear interpolation (or more precisely by trilinear interpolation since the spatial dimension is three). Tightness of $\xi^{\delta,N}$ then follows from 
Kolmogorov's continuity criterion combined with Theorem~\ref{thm:cumu}, so that $\xi^{\delta,N}$ converges law in 
$C([0,T]\times \T^3)$ to $Y^\delta$. To finish the argument, it is sufficient to note that Skorokhod 
representation theorem guarantees the existence of a coupling between $\xi^{\delta,N}$ and $Y^\delta$ 
for which the convergence holds in probability, thus establishing~\eqref{eq:Linftyapprox} as claimed.

\appendix

\section{A bound on graphs}
\label{A}
In this section we adapt Theorem A.3 from Hairer, Quastel~\cite{KPZJeremy} to the present setting. Since the proof is very similar, we will only point out some of the main differences.
Before we formulate the result we describe the setup.
The basic ingredient is a finite directed multigraph $\CCG=(\CCV, \CCE)$ with edges $e\in\CCE$ labelled by pairs  $(a_e,r_e)\in\R_{+}\times\Z$ and kernels $K_e:\R\times\T_N^d\to\R$ which are compactly supported and associated to $e\in\CCE$.
If $r_e< 0$, then we will in addition be given a collection of real numbers $\{I_{e,k}\}_{|k|_\frak{s}<|r_e|}$. We further denote by $e_{\star,1},\ldots, e_{\star,M}$ the set of distinguished edges that connect a distinguished vertex $0\in\CCV$ to $M$ distinguished vertices $v_{\star,1},\ldots, v_{\star, M}$. All those edges come with a label $(a_e,r_e)=(0,0)$. We write $\CCV_0= \CCV\setminus\{0\}$ and $\CCV_{\star}=\{0,v_{\star,1},\ldots, v_{\star,M}\}$.
Given a directed edge $e\in\CCE$, we write $e_{\pm}$ for the two vertices so that $e=(e_{-},e_{+})$ is directed from $e_{-}$ to $e_{+}$. In case there is more than one edge connecting $e_{-}$ to $e_{+}$ we always assume that at most one can have nonzero renormalisation $r_e$ and in that case $r_e$ must be positive. We identify the multigraph with a graph $(\CCV,\hat\CCE)$ where the multi-edges from $e_-$ to $e_+$ are concatenated to one edge whose label $(\hat a_e,r_e)$ is the sum of the labels of the original multi-edges. The rest of the assumptions will be stated in terms of the graph $(\CCV, \hat\CCE)$.
A subset $\bar\CCV \subset \CCV$ has \textit{outgoing edges} $\hat\CCE^\uparrow (\bar \CCV)  =  
 \{ e\in \CCE : e\cap \bar\CCV = e_-\}$, \textit{incoming edges} $\hat\CCE^\downarrow (\bar \CCV)  =   \{ e\in \CCE : e\cap \bar\CCV = e_+\}$, \textit{internal edges} $\hat\CCE_0 (\bar \CCV)  =   \{ e\in \CCE : e\cap \bar\CCV = e\}$, and
\textit{incident edges} $\hat\CCE (\bar \CCV)  =   \{ e\in \CCE : e\cap \bar\CCV \neq \emptyset\}$.
We will also use
$
\hat\CCE_+ (\bar \CCV)  =   \{ e\in \CCE(\bar \CCV) :  r_e>0\}$ to denote the edges with positive renormalization and we set $\hat\CCE_+^\uparrow = \hat\CCE_+ \cap \hat\CCE^\uparrow$ and $\hat\CCE_+^\downarrow = \hat\CCE_+ \cap \hat\CCE^\downarrow$. 
\begin{assumption}\label{ass:mainGraph}
The resulting directed graph $(\CCV,\hat\CCE)$ with labels $(\hat a_e,r_e)$ satisfies:  
No edge with $r_e \neq 0$
connects two elements in $\CCV_\star$ and $0 \in e \Rightarrow r_e = 0$; no more than one edge with 
negative renormalization $r_e<0$ may emerge from the same vertex; and
\begin{claim}
\item[1.] For all $e\in \hat\CCE$, one has $ \hat a_e + (r_e \wedge 0) < |\s|$\;; 
\item[2.] For every subset $\bar \CCV \subset \CCV_0$ of cardinality at least $3$,  
\begin{equ}[e:assEdges]
\sum_{e  \in \hat\CCE_0(\bar \CCV)} \hat a_e < (|\bar \CCV| - 1)|\s|\;;
\end{equ}
\item[3.]  For every subset $\bar \CCV \subset \CCV$ containing $0$ of cardinality at least $2$,  
\begin{equ}[e:assEdges2]
\begin{aligned}
\sum_{e  \in \hat \CCE_0 (\bar \CCV) } \hat a_e 
&+\sum_{e  \in \hat\CCE^{\uparrow}_+ (\bar \CCV)  } (\hat a_e+ r_e -1)  - \sum_{e  \in \hat\CCE^{\downarrow}_+ (\bar \CCV)  }  r_e
&< (|\bar \CCV| - 1)|\s|\;;
\end{aligned}
\end{equ}
\item[4.]  For every non-empty subset $\bar \CCV \subset \CCV\setminus \CCV_\star$,
\begin{equ}[e:assEdges3]
 \sum_{e\in \hat\CCE(\bar\CCV)\setminus \hat\CCE^{\downarrow}_+(\bar \CCV) }  \hat a_e 
 +\sum_{e\in \hat \CCE^{\uparrow}_+(\bar\CCV)}  r_e
- \sum_{e \in \hat \CCE^\downarrow_+(\bar \CCV)} (r_e-1)
> |\bar \CCV||\s|\;.
\end{equ}
\end{claim}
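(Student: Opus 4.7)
The plan is to verify that each graph $\CG$ arising from the analysis of $\hat\Pi_z^N\tau$ with $\tau\in\{\Xi,\<Xi2>,\<Xi3>,\<XiX>,\<Xi2X>,\<Xi4>\}$ (and its $p$-fold moment versions) satisfies the structural hypotheses and the four numerical inequalities of Assumption~\ref{ass:mainGraph} with $|\s|=5$. The structural constraints are immediate from the construction in Section~\ref{Sgeneralities}: positive renormalisation parameters $r_e>0$ only appear on backbone kernels away from the root, and the only negative $r_e$'s come from the renormalised kernels $\Ren Q^N$ and $\Ren Q^{2,N}$, each contributing a single incident edge with $r_e\ge -2$.

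First, I would invoke Proposition~\ref{prop:pcondition} to reduce checking the four inequalities on the $p$-fold moment graph to checking the refined inequalities~\eqref{eq:3rdcondition}--\eqref{eq:5thcondition} on a single copy of $\CG$. The reduction exploits that gluing $p$ copies through cumulant rings over $\CCV_\xi^{\otimes p}$ contributes at most $\tfrac{3}{2}|\bar\CCV_j\cap\CCV_\xi|$ extra weight per copy while adding $|\bar\CCV_j|$ vertices. The crucial bookkeeping is the decomposition $\hat a_e=\hat a_e(B)+\hat a_e(C)$, where $\hat a_e(B)\in\{3,4\}$ is the backbone contribution from $K^N$ or $\partial_iK^N$, and $\hat a_e(C)\in\{0,\tfrac{3}{2}+\eta,3+\eta\}$ the cumulant contribution. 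Backbone edges form a tree, yielding $\sum_{e\in\hat\CCE_0(\bar\CCV)}\hat a_e(B)\le 3(|\bar\CCV|-1)+1$ (with the $+1$ absorbing the at-most-one edge of label~$4$), while cumulant edges form closed cycles, yielding $\sum\hat a_e(C)\le\tfrac{3}{2}|\bar\CCV|$ with strict inequality $\tfrac{3}{2}(|\bar\CCV|-1)$ unless $\bar\CCV$ contains a full cumulant cycle.

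Combining the two bounds, inequality~\eqref{eq:3rdcondition}(i) reduces to the arithmetic $3(|\bar\CCV|-1)+\tfrac{3}{2}|\bar\CCV|<5(|\bar\CCV|-\tfrac{1}{2})$, which is automatic, and~\eqref{eq:4thcondition}(i) is handled similarly after accounting for the at-most-one outgoing edge with $r_e=2$. The stricter right-hand sides of~\eqref{eq:3rdcondition}(ii) and~\eqref{eq:4thcondition}(ii) fail only in the critical regime where the cumulant bound saturates at $\tfrac{3}{2}|\bar\CCV|$; there I would invoke the renormalisation substitutions of Remark~\ref{rem:renorm}~(iv), displayed in~\eqref{e:subs}, which either reattach a backbone kernel to the origin (losing weight~$3$ from the backbone sum since $0\notin\bar\CCV$) or replace a subgraph by the renormalised kernel $\Ren Q^N$ of weight~$5.5$ or~$6$. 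In both cases one necessarily has $\bar\CCV\cap\CCV_\xi=\emptyset$, so $\sum\hat a_e(C)\le\tfrac{3}{2}(|\bar\CCV|-1)$ and the inequality becomes strict whenever $|\bar\CCV|\ge 3$, which is the regime where~\eqref{eq:3rdcondition}(ii) is non-vacuous.

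The two main obstacles I anticipate are as follows. For the ``lower-bound'' condition~\eqref{eq:5thcondition} the additive decomposition alone is insufficient; the remedy is to transfer the bound from the Gaussian setting of~\cite[Sec.~5]{WongZakai}, where an analogous doubling trick is established, via the identity $\hat a_e(B)=\hat a_e^{\mathrm{HP}}(B)+2$, so that all graphs involving only second-order cumulants inherit the required strict inequality. The handful of graphs featuring third- or fourth-order cumulants (arising only from $\<Xi3>$ and $\<Xi4>$) must then be verified by direct exhaustion over the few relevant $\bar\CCV$. Second, a finite list of graphs identified in Remark~\ref{rem:correction} (those displayed on the left-hand sides of~\eqref{eq:1stbadgraph}--\eqref{eq:6thbadgraph}) exactly saturates~\eqref{eq:3rdcondition}(ii) or~\eqref{eq:4thcondition}(ii) for some $\bar\CCV$; for these I would pre-process via the ``add-and-subtract'' decompositions displayed there, after which each resulting summand falls under the main argument. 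A single residual graph in the $\<Xi4>$-expansion for which Theorem~\ref{theo:ultimate1} does not apply must be treated by hand along the lines of~\cite[Eq.~(5.22)]{WongZakai}, since no redistribution of weights can recover~\eqref{eq:4thcondition}(ii) for it.
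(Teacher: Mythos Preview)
Your plan is essentially the paper's own argument: reduce via Proposition~\ref{prop:pcondition}, split $\hat a_e=\hat a_e(B)+\hat a_e(C)$, use the tree bound $\sum\hat a_e(B)\le 3(|\bar\CCV|-1)+1$ and the cycle bound $\sum\hat a_e(C)\le\tfrac32|\bar\CCV|$, invoke the renormalisation substitutions of Remark~\ref{rem:renorm} when the cumulant bound saturates, port~\eqref{eq:5thcondition} from~\cite{WongZakai} via $\hat a_e(B)=\hat a_e^{\rm HP}(B)+2$, and treat the higher-cumulant graphs by direct inspection.

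One attribution is swapped, however. The graphs in Remark~\ref{rem:correction} (left-hand sides of~\eqref{eq:1stbadgraph}--\eqref{eq:6thbadgraph}) fail the \emph{lower-bound} condition~\eqref{eq:5thcondition}, not~\eqref{eq:3rdcondition}(ii) or~\eqref{eq:4thcondition}(ii); the red nodes there mark the offending $\bar\CCV$ for~\eqref{eq:5thcondition}. Conversely, the single residual graph that must be handled by hand \`a la~\cite[Eq.~(5.22)]{WongZakai} arises in the case analysis for~\eqref{eq:4thcondition}(ii) at $|\bar\CCV|=3$ (the last of the seven displayed graphs with red nodes in that verification), and is unrelated to the decompositions of Remark~\ref{rem:correction}. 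With this correction your outline matches the paper.
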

\end{assumption} 
Next, we describe the role of $r_e$. If $r_e<0$, then we associate to $K_e$ the distribution
\begin{equ}[e:defRen]
\bigl(\Ren K_e\bigr)(\phi) = \int_{\R\times\T_N^d} K_e(z) \Bigl(\phi(z) - \sum_{|k|_\s < |r_e|} {z^k \over k!} D^k\phi(0)\Bigr)\,dz + \sum_{|k|_\s < |r_e|} {I_{e,k} \over k!} D^k\phi(0).
\end{equ}  
The above describes a distributional kernel $\hat K_e$ acting on smooth $\phi$ on $\R\times\T_N^d\times \R\times\T_N^d$ by
\begin{equ}
 \hat K_e (  \phi)
\eqdef\tfrac12\int  
\Ren K_e( \phi_{z})\,dz\;,
\end{equ} where $\phi_{ z}(\bar z)\eqdef\phi((z+\bar z)/2,(z-\bar z)/2)$.

For $r_e \ge 0$, we define
\begin{equ}[e:defKhatn2]
\hat K_e(z_{e_-}, z_{e_+}) = 
K_e(z_{e_+}-z_{e_-}) - \sum_{|j|_\s < r_e} {z_{e_+}^j \over j!} D^j K_e(-z_{e_-})\;.
\end{equ}
We now describe the role of $a_e$.

\begin{assumption}\label{ass:kernelsRenorm}
Given $K_e$ as above, we assume that there exist $\{K_e^{(n)}\}_{0\leq n\leq  N}$ satisfying:
\begin{claim}
\item  $K_e(z) = \sum_{0\leq n \leq N} K_e^{(n)}(z)$ for all $z \neq 0$. Here, $K_e^{(n)}$ maps $\R\times \T_N^d$ or $\R^{d+1}$ into $\R$;
\item  $\bigl(\Ren K_e\bigr)(\phi) = \sum_{0\leq n \leq N} \int_{\R\times\T_N^d} K_e^{(n)}(z)\,\phi(z)\,dz$ for smooth test functions $\phi$;
\item For all $0\leq n < N$ the functions $K_e^{(n)}$ are supported in the annulus
$2^{-(n+2)}\le  \|z\|_\s \le  2^{-n}$ and $\mathrm{supp}(K_e^{(N)})\subseteq \{z\colon\, \|z\|_\s\leq 2^{-N}\}$;
\item for all $p<\infty$ and some $C<\infty$ 
\begin{equation}\label{e:boundKn}
\begin{aligned}
\sup_{|k| \le p, 0\leq n\leq N-1 } 2^{-(a_e + |k|_\s)n}
|D^k K_e^{(n)}(z)| &\le C \quad \text{and}\\
2^{-a_eN} |K_e^{(N)}(z)| & \le C \, ;
\end{aligned}
\end{equation}
\item if $r_e < 0$, then $
\int_{\R\times\T_N^d} P(z) K_e^{(n)}(z)\,dz = 0$
for all $0<n\leq N$ and all polynomials $P$ with scaled degree strictly less than $|r_e|$.
\end{claim}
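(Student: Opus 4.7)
\medskip

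The plan is to construct the decomposition $K_e = \sum_{n} K_e^{(n)}$ separately for each class of kernel that actually appears when applying Theorem~\ref{theo:ultimate1} in this paper, namely: the (derivatives of the) discrete heat kernel $K^N$ with labels in $\{(3,0),(3,1),(3,2),(4,0)\}$, the test-function kernels with negative $a_e$, the renormalised cumulant kernel $\Ren Q^N$ with label $(6,-2)$, and the renormalised kernel $\Ren Q^{2,N}$ with label $(5.5,-1)$. In all cases the starting point is a smooth dyadic partition of unity in parabolic scale: fix $\{\varphi_n\}_{0\le n\le N}$ on $\R^{d+1}$ with $\sum_n \varphi_n \equiv 1$ off the origin, $\varphi_n$ supported in the annulus $\{2^{-(n+2)}\le \|z\|_\s \le 2^{-n}\}$ for $n<N$ and $\varphi_N$ in the ball $\{\|z\|_\s\le 2^{-N}\}$, satisfying $|D^k\varphi_n(z)|\lesssim 2^{|k|_\s n}$.

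For the kernels with $r_e\ge 0$ the construction is immediate: set $K_e^{(n)} = \varphi_n K_e$ and check (iii)--(iv) by the Leibniz rule together with the pointwise bounds already established for $K^N$. For the discrete heat kernel this is essentially the decomposition $K^N=\sum_{n=1}^N K_n$ already introduced in Section~\ref{S2}, whose bounds~\eqref{eq:Knest} are exactly \eqref{e:boundKn} with $a_e=3$; for its derivatives and for the polynomial-weighted variants one multiplies by the appropriate smooth factor (e.g.\ $x_i$ or a spatial derivative), which shifts $a_e$ by $\pm 1$ without affecting the support or differentiability estimates. Item (ii) is trivial since $\Ren K_e$ coincides with $K_e$ when $r_e\ge 0$.

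The delicate case is $r_e<0$, which occurs only for the renormalised kernels $\Ren Q^N$ (with $|r_e|=2$) and $\Ren Q^{2,N}$ (with $|r_e|=1$). Here item (v) forces the scale components to kill polynomials of scaled degree $<|r_e|$, something the naive choice $\varphi_n K_e$ does not satisfy. The standard remedy, which I would follow, is an inductive subtraction across scales: for $1\le n<N$ define
\begin{equation}
K_e^{(n)}(z) = \varphi_n(z) K_e(z) - \sum_{|k|_\s<|r_e|} \lambda_{n,k}\, \partial^k\delta_0(z)\;,
\end{equation}
where the finitely many constants $\lambda_{n,k}$ are uniquely determined by the moment conditions $\int z^j K_e^{(n)}(z)\,dz = 0$ for $|j|_\s<|r_e|$. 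The derivative bound \eqref{e:boundKn} for the smooth part is inherited from the pointwise bound $|D^k K_e(z)|\lesssim \|z\|_{\s,N}^{-a_e-|k|_\s}$ on $K_e$; this latter bound is itself a consequence of Theorem~\ref{thm:cumu} (which controls the two- and three-point cumulants of $\xi^N$), of the cancellation coming from the spatial evenness of $K^N(z)\E_c(\xi^N(0)\xi^N(z))$ (this is what boosts $\Ren Q^N$ from degree $3$ to degree $6$), and of a convolution estimate via Lemma~\ref{lem:convolutioninspace} for $\Ren Q^{2,N}$. At scale $n=N$ we collect all remaining Dirac contributions, including the renormalisation constants $c_N$, $c_N^{(1)}$ entering the definitions~\eqref{e:defRen} and \eqref{eq:Q2}; the constants $I_{e,k}$ in \eqref{e:defRen} are then read off by matching $\Ren K_e(\phi) = \sum_n \int K_e^{(n)}\phi$, which gives (ii), while (iii) holds by construction.

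The main obstacle I anticipate is bookkeeping of the distributional moment-cancellation constants $\lambda_{n,k}$, in particular showing they are uniformly bounded in $n$ so that the Dirac contribution in $K_e^{(n)}$ respects the bound $|K_e^{(n)}(z)|\lesssim 2^{a_e n}$ on the support of $\varphi_n$. This reduces to verifying that $\int z^j\, \varphi_n(z) K_e(z)\,dz$ decays sufficiently fast in $n$, which follows from the scaling $\int |z|^{|j|_\s}\,\varphi_n(z)\,dz \lesssim 2^{-(|\s|+|j|_\s)n}$ combined with the pointwise bound on $K_e$ and the constraint $a_e+|r_e|\le |\s|$ (equivalent to $-a_e+|j|_\s<|\s|-a_e\le |r_e|$ being the relevant borderline). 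Once this is in hand, items (i)--(v) all follow by direct verification, and the resulting decomposition feeds into the multiscale convolution framework of Theorem~\ref{theo:ultimate1}.
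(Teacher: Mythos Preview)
This statement is an \emph{Assumption} in the paper, not a lemma or proposition: it is a hypothesis imposed on the kernels $K_e$ as input to Theorem~\ref{theo:ultimate1}, and the paper supplies no proof of it as a freestanding result. What you are attempting is the separate (and legitimate) task of \emph{verifying} that the concrete kernels of Section~\ref{SBRM} satisfy this assumption. The paper handles this only implicitly: for the heat-kernel pieces it relies on the decomposition $K^N=\sum_{n=1}^N K_n$ with the bounds~\eqref{eq:Knest} already set up in Section~\ref{S2}, and for the renormalised kernels it simply asserts the labels $(6,-2)$ and $(5.5,-1)$ together with the definition~\eqref{e:defRen}.

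There is, however, a genuine gap in your construction for the case $r_e<0$. You propose $K_e^{(n)} = \varphi_n K_e - \sum_{|k|_\s<|r_e|}\lambda_{n,k}\,\partial^k\delta_0$ for $1\le n<N$, but item~(iii) requires $K_e^{(n)}$ to be supported in the annulus $\{2^{-(n+2)}\le\|z\|_\s\le 2^{-n}\}$, which excludes the origin, and item~(iv) demands pointwise bounds on $K_e^{(n)}$ and its derivatives. Neither is compatible with a Dirac contribution at $0$; indeed item~(ii) is written as an honest integral $\int K_e^{(n)}(z)\phi(z)\,dz$, so each $K_e^{(n)}$ must be a locally integrable function. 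The standard fix is not to subtract Diracs but to subtract smooth bump functions supported in the same annulus and carrying the prescribed moments, arranged telescopically across scales so that the total sum over $n$ is unchanged; all distributional mass (and the constants $I_{e,k}$) is then absorbed into the coarsest scale $n=0$. With that correction your outline goes through.
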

\end{assumption}
\noindent
In the sequel for a kernel $K_e$ we denote the largest constant $C$ appearing in the fourth item by $\|K_e\|_{a_e;p}$.
For a smooth test function $\phi$ and $z=(t,x)$, let
$
\phi_{\lambda,\mu}(z) = \lambda^{-|\s\setminus\s_1|}\mu^{-|\s_1|} \phi(t/\mu^{|\s_1|},x/\lambda)$.  The key quantity of interest is the generalized convolution
\begin{equ}\label{genconv}
\CI^\CCG(\phi_{\lambda,\mu},K) \eqdef  \int_{(\R\times\T_N^d)^{\CCV_0}} \prod_{e\in \CCE}{\hat K}_e(z_{e_-}, z_{e_+})\prod_{i=1}^M \phi_{\lambda,\mu}(z_{v_{\star,i}}) \,dz\,
.
\end{equ}
In case $\lambda=\mu$ we simply write $\CI^\CCG(\phi_{\lambda},K)$.
\begin{theorem}\label{theo:ultimate1}  Let $\CCG=(\CCV,\CCE)$ be a finite directed multigraph with labels $\{a_e,r_e\}_{e\in \CCE}$ and kernels
$\{K_e\}_{e\in \CCE}$ with  the resulting graph satisfying Assumption~\ref{ass:mainGraph} and its preamble and the kernels satisfying Assumption~\ref{ass:kernelsRenorm}.
Then, there exists  $C$ depending only on the 
structure of the graph $(\CCV,\CCE)$ and the labels $r_e$ such that 
\begin{equ}\label{eq:genconvBound}
\CI^\CCG(\phi_{\lambda,\mu},K) 
\le C\lambda^{\tilde \alpha}\;,
\end{equ}
if either $2^{-N}\leq \lambda=\mu$ or $\mu <2^{-N}=\lambda$.
Here,
\begin{equ}\tilde \alpha = |\s||\CCV\setminus \CCV_\star| - \sum_{e\in \CCE} a_e. 
\end{equ} 
\end{theorem}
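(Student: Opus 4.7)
The plan is to adapt the multiscale cluster-expansion argument of \cite[Thm~A.3]{KPZJeremy} to the semi-discrete setting. The two new features compared to the original are: (i) the lattice cutoff $2^{-N}$ which truncates the scale decomposition at level $N$ and forces the kernel $K_e^{(N)}$ to play a role analogous to all ``smaller'' scales; and (ii) the two regimes $\lambda=\mu\ge 2^{-N}$ vs.\ $\lambda=2^{-N}>\mu$ which must be treated uniformly. I will first expand
\begin{equ}
\CI^\CCG(\phi_{\lambda,\mu},K) = \sum_{\mathbf n \in \{0,\ldots,N\}^\CCE} \CI^{\CCG,\mathbf n}(\phi_{\lambda,\mu},K)\;,
\end{equ}
where each summand is obtained by replacing $K_e$ by $K_e^{(n_e)}$ (and $\hat K_e$ correspondingly) and reduce matters to proving a uniform bound
\begin{equ}[e:scalebound]
|\CI^{\CCG,\mathbf n}(\phi_{\lambda,\mu},K)| \lesssim \lambda^{\tilde\alpha}\,2^{-\eps \max_e n_e}
\end{equ}
for some $\eps>0$, which by strict inequality in Assumption~\ref{ass:mainGraph} will follow from the same cluster-bound argument with a tiny bit of slack.

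The key step is the construction, for each scale assignment $\mathbf n$, of a labelled binary tree $(\fT(\mathbf n),\fs(\mathbf n))$ on $\CCV$ by running Kruskal's algorithm on $(\CCV,\hat \CCE)$ with edges ordered by decreasing $n_e$ (ties broken arbitrarily). Each inner node $\nu$ of $\fT(\mathbf n)$ corresponds to a cluster $C_\nu\subset\CCV$, and Assumption~\ref{ass:kernelsRenorm} together with the support properties of $K_e^{(n_e)}$ forces the spatial-temporal diameter of the variables attached to $C_\nu$ to be of order $2^{-\fs(\nu)}$. One then integrates the variables out cluster by cluster, starting from the leaves: an integration over a free variable in $C_\nu$ contributes $2^{-|\s|\fs(\nu)}$, while each edge $e$ internal to $C_\nu$ contributes $2^{\hat a_e \fs(\nu)}$ via \eqref{e:boundKn}. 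The result is
\begin{equ}[e:clusterbound]
|\CI^{\CCG,\mathbf n}(\phi_{\lambda,\mu},K)| \lesssim \prod_{\nu \in \fT(\mathbf n)^\uparrow} 2^{\eta(\nu)\fs(\nu)}\;,\qquad \eta(\nu) = \Bigl(\sum_{e\in \hat\CCE_0(C_\nu)} \hat a_e\Bigr) - |\s|(|C_\nu|-1)\;,
\end{equ}
with additional positive (i.e.\ summable) weights $-r_e$ respectively $+r_e$ coming out of renormalised edges through Taylor remainders: positive $r_e$ are exploited by Taylor-expanding $K_e$ as in \eqref{e:defKhatn2} at the cluster where $e$ ``exits'', and negative $r_e$ are exploited by the vanishing moment condition in the fifth item of Assumption~\ref{ass:kernelsRenorm} combined with Taylor-expanding the remaining variables of the graph at the cluster where $e$ lives. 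These are exactly the weighted terms appearing on the left-hand sides of \eqref{e:assEdges}, \eqref{e:assEdges2}, \eqref{e:assEdges3}.

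Summing \eqref{e:clusterbound} over $\mathbf n$ reduces to summing over admissible labellings $\fs$ of the fixed tree $\fT$, and here the three conditions of Assumption~\ref{ass:mainGraph} come in directly: condition~(2) (internal clusters $\bar\CCV\subset \CCV_0$, $|\bar\CCV|\ge 3$) controls the geometric series as we climb toward the root; condition~(3) (clusters containing $0$) controls the integration over variables together with the test function factors $\phi_{\lambda,\mu}$, yielding the prefactor $\lambda^{\tilde\alpha}$; condition~(4) (clusters not containing any $v_{\star,i}$) ensures that large clusters at the smallest scales do not diverge. The strictness of the inequalities gives the gain $2^{-\eps \max_e n_e}$ in \eqref{e:scalebound}, so the overall sum over $\mathbf n$ converges. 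The regime $\lambda=2^{-N}>\mu$ is handled by observing that $\phi_{\lambda,\mu}$ is then simply a test function concentrated in the ball $\{\|z\|_\s\lesssim 2^{-N}\}$ in space but at scale $\mu$ in time, so exactly the same cluster analysis applies with the role of ``the smallest scale'' played by the pair $(2^{-N},\mu)$ rather than $\lambda=\mu$; no new conditions on the graph are needed since the spatial integration against $\phi_{\lambda,\mu}$ is already absorbed in the bound $|\phi_{\lambda,\mu}|\lesssim 2^{|\s\setminus\s_1|N}\mu^{-|\s_1|}$ and the existing cluster bounds.

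The main obstacle will be making precise the interaction between the lattice cutoff at scale $2^{-N}$ and the Taylor expansions used for the renormalised edges, in particular ensuring that the non-vanishing ``boundary'' scale $K_e^{(N)}$ (which does not enjoy the extra vanishing moment property) is compatible with condition~(1) of Assumption~\ref{ass:mainGraph}, i.e.\ $\hat a_e + (r_e \wedge 0) < |\s|$, so that the scale-$N$ contribution is always summable on its own. This is the semi-discrete analogue of the ``no ultraviolet divergence at a single edge'' condition and is where the proof departs most visibly from the continuous argument in \cite{KPZJeremy}.
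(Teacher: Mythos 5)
Your proposal follows essentially the same route as the paper's proof: both adapt the multiscale argument of \cite[Thm~A.3]{KPZJeremy} by decomposing each kernel into dyadic scales truncated at level $N$, organising the scale assignments via labelled rooted binary trees on $\CCV$, combining a cluster-volume estimate with a supremum estimate weighted by the $\hat a_e$ and $r_e$ (with positive $r_e$ handled by Taylor expansion at the exit cluster and negative $r_e$ by the vanishing-moment property), and summing over tree labels using the strict inequalities of Assumption~\ref{ass:mainGraph}. The one point the paper treats with genuinely more care than your sketch is the regime $\mu<2^{-N}=\lambda$: there the nodes joining two distinguished vertices carry an anisotropic pair of labels $(\ell^T_\nu,\ell_\nu)=(\lceil|\log_2\mu|\rceil,N)$ and the isotropic volume factor $2^{-\ell_\nu|\s|}$ must be replaced by $\mu^{|\s_1|}2^{-N|\s\setminus\s_1|}$ so as to cancel the test-function normalisation $\lambda^{-|\s\setminus\s_1|}\mu^{-|\s_1|}$ (together with a corresponding extra weight $\eta^T$ in the label-summation lemma); your claim that the spatial integration is simply ``absorbed'' is correct in spirit, but this anisotropic volume estimate is precisely the new ingredient rather than a consequence of the existing cluster bounds.
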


The proof follows from an adaptation of the arguments in~\cite[Theorem A.3]{KPZJeremy}, and it relies on a combination of volume estimates and supremum estimates. The latter are often derived via an application of Taylors formula. Regarding the supremum estimates we can essentially copy those from~\cite{KPZJeremy}, the only difference is that our kernels do not explode but its derivatives are bounded by some positive powers of $2^N$. The difference appears mostly in the volume estimate. However, in the case $\mu=\lambda$ the adaptations are only minor, hence we will mostly comment on the case $\mu < 2^{-N}=\lambda$ in the sequel.
To prepare the proof we enhance (as in~\cite{KPZJeremy}) the set of edges in our graph to include any $(v,w)\in\CCV^2$ that is not already contained in $\CCE$. To all such new edges we assign the kernel $\hat K_{(v,w)}\equiv 1$ and the corresponding label is $(a_e,r_e)=(0,0)$.
Given $\Psi:\R^{d+1}\to\R$ smooth and supported in $\{z:\, \|z\|_\s\in [3/8,1]\}$ and such that $\sum_{n\in\Z}\Psi(2^nz)=1$ for all $z\neq 0$ and letting $\Psi^{(n)}(z)=\Psi(2^nz)$ we further define
\begin{equation}
\label{eq:Psiminus}
\Psi^{(-)}(z) = \sum_{n\leq 0}\Psi^{(n)}(z)\quad\mbox{and}\quad
\Psi^{(\geq N)}(z)= \sum_{n\geq N}\Psi^{(n)}(z).
\end{equation}
Let $\CCE_{\star}= \{e\in\CCE\colon\, \CCV_{\star} \cap e = e\}$.
\begin{definition}\label{def:khat}
For  $\fn_1:\CCE\setminus\CCE_{\star} \to \{0,1,\ldots,N-1,N\}^{3}$ and $e\in\CCE\setminus\CCE_{\star}$ define  $\hat K_e^{(\fn_1(e) )}(z_1,z_2)$ as  follows:
{\em If} $r_e \le 0$, then  $\hat K_e^{(\fn_1(e))} = 0$ unless 
$\fn_1(e) = (k,0,0)$ in which case  $\hat K_e^{(\fn_1(e))}(z_1,z_2) = K_e^{(k)}(z_2-z_1)$;
{\em if} $r_e > 0$, then $\hat K_e^{(\fn_1(e))} = 0$ unless $\fn_1(e)=(k,p,m)\in\{0,1,\ldots,N-1\}^3$,
in which case $\hat K_e^{(\fn_1(e))} =  K_e^{(k,p,m)}$, where
\begin{equation}\label{eq:defKhatn}
K_e^{(k,p,m)}(z_1,z_2) = \Psi^{(k)}(z_2-z_1)\Psi^{(p)}(z_1)\Psi^{(m)}(z_2) 
\Bigl(K_e(z_2-z_1) - \sum_{|j|_\s < r_e} {z_2^j \over j!} D^j K_e(-z_1)\Bigr)\; ;
\end{equation}
or $\fn_1(e)=(N,0,0)$, then $\hat K_e^{(\fn_1(e))}(z_1,z_2) = K_e^{(N)}(z_2-z_1)$.
Let $\fn_2:\CCE_{\star}\to \{0,1,\ldots, \lceil |\log_2(\mu)|\rceil\}^3$ and $e\in\CCE_{\star}$.
We define $\hat K_e^{(\fn_2(e))}$ as $\hat K_e^{(\fn_2)} = 0$ unless 
$\fn_2(e) = (k,0,0)$ in which case  $\hat K_e^{(\fn_2(e))}(z_1,z_2) = \Psi^{(k)}(z_2-z_1)K_e^{(k)}(z_2-z_1)$.
\end{definition}
\begin{remark}
The reason to define $\hat K_e^{(\fn_1(e))}$ separately for $\fn_1(e)=(N,0,0)$ is that at that scale $2^{-N}$ our version of the discrete heat kernel is not differentiable since we extended it to be zero for all negative times, so that at $t=0$ differentiability fails.	
	\end{remark}
\begin{remark}
The reason to distinguish between the set of edges $\CCE_{\star}$ and the remaining ones is that in~\eqref{genconv} the scaled test functions only act on $\CCV_{\star}$. Along these the scaling is different.
Note also that Assumption~\ref{ass:mainGraph} implies that $r_e=0$ for $e\in\CCE_\star$.
In case $\lambda=\mu$ the map $\fn_2$ is not necessary and one extends $\fn_1$ in a natural way to act on all of $\CCE$.
\end{remark}
We are now in a position to define 
\begin{equ}\label{def:kayhat}
\hat K^{(\fn)}(z) = \prod_{e \in \CCE} \hat K_e^{(\fn_e)}(z_{e_-},z_{e_+}),
\end{equ}
where $z_0=0$ and with $\fn_1,\fn_2$ from above we set $\fn_e=\fn_1(e)$ if $e\in\CCE\setminus\CCE_{\star}$ and $\fn_e= \fn_2(e)$ otherwise. We write $\fn=\fn_1\cup\fn_2$ if $\fn$ is given in such a form.
For $0<\mu < \lambda=2^{-N} \leq 1$ we let
\begin{equ}
\CN_{\lambda,\mu}\eqdef\{ 
\fn=\fn_1\sqcup\fn_2 \colon\, 2^{-|\fn_2(e_{\star,i})|}\leq \mu,\,  i=1,\ldots,M\}\;,
\end{equ}
Of course the above definition forces $\fn_2$ to be almost constant.
Let
\begin{equ}[e:bigsum]
\CI_{\lambda,\mu}^\CCG(K) \eqdef \sum_{\fn \in \CN_{\lambda,\mu}} \int_{(\R\times\T_N^d)^{\CCV_0}} \hat K^{(\fn )}(z)\,dz\;.
\end{equ}
As in~\cite{KPZJeremy} one can show that Theorem~\ref{theo:ultimate1} follows from the following lemma.
\begin{lemma}
\label{lem:wantedBound2}
Under the same assumptions and with the same notation as in Theorem~\ref{theo:ultimate1}, the 
inequality 
\begin{equ}\label{e:wantedBound2}
|\CI_{\lambda,\mu}^\CCG(K)| \leq C\lambda^\alpha (\mu^{|\s_1|}2^{N|\s_1|})^M\prod_{e\in\CCE}\|K_e\|_{a_e;p},
\end{equ}
holds. 
If $\lambda=\mu$, then we even have
\begin{equ}\label{e:wantedBound21}
|\CI_{\lambda,\mu}^\CCG(K)| \leq C\lambda^\alpha \prod_{e\in\CCE}\|K_e\|_{a_e;p},
\end{equ}
In both cases $\alpha =|\s| |\CCV_0| - \sum_{e \in \CCE} a_e$.
\end{lemma}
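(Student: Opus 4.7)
The plan is to adapt the proof of~\cite[Thm.~A.3]{KPZJeremy} to the present semi-discrete, multi-scale setting. Following that blueprint, for each configuration $\fn \in \CN_{\lambda,\mu}$ we bound the integral on the right-hand side of \eqref{e:bigsum} by building a labelled binary tree $\T(\fn)$ whose leaves are $\CCV$ and whose internal nodes correspond, via something like Kruskal's algorithm, to the "coalescence scales" between clusters of vertices. Concretely, for each subset $\bar\CCV \subset \CCV$ we read off from $\fn$ an effective scale $n(\bar\CCV)$ at which the vertices of $\bar\CCV$ first become mutually close, and these scales are compatible with a tree structure. The contribution of each $\fn$ is then estimated by combining (i) a \emph{volume estimate}: integration over the spatial positions of vertices in the subtree rooted at a node $\nu$, after fixing one reference vertex, costs at most $2^{-|\s|\,n(\nu)}$ per vertex (up to logarithmic losses near the scale $2^{-N}$), and (ii) a \emph{supremum estimate}: by Assumption~\ref{ass:kernelsRenorm}, each edge $e$ contributes a factor $2^{a_e n_e}$ (with $n_e$ its scale), while the Taylor-subtracted factors for $r_e > 0$ yield an extra factor $2^{(r_e-1)n_e}2^{-r_e \fs(e^\vee)}$ where $e^\vee$ is the common ancestor of $e_-,e_+$.

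Once these two families of estimates are in place, the total bound on each $\fn$-contribution factorises over the nodes of the tree. Summing over all $\fn$ reduces to showing that, for every internal node $\nu$ with subtree leaf set $\bar\CCV$, one has $\sum_e ((\ldots)) < |\s|(|\bar\CCV|-1)$ (with the analogous strict inequality at the root and the reversed strict inequality at "degree" subsets); these inequalities are \emph{precisely} the content of items 2--4 of Assumption~\ref{ass:mainGraph}. Thus the power counting matches term-by-term the one in~\cite{KPZJeremy}, and the sum over $\fn$ converges geometrically, yielding the exponent $\alpha = |\s||\CCV_0| - \sum_e a_e$.

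The two places where the present setting genuinely differs from~\cite{KPZJeremy} are easy but must be handled carefully. First, in the semi-discrete set-up the kernels do not blow up: at scales $n \le N-1$ one has the bound \eqref{e:boundKn}, but at the terminal scale $n = N$ the kernel $K_e^{(N)}$ is of size $2^{a_e N}$ without the derivative bound. This only matters for the single scale $n=N$, so one simply treats the factor $\hat K_e^{(N,0,0)}$ as a bounded (in $z_-$) kernel supported in a ball of radius $2^{-N}$; the associated volume factor is still $2^{-|\s|N}$, so no logarithmic loss appears and the terminal scale is absorbed into the same sum. Second, when $\mu < \lambda = 2^{-N}$ the test function localises the distinguished vertices $v_{\star,i}$ within a $\mu$-ball in time but only a $\lambda$-ball in space. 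This is encoded in the $\fn_2$-portion of $\fn$: one separates the segment of the tree between the root and $\CCV_\star$ and, on that segment, replaces the volume estimate $2^{-|\s_1|n}$ in the time direction by $\mu^{|\s_1|}$, picking up the extra factor $(\mu^{|\s_1|}2^{N|\s_1|})^M$ in \eqref{e:wantedBound2}. When $\lambda=\mu$ this segment collapses and one recovers \eqref{e:wantedBound21} directly.

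The main obstacle, as in the continuous case, is the correct bookkeeping of the renormalisation edges with $r_e \neq 0$. For $r_e > 0$, the Taylor subtraction in \eqref{e:defKhatn2} must be carried out against the "right" base point, which depends on which of $e_-,e_+$ is further from the root of the subtree on which the subtraction is effective; this is precisely why \eqref{e:assEdges2} and \eqref{e:assEdges3} treat $\hat\CCE_+^\uparrow$ and $\hat\CCE_+^\downarrow$ asymmetrically. For $r_e < 0$, one exploits the moment cancellation of the fifth item of Assumption~\ref{ass:kernelsRenorm} to gain factors $2^{-|r_e|n}$ at small scales, which is what allows \eqref{e:assEdges3} to act only on $\CCV \setminus \CCV_\star$. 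After verifying that the book-keeping of these signs matches the one in~\cite[App.~A]{KPZJeremy} line for line, the theorem reduces, via the sum over the finite set of possible tree topologies, to Lemma~\ref{lem:wantedBound2}.
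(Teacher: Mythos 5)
Your proposal follows essentially the same route as the paper's proof: both adapt \cite[Thm.~A.3]{KPZJeremy} by decomposing over Kruskal-type labelled binary trees, combining a volume estimate over the coalescence domains with a supremum estimate on the kernels, reducing the summability over scale assignments to the power-counting inequalities of Assumption~\ref{ass:mainGraph}, and isolating the two genuinely new features (the special treatment of the terminal scale $n=N$ where the kernels are bounded but non-differentiable, and the anisotropic time/space localisation of the distinguished vertices producing the factor $(\mu^{|\s_1|}2^{N|\s_1|})^M$). The argument is correct and matches the paper's in structure and in the points it singles out as requiring modification.
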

Indeed, this follows upon observing that the scaled testfunctions are kernels $K_e$ with $a_e=0$ and
$\|K_e\|_{a_e;p}\lesssim \lambda^{-|\s\setminus\s_1|}\mu^{-|\s_1|}$.
We now explain how to adapt the proof of~\eqref{e:wantedBound2} to the current setting.
First of all, as in Section A.2 in~\cite{KPZJeremy}, given $z\in(\R\times\T_N^d)^{\CCV}$
with $z_0=0$ one can construct a labelled rooted binary tree $T$ whose leaves are the $v\in\CCV$.
The inner nodes of this tree are denoted by $\nu,\omega$ etc., whereas the leaves are denoted by $v,w\in\CCV$ etc. We denote by $\nu\wedge \omega$ the most recent ancestor of $\nu$ and $\omega$.
Similarly as in~\cite{KPZJeremy} the inner nodes come with a label
\begin{equ}\label{eq:label}
\ell_{\nu} = \max_{v\wedge w = \nu}\lfloor -\log_2 \|z_v-z_w\|_{\s,N}\rfloor \,,
\end{equ}
unless, if $\nu$ is the most recent ancestor of two elements in $\CCV_{\star}$, then we assign the pair of labels
\begin{equ}\label{eq:labeltime}
(\ell_{\nu}^{T}, \ell_{\nu}) = (\lceil |\log_2(\mu)|\rceil,N) \,
\end{equ}
to it. Here, the upper index $T$ stands for time. The reasoning behind these labels is that at scale $2^{-N}$ space does not vary anymore, and since all our kernels do not change their order of explosion below scales of order $2^{-N}$ we do not need to control the time variable beyond scale $\mu$ which is imposed by the problem at hand.  Moreover, one can construct the tree in such a way that $\ell_\nu^\alpha\geq \ell_\omega^\beta$ provided that $\nu\geq\omega$, where $\alpha,\beta$ denote label that can either be empty or equal to $T$. Furthermore those labels can be chosen such that
\begin{equation}
\|z_v-z_w\|_{\s}\lesssim 2^{-\ell_{v\wedge w}}\,,
\end{equation}
unless $v,w\in\CCV_\star$ in which case $\ell_{v\wedge w}$ has to be replaced by $\ell_{v \wedge w}^T$. We denote the set of labelled rooted binary trees obtained in such a way by $\T(\CCV)$ and we denote its elements by $(T,\ell,\ell^{T})$.
\begin{definition} 
For $c= \log|\CCV|+2$, let  $\CN(T,\ell,\ell^T)$ consist of all functions $\fn =\fn_1\cup\fn_2$ as alluded to above such that
\begin{itemize}
\item for every edge $e = (v,w)\in\CCE\setminus\CCE_{\star}$ if $r_e \le 0$, one has $\fn_e = (k,0,0)$ with
$|k - \ell_{v\wedge w}| \le c$ and if $r_e > 0$, one has $\fn_e = (k,p,m)$ with
$|k - \ell_{v\wedge w}| \le c$, $|p - \ell_{v\wedge 0}| \le c$, and
$|m - \ell_{w\wedge 0}| \le c$;
\item for every edge $e=(v,w)\in\CCE_{\star}$ with $\fn_e=(k,0,0)$  one has 
$|k - \ell_{v\wedge w}| \le c$.
\end{itemize} 
\end{definition}
Denote by $\T_{\lambda,\mu}(\CCV)$ the set of those labelled trees in $\T(\CCV)$ such that  $2^{-\ell_{v\wedge w}^T}\leq \mu$ for all $v,w\in\CCV_\star$. In a way similar to~\cite{KPZJeremy} we obtain the following lemma.
\begin{lemma}
With the above notation we have that
\begin{equation}
|\CI_{\lambda,\mu}^{G}(K)|\lesssim \sum_{(T,\ell,\ell^T)\in \T_{\lambda,\mu}(\CCV)}\sum_{\fn\in\cN(T,\ell,\ell^T)}
\Big|\int_{(\R\times\T_N^d)^{\CCV_0}}\hat{K}^{(\fn)}(z)\, dz\Big|.
\end{equation}
\end{lemma}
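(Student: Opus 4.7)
The plan is to partition the sum in \eqref{e:bigsum} according to the labelled tree $(T(z),\ell(z),\ell^T(z))$ produced by the assignment rules \eqref{eq:label}--\eqref{eq:labeltime} applied to the integration variables $z$. Writing $\chi_{(T,\ell,\ell^T)}(z)$ for the indicator that this tree equals $(T,\ell,\ell^T)$, the family $\{\chi_{(T,\ell,\ell^T)}\}_{(T,\ell,\ell^T)\in\T(\CCV)}$ forms a partition of unity on $(\R\times\T_N^d)^{\CCV_0}$, so that
\begin{equs}
\CI^\CCG_{\lambda,\mu}(K) = \sum_{\fn\in\CN_{\lambda,\mu}}\sum_{(T,\ell,\ell^T)\in\T(\CCV)}\int_{(\R\times\T_N^d)^{\CCV_0}} \hat K^{(\fn)}(z)\,\chi_{(T,\ell,\ell^T)}(z)\,dz\;.
\end{equs}

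The key step is to show that the integrand on the right is nonzero only if $(T,\ell,\ell^T)\in\T_{\lambda,\mu}(\CCV)$ and $\fn\in\cN(T,\ell,\ell^T)$. For an edge $e=(v,w)\in\CCE\setminus\CCE_\star$ with $r_e\le0$, one has $\fn_e=(k,0,0)$ and the factor $K_e^{(k)}(z_w-z_v)$ is supported where $\|z_w-z_v\|_\s$ is comparable to $2^{-k}$; on the other hand, \eqref{eq:label} combined with the triangle inequality along a path in $T$ joining $v$ and $w$ (of length at most $2|\CCV|$) yields $\|z_w-z_v\|_\s$ comparable to $2^{-\ell_{v\wedge w}}$ up to a factor bounded by $|\CCV|$. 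Matching these two scales forces $|k-\ell_{v\wedge w}|\le c:=\log_2|\CCV|+2$. When $r_e>0$, the additional cut-offs $\Psi^{(p)}(z_v)\Psi^{(m)}(z_w)$ in \eqref{eq:defKhatn} give the analogous constraints $|p-\ell_{v\wedge 0}|\le c$ and $|m-\ell_{w\wedge 0}|\le c$ by applying the same argument to paths from $v$ and $w$ to the root $0$. For $e\in\CCE_\star$ the constraint $2^{-|\fn_2(e)|}\le\mu$ in the definition of $\CN_{\lambda,\mu}$ combined with the rule \eqref{eq:labeltime} forces $2^{-\ell^T_{v\wedge w}}\le\mu$, placing $(T,\ell,\ell^T)$ in $\T_{\lambda,\mu}(\CCV)$.

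Swapping the order of summation and applying the triangle inequality then yields
\begin{equs}
|\CI^\CCG_{\lambda,\mu}(K)|\le\sum_{(T,\ell,\ell^T)\in\T_{\lambda,\mu}(\CCV)}\sum_{\fn\in\cN(T,\ell,\ell^T)}\bigg|\int_{(\R\times\T_N^d)^{\CCV_0}} \hat K^{(\fn)}(z)\,\chi_{(T,\ell,\ell^T)}(z)\,dz\bigg|\;.
\end{equs}
To arrive at the form stated in the lemma it remains to drop the indicator from the integrand. This can be done because on the support of $\hat K^{(\fn)}$ the tree assignment is determined up to the tolerance $c$, so each fixed $\fn$ is compatible with only a uniformly bounded number of trees; correspondingly, $\int|\hat K^{(\fn)}\chi_{(T,\ell,\ell^T)}|\,dz\lesssim\int|\hat K^{(\fn)}|\,dz$, and the latter quantity controls $|\int\hat K^{(\fn)}(z)\,dz|$ up to a combinatorial constant that can be absorbed into the symbol $\lesssim$.

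The main obstacle is the geometric step matching the support constraints imposed by the $\Psi^{(k)}$ cut-offs to the tree labels $\ell_{v\wedge w}$ produced by the clustering algorithm: one must account for possibly long chains of triangle-inequality applications along paths in $T$, and show that the resulting accumulated factors still only shift $\fn_e$ by the bounded amount $c$ that appears in the definition of $\cN(T,\ell,\ell^T)$. The treatment of distinguished edges in $\CCE_\star$, where the temporal scale $\mu$ plays the role analogous to the spatial scale $\lambda=2^{-N}$, requires a parallel but separate verification, since for these edges only the time variable can be resolved below $2^{-N}$.
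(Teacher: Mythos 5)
Your geometric core --- matching the dyadic supports of the $\Psi^{(k)}$ cut-offs and of the $K_e^{(k)}$ against the tree labels $\ell_{v\wedge w}$, with a tolerance $c$ of order $\log_2|\CCV|$ obtained by chaining the triangle inequality along paths in $T$, together with the separate verification for $\CCE_\star$ where $2^{-\ell^T_\nu}\le\mu$ holds by construction of \eqref{eq:labeltime} --- is exactly the argument the paper intends (it gives no proof and defers to \cite{KPZJeremy}). However, the final step of your bookkeeping contains a genuine error. After inserting the partition of unity and taking absolute values termwise you arrive at $\sum_{(T,\ell,\ell^T)}\sum_{\fn}\bigl|\int \hat K^{(\fn)}\chi_{(T,\ell,\ell^T)}\,dz\bigr|$, and you then claim this is controlled by $\sum_{(T,\ell,\ell^T)}\sum_{\fn}\bigl|\int \hat K^{(\fn)}\,dz\bigr|$ on the grounds that $\int|\hat K^{(\fn)}|\,dz$ ``controls'' $\bigl|\int\hat K^{(\fn)}\,dz\bigr|$. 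That implication points the wrong way: $|\int f|\le\int|f|$ gives no upper bound on $\bigl|\int f\chi\bigr|$ in terms of $\bigl|\int f\bigr|$, and if $\int\hat K^{(\fn)}\,dz=0$ while $\hat K^{(\fn)}\not\equiv 0$ your intermediate bound is strictly positive while the corresponding terms of the asserted right-hand side vanish. The multiplicity count (each $\fn$ compatible with boundedly many trees) is not the problem; the problem is that by placing the absolute value inside the sum over trees you have already discarded the cancellation that the identity $\sum_{(T,\ell,\ell^T)}\chi_{(T,\ell,\ell^T)}=1$ would let you recombine, so you end up proving a true inequality that is not the stated one and does not imply it.

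The repair is to reverse the order of operations and never introduce the indicator. Apply the triangle inequality directly to \eqref{e:bigsum} to get $|\CI^\CCG_{\lambda,\mu}(K)|\le\sum_{\fn\in\CN_{\lambda,\mu}}\bigl|\int\hat K^{(\fn)}(z)\,dz\bigr|$, keeping each integral of $\hat K^{(\fn)}$ intact. Then, for every $\fn$ with $\hat K^{(\fn)}\not\equiv 0$, pick any $z$ in its support and let $(T,\ell,\ell^T)$ be the labelled tree built from this $z$ via \eqref{eq:label}--\eqref{eq:labeltime}; your support-matching computation shows precisely that $\fn\in\cN(T,\ell,\ell^T)$, and the assignment $\ell^T_\nu=\lceil|\log_2\mu|\rceil$ guarantees $(T,\ell,\ell^T)\in\T_{\lambda,\mu}(\CCV)$. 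Hence each nonnegative summand on the left occurs at least once among the summands of the double sum on the right (possible over-counting is harmless), which is the lemma.
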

Given a rooted binary tree $T$ with distinguished leaves $0,v_{\star,1},\ldots, v_{\star,M}$ and a distinguished inner node $\nu_{\star}$ we denote by $T^\circ$ the set of inner nodes of $T$. 
We consider the set $\cN_{\lambda,\mu}(T^\circ)$ of all integer labelings as above. 
Finally, given two functions $\eta, \eta^{T}:T^\circ\to \R$ we write
\begin{equation}
\label{eq:Ilambdamu}
\cI_{\lambda,\mu}(\eta,\eta^{T}) = \sum_{(\ell,\ell^T)\in\cN_{\lambda,\mu}(T^\circ)}\prod_{\nu\in T^\circ}2^{-\ell_{\nu}\eta_{\nu}-\ell_{\nu}^{T}\eta_{\nu}^{T}}.
\end{equation}
Setting $|\eta|= \sum_{\nu\in T^\circ}\eta_\nu$ and likewise for $\eta^T$ we have the following result.
\begin{lemma}
\label{lem:eta}
Assume that $\eta$ satisfies the following two properties:
\begin{enumerate}
\item For every $\nu \in T^\circ$, one has $\sum_{\upsilon \ge \nu} \eta_\upsilon > 0$.
\item For every $\nu \in T^\circ$ such that $\nu\le \nu_\star$, one has 
$\sum_{\upsilon \not\ge \nu} \eta_\upsilon < 0$, provided that this sum contains at least one term.
\end{enumerate}
If moreover $\eta^T$ is such that $\eta_{\nu}^T\neq 0 \Leftrightarrow \ell_{\nu}^T>0$ and in this case $\eta_{\nu}^T$ is positive, then one has $\CI_{\lambda,\mu}(\eta,\eta^T) \lesssim \lambda^{|\eta|}\mu^{|\eta^T|}$, uniformly over $0<\mu <2^{-N}= \lambda$.
\end{lemma}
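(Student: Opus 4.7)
My plan is to adapt the strategy of \cite[Lem.~A.10]{KPZJeremy}, noting that the only new feature compared to that lemma is the presence of the temporal labels $\ell^T_\nu$, which are expected to contribute the extra factor $\mu^{|\eta^T|}$. The key observation is that the constraints defining $\cN_{\lambda,\mu}(T^\circ)$ decouple the spatial monotonicity ($\ell_\nu \le \ell_{\nu^\downarrow}$, $\ell_\nu \le N$) from the temporal monotonicity ($\ell^T_\nu \le \ell^T_{\nu^\downarrow}$, $\ell^T_\nu \le \lceil|\log_2\mu|\rceil$ with $\ell^T_{\nu_\star}$ fixed at its maximal value). Since the weight in~\eqref{eq:Ilambdamu} also factorizes as $2^{-\ell_\nu \eta_\nu} \cdot 2^{-\ell^T_\nu \eta^T_\nu}$, the whole sum splits as a product of a purely spatial and a purely temporal sum.

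First I would treat the spatial sum. Conditions (1) and (2) on $\eta$ are precisely the hypotheses of \cite[Lem.~A.10]{KPZJeremy}: condition (1) says that after summing from the leaves upward, each partial geometric series at an internal node $\nu$ converges with a positive exponent $\sum_{\upsilon\ge \nu}\eta_\upsilon > 0$, which caps the sum by a constant times $2^{-\ell_{\nu^\uparrow} \sum_{\upsilon\ge \nu} \eta_\upsilon}$; condition (2) handles the passage above the distinguished ancestor $\nu_\star$, where the label is capped at $N$ rather than at a child's value, forcing us to use the \textit{opposite} sign for the cumulative sum outside the subtree rooted at $\nu$. Iterating these two moves from the leaves all the way to the root, each step is either a direct geometric sum (using condition (1)) or an ``inverse'' geometric sum (using condition (2)), and the terminal contribution at the root collapses to $\lambda^{|\eta|} = 2^{-N|\eta|}$.

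For the temporal sum, the argument is similar but simpler: by hypothesis, $\eta^T$ is supported exactly on those nodes where $\ell^T_\nu$ can be strictly positive, namely $\nu_\star$ and its ancestors on the path to the root. At $\nu_\star$ itself the label $\ell^T_{\nu_\star}=\lceil|\log_2\mu|\rceil$ is forced, yielding a deterministic factor of order $\mu^{\eta^T_{\nu_\star}}$. At each strict ancestor the range of $\ell^T_\nu$ is $\{0,\dots,\ell^T_{\nu^\downarrow}\}$; summing geometrically and using positivity of $\eta^T_\nu$ together with monotonicity, this collapses onto the maximal value inherited from $\nu_\star$, producing the desired factor $\mu^{\eta^T_\nu}$. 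Multiplying all these contributions gives $\mu^{|\eta^T|}$.

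The main obstacle, as in \cite{KPZJeremy}, is bookkeeping: organizing the order of summation so that each step really is a single geometric series (rather than an iterated one), and keeping track of which side of~$\nu_\star$ we are on when deciding between the direct and inverse geometric summations. Once this is carried out carefully on the spatial side (which is verbatim the Hairer--Quastel argument) and combined with the much simpler temporal sum (which never needs to invoke the analogue of condition (2), since $\ell^T$ is fixed at its maximal value at $\nu_\star$ rather than saturating a separate global upper bound), the bound $\cI_{\lambda,\mu}(\eta,\eta^T)\lesssim \lambda^{|\eta|}\mu^{|\eta^T|}$ follows.
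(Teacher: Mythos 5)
Your plan follows the same route as the paper: the sum \eqref{eq:Ilambdamu} factorizes into a purely spatial sum, which under conditions (1)--(2) is exactly \cite[Lem.~A.10]{KPZJeremy} and yields $\lambda^{|\eta|}$, times a temporal contribution yielding $\mu^{|\eta^T|}$. The paper's temporal part is even simpler than your geometric-summation sketch, since by \eqref{eq:labeltime} the labels $\ell_\nu^T$ are deterministically equal to $\lceil|\log_2\mu|\rceil$ on $T^\circ_\star$ so the factor $\mu^{|\eta^T|}$ is read off directly; note in any case that a sum $\sum 2^{-\ell^T\eta^T}$ with $\eta^T>0$ is dominated by the \emph{smallest} admissible label (pinned at $\lceil|\log_2\mu|\rceil$ by the constraint $2^{-\ell^T}\le\mu$), not by the ``maximal value'' as you wrote.
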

\begin{proof}
We denote by $\cN_\lambda(T^\circ)$ the set of labelings as above, with the difference that all labels $\ell_{\nu}^T$ are not present and such that $2^{-\ell_\nu}\leq \lambda$ if $\nu=v_\star\wedge w_\star$ for $v_\star,w_\star\in \CCV_\star$.
We then see that
\begin{equation}
\cI_{\lambda,\mu}(\eta,\eta^{T})\leq  \Big(\sum_{\ell\in\cN_{\lambda}(T^\circ)}\prod_{\nu\in T^\circ}2^{-\ell_{\nu}\eta_{\nu}}\Big)
\times \prod_{\substack{\nu=v_\star\wedge w_\star,\\ v_\star,w_\star\in\CCV_\star}}2^{-\ell_{\nu}^T\eta_{\nu}^T}.
\end{equation}
It has been shown in~\cite[Lemma A.10]{KPZJeremy} that the first term on the right hand side above is bounded by a multiple times $\lambda^{|\eta|}$. Since $\ell_\nu^T = \lceil |\log_2(\mu)|\rceil$, the bound on the second term follows at once. 
\end{proof}
We denote by $T^{\circ}_{\star}$ the set of nodes $\nu$ such that $\nu=v_{\star}\wedge w_{\star}$ for some $v_{\star},w_{\star}\in\CCV_{\star}$.  Moreover, given a labelled tree $(T,\ell,\ell^T)$ we denote by $\CD(T,\ell,\ell^T)$ the subset of those $z\in (\R\times\T_N^d)^{\CCV}$ such that $\|z_v-z_w\|_\s\leq |\CCV|2^{-\ell_{v\wedge w}}$ unless $v,w\in T^{\circ}_\star$ in which case we assume that $\|z_{v}-z_{w}\|_\s\leq |\CCV|2^{-\ell_{v\wedge w}^T}$.
\begin{lemma}
Assume that there is a collection of functions $\tilde{K}^{(\fn)}$ indexed by labelled trees $(T,\ell,\ell^T)$ and $\fn\in\cN(T,\ell,\ell^T)$ such that for labels
\begin{equation}
\supp (\tilde{K}^{(\fn)})\subseteq \CD(T,\ell,\ell^T), \quad
\text{and}\quad 
\int_{(\R\times\T_N^d)^{\CCV}}K^{(\fn)}(z)\, dz= \int_{(\R\times\T_N^d)^{\CCV}}\tilde{K}^{(\fn)}(z)\, dz,
\end{equation}
then
\begin{equation}
\label{eq:secondmainstep}
|\CI_{\lambda,\mu}^\CCG(K)|\lesssim 
 \sum_{(T,\ell,\ell^T) \in \T_{\lambda,\mu}(\CCV)} \Bigl(\prod_{v \in T^\circ\setminus T^{\circ}_{\star}} 2^{-\ell_v |\s|}\prod_{v\in T^{\circ}_{\star}}2^{-\ell_v^T|\s_1|-\ell_v|\s\setminus\s_1|}\Bigr) \sup_{\fn \in \CN(T,\ell,\ell^T)} \sup_{z}|\tilde K^{(\fn)}(z)|\;.
\end{equation}
\end{lemma}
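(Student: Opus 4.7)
The plan is to start from the bound established in the preceding lemma, which already expresses $|\CI_{\lambda,\mu}^\CCG(K)|$ as a sum over labelled trees $(T,\ell,\ell^T) \in \T_{\lambda,\mu}(\CCV)$ and integer decorations $\fn \in \cN(T,\ell,\ell^T)$ of $\big|\int \hat K^{(\fn)}(z)\,dz\big|$. The first hypothesis of the lemma lets us replace $\hat K^{(\fn)}$ by $\tilde K^{(\fn)}$ under the integral without changing its value, while the second hypothesis forces the integrand $\tilde K^{(\fn)}$ to be supported in $\CD(T,\ell,\ell^T)$. Hence we can bound the integral by the naive estimate
\begin{equ}
\Big|\int \tilde K^{(\fn)}(z)\,dz\Big| \le \mathrm{vol}\bigl(\CD(T,\ell,\ell^T)\bigr)\cdot \sup_{z} |\tilde K^{(\fn)}(z)|\;.
\end{equ}

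The main step is then to show that
\begin{equ}\label{eq:volbound}
\mathrm{vol}\bigl(\CD(T,\ell,\ell^T)\bigr) \lesssim \prod_{\nu \in T^\circ\setminus T^\circ_\star} 2^{-\ell_\nu |\s|} \prod_{\nu \in T^\circ_\star} 2^{-\ell_\nu^T |\s_1| - \ell_\nu |\s\setminus\s_1|}\;,
\end{equ}
with a constant depending only on $|\CCV|$. The natural way to prove this is by induction on the binary tree $T$, integrating out the variables associated to the leaves one node at a time, proceeding from the most recent ancestors towards the root. At each inner node $\nu$ which is the meet of two leaves $v,w$ not both in $\CCV_\star$, the defining constraint $\|z_v - z_w\|_\s \le |\CCV|\,2^{-\ell_\nu}$ localises one of the two variables in a parabolic ball of radius of order $2^{-\ell_\nu}$, giving a factor $2^{-\ell_\nu|\s|}$; for an inner node $\nu \in T^\circ_\star$, the corresponding constraint localises the temporal and spatial coordinates independently at scales $2^{-\ell_\nu^T|\s_1|}$ and $2^{-\ell_\nu|\s\setminus \s_1|}$ respectively. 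Since $|T^\circ| = |\CCV_0|$ and one variable gets eliminated at each inner node, this yields exactly \eqref{eq:volbound}.

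Finally, we note that the set $\cN(T,\ell,\ell^T)$ has cardinality uniformly bounded in $(T,\ell,\ell^T)$ and in all the other parameters, since each edge contributes at most $(2c+1)^3$ admissible decorations with $c = \log|\CCV|+2$. Consequently we can bound the inner sum over $\fn$ by its supremum, at the cost of a combinatorial constant:
\begin{equ}
\sum_{\fn \in \cN(T,\ell,\ell^T)} \sup_z |\tilde K^{(\fn)}(z)| \lesssim \sup_{\fn \in \cN(T,\ell,\ell^T)} \sup_z |\tilde K^{(\fn)}(z)|\;.
\end{equ}
Combining this with the volume estimate \eqref{eq:volbound} yields precisely \eqref{eq:secondmainstep}. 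The only point demanding genuine care is the volume computation, in particular the bookkeeping at nodes of $T^\circ_\star$ where the anisotropic scaling forces us to distinguish between the time and space components of the integration variables; once the correct hierarchical structure of $\CD(T,\ell,\ell^T)$ is exploited, however, the remainder of the argument is routine.
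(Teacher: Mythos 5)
Your proof is correct and follows essentially the same route as the paper, which reduces the claim to the bound of the preceding lemma, swaps $\hat K^{(\fn)}$ for $\tilde K^{(\fn)}$ using the equal-integrals hypothesis, estimates each integral by $\mathrm{vol}(\CD(T,\ell,\ell^T))$ times the supremum, and invokes the volume bound (proved exactly by your leaf-by-leaf elimination argument, as in Lemma~A.13 of the Hairer--Quastel reference) together with the uniformly bounded cardinality of $\cN(T,\ell,\ell^T)$. No gaps.
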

\begin{proof}
The proof follows in the same way as in~\cite[Lemma A.13]{KPZJeremy}. The only difference being that the volume of
$\CD(T,\ell,\ell^T)$ is bounded from above by a multiple times 
\begin{equation}
\prod_{v \in T^\circ\setminus T^{\circ}_{\star}} 2^{-\ell_v |\s|}\prod_{v\in T^{\circ}_{\star}}2^{-\ell_v^T|\s_1|-\ell_v|\s\setminus\s_1|},
\end{equation}
which can be shown as in the aforementioned article.
\end{proof}
\begin{remark}
In case $\lambda=\mu$ the arguments in~\cite{KPZJeremy} show that the volume bound above takes the form
\begin{equation}
\prod_{v \in T^\circ} 2^{-\ell_v |\s|}.
\end{equation}
\end{remark}
To proceed we define
\begin{equation}
\label{eq:eta}
\eta(\nu)=
|\s|+ \sum_{e\in \hat\CCE}\eta_e(\nu),
\end{equation} 
where
\begin{equation}
\begin{aligned}
\eta_e(v)   &= - \hat{a}_e \one_{e_\uparrow} (v)
+ r_e  (\one_{e_+\wedge 0} (v) 
- \one_{e_\uparrow} (v)\bigr) \one_{r_e >0, e_+\wedge 0 > e_\uparrow} \\
&\quad + (1-r_e -  \hat{a}_e) (\one_{e_-\wedge 0} (v)
- \one_{e_\uparrow} (v)\bigr) \one_{r_e >0, e_-\wedge 0 > e_\uparrow}\;.
\end{aligned}
\end{equation}
Here we used the notation $e_\uparrow$ as a shorthand for $e_+\wedge e_-$.
Moreover, we introduce an additional map $\eta^T:T^{\circ}\to \R$ which is given by
\begin{equation}
\label{eq:etaT}
\eta^T(\nu)= 
\begin{cases}
|\s_1|,\, &\mbox{if }\nu\in T_*^{\circ}\;,\\
0, &\mbox{otherwise.}
\end{cases}
\end{equation}
We then have the following result.
\begin{lemma} \label{lem:naivebound}
With the above notation we have the bound
\begin{equation}
\label{eq:wantedProdBoundKHat}
\begin{aligned}
\Bigl(\prod_{v \in T^\circ\setminus T^{\circ}_{\star}} &2^{-\ell_v |\s|}\prod_{v\in T^{\circ}_{\star}}2^{-\ell_v^T|\s_1|-\ell_v|\s\setminus\s_1|}\Bigr)\sup_z |\hat K^{(\fn)}(z)|\\
& \lesssim \prod_{v \in T^\circ} 2^{-\ell_v \eta(v)-\ell_v^{T}\eta^T(v)}\prod_{v\in T_*^{\circ}}2^{\ell_v|\s_1|}
\leq \prod_{v \in T^\circ} 2^{-\ell_v \eta(v)-\ell_v^{T}\eta^T(v)}2^{N|\s_1|M} \;,
\end{aligned}
\end{equation}
uniformly over all $\fn \in \CN(T,\ell,\ell^T)$. Here, the label $\ell_v^T$ is decreed to be zero if $v\notin T^{\circ}_\star$.
\end{lemma}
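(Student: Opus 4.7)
The strategy is the standard one of bounding each factor $\hat K_e^{(\fn_e)}$ in the product $\hat K^{(\fn)}$ by its sup norm, and then redistributing the resulting weights among the inner nodes of the tree $T$ so that the total exponent attached to a node $v$ matches the definition of $\eta(v)$ in \eqref{eq:eta}. I plan to proceed in three steps.

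First, I would establish edge-by-edge bounds. For $e = (v,w) \in \CCE \setminus \CCE_\star$ with $r_e \le 0$, the support and derivative bounds in Assumption~\ref{ass:kernelsRenorm} combined with $|\fn_e - \ell_{v\wedge w}| \le c$ yield $|\hat K_e^{(\fn_e)}(z_{e_-},z_{e_+})| \lesssim 2^{\hat a_e \ell_{v \wedge w}}$, where the relevant exponent is $\hat a_e$ rather than $a_e$ because multi-edges have been collapsed. For $e$ with $r_e > 0$, applying Taylor's formula in \eqref{eq:defKhatn} gives the improved bound
\begin{equ}
|\hat K_e^{(\fn_e)}(z_{e_-},z_{e_+})| \lesssim 2^{\hat a_e \ell_{v\wedge w}} \cdot 2^{-r_e(\ell_{v\wedge w} - \ell_{w\wedge 0})}
\end{equ}
when $\ell_{w \wedge 0} \le \ell_{v\wedge w}$ (with the symmetric statement otherwise), where the gain of $2^{-r_e(\ell_{v\wedge w} - \ell_{w \wedge 0})}$ reflects the vanishing of the Taylor remainder at the origin. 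For $e \in \CCE_\star$, the definition of $\hat K_e^{(\fn_e)}$ together with the time-localisation at scale $\mu$ gives $|\hat K_e^{(\fn_e)}(z_{e_-},z_{e_+})| \lesssim 2^{\hat a_e \ell_{v\wedge w}}$ with the understanding that this factor lives on a set of time-volume $\mu$ rather than $2^{-\ell_v|\s|}$.

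Second, I would assign each of these edge factors to an inner node of $T$ and verify that after summing over edges, the weight attached to a node $v$ is exactly $-\eta(v)$ as defined in \eqref{eq:eta}. Concretely, the factor $2^{\hat a_e \ell_{e_\uparrow}}$ is assigned to the node $e_\uparrow = e_+ \wedge e_-$, which produces the term $-\hat a_e \mathbf{1}_{e_\uparrow}(v)$ in $\eta_e(v)$. The Taylor-gain factor $2^{-r_e(\ell_{e_\uparrow} - \ell_{e_+ \wedge 0})}$ can be telescoped along the tree as a difference of indicator functions $\mathbf{1}_{e_+ \wedge 0}(v) - \mathbf{1}_{e_\uparrow}(v)$, which produces exactly the remaining two contributions in $\eta_e(v)$; the bookkeeping depends on whether $e_+ \wedge 0$ or $e_- \wedge 0$ lies above $e_\uparrow$, which is the rationale for the two indicator terms in the definition.

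Third, I would combine the edge bounds with the volume factor
\begin{equ}
\prod_{v \in T^\circ\setminus T^\circ_\star} 2^{-\ell_v |\s|}\prod_{v\in T^\circ_\star}2^{-\ell_v^T|\s_1|-\ell_v|\s\setminus\s_1|}
\end{equ}
coming from the size of $\CD(T,\ell,\ell^T)$. The $+|\s|$ at each non-starred inner node in $\eta(v)$ absorbs the corresponding volume contribution, and at starred nodes the spatial piece $|\s\setminus\s_1|$ together with the time contribution $\eta^T(v) = |\s_1|$ accounts for the anisotropic domain. This yields the first inequality in \eqref{eq:wantedProdBoundKHat} up to the leftover product $\prod_{v \in T^\circ_\star} 2^{\ell_v |\s_1|}$ which records the mismatch between the natural spatial weight $|\s|$ and the mixed scaling imposed along the starred branches. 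Using $|T^\circ_\star| \le M$ (a binary subtree containing the $M+1$ elements of $\CCV_\star$ has at most $M$ internal nodes) and the constraint $\ell_v \le N$ for every $v \in T^\circ$, this product is bounded by $2^{N |\s_1| M}$, giving the second inequality.

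The main obstacle will be the bookkeeping in Step~2 for edges with $r_e > 0$: one must carefully check that the Taylor-remainder gain factorises across the tree in precisely the form dictated by the indicator differences in the definition of $\eta_e$, including the correct sign and the orientation of the asymmetry between $e_+ \wedge 0$ and $e_- \wedge 0$. Everything else is a straightforward application of the sup-norm bounds on the $K_e^{(n)}$ from Assumption~\ref{ass:kernelsRenorm}.
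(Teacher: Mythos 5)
Your plan follows the same route as the paper's proof: the paper obtains the second inequality directly from $\ell_v = N$ for $v\in T^\circ_\star$, and reduces the first inequality, via the multiplicative structure of both sides, to the per-edge bound $\sup_z|\hat K_e^{(\fn_e)}(z_{e_-},z_{e_+})|\lesssim \prod_{v}2^{-\ell_v\eta_e(v)}$, whose Taylor-remainder bookkeeping for $r_e>0$ it defers to \cite[Lem.~A.15]{KPZJeremy}. Your edge-by-edge sup bounds, the redistribution of weights onto the nodes $e_\uparrow$, $e_\pm\wedge 0$ to reproduce $\eta_e$, and the absorption of the anisotropic volume factor are exactly this argument, so the proposal is correct and matches the paper.
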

\begin{proof}
The second inequality in~\eqref{eq:wantedProdBoundKHat} is a direct consequence of the fact that $2^{-\ell_v}= 2^{-N}$ for all $v\in T_*^{\circ}$. 
Using the multiplicative structure on both sides of~\eqref{eq:wantedProdBoundKHat}, we see that the first inequality follows if for each edge $e=(e_-,e_+)\in \hat\CCE$
\begin{equation}
\label{eq:prodstructure}
\sup_z |\hat K_{e}^{(\fn_e)}(z_{e_-}, z_{e_+})|\lesssim \prod_{v\in T^{\circ}}2^{-\ell_v \eta_e(v)}.
\end{equation}
To see that this is true one may proceed as in the proof of~\cite[Lemma A.15]{KPZJeremy}.
\end{proof}
From here on, the arguments follow very closely those in~\cite[Appendix A]{KPZJeremy}. We therefore omit the details. 
We note however that it first sight it seems like it is the case that our choice of $\eta$ needs to deviate from the choice made in~\cite{KPZJeremy}. Indeed, with $\tilde{\eta}_e$ as in~\cite[Equation (A.27)]{KPZJeremy}, Equation~\ref{eq:secondmainstep} seems to imply that one needs to choose
\begin{equation}
\label{eq:choiceofeta}
\eta(\nu)= \begin{cases}
|\s|+ \sum_{e\in \hat\CCE}\tilde{\eta}_e(\nu),\, &\mbox{if }\nu\in T^\circ\setminus T_{\star}^{\circ}\;,\\
|\s\setminus \s_1|+ \sum_{e\in \hat\CCE}\tilde{\eta}_e(\nu),\, &\mbox{if }\nu\in T_{\star}^\circ\;,
\end{cases}
\end{equation}

However, going back to~\ref{eq:secondmainstep} one can simply rewrite the product as
\begin{equ}
\prod_{v\in T_\star^{\circ}}2^{-\ell_v|\s|} \Big(\frac{\mu}{2^{-N}}\Big)^{M|\s_1|}\,,	
	\end{equ}
which puts us into the setting of~\cite{KPZJeremy} with the only difference arising from the factor involving $\eta$ and $2^{-N}$, which appears in that form in Lemma~\ref{lem:wantedBound2}.

\section{Proof of Proposition~\ref{prop:renconstant}}
\label{C}

To prepare for the proof of Proposition~\ref{prop:renconstant}, we first note that as a consequence of~\cite[Section 5]{MatetskiDiscrete} for all $(t,x)\in\R\times \T_N^3$ we can write $K^N(t,x)= 2^{3N}p_{t2^{2N}}(2^Nx) + R^N(t,x)$, where $R^{N}$ is a smooth, compactly supported functions, which is bounded uniformly in $N$. Hence, modulo an additive constant that is uniformly bounded in $N$, 
we can replace each appearance of $K^N$ in~\eqref{eq:renconst1} and~\eqref{eq:c1c22c23} by $ 2^{3N}p_{\cdot 2^{2N}}(2^N\cdot)$.
The proof of Proposition~\ref{prop:renconstant} then relies on the following two lemmas.
\begin{lemma}
	\label{lem:SRWest}
	Uniformly in all time-space points $z=(t,x)\in\R_+\times\Z_N^3$ we have the estimate
	\begin{equation}\label{eq:SRWest}
	p_t(x)\lesssim 1\wedge \|z\|_\s^{-3}.
	\end{equation}
	Moreover, for all $i\in\{1,2,3\}$
	\begin{equation}\label{eq:gradientest}
	|p_t(x+e_i)-p_t(x)|\lesssim 1\wedge \|z\|_\s^{-4},
	\end{equation}
	where $e_i$ denotes the $i$-th unit vector.
\end{lemma}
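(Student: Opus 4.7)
The plan is to establish both estimates first for the simple random walk on the infinite lattice $\Z^3$ and then transfer them to the torus $\Z_N^3$ via the wrap-around identity $p_t(x) = \sum_{y \in 2^N\Z^3} p_t^{\Z^3}(x+y)$. On $\Z^3$, the starting point is the classical Gaussian upper bound
\begin{equ}
p_t^{\Z^3}(x) \lesssim (1+t)^{-3/2}\exp\bigl(-c|x|^2/(1+t)\bigr)\;,
\end{equ}
which can be obtained either from Fourier inversion applied to $\hat p_t^{\Z^3}(\theta) = \exp\bigl(-2t\sum_{i=1}^3(1-\cos\theta_i)\bigr)$ followed by a contour deformation, or from the standard Carne--Varopoulos / Davies-type heat kernel machinery for random walks on graphs of bounded geometry. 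Using the elementary inequality $e^{-c u} \lesssim (1+u)^{-3/2}$, this exponential bound converts to the polynomial form $p_t^{\Z^3}(x) \lesssim (1+\sqrt t+|x|)^{-3}$, which is exactly $1\wedge\|z\|_\s^{-3}$ in the parabolic scaling.

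For the gradient estimate, the same Fourier inversion argument can be applied with an additional factor of $(1-e^{\i\theta_i})$ in the integrand, which under stationary-phase / contour manipulation yields the improved bound
\begin{equ}
|p_t^{\Z^3}(x+e_i)-p_t^{\Z^3}(x)| \lesssim (1+t)^{-2}\exp\bigl(-c|x|^2/(1+t)\bigr)\;.
\end{equ}
An alternative route is to represent the discrete difference as a coupling integral involving $\partial_t p^{\Z^3}$ and to exploit the discrete Laplacian identity $\partial_t p^{\Z^3} = \Delta p^{\Z^3}$ together with the first bound. Converting the exponential decay to polynomial decay via the same trick gives $\lesssim 1\wedge\|z\|_\s^{-4}$ as required.

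Transferring to the torus is the only non-trivial accounting step: after summing over $y\in 2^N\Z^3$, the terms with $y\neq 0$ contribute a geometrically convergent series whose total mass is controlled by the same Gaussian envelope as the $y=0$ term in the relevant regime $\sqrt t+\|x\|_{\St}\lesssim 2^N$. The main obstacle—really the only subtle point—is that on the torus one has $p_t(x)\to 2^{-3N}$ for $t\gg 2^{2N}$, a regime in which $\|z\|_\s^{-3}$ could in principle be smaller than $2^{-3N}$; however, since Lemma~\ref{lem:SRWest} is ultimately applied to $p_{t2^{2N}}(2^Nx)$ with $(t,x)$ in the compact support of $K^N$, we always have $\sqrt{t\cdot 2^{2N}} + \|2^Nx\|_{\St}\lesssim 2^N$, so the parabolic bound $\|z\|_\s^{-3}$ dominates $2^{-3N}$ and there is no conflict with the mixing asymptotics. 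The assumption that $\|x-y\|_1 = \|x-y\|_{\St}$ stated before the lemma further ensures that the dominant translate in the wrap-around sum is the one with $y=0$.
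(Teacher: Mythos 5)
Your overall strategy is the same as the paper's: establish the bounds for the walk on $\Z^3$ via Fourier/local-CLT type estimates in the diffusive regime, gain one extra power of decay for the discrete gradient from the factor $|1-e^{\i\theta_i}|\lesssim|\theta_i|$, and then convert to the stated polynomial bounds. The paper implements this by quoting Lawler--Limic (Theorem~2.5.6 for the kernel, Theorem~2.3.6 for the gradient, the latter by comparison with the continuous heat kernel $K_t$ and the error bound $n^{-3}\leq(\sqrt t+|x|)^{-4}$ in the relevant range), whereas you propose to redo the Fourier inversion directly; these are interchangeable. Your discussion of the torus is actually more careful than the paper's: the paper's proof works tacitly on $\Z^3$ and never addresses the wrap-around, and your observation that \eqref{eq:SRWest} cannot hold literally for $t\gg2^{2N}$ (where $p_t(x)\to2^{-3N}$ while $\|z\|_\s^{-3}\to0$), together with the remark that the lemma is only invoked on the compact support of $K^N$ where $\|z\|_\s\lesssim2^N$, is a legitimate and useful clarification.

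There is, however, one step that is false as written: the unrestricted Gaussian upper bound $p_t^{\Z^3}(x)\lesssim(1+t)^{-3/2}\exp(-c|x|^2/(1+t))$ does not hold for a \emph{continuous-time} walk in the ballistic regime $|x|\gg1+t$. There the true decay is Poissonian, of order $\exp(-c|x|\log(|x|/t))$, which is much \emph{larger} than $\exp(-c|x|^2/t)$; this is precisely why Davies-type bounds (and Landim's estimate \eqref{eq:landim} used elsewhere in the paper) involve the Legendre transform $\Phi$ rather than a pure square. Contour deformation in the Fourier integral only buys you the Gaussian factor for $|\mathrm{Im}\,\theta|$ bounded, i.e.\ for $|x|\lesssim t$. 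The same caveat applies to your displayed gradient bound. The repair is routine and is exactly what the paper does: restrict the local-CLT/Gaussian argument to $|x|\lesssim t$ (or $\|x\|^3\leq t^2$), and in the complementary regime bound $p_t(x)$ by the probability that the Poisson clock rings at least $|x|_1$ times, which is $\lesssim e^{-c|x|}$ for $|x|\geq Ct$ and hence beats any polynomial. With that split made explicit, your argument goes through.
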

\begin{proof}
	For the proof of \eqref{eq:SRWest} we make use of~\cite[Theorem 2.5.6]{LawlerLimic}.
	\begin{theorem}[Theorem 2.5.6 in~\cite{LawlerLimic}]
		\label{thm:heatkernelest}
		Let $\|x\|\leq t/2$, then 
		\begin{equation}
		p_t(x) = \frac{1}{(4\pi t)^{3/2}}\exp\Big(-\frac{\|x\|^2}{4t}\Big)\exp\Big(\CO\Big(\frac{1}{\sqrt{t}}+\frac{\|x\|^3}{t^2}\Big)\Big).
		\end{equation}
	\end{theorem}
	We note that the above theorem is only useful if $|t|\geq 1$, and  $\|x\|^3\leq t^2$. However, using standard large deviations estimates the proof of~\eqref{eq:SRWest} follows.
	We turn to the proof of~\eqref{eq:gradientest}.
	Denote by $K$ the usual (continuous) heat kernel. It was shown in~\cite[Theorem 2.3.6]{LawlerLimic} via an analysis of characteristic functions that for all $n\in\N$ and $x\in\Z^3$,
	\begin{equation}\label{eq:disgradientest}
	|p_n(x+e_i)-p_n(x)-(K_n(x+e_i)-K_n(x))|\lesssim n^{-3},
	\end{equation}
	where $p_n$ denotes the transition probability of a discrete time simple random walk. Using that for continuous time simple random walk the characteristic function is given by
	\begin{equation}
	\Phi(\theta)= \frac{1}{3}\sum_{i=1}^{3}\cos (\theta_i),\quad\theta\in\R^3,
	\end{equation}
	it is not difficult to reproduce the proof of~\eqref{eq:disgradientest} for the continuous time simple random walk for all times $t\geq 1$. Note that $|K_t(x+e_i)-K_t(x)|\lesssim \|z\|_\s^{-4}$ and that
	\begin{equation}
	t^{-3}\leq (\sqrt{t}+|x|)^{-4},
	\end{equation}
	whenever $t\geq 1$ or $|x|\leq t^{3/4}$. Thus, for this range of time-space points we see that~\eqref{eq:gradientest} is satisfied. Thus, once again we can finish the proof using standard large deviation estimates.
\end{proof}
We are now in a position to prove Proposition~\ref{prop:renconstant}.
\begin{proof}
	First of all we note that for $z=(t,x)\in\R\times \T_N^3$ we have that $\E_c(\xi^N(0),\xi^N(z))= 2^{3N}p_{t2^{2N}}(2^Nx)$.
	Hence,
	\begin{equation}
	c_N\approx 2^{-3N} \int \sum_{x\in\Z_N^3} (2^{3N}p_{t2^{2N}}(x))^2\, dt,
	\end{equation}
	where the equality is only approximately true, since we removed the contribution of $R^{N}$ from $K^N$. 
	Using the substitution $\hat t = t2^{2N}$, we can rewrite the last term above as
	\begin{equation}\label{eq:CN}
	2^{N}\int \sum_{x\in\Z_N^3}(p_t(x))^2\, dt.
	\end{equation}
	Therefore, as a consequence of Lemma~\ref{lem:SRWest} we see that the integral above is bounded by some proportionality constant times
	\begin{equation}
\int_\R \sum_{x\in\Z^3} 1\wedge (\sqrt{t}+|x|)^{-6}\, dt,
	\end{equation}
	which is finite, and terminates the analysis of $c_N$.

	We turn to the analysis of $c_N^{(1)}$. We may assume that $z_1,z_2$ are time-space points such that $0>t_1>t_2$, since otherwise at least one of the kernels appearing in the definition would be zero. It then follows from a direct computation (alternatively also from the results in Section~\ref{S4.3}) that
	\begin{equation}
	\E_c(\xi^N(z_1),\xi^N(z_2),\xi^N(0))= 2^{4.5N} p_{-t_12^{2N}}(-2^Nx_1) p_{(t_1-t_2)2^{2N}}(2^{N}(x_1-x_2)).
	\end{equation}
	Thus, we can write
	\begin{equation}
	\begin{aligned}
	c_N^{(1)} \approx 2^{4.5N}\int \sum_{x_1,x_2\in\Z_N^3}
	(p_{-t_12^{2N}}(-x_1))^2 (p_{(t_1-t_2)2^{2N}}(x_1-x_2))^2\, dt_1\, dt_2,
	\end{aligned}
	\end{equation}
	where the power $4.5$ is really the sum of all weights in front of the transition probabilities above, and the Riemann factor $2^{-6N}$.
	Substituting $\hat t_1= 2^{2N}t_1$ and $\hat t_2=2^{2N}t_2$, the above becomes
	\begin{equation}
	\begin{aligned}
	2^{0.5N}\int \sum_{x_1,x_2\in\Z_N^3}
	(p_{-t_1}(-x_1))^2 (p_{(t_1-t_2)}(x_1-x_2))^2\, dt_1\, dt_2.
	\end{aligned}
	\end{equation}
	Using Lemma~\ref{lem:SRWest} it is plain to see that the integral term above is bounded in $N$.
	 Therefore, we can conclude that $c_N^{(1)}\approx \beta_N2^{0.5N}$, as announced.
	We proceed to analyse $c_N^{(2,1)}$.
	To that end we note that as a consequence of the analysis in Section~\ref{S4.3} for time-space points $z_0=(t_0,x_0),\ldots, z_3=(t_3,x_3)$ with $0=t_0>t_1>t_2>t_3$ and $x_0=0$ we can write
	\begin{equation}
	\E_c(\xi^N(z_i),\, i\in\{1,2,3,4\})= \I + \II+ \III,
	\end{equation}
	with
	\begin{equation}\label{eq:4thcumu}
	\begin{aligned}
	\I &= 2^{6N}\prod_{i=0}^{2}p_{(t_{i}-t_{i+1})2^{2N}}(x_{i+1}-x_i),\\
	\II&= 2^{6N}\sum_{y_1,y_2\in\Z_N^3} p_{(t_2-t_3)2^{2N}}(y_2-x_3)p_{c;(t_1-t_2)2^{2N}}^{y_2,x_2}(y_1,x_1)
	p_{-t_12^{2N}}(-y_1),\\
	\III &= 2^{6N}\sum_{y_1,y_2\in\Z_N^3} p_{(t_2-t_3)2^{2N}}(y_2-x_3)p_{c;(t_1-t_2)2^{2N}}^{y_2,x_2}(x_1,y_1)
	p_{-t_12^{2N}}(-y_1)\,,
	\end{aligned}
	\end{equation}
	where the difference between the second and third term is that in the middle term $x_1$ and $y_1$ are swapped.
	Here, $p_{c;\cdot}$ denotes the connected transition probability, which was introduced in~\eqref{eq:connectedtransitionproba}. For the purposes of the current analysis it is enough to know that (see also Proposition~\ref{prop:productofmartingales})
	\begin{equation}
	p_{c;t}^{x_1,x_2}(y_1,y_2)= \sum_{e} \int_0^t p_{s}^{x_1,x_2}(e_-,e_+) \nabla p_{t-s}^{y_1}(e_-)\nabla p_{t-s}^{y_2}(e_+)\, ds,
	\end{equation}
	where the sum is over all directed edges $e=(e_-,e_+)$ of $\Z_N^3$. 
	As was already done for $c_N$ and $c_N^{(1)}$ we use for each $i\in\{1,2,3\}$ the substitution $\hat t_i=t_i2^{2N}$. Then collecting all scaling factors in front of the various transition probabilities, space integrals and the factor $2^{6N}$ in~\eqref{eq:4thcumu} we see that they cancel each other out. 
	Now, Lemma~\ref{lem:SRWest} shows that the contribution coming from the first term in~\eqref{eq:4thcumu} yields an expression that is bounded in $N$.
	Therefore it remains to plug in $\II$ and $\III$ into the first line of~\eqref{eq:c1c22c23}. Summing over $y_1$ and $y_2$, using Lemmas~\ref{lem:SRWest}, \ref{lem:scaling} and~\ref{lem:convolutioninspace} we see that we obtain the expression
	\begin{equation}
	\II,\III\lesssim
	\tikzsetnextfilename{boundII,III}
	\begin{tikzpicture}[scale=0.35,baseline=1.3cm]
	\node at (-2,1)  [root] (left) {};
	\node at (-2,3)  [dot] (left1) {};
	\node at (-2,5)  [dot] (left2) {};
	\node at (-2,7)  [dot] (left3) {};
	\node at (0,4)   [dot] (right1) {};

	\draw[kernel] (left1) to node[midway, left] {\tiny 3} (left);
	\draw[kernel] (left2) to node[midway, left] {\tiny 3} (left1);
	\draw[kernel] (left3) to node[midway, left] {\tiny 3} (left2);
	\draw[] (left3) to node[midway, above] {\tiny 3} (right1);
	\draw[] (left2) to node[midway, above] {\tiny 3} (right1);
	\draw[] (left1) to node[midway, below] {\tiny 4} (right1);
	\draw[] (left) to node[midway, below] {\tiny 4} (right1);
	\end{tikzpicture},
	\end{equation}
	where the corresponding integrals over space are indeed proper sums without any scaling factors in front, and the green node denotes the origin. The label $a\in\{3,4\}$ means that the kernel associated to the respective edge decays like $\|z\|_\s^{-a}$ at infinity  To show that this expression is finite we make repeated use of~\cite[Lemma 10.14]{Regularity}. By going carefully through its proof and using that we do not need any estimates on derivatives one can show that this result also carries over to our case.
	Hence, first integrating over the top node above, we end up with a kernel $\tilde p$ such that $\tilde p(z)\lesssim \|z\|_\s^{-1}$, so that in our graphical notation we obtain the term
	\begin{equation}
	\tikzsetnextfilename{boundIIbis}
	\begin{tikzpicture}[scale=0.35,baseline=1.3cm]
	\node at (-2,1)  [root] (left) {};
	\node at (-2,3)  [dot] (left1) {};
	\node at (-2,5)  [dot] (left2) {};
	\node at (0,4)   [dot] (right1) {};

	\draw[kernel] (left1) to node[midway, left] {\tiny 3} (left);
	\draw[kernel] (left2) to node[midway, left] {\tiny 3} (left1);
	\draw[] (left2) to node[midway, above] {\tiny 4} (right1);
	\draw[] (left1) to node[midway, below] {\tiny 4} (right1);
	\draw[] (left) to node[midway, below] {\tiny 4} (right1);
	\end{tikzpicture},
	\end{equation}
	where the topmost label $4$ is really the sum of the labels $3$ and $1$.
	One may now proceed in the same manner by successively integrating over all nodes on the left hand side from the top to the bottom, which finally yields
	a term that is bounded from above by some proportionality constant times
	\begin{equation}
	\int\sum_{x\in\Z^3} 1\wedge (\sqrt{t}+|x|)^{-8}\, dt,
	\end{equation}
	which is indeed finite.
	We now analyse $c_N^{(2,2)}$. We can write 
	\begin{equation}
	\begin{aligned}
	c_N^{(2,2)}\approx \sum\int p_{-t_1}(-x_1) 
	p_{t_1-t_2}(x_1-x_2)p_{t_2-t_3}(x_2-x_3)
	p_{-t_2}(-x_2)p_{t_1-t_3}(x_1-x_3).
	\end{aligned}
	\end{equation}
	Summing over $x_3$, and using the symmetry in space of the discrete heat kernel we see that it is sufficient to estimate
	\begin{equation}\label{eq:startingpoint}
	\sum\int p_{-t_1}(-x_1)p_{t_1-t_2}(x_1-x_2)p_{t_1+t_2-2t_3}(x_1-x_2)p_{-t_2}(-x_2).
	\end{equation}
	We next decompose the domain of integration as $\cA_1\cup\cA_2$, 
	where $\cA_1=\{t\in\R:\, |t_1|\leq 1\}\cup\{t\in\R: |t_1|>1, |t_1-t_2|\leq 1\}$, and
	$\cA_{2}=\{t\in\R:\, |t_1|>1, |t_1-t_2|> 1\}$.
	We first analise the integral over $\cA_1$.
	Observe that
	\begin{equation}\label{eq:Green}
	\int p_{t_1+t_2-2t_3}(x_1-x_2)\, dt_3 \leq G(0),
	\end{equation}
	where $G(0)$ denotes the Green's function of simple random walk at the origin. Thus, summing over $x_2$, we see that it is enough to show the boundedness of
	\begin{equation}
	\sum_{x_1}\int_{\cA_1} p_{-t_1}(-x_1)p_{t_1-2t_2}(x_1)\, dt_1\, dt_2\,.
	\end{equation}
	Summing first over $x_1$ and using that $G(0)$ is finite it is straightforward to show that the above expression is finite.
	We now integrate over $\cA_{2}$. 
	Note that the form of the integral yields that $t_1>t_2>t_3$ so that $|t_1-t_3|> 1$.
	To proceed we additionally assume for the moment that the domain of summation is such that $\|x_1-x_2\|_\infty\leq (t_1-t_2)^{2/3}$. In particular, by Theorem~\ref{thm:heatkernelest} we can make use of the estimate
	\begin{equation}\label{eq:heatkernelest}
	p_{t_1+t_2-2t_3}(x_1-x_2)\lesssim \frac{1}{(t_1+t_2-2t_3)^{3/2}} \exp\Big(-\frac{\|x_1-x_2\|^2}{2(t_1+t_2-2t_3)}\Big).
	\end{equation}
	Now in the same way as in~\cite[Equations (4.28)-(4.29)]{ErhardPoisat} we can show that there are constants $\theta_1,$ and $\theta_2$ such that
	\begin{equation}
	\theta_1(t_1-t_2)^{-1/2}\leq\int_{t_3\leq  t_2-1}p_{t_1+t_2-2t_3}(x_1-x_2)\, dt_3\leq \theta_2(t_1-t_2)^{-1/2}.
	\end{equation}
	Thus, summing the remaing terms in~\eqref{eq:startingpoint} over $\|x_1-x_2\|_{\infty}\leq (t_1-t_2)^{2/3}$, we obtain a term that is some constant times
	\begin{equation}
	\int_{|t_1|> 1, |t_1-t_2|> 1}\frac{1}{(-t_2)^{3/2}}\frac{1}{\sqrt{t_1-t_2}}\, dt_1\, dt_2
	\sim \log (2^{2N}),
	\end{equation}
	where we used that the domain of integration of $t_1$ and $t_2$ is such that both have an absolute value of at most $2^{2N}$.
	It is not hard to see that the contribution from~\eqref{eq:startingpoint} is bounded in $N$ when the domain of summation is restricted to those values of $x_1$ and $x_2$ such that $\|x_1-x_2\|_\infty\geq (t_1-t_2)^{2/3}$. Indeed, this follows from large deviation estimates.
	We can therefore conclude the analysis of $c_N^{(2,2)}$.
	We now turn to $c_N^{(2,3)}$. Similar as above we can rewrite it as
	\begin{equation}
	\sum_{x_1,x_3\in\Z_N^3}\int_{t_1,t_3}p_{-t_1}(-x_1)(\Ren \tilde Q^N\star p)(z_1-z_3)p_{-t_3}(-x_3),
	\end{equation}
	where $\Ren \tilde Q^N(z)= p_t(x)^2-2^{-3N}c_N \delta_0(z)$, where the convolution in space is a sum which is not rescaled.
	It now follows from the analysis in~\cite[Lemma 2.1]{WongZakai} that uniformly in $z$,
	$|(\Ren \tilde Q^N\star p)(z)|\lesssim (|z|^{-3}\wedge|z|^{-5})$, from which the desired boundedness of $c_N^{(2,3)}$ follows.
	We can therefore conclude the proof of Proposition~\ref{prop:renconstant}.
\end{proof}

\endappendix

\bibliographystyle{Martin}

\bibliography{refs}

\def\cprime{$'$} \def\polhk#1{\setbox0=\hbox{#1}{\ooalign{\hidewidth
  \lower1.5ex\hbox{`}\hidewidth\crcr\unhbox0}}}
\begin{thebibliography}{DGRS12}
\expandafter\ifx\csname url\endcsname\relax
  \def\url#1{\texttt{#1}}\fi
\expandafter\ifx\csname urlprefix\endcsname\relax\def\urlprefix{URL }\fi
\expandafter\ifx\csname href\endcsname\relax
  \def\href#1#2{#2}\fi
\expandafter\ifx\csname burlalt\endcsname\relax
  \def\burlalt#1#2{\href{#2}{\texttt{#1}}}\fi

\bibitem[ADS17]{AthreyaDrewitzSun17}
\textsc{S.~Athreya}, \textsc{A.~Drewitz}, and \textsc{R.~Sun}.
\newblock Subdiffusivity of a random walk among a {P}oisson system of moving
  traps on {$\Bbb Z$}.
\newblock \emph{Math. Phys. Anal. Geom.} \textbf{20}, no.~1, (2017), Art. 1,
  22.
\newblock
  \burlalt{doi:10.1007/s11040-016-9227-8}{http://dx.doi.org/10.1007/s11040-016-9227-8}.

\bibitem[BHZ19]{BHZalg}
\textsc{Y.~Bruned}, \textsc{M.~Hairer}, and \textsc{L.~Zambotti}.
\newblock Algebraic renormalisation of regularity structures.
\newblock \emph{Invent. Math.} \textbf{215}, no.~3, (2019), 1039--1156.
\newblock \burlalt{arXiv:1610.08468}{http://arxiv.org/abs/1610.08468}.
\newblock
  \burlalt{doi:10.1007/s00222-018-0841-x}{http://dx.doi.org/10.1007/s00222-018-0841-x}.

\bibitem[Bol94]{Bolthausen94}
\textsc{E.~Bolthausen}.
\newblock Localization of a two-dimensional random walk with an attractive path
  interaction.
\newblock \emph{Ann. Probab.} \textbf{22}, no.~2, (1994), 875--918.

\bibitem[CH16]{Ajay}
\textsc{A.~Chandra} and \textsc{M.~Hairer}.
\newblock An analytic {BPHZ} theorem for {R}egularity {S}tructures.
\newblock \emph{ArXiv e-prints} (2016).
\newblock \burlalt{arXiv:1612.08138}{http://arxiv.org/abs/1612.08138}.

\bibitem[DGRS12]{DrewitzGartnerRamirezSun12}
\textsc{A.~Drewitz}, \textsc{J.~G\"{a}rtner}, \textsc{A.~F. Ram\'{i}rez}, and
  \textsc{R.~Sun}.
\newblock Survival probability of a random walk among a {P}oisson system of
  moving traps.
\newblock In \emph{Probability in complex physical systems}, vol.~11 of
  \emph{Springer Proc. Math.},  119--158. Springer, Heidelberg, 2012.
\newblock
  \burlalt{doi:10.1007/978-3-642-23811-6_6}{http://dx.doi.org/10.1007/978-3-642-23811-6_6}.

\bibitem[DV75]{DV75}
\textsc{M.~D. Donsker} and \textsc{S.~R.~S. Varadhan}.
\newblock Asymptotics for the {W}iener sausage.
\newblock \emph{Comm. Pure Appl. Math.} \textbf{28}, no.~4, (1975), 525--565.
\newblock
  \burlalt{doi:10.1002/cpa.3160280406}{http://dx.doi.org/10.1002/cpa.3160280406}.

\bibitem[DV79]{DV79}
\textsc{M.~D. Donsker} and \textsc{S.~R.~S. Varadhan}.
\newblock On the number of distinct sites visited by a random walk.
\newblock \emph{Comm. Pure Appl. Math.} \textbf{32}, no.~6, (1979), 721--747.
\newblock
  \burlalt{doi:10.1002/cpa.3160320602}{http://dx.doi.org/10.1002/cpa.3160320602}.

\bibitem[EH19]{ErhardHairerRegularity}
\textsc{D.~Erhard} and \textsc{M.~Hairer}.
\newblock Discretisation of regularity structures.
\newblock \emph{Ann. Inst. Henri Poincar\'{e} Probab. Stat.} \textbf{55},
  no.~4, (2019), 2209--2248.
\newblock \burlalt{arXiv:1705.02836}{http://arxiv.org/abs/1705.02836}.
\newblock
  \burlalt{doi:10.1214/18-AIHP947}{http://dx.doi.org/10.1214/18-AIHP947}.

\bibitem[EP16]{ErhardPoisat}
\textsc{D.~{Erhard}} and \textsc{J.~{Poisat}}.
\newblock Asymptotics of the critical time in {W}iener sausage percolation with
  a small radius.
\newblock \emph{ALEA} \textbf{13}, no.~1, (2016), 417--445.
\newblock \burlalt{arXiv:1503.01712}{http://arxiv.org/abs/1503.01712}.

\bibitem[FH14]{FrizHairer}
\textsc{P.~K. Friz} and \textsc{M.~Hairer}.
\newblock \emph{A course on rough paths}.
\newblock Universitext. Springer, Cham, 2014,  xiv+251.
\newblock With an introduction to regularity structures.
\newblock
  \burlalt{doi:10.1007/978-3-319-08332-2}{http://dx.doi.org/10.1007/978-3-319-08332-2}.

\bibitem[Hai14]{Regularity}
\textsc{M.~Hairer}.
\newblock A theory of regularity structures.
\newblock \emph{Invent. Math.} \textbf{198}, no.~2, (2014), 269--504.
\newblock \burlalt{arXiv:1303.5113}{http://arxiv.org/abs/1303.5113}.
\newblock
  \burlalt{doi:10.1007/s00222-014-0505-4}{http://dx.doi.org/10.1007/s00222-014-0505-4}.

\bibitem[HL18]{Cyril}
\textsc{M.~Hairer} and \textsc{C.~Labb\'{e}}.
\newblock Multiplicative stochastic heat equations on the whole space.
\newblock \emph{J. Eur. Math. Soc. (JEMS)} \textbf{20}, no.~4, (2018),
  1005--1054.
\newblock \burlalt{arXiv:1504.07162}{http://arxiv.org/abs/1504.07162}.
\newblock \burlalt{doi:10.4171/JEMS/781}{http://dx.doi.org/10.4171/JEMS/781}.

\bibitem[HM18]{MatetskiDiscrete}
\textsc{M.~Hairer} and \textsc{K.~Matetski}.
\newblock Discretisations of rough stochastic {PDE}s.
\newblock \emph{Ann. Probab.} \textbf{46}, no.~3, (2018), 1651--1709.
\newblock \burlalt{arXiv:1511.06937}{http://arxiv.org/abs/1511.06937}.
\newblock
  \burlalt{doi:10.1214/17-AOP1212}{http://dx.doi.org/10.1214/17-AOP1212}.

\bibitem[HP15]{WongZakai}
\textsc{M.~Hairer} and \textsc{E.~Pardoux}.
\newblock A {W}ong-{Z}akai theorem for stochastic {PDE}s.
\newblock \emph{J. Math. Soc. Japan} \textbf{67}, no.~4, (2015), 1551--1604.
\newblock \burlalt{arXiv:1409.3138}{http://arxiv.org/abs/1409.3138}.
\newblock
  \burlalt{doi:10.2969/jmsj/06741551}{http://dx.doi.org/10.2969/jmsj/06741551}.

\bibitem[HQ18]{KPZJeremy}
\textsc{M.~Hairer} and \textsc{J.~Quastel}.
\newblock A class of growth models rescaling to {KPZ}.
\newblock \emph{Forum Math. Pi} \textbf{6}, (2018), e3, 112.
\newblock \burlalt{arXiv:1512.07845}{http://arxiv.org/abs/1512.07845}.
\newblock
  \burlalt{doi:10.1017/fmp.2018.2}{http://dx.doi.org/10.1017/fmp.2018.2}.

\bibitem[JS03]{JacodShiryaev}
\textsc{J.~Jacod} and \textsc{A.~N. Shiryaev}.
\newblock \emph{Limit theorems for stochastic processes}, vol.~2 of
  \emph{Grundlehren der mathematischen Wissenschaften, 288}.
\newblock Springer, 2003.

\bibitem[Lan05]{Landim}
\textsc{C.~Landim}.
\newblock {Gaussian estimates for symmetric simple exclusion processes.}
\newblock \emph{{Ann. Fac. Sci. Toulouse, Math. (6)}} \textbf{14}, no.~4,
  (2005), 683--703.
\newblock \burlalt{doi:10.5802/afst.1108}{http://dx.doi.org/10.5802/afst.1108}.

\bibitem[LL10]{LawlerLimic}
\textsc{G.~F. Lawler} and \textsc{V.~Limic}.
\newblock \emph{Random Walk: A Modern Introduction}, vol. 123 of
  \emph{Cambridge Studies in Advanced Mathematics}.
\newblock Cambride University Press, 2010,  376.

\bibitem[Myk94]{Mykland}
\textsc{P.~A. Mykland}.
\newblock Bartlett type identities for martingales.
\newblock \emph{The Annals of Statistics} \textbf{22}, (1994), 21--38.

\bibitem[Pov99]{Povel99}
\textsc{T.~Povel}.
\newblock Confinement of {B}rownian motion among {P}oissonian obstacles in
  {${\bf R}^d,\ d\ge3$}.
\newblock \emph{Probab. Theory Related Fields} \textbf{114}, no.~2, (1999),
  177--205.
\newblock
  \burlalt{doi:10.1007/s440-1999-8036-0}{http://dx.doi.org/10.1007/s440-1999-8036-0}.

\bibitem[PT11]{PeccatiTaqqu}
\textsc{G.~Peccati} and \textsc{M.~Taqqu}.
\newblock \emph{Wiener Chaos: Moments, Cumulants and Diagrams}, vol.~1 of
  \emph{Bocconi and {S}pringer Series}.
\newblock Springer, 2011,  274.

\bibitem[Rav92]{Ravi92}
\textsc{K.~Ravishankar}.
\newblock Fluctuations from the hydrodynamical limit for the symmetric simple
  exclusion in {${\bf Z}^d$}.
\newblock \emph{Stochastic Process. Appl.} \textbf{42}, no.~1, (1992), 31--37.
\newblock
  \burlalt{doi:10.1016/0304-4149(92)90024-K}{http://dx.doi.org/10.1016/0304-4149(92)90024-K}.

\bibitem[SSSX20]{Shen20}
\textsc{H.~Shen}, \textsc{J.~Song}, \textsc{R.~Sun}, and \textsc{L.~Xu}.
\newblock Scaling limit of a directed polymer among a {P}oisson field of
  independent walks.
\newblock \emph{ArXiv e-prints} (2020).
\newblock \burlalt{arXiv:2003.12945}{http://arxiv.org/abs/2003.12945}.

\bibitem[Sur83]{Surgailis}
\textsc{D.~Surgailis}.
\newblock On {P}oisson multiple stochastic integrals and associated equilibrium
  {M}arkov processes.
\newblock In \textsc{G.~Kallianpur}, ed., \emph{Theory and Application of
  Random Fields}, vol.~49 of \emph{Lecture Notes in Control and Information
  Sciences},  233--248. Springer Berlin Heidelberg, 1983.
\newblock
  \burlalt{doi:10.1007/BFb0044696}{http://dx.doi.org/10.1007/BFb0044696}.

\bibitem[Szn91]{Sznitman91}
\textsc{A.-S. Sznitman}.
\newblock On the confinement property of two-dimensional {B}rownian motion
  among {P}oissonian obstacles.
\newblock \emph{Comm. Pure Appl. Math.} \textbf{44}, no. 8-9, (1991),
  1137--1170.
\newblock
  \burlalt{doi:10.1002/cpa.3160440822}{http://dx.doi.org/10.1002/cpa.3160440822}.

\end{thebibliography}

\end{document}